\documentclass{article}

\usepackage{mystyle}

\usepackage[english]{babel}

\usepackage[letterpaper,top=2cm,bottom=2cm,left=3cm,right=3cm,marginparwidth=1.75cm]{geometry}
\usepackage{comment}

\newtheorem{lemma}{Lemma}
\newtheorem{theorem}{Theorem}
\newtheorem{corollary}{Corollary}
\newtheorem{proposition}{Proposition}
\newtheorem{definition}{Definition}
\newtheorem{example}{Example}
\newtheorem{remark}{Remark}
\newtheorem{assumption}{Assumption}

\crefname{theorem}{Theorem}{Theorems}
\crefname{proposition}{Proposition}{Propositions}
\crefname{remark}{Remark}{Remarks}
\crefname{definition}{Definition}{Definitions}
\crefname{corollary}{Corollary}{Corollaries}
\crefname{lemma}{Lemma}{Lemmas}
\crefname{example}{Example}{Examples}
\crefname{assumption}{Assumption}{Assumptions}

\renewcommand{\d}[1]{\mathrm{d}}

\renewcommand*\d{\mathop{}\!\mathrm{d}}

\newcommand{\COT}{\mathrm{COT}}
\newcommand{\co}{\mathrm{co}}
\newcommand{\loc}{\mathrm{loc}}

\newcommand{\RN}[1]{
\textup{\uppercase\expandafter{\romannumeral#1}}
}

\newcommand{\PLeb}{\Pp^{\mathrm{Leb}}_2([0,1] \times \Theta)}
\newcommand{\Leb}{\mathrm{Leb}}
\newcommand{\TV}{\mathrm{TV}}

\title{Training Infinitely Deep and Wide Transformers}
\author{
    Raphaël Barboni \\
    Bocconi University \\
    \texttt{raphael.barboni@unibocconi.it}
    \and
    Takashi Furuya \\
    Doshisha University, RIKEN AIP\\
    \texttt{tfuruya@mail.doshisha.ac.jp}
    \and
    Maarten V. de Hoop \\
    Rice University \\
    \texttt{mdehoop@rice.edu}
    \and
    Gabriel Peyr\'e \\
    CNRS, ENS, PSL Universit\'e\\
    \texttt{gabriel.peyre@ens.fr}}
\date{}

\begin{document}

\maketitle

\begin{abstract}
Transformers have become the dominant architecture in modern machine learning, yet the theoretical understanding of their training dynamics remains limited, especially for deep architectures with many layers and attention heads.
This paper develops a rigorous mathematical framework for analyzing gradient-based training of transformers in the mean-field regime, where both the depth (number of layers) and width (number of attention heads) tend to infinity.
In fact, we extend the mean-field training theory for ResNets to the transformer setting. While ResNet training can be understood as controlling a neural ordinary differential equation (ODE), transformer training corresponds to controlling a neural partial differential equation (PDE), due to the coupling of multiple token distributions through the attention mechanism. Our mean-field model features two types of measure representations: token distributions evolving through layers (represented as measures over the token space), and attention parameters at each layer (represented as measures over the attention parameter space).
We establish well-posedness of the forward pass through infinitely deep transformers, characterizing token evolution via flow maps that satisfy ODEs in function spaces. Using adjoint sensitivity analysis, we derive an explicit formula for the conditional Wasserstein gradient of the training risk, involving adjoint variables governed by backward ODEs. We prove the existence and uniqueness of gradient flow curves in the conditional Wasserstein metric space, establishing a rigorous foundation for gradient-based transformer training. A key technical contribution is providing necessary and sufficient conditions for injectivity of the Neural Tangent Kernel (NTK) for attention mechanisms: we show that NTK injectivity is equivalent to linear independence of log-sum-exp functions modulo affine functions, a condition satisfied by diverse token distributions, including discrete distributions, uniform distributions, and Gaussian mixtures. Under this NTK injectivity assumption, we prove that gradient flow converges to global minima when the initial loss is sufficiently small, eliminating spurious local minima from the optimization landscape.
\end{abstract}

\section{Introduction}

Transformers~\cite{vaswani2017attention} have emerged as the dominant architecture in modern machine learning, powering breakthroughs in natural language processing, computer vision, and beyond. Despite their empirical success, the theoretical understanding of transformer training dynamics remains limited, particularly for deep architectures with many layers and large numbers of attention heads. Understanding when and why gradient-based optimization converges for transformers is essential for developing principled design choices and training procedures.

\subsection{Previous Works}

\paragraph{Infinite Width: Mean Field over Parameters}

The analysis of infinitely wide neural networks depends on the scaling normalization of the weights, leading to distinct mathematical representations.
The Neural Tangent Kernel (NTK) regime~\cite{jacot_neural_2021} arises when weights scale as $1/\sqrt{\text{width}}$. In this lazy training regime, the network remains close to initialization throughout optimization, and training dynamics reduce to kernel regression in a reproducing kernel Hilbert space (RKHS). The kernel is fixed by the architecture and initialization, and the problem becomes linear in the RKHS. This Euclidean structure is mathematically simpler and enables global convergence guarantees~\cite{du2019gradient,allen2019convergence}, but precludes genuine feature learning since the network's representational capacity is frozen at initialization.
The mean-field regime corresponds to a different scaling of $1/\text{width}$, where network parameters are represented as a probability distribution over the parameter space~\cite{mei2018mean,rotskoff2018parameters,chizat2018global}. Unlike the Euclidean NTK setting, this mean-field formulation is inherently non-Euclidean: training dynamics evolve on the Wasserstein manifold of probability measures and are governed by continuity equations or Wasserstein gradient flows. This richer regime allows for feature learning, as parameters can move substantially during training. Global convergence results have been established for shallow mean-field networks under appropriate regularity conditions~\cite{chizat2018global,mei2018mean}.
In the present paper, we adopt the mean-field representation to model transformers with infinitely many attention heads: each layer contains a probability distribution over attention parameters (query, key, value matrices). For our convergence analysis, although the model is formulated in the mean-field framework, we also leverage tools from NTK theory by studying the Neural Tangent Kernel of the attention mechanism and establishing conditions for its injectivity.

\paragraph{Infinite Depth: Neural ODE Models}
Understanding training dynamics of deep neural networks has been a central challenge in deep learning theory. Initial results on training deep networks~\cite{du2019gradient,allen2019convergence} established convergence guarantees in overparameterized regimes, typically under conditions analogous to the NTK regime where the network remains close to initialization. 
The introduction of residual connections~\cite{he2016deep} was crucial for training very deep networks in practice. Theoretically, residual networks admit an interpretation as discretizations of continuous dynamical systems. Chen \textit{et al}.~\cite{chen2018neural} introduced Neural ODEs, viewing ResNets in the limit of infinitely many layers as ordinary differential equations on the feature space. The continuous-time perspective depends on the scaling: with residual weights scaling as $1/\text{depth}$, one obtains a neural ODE, while $1/\sqrt{\text{depth}}$ scaling leads to a neural SDE~\cite{marion2022scaling}.
Training dynamics in the neural ODE regime, where residual blocks are themselves mean-field MLPs, have been rigorously analyzed by~\cite{barboni2024understanding}. They established well-posedness of the gradient flow via the theory of curves of maximal slope in the conditional Wasserstein metric, and proved local convergence (when the initial loss is sufficiently small) under a Polyak-Łojasiewicz condition. In our paper, we adopt the same framework for infinitely deep transformers, with a crucial distinction: the data (tokens) are also represented in the mean-field limit. Unlike ResNets that operate on finite-dimensional feature vectors via a neural ODE, transformers in our formulation process token distributions through a neural PDE that couples the evolution of multiple probability measures. 

\paragraph{Infinite Number of Tokens: Mean Field over Tokens and Neural PDE Frameworks}
Dealing with very large numbers of tokens is highly relevant for modern transformer applications: long-context language models process thousands of tokens, vision transformers handle thousands of image patches. 
In the encoder (non-causal) setting, a natural approach is to represent the token distribution as a probability measure over the token space. This perspective was pioneered by~\cite{sander2022sinkformers}, who formulated the evolution of tokens through self-attention layers as a non-linear PDE on the space of probability measures. We refer to this approach as ``mean field over tokens''. In the present work, it is crucial to note that we employ mean field in a dual sense: both over tokens (infinitely many tokens) and over parameters (infinitely many attention heads).
Building on this mean-field token framework,~\cite{geshkovski2023emergence} analyzed the emergence of clustering behavior in the PDE governing token evolution, proving that attention mechanisms drive tokens toward discrete clusters.
From an expressivity perspective,~\cite{furuya2024transformers} established that deep transformers are universal approximators for in-context mappings from probability measures over tokens to output tokens.

\paragraph{Optimal Transport and Wasserstein Gradient Flows}

Flows over measures provide a unifying framework for understanding both training dynamics and transformer computations. In mean-field neural networks, Wasserstein gradient flows~\cite{jordan1998variational,ambrosio2008gradient} describe the training dynamics: gradient descent on parameters induces a gradient flow on the measure representing the parameter distribution~\cite{chizat2018global}. For transformers, token evolution through layers can be formulated as continuity equations with velocity fields determined by attention~\cite{sander2022sinkformers,geshkovski2023emergence}, though these are not generally Wasserstein flows.
For deep architectures, the layer structure must be preserved during training: parameters at different depths should not mix. This motivates conditional optimal transport~\cite{barboni2024understanding}. We adopt this framework for transformers, with the additional complexity that tokens themselves evolve as measures, requiring a neural PDE formulation.

\paragraph{Training of Transformers}
Most existing results on transformer training focus on in-context learning, a behavior first demonstrated experimentally in \cite{garg2022can}. Subsequent works have shown that transformers are expressive enough to exhibit this phenomenon \cite{akyurek2022learning}, that shallow but optimal transformers already possess this capability \cite{mahankali2023one}, and, in more restricted settings, that the training dynamics converge to globally optimal parameters \cite{ahn2023transformers}.
The works most closely related to ours are \cite{wu2023convergence} and its refinement in \cite{song2024unraveling}, which establish convergence to a global minimum for the training of a single attention layer under an NTK-type parameter rescaling (equivalently, sufficiently small initialization loss). This line of work is further extended in \cite{qin2025convergence,gao2024global}, which study deep transformers by incorporating skip connections.
Our work adopts a similar NTK-based perspective but goes beyond these results by accommodating an arbitrarily large (possibly infinite) number of residual blocks, as well as an arbitrary number of tokens.

\subsection{Our Contributions}

This paper extends the mean-field training theory of~\cite{barboni2024understanding} from ResNets to transformers. While ResNet training corresponds to controlling a neural ODE (an ordinary differential equation on the token space), transformer training corresponds to controlling a neural PDE (a partial differential equation coupling the evolution of multiple token distributions). Our mean-field model represents transformers as both infinitely deep (parameterized by a depth variable) and infinitely wide (parameterized by a probability distribution of attention parameters), with the key distinction that there are two types of measure representations: tokens evolving through layers, and attention parameters at each layer.

\textbf{Training Mean-Field Transformers (\cref{sec:training}):}
In~\cref{subsec:mean_field_transformers}, we introduce the mean-field formulation for infinitely deep and wide transformers, where token distributions evolve via a continuity equation with mean-field attention operators.~\Cref{prop:existence_uniqueness} establishes well-posedness of the forward process, showing that token evolution is characterized by flow maps satisfying ODEs in function spaces.~\Cref{thm:upper_gradient} derives the conditional Wasserstein gradient of the training risk using adjoint sensitivity analysis, obtaining an explicit upper-gradient formula involving adjoint variables that satisfy backward ODEs.~\Cref{thm:gradient_flow_wellposed} proves existence and uniqueness of gradient flow curves for training, and~\cref{thm:maximal_slope_equivalence} establishes equivalence between the gradient flow continuity equation and the curve of maximal slope characterization.~\Cref{thm:local_convergence} then proves local linear convergence to a global minimizer when the $V$-part of the NTK is positive at initialization and the initial loss is sufficiently small.

\textbf{Injectivity of the Neural Tangent Kernel for Attention (\cref{sec:NTK_expressivity}):}
A key technical challenge is ensuring the NTK remains positive definite throughout training. In the section on analysis for injective NTK of attention, we establish necessary and sufficient conditions for injectivity of the NTK map for attention mechanisms. The main proposition (\cref{prop:equivalence_injectivity_cumulant}) shows that NTK injectivity is equivalent to linear independence of log-sum-exp (cumulant generating) functions modulo affine functions. This condition holds for diverse token distributions: discrete distributions satisfying a distinctness condition (\cref{eq:ass:dist}, verified almost surely for i.i.d. samples from absolutely continuous measures discussed in \cref{rem-Justification}), uniform distributions on cubes with distinct radii, symmetric multivariate Laplace distributions with distinct covariance matrices (\cref{lem:Vandermonde}), and two-component Gaussian mixtures with distinct separation parameters. These results are further stabilized under convolution with centered Gaussians (\cref{lem:stability-Gaussian}). The injectivity conditions provide constructive guidance for initialization and ensure convergence of gradient-based training to global minima when the initial loss is sufficiently small.

\paragraph{Notations}

For a metric space $X$, $\Pp(X)$ is the set of Borel probability measures on $X$, $\Pp_c(X)$ is the subset of compactly supported probability measures and, for $p \geq 1$, $\Pp_p(X)$ is the subset of probability measures with finite $p$-th order moment.
When $X$ is a subset of a Banach space this $p$-th order moment is denoted $\Ee_p(\rho) \eqdef \int_X \| x \|^p \d \rho(x)$, for $\rho \in \Pp_p(X)$.
We denote by $\Ww_p$ the \emph{Wasserstein distance} on $\Pp_p(X)$~\cite{villani2009optimal,santambrogio2015optimal}.
For $d \geq 1$, the set of $d$-dimensional real finite vector measures is denoted by $\Mm^d(X)$ and the total variation norm is denoted by $\|.\|_{\TV}$.
For a measure $\rho$ on $X$ and a measurable map $f : X \to Y$ between two metric spaces, $f_\# \rho$ denotes the pushforward of $\rho$ by $f$.
For $x \in X$, we denote by $\delta_x \in \Pp(X)$ the Dirac measure at $x$.
For two Banach spaces $\Bb, \Bb'$, a subset $X \subset \Bb$ and $k \geq 0$, we denote by $\Cc^k(X, \Bb')$ the space of $k$-times continuously differentiable functions $f : X \to \Bb'$.
This space is provided with the norm $\| f \|_{\Cc^k(X, \Bb')} = \sup_{0 \leq l \leq k} \sup_{x \in X} \| f^{(l)}(x) \|$.
All norm subscripts may be omitted when clear from the context.


\section{Training mean-field models of Transformers}
\label{sec:training}

We introduce in this section mean-field models of infinitely deep and arbitrarily wide Transformers (\cref{subsec:mean_field_attention,subsec:mean_field_transformers}), show how the associated training dynamics can be interpreted as a gradient flow on the space of parameter distributions (\cref{subsec:gradient}) and give conditions for convergence of this gradient flow (\cref{subsec:convergence}).
A detailed study of our convergence assumptions will be the subject of~\cref{sec:NTK_expressivity}.

\subsection{Mean-field Attention layers} \label{subsec:mean_field_attention}

We consider in this paper Attention layers where a supplementary bias parameter is added to the \emph{query matrix} and where the \emph{key matrix} is fixed to $K = \Id$.
Given a distribution $\mu$ of tokens and an input token $x \in \RR^d$, we hence consider Attention maps defined by:
\begin{align} \label{eq:attention}
     \varphi_\theta[\mu](x) \eqdef \frac{\int e^{\left< Q x + q, y \right>} V y \, d\mu(y)}{\int e^{\left< Q x + q, y \right>} \, \text{d}\mu(y)} .
\end{align}
In the above equation, the Attention layer is parameterized by the triplet $\theta = (Q, q, V)$, living in the parameter set
$\Theta \eqdef \mathbb{R}^{d \times d} \times \RR^d \times \mathbb{R}^{d \times d}$.

\begin{remark} \label{rem:attention_modifications}
    We introduce two modifications w.r.t.\@ standard parameterizations of Attention layers:
    \begin{itemize}
        \item The fact that the matrix $K$ is fixed is introduced in this section mainly for technical reasons.
        The main purpose is to ensure appropriate regularity and integrability of the Attention w.r.t. its parameters (see~\cref{lem:differential_A,lem:A_theta_param_differential,lem:A_rho_wasserstein_differential})
        for the gradient flow dynamics to be well-defined.
        Note that this simply amounts to performing a change of variables and to replacing $(Q,q)$ by $(K^\top Q, K^\top q)$.
        In particular, it does not affect the expressivity of Attention.
        \item In contrast, the supplementary bias parameter $q$ is precisely introduced to enhance the expressivity of Attention layers.
        In particular, it breaks some invariance by symmetry in~\cref{eq:attention}.
        The parameter $q$ thus plays a key role in proving the positivity of the tangent kernel in~\cref{sec:NTK_expressivity}.
    \end{itemize}
\end{remark}

In the following, we consider mean-field models of multi-head Attention layers of infinite width, which are parameterized by the distribution of their parameters.
For $\rho \in \Pp(\Theta)$ we define:
\begin{align} \label{eq:attention_meanfield}
    \Phi_\rho[\mu](x) := \int_{\Theta} \varphi_\theta[\mu](x) \, \d \rho(\theta).
\end{align}
We recover the case of multi-head Attention layers of finite width by considering an empirical distribution of parameters.
Given a finite number $H \geq 1$ of heads and a family of parameters $(\theta_h)_{1 \leq h \leq H} \in \Theta^H$, considering $\Hat{\rho} := \frac{1}{H} \sum_h \delta_{\theta_h}$, we have: 
\[
    \Phi_{\Hat{\rho}}[\mu](x) = \frac{1}{H} \sum_h \varphi_{\theta_h}[\mu](x).
\]

\subsection{Mean-field models of Transformers}
\label{subsec:mean_field_transformers}

In this work, we consider infinitely deep Transformers.
These consist of a continuum of multi-head Attention layers, each parameterized by a measure $\rho(.|s)$ for each depth parameter $s \in [0,1]$.

For a discrete distribution of tokens $\mu(s=0) := \frac{1}{n}\sum_i \delta_{x_i(s=0)}$ at depth $s=0$, the set of tokens $x_i(s)$, encoded as a measure $\mu(s) := \frac{1}{n} \sum_{i=1}^{n} \delta_{x_i(s)}$, evolves according to a set of coupled ODEs:
\begin{align} \label{eq:ODE-evol}
     \frac{\mathrm{d} x_i}{\mathrm{d} s}(s) = \Phi_{\rho(.|s)}[\mu(s)](x_i(s)). 
\end{align}
This is extended to an arbitrary (possibly infinite) number of tokens by considering the PDE over the space of tokens:
\begin{equation}\label{eq:PDE-evol}
    \partial_s \mu(s) + \text{div}( \mu(s) \Phi_{\rho(.|s)}[\mu(s)]  ) = 0.    
\end{equation}
Note that in the following, these continuity equations are to be understood in the weak or \emph{distributional} sense, that is, for every test function $\varphi \in \Cc^\infty_c([0, 1] \times \RR^d)$ it holds:
    \begin{align*}
        \int_\Xx \varphi(1) \d \mu(1) =  \int_{\RR^d} \varphi(0) \d \mu(0) + \int_0^1 \int_\Xx \left( \partial_s \varphi(s) + \nabla \varphi(s) \cdot \Phi_{\rho(.|s)}[\mu(s)] \right) \d \mu(s) \d s.
    \end{align*}
In particular, when restricted to discrete measures, one recovers the system of ODEs in~\cref{eq:ODE-evol}.
Once this PDE has been solved and $\mu(s)$ has been computed, one can define the flow-map $\Lambda_\rho[\mu(0)](s)$ (which depends only on $\mu(0)$) induced by the Transformer, which is the mapping advecting $\mu(0)$ to $\mu(s)$.
Formally, we denote by $\Lambda_\rho[\mu(s=0)](s, x) := x(s)$ the mapping obtained by integrating the ODE
\begin{align} \label{eq:ODE_evaluation}
    \forall s \in [0,1], \quad \frac{\mathrm{d} x}{\mathrm{d} s}(s) = \Phi_{\rho(.|s)}[\mu(s)](x(s)),
    \quad \text{with} \quad x(0)=x. 
\end{align}
This implies that $\mu(s) = \Lambda_\rho[\mu(0)](s)_\# \mu(0)$. 
Note that this mapping $\Lambda_\rho[\mu]$ can be defined on the whole space, although for evaluation of the training risk, it is only required to compute it on the support of the input token distribution $\mu(0)$ as well as on some input token $x(0)$. 

\paragraph{Deep residual Transformers as flow-maps}

Since the output of a deep Transformer can be seen as the output of a flow-map, in the following, we view Attention layers as mappings acting on the space of continuous functions.
Given parameters $\theta \in \Theta$, a compact subset $S \subset \RR^d$ and a compactly supported probability measure $\mu$ with $\Supp(\mu) \subset S$, we define for every continuous map $\Lambda \in \Cc^0(S, \RR^d)$:
\begin{align} \label{eq:A_theta}
    \Aa_\theta [\mu](\Lambda) : x \in \RR^d \mapsto \varphi_\theta[\Lambda_\# \mu](\Lambda(x)) \, .
\end{align}
In this way, $\Aa_\theta(\Lambda)$ is the mapping of the Attention layer after a push-forward of both token distribution $\mu$ and input token $x$ by $\Lambda$.
Similarly, for a parameter distribution $\rho \in \Pp(\Theta)$, one can define:
\begin{align} \label{eq:A_rho}
    \Aa_\rho [\mu](\Lambda) \eqdef \int_\Theta \Aa_\theta [\mu](\Lambda) \d \rho(\theta) \, ,
\end{align}
which is the mapping of a multi-head Attention layer after a push-forward of both token distribution $\mu$ and input token $x$ by $\Lambda$.
In particular, \cref{eq:A_rho} is to be understood as a Bochner integral while the definitions of $\Aa_\theta$ and $\Aa_\rho$ are justified in the proof of~\cref{lem:differential_A}.
In the following, since the token distribution $\mu$ will be explicit from the context, we use the shorthand notations $\Aa_\theta \eqdef \Aa_\theta[\mu]$ and $\Aa_\rho \eqdef \Aa_\rho[\mu]$.

Then, instead of seeing Transformers as controlling the token distribution $\mu$ through the continuity equation~\cref{eq:PDE-evol}, one can see Transformers as controlling the flow-map $\Lambda = \Lambda_\rho[\mu_0] \in \Cc^0([0,1] \times S, \RR^d)$ through the (Banach-space-valued) ODE:
\begin{align} \label{eq:flow_map_ODE}
    \frac{\d}{ \d s} \Lambda(s) = \Aa_{\rho(.|s)}(\Lambda(s)), \quad \text{with} \quad \Lambda(0) = \Id.
\end{align}
Note that, due to the absence of regularity assumption on the parameterization $\{ \rho(.|s) \}_{s \in [0,1]}$, this equation is to be understood in the sense of Carathéodory solutions:
\begin{align*}
    \forall s \in [0,1], \quad \Lambda(s) = \Id + \int_0^s \Aa_{\rho(.|r)} (\Lambda(r)) \d r .
\end{align*}

\begin{remark}
   The idea of studying evolution PDEs of the form~\cref{eq:PDE-evol} on the space of densities through the lens of ODEs on the space of flows is a standard idea which can be traced back to the work of Arnold on Euler's equation~\cite{arnold1966geometrie}.
   There are at least two advantages of this approach:
   \begin{itemize}
       \item From a conceptual perspective, it provides a convenient mathematical framework for studying the training of deep Transformers.
       This framework indeed extends the one developed for the study of deep ResNets or NODE models~\cite{barboni2024understanding,lu2020mean,ding2021global,isobe2023convergence}.
        However, whereas NODEs usually process data points through independent ODEs, the particularity of Transformers is to process tokens through coupled ODEs (\cref{eq:ODE-evol}). 
        This results in the study of non-local PDEs on the space of token distributions (\cref{eq:PDE-evol}) whose perturbations are hard to study due to the lack of a proper differentiable structure.
        In contrast, we study here deep Transformers seen as parameterized ODEs on the Banach space $\Cc^0(S, \RR^d)$ (\cref{eq:flow_map_ODE}).
        As a consequence, and as opposed to other works~\cite{song2024unraveling,gao2024global,qin2025convergence}, our results do not depend on the number of tokens used for training.

        \item From a more practical perspective, such a reparameterization of Attention layers provides them with enhanced regularity.
        It is for example known that the Lipschitz constant of Attention layers has an exponential dependence on the radius of the support of the input token distribution~\cite{castin2024smooth,geshkovski2023emergence}.
        In contrast, the Lipschitz constant of $\Aa$ behaves here quadratically w.r.t. $\Lambda \in \Cc^0(S, \RR^d)$ (see~\cref{lem:differential_A}).
   \end{itemize}
\end{remark}

\paragraph{Well-posedness of the forward process} \label{subsec:forward}

We study here the well-posedness of the mean-field models of Transformers, i.e., existence and uniqueness of solutions to~\cref{eq:PDE-evol} and~\cref{eq:flow_map_ODE}.
For the latter Banach-space-valued ODE to be well-posed, a minimal working assumption is that the associated velocity field satisfies adequate measurability and integrability conditions (see~\cref{ass:caratheodory} and more generally~\cref{sec:Banach_ODE} for well-posedness results of Banach-space-valued ODEs).
In the case of the parameterized map $\Aa$ defined in~\cref{eq:A_rho}, it is necessary to assume the family of parameter distributions $\{ \rho(.|s) \}_{s\in [0,1]}$ has appropriate measurability and integrability properties.
We define here:
\begin{align} \label{eq:PLeb}
    \PLeb \eqdef \left\{ \rho \in \Pp_2( [0,1] \times \Theta) \, \colon \, \text{$\rho$ has uniform marginal on $[0,1]$} \right\} .
\end{align}
Note that every parameterization $\rho \in \PLeb$ corresponds to a ($\d s$-almost everywhere) uniquely determined family of parameter distributions $\{ \rho(.|s) \}_{s\in [0,1]} \in \Pp_2(\Theta)^{[0,1]}$, which is obtained by taking the disintegration of $\rho$ w.r.t.\@ the Lebesgue measure on $[0,1]$~\cite[Thm.~4.2.4]{attouch2014variational}.

The following result shows that solutions to~\cref{eq:PDE-evol} are push-forwards of the initial token distribution by flow-maps that are solutions to the ODE~\cref{eq:flow_map_ODE}.
In the following, we denote by $\Cc^0_\co([0,1], \Pp_c(\RR^d))$ the set of co-compactly supported paths of probability measures, i.e., the set of continuous paths \mbox{$\mu : [0,1] \to \Pp_c(\RR^d)$} such that there exists a compact set $S \subset \RR^d$ with $\Supp(\mu(s)) \subset S$ for every $s \in [0,1]$.

\begin{proposition}[Existence and uniqueness of the forward process] \label{prop:existence_uniqueness}
    Let $\rho \in \PLeb$ and consider some initial token distribution $\mu_0 \in \Pp_c(\RR^d)$ with $\Supp(\mu_0) \subset S$ for some compact subset $S \subset \RR^d$.
    Then there exists a unique solution $\mu \in \Cc^0_\co([0,1], \Pp_c(\RR^d))$ starting from $\mu_0$ of:
    \begin{align} \label{eq:transformer_continuity}
        \partial_s \mu(s) + \div \left(  \mu(s) \Phi_{\rho(.|s)}[\mu(s)] \right) = 0
        \quad \text{with} \quad \mu(0) = \mu_0.
    \end{align}
    For every $s \in [0, 1]$, this solution admits the representation $\mu(s) = \Lambda_\rho[\mu_0](s)_\# \mu_0$, where $\Lambda_\rho[\mu_0] \in \Cc^0([0, 1] \times S, \RR^d)$ is the flow-map solution to the Cauchy problem:
    \begin{align} \label{eq:transformer_flow}
        \forall s \in [0,1], \quad \frac{\d}{\d s} \Lambda_\rho[\mu_0](s)
        =  \Aa_{\rho(.|s)}[\mu_0](\Lambda_\rho[\mu_0](s)),
        \quad \text{with} \quad \Lambda_\rho[\mu_0](0) = \Id.
    \end{align}
    In the following, when no ambiguity, we use the shorthand notation $\Lambda_\rho \eqdef \Lambda_\rho[\mu_0]$.
\end{proposition}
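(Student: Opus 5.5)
We first solve the Banach-space ODE~\cref{eq:transformer_flow} on $\Cc^0(S,\RR^d)$, then push forward to get existence for~\cref{eq:transformer_continuity}, and finally prove uniqueness by showing that any co-compactly supported solution is transported by such a flow-map.

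\textbf{Step 1: well-posedness of the flow-map ODE.} We work with $s\mapsto \Aa_{\rho(.|s)}[\mu_0]$ as a Carathéodory vector field on $\Cc^0(S,\RR^d)$. By \cref{lem:differential_A}, $\Aa_\theta$ is well defined and locally Lipschitz on bounded sets. On the ball $\|\Lambda\|_\infty\le R$ the Lipschitz constant is controlled polynomially (quadratically) in $\|\theta\|$. The growth bound is $\|\Aa_\theta(\Lambda)\|_\infty \le \|V\|\,\|\Lambda\|_\infty$, since $\varphi_\theta[\nu](x)$ is a convex combination of the points $Vy$ with $y\in\Supp\nu$. Integrating against $\rho(.|s)$ and using $\rho\in\Pp_2$ gives $\int_0^1\int\|V\|\,\d\rho(\theta|s)\,\d s<\infty$. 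Hence $\|\Aa_{\rho(.|s)}(\Lambda)\|\le m(s)\|\Lambda\|$ with $m\in L^1$. Grönwall then gives an a priori bound $\|\Lambda(s)\|_\infty\le \|\Id\|_{\Cc^0(S)}\exp(\int_0^1 m)=:R$. Measurability in $s$ comes from the disintegration. For the Lipschitz modulus on the ball of radius $R$ I need $\int\!\int L_R(\theta)\,\d\rho\,\d s<\infty$, which requires $L_R(\theta)\lesssim 1+\|\theta\|^2$.

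\textbf{The main obstacle} is this integrability. The naive Lipschitz constant of softmax attention grows like $e^{c\|Q\|R^2}$, which is not integrable under a mere second moment. I would rely on the specific structure of $\Aa$ as claimed in \cref{lem:differential_A}. Differentiating in $\Lambda$ produces covariance-type terms of the form $\mathrm{Cov}_{\pi}(\cdot)$ under the Gibbs weights $\pi$. These are bounded by $\|Q\|\,\|V\|R^2$-type products with no exponential factor, since the weights are normalized. The required bound $L_R\lesssim (1+\|\theta\|^2)$ then follows from Young's inequality. Given this, the Carathéodory existence–uniqueness theorem of \cref{sec:Banach_ODE} (\cref{ass:caratheodory}), together with the a priori bound and a localization argument, yields a unique absolutely continuous solution $\Lambda_\rho\in\Cc^0([0,1],\Cc^0(S,\RR^d))\cong \Cc^0([0,1]\times S,\RR^d)$.

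\textbf{Step 2: existence for the PDE.} Set $\mu(s)=\Lambda_\rho(s)_\#\mu_0$. By the definition~\cref{eq:A_theta}, $\Aa_{\rho(.|s)}(\Lambda(s))(x)=\Phi_{\rho(.|s)}[\mu(s)](\Lambda(s,x))$. So for $\mu_0$-a.e.\ $x$ the curve $s\mapsto\Lambda(s,x)$ is an absolutely continuous solution of~\cref{eq:ODE_evaluation}. For a test function $\varphi$, $\frac{\d}{\d s}\varphi(s,\Lambda(s,x))=\partial_s\varphi+\nabla\varphi\cdot\Phi_{\rho(.|s)}[\mu(s)](\Lambda(s,x))$. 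Integrating in $s$ and in $\mu_0$, with Fubini justified by the $L^1$ bound above, gives the weak formulation. The supports lie in the compact set $\bar B(0,R)$, and continuity of $s\mapsto\mu(s)$ in $\Ww_p$ follows from the uniform continuity of $\Lambda$.

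\textbf{Step 3: uniqueness.} Let $\tilde\mu$ be another solution supported in a compact set $S'$. Freeze the velocity field $v(s,x)=\Phi_{\rho(.|s)}[\tilde\mu(s)](x)$. On $S'$ enlarged, this field is Lipschitz in $x$ with an $L^1$-in-$s$ constant, by the same covariance bound with $\tilde\mu(s)$ fixed. Uniqueness for linear continuity equations with such fields, via the superposition principle or the classical Lagrangian representation, gives $\tilde\mu(s)=\tilde\Lambda(s)_\#\mu_0$, where $\tilde\Lambda$ is the flow of $v$. Substituting, $\tilde\Lambda$ solves $\frac{\d}{\d s}\tilde\Lambda(s)=\Aa_{\rho(.|s)}[\mu_0](\tilde\Lambda(s))$ with $\tilde\Lambda(0)=\Id$. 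By uniqueness in Step 1, $\tilde\Lambda=\Lambda_\rho$, and therefore $\tilde\mu=\mu$.
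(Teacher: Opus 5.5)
Your proposal is correct and follows essentially the same route as the paper. You solve the flow-map ODE on $\Cc^0(S,\RR^d)$ via the Carath\'eodory theorem, using the linear growth $\|\Aa_\theta(\Lambda)\|\le\|V\|\,\|\Lambda\|$ and the quadratic-in-$\theta$ Lipschitz bound of \cref{lem:differential_A} (which your covariance argument correctly justifies), then get existence by push-forward and uniqueness by freezing the velocity and invoking the Lagrangian representation plus uniqueness of the flow-map ODE, exactly as in the proof of \cref{thm:wellposed} that the paper appeals to.
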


\begin{proof}
    The result is proved by applying the theory developed in~\cref{sec:nonlocal_transport}.
    Let $\Aa$ be defined by~\cref{eq:A_rho} and let $\left\{ \rho(.|s) \right\}_{s \in [0,1]}$ be the disintegration of $\rho$ w.r.t.\@ the Lebesgue measure on $[0,1]$.
    Then it follows from~\cref{lem:differential_A} that the map
    \begin{align}
        (s, \Lambda) \in [0,1] \times \Cc^0(S, \RR^d) \ \mapsto \ \Aa_{\rho(.|s)}(\Lambda) 
    \end{align}
    satisfies the Carathéodory condition, is locally $L^1$-Lipschitz and has $L^1$-linear growth (\cref{ass:caratheodory}).
    Therefore, by~\cref{thm:caratheodory} there exists a unique solution to the Cauchy problem:
    \begin{align*}
        \forall s \in [0,1], \quad \Lambda(s) = \Id + \int_0^s \Aa_{\rho(.|r)}(\Lambda(r)) \d r \, .
    \end{align*}
    Moreover, using the linear growth of $\Aa$ w.r.t.\@ $\Lambda$, \cref{lem:gronwall} gives the bound:
    \begin{align*}
        \forall s \in [0,1], \quad \left\| \Lambda(s) \right\|_{\Cc^0} \leq \left\| \Id \right\|_{\Cc^0} e^{\int_{[0,1] \times \Theta} \| V \| \d \rho(\theta, r)} \, .
    \end{align*}
    As in~\cref{thm:wellposed}, this shows the existence and uniqueness of a solution $\mu \in \Cc^0_\co([0,1], \Pp_c(\RR^d))$ to~\cref{eq:transformer_continuity} and the flow map $\Lambda_\rho[\mu_0]$ is the solution $\Lambda$ to the above Cauchy problem.
\end{proof}

\subsection{Training mean-field Transformers with Conditional Wasserstein gradient flows} \label{subsec:gradient}

We are interested here in the dynamics of gradient-based algorithms for the training of deep Transformer architectures. In the following, we consider a given (finite) dataset $\{ (\mu^{(j)}, x^j) \}_{1 \leq j \leq N} \in ( \Pp_c(\RR^d) \times \RR^d )^N$ constituted by $N$ pairs of token distributions $\mu^{(j)}$ and input tokens $x^j$.
For each index $j \in \{1, ..., N \}$ we will consider a loss $\ell^j : \RR^d \to \RR^d$ on the space of tokens. Then, for a parameterization $\rho \in \PLeb$, we define the training risk as:
\begin{align} \label{eq:risk}
    \Ll(\rho) \eqdef \frac{1}{N} \sum_{j = 1}^N  \ell^j(x^j(1)) ,
\end{align}
where $(x^j(s))_{s \in [0,1]}$ and $(\mu^{(j)}(s))_{s \in [0,1]}$ are, respectively, the solutions of the forward process~\cref{eq:ODE_evaluation} and~\cref{eq:PDE-evol}, with respective initializations, $x^j(0) = x^j$ and $\mu^{(j)}(0) = \mu^{(j)}$.

Equivalently, the training risk is given for $\rho \in \PLeb$ by:
\begin{align} \label{eq:risk_flowmap}
    \Ll(\rho) = \frac{1}{N} \sum_{j = 1}^N  \ell^j(\Lambda^j_\rho(1,x^j)) ,
\end{align}
where, in the above equation and in the rest of this paper, we use the shorthand $\Lambda^j_\rho = \Lambda_\rho[\mu^{(j)}]$ to denote the flow-maps given by~\cref{prop:existence_uniqueness} and solution to the ODE:
\begin{align*}
    \frac{\d}{\d s} \Lambda^j_\rho(s) = \Aa^j_{\rho(.|s)}(\Lambda^j_\rho(s)) ,
\end{align*}
with $\Aa^j_{\rho(.|s)} = \Aa_{\rho(.|s)}[\mu^{(j)}]$ defined by~\cref{eq:A_rho}.
We will also consider, for each index $j \in \{1, ..., N\}$, a compact subset $S^j \subset \RR^d$ such that $\Supp(\mu^{(j)}) \cup \{x^j\} \subset S^j$.
In particular, $\Lambda^j_\rho \in \Cc^0([0,1] \times S^j, \RR^d)$.

\begin{remark}
    For concreteness, one can consider in the following that the losses $\ell^j$ are of the form $\ell^j : x \mapsto \frac{1}{2} \| x - y^j \|^2$ for some target tokens $y^j \in \RR^d$.
    In this case, the training risk associated with some parameterization $\rho \in \PLeb$ reads:
    \begin{align*}
        \Ll(\rho) = \frac{1}{2N} \sum_{j = 1}^N  \left\| x^j(1) - y^j \right\|^2
        = \frac{1}{2N} \sum_{j = 1}^N  \left\| \Lambda^j_\rho(1, x^j) - y^j \right\|^2 .
    \end{align*}
\end{remark}

\subsubsection{Conditional Optimal Transport geometry}

To study the dynamics of gradient-based methods for the minimization of the risk $\Ll$ in the training of Transformers, we follow the approach of~\cite{barboni2024understanding}.
Using tools from the theory of gradient flows in metric spaces~\cite{ambrosio2008gradient}, it was shown in~\cite{barboni2024understanding} that gradient flows for the training of mean-field models of residual architectures can be modeled by metric gradient flows on the space of parameterizations when provided with a \emph{Conditional Optimal Transport (COT)} metric.
Such a COT metric on $\PLeb$ corresponds to a layer-wise $\Ww_2$-distance, which enforces the marginal constraint defining $\PLeb$ to be conserved throughout training.
For $\rho, \rho' \in \PLeb$ we define:
\begin{align}
    \Ww_2^\COT(\rho, \rho') \eqdef \left( \int_0^1 \Ww_2(\rho(.|s), \rho'(.|s))^2 \d s \right)^{1/2}
\end{align}
Then $\Ww_2^\COT$ provides $\PLeb$ with a complete metric space structure (see e.g.~\cite[Sec.~2]{barboni2024understanding}).

\begin{proposition}
    The distance $\Ww_2^\COT$ defines a complete metric space structure on $\PLeb$.
\end{proposition}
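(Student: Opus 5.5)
We have to show that $\Ww_2^\COT$ is a metric on $\PLeb$ and that $(\PLeb,\Ww_2^\COT)$ is complete. The plan is to work throughout with the disintegration $s\mapsto\rho(.|s)$, which is defined $\d s$-almost everywhere, and to identify $\PLeb$ with a subset of the Bochner-type space $L^2([0,1];(\Pp_2(\Theta),\Ww_2))$. Three preliminary facts come first.

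\textbf{Preliminaries.} First, the integrand $s\mapsto\Ww_2(\rho(.|s),\rho'(.|s))^2$ must be measurable. To get this, choose for each $s$ an optimal coupling measurably in $s$, using a measurable selection of optimal plans (e.g.\ Villani, Cor.~5.22), or write $\Ww_2^2$ as a supremum over a countable dense family of Kantorovich potentials. Second, the integral is finite: $\Ww_2(\rho(.|s),\rho'(.|s))^2\le 2\Ee_2(\rho(.|s))+2\Ee_2(\rho'(.|s))$, and integrating in $s$ gives at most $2\Ee_2(\rho)+2\Ee_2(\rho')<\infty$ up to the $s$-component. Third, the metric axioms. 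Symmetry is immediate. For the triangle inequality, apply the triangle inequality for $\Ww_2$ pointwise in $s$ and then Minkowski's inequality in $L^2(\d s)$. For separation, $\Ww_2^\COT(\rho,\rho')=0$ forces $\rho(.|s)=\rho'(.|s)$ for a.e.\ $s$, and uniqueness of the disintegration then gives $\rho=\rho'$.

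\textbf{Completeness.} Let $(\rho_n)$ be Cauchy for $\Ww_2^\COT$. Pass to a subsequence with $\Ww_2^\COT(\rho_{n_k},\rho_{n_{k+1}})\le 2^{-k}$. Then
\begin{align*}
\Big\|\sum_k \Ww_2(\rho_{n_k}(.|s),\rho_{n_{k+1}}(.|s))\Big\|_{L^2(\d s)}<\infty ,
\end{align*}
so for a.e.\ $s$ the sequence $(\rho_{n_k}(.|s))_k$ is Cauchy in $(\Pp_2(\Theta),\Ww_2)$. Since $\Theta$ is a Euclidean space, this space is complete, and we obtain a limit $\bar\rho(.|s)$. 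The family $s\mapsto\bar\rho(.|s)$ is measurable as a pointwise a.e.\ limit of measurable kernels, because $\Ww_2$-convergence implies weak convergence. Define $\bar\rho\eqdef\int_0^1\delta_s\otimes\bar\rho(.|s)\,\d s$. By construction it has uniform marginal on $[0,1]$.

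To show $\bar\rho\in\Pp_2$, use the bound $\Ww_2(\bar\rho(.|s),\delta_0)\le\Ww_2(\rho_{n_1}(.|s),\delta_0)+\sum_k\Ww_2(\rho_{n_k}(.|s),\rho_{n_{k+1}}(.|s))$, whose right-hand side lies in $L^2(\d s)$. By Fatou's lemma,
\begin{align*}
\Ww_2^\COT(\rho_{n_k},\bar\rho)^2\le\liminf_{m}\int_0^1\Ww_2(\rho_{n_k}(.|s),\rho_{n_m}(.|s))^2\,\d s\le\sup_{m\ge k}\Ww_2^\COT(\rho_{n_k},\rho_{n_m})^2\to 0 .
\end{align*}
Convergence of the whole Cauchy sequence then follows from convergence of the subsequence.

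\textbf{Main obstacle.} The delicate step is the measurability in $s$: both of the integrand defining $\Ww_2^\COT$ and of the limit kernel, so that $\bar\rho$ is a genuine element of $\Pp_2([0,1]\times\Theta)$. I would handle this by viewing kernels as measurable maps into the Polish space $(\Pp_2(\Theta),\Ww_2)$. Measurability of a kernel in the weak sense is equivalent to Borel measurability of this map, since the Borel $\sigma$-algebras of the weak and $\Ww_2$ topologies coincide on $\Pp_2$. Then $\Ww_2$, being continuous on the product space, composes into a measurable function of $s$, and pointwise limits of measurable maps into a metric space remain measurable.
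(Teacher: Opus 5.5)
Your proof is correct. The paper itself gives no proof of this proposition. It records the statement as a known property of conditional optimal transport and refers to \cite{barboni2024understanding} (Sec.~2). So your argument is a self-contained substitute rather than a variant of something in the text.

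\textbf{Your route.} You use disintegration to identify $\PLeb$ with classes of measurable maps $s\mapsto\rho(.|s)$ into the Polish space $(\Pp_2(\Theta),\Ww_2)$ satisfying $\int_0^1\Ee_2(\rho(.|s))\,\d s<\infty$. You then run the Riesz--Fischer argument for square-integrable maps into a complete separable metric space. The only inputs are:
\begin{itemize}
\item completeness of $(\Pp_2(\Theta),\Ww_2)$;
\item uniqueness of disintegrations;
\item coincidence of the narrow and $\Ww_2$ Borel $\sigma$-algebras on $\Pp_2(\Theta)$.
\end{itemize}
You rightly single out the third as what makes both the integrand and the limit kernel measurable. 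One small bookkeeping step: redefine the kernels, say as $\delta_0$, on the null sets of $s$ where $\rho(.|s)\notin\Pp_2(\Theta)$ or where the subsequence fails to converge. With the $\sigma$-algebra argument in hand, the measurable selection of optimal plans you mention as an option is not needed.

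\textbf{An alternative route.} A natural alternative fits how the paper later handles $\Ww_2^\COT$, namely through optimal conditional couplings $\gamma\in\Gamma^\Leb_o(\rho,\rho')$. It rests on the primal formulation: $\Ww_2^\COT(\rho,\rho')^2$ is the minimal transport cost $\int\|\theta-\theta'\|^2\,\d\gamma$ over couplings of $\rho$ and $\rho'$ that share the variable $s$.
\begin{itemize}
\item This immediately gives $\Ww_2\le\Ww_2^\COT$ on $\Pp_2([0,1]\times\Theta)$.
\item Hence a $\Ww_2^\COT$-Cauchy sequence has a $\Ww_2$-limit, whose $s$-marginal is still uniform.
\item Narrow lower semicontinuity of the constrained cost then upgrades this to $\Ww_2^\COT$-convergence.
\end{itemize}

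\textbf{What each approach buys.} The coupling route avoids pointwise-in-$s$ limits of kernels. It pays for this upfront, because it must first prove the equivalence of the two formulations, and that needs a measurable selection $s\mapsto\gamma_s$ of optimal plans. Your version needs no selection at all and stays elementary.
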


Of particular interest here is that on $\PLeb$ provided with the metric $\Ww_2^\COT$, absolutely continuous curves enjoy dynamical properties that are similar to those of the classical Wasserstein space.
That is, a celebrated result from~\cite{ambrosio2008gradient} is that absolutely continuous curves in $\Ww_2(\Theta)$ are characterized by solutions of continuity equations. Here, for an interval $I \subset \mathbb{R}$, a narrowly continuous curve $(\rho_t)_{t \in I}$ in $\PLeb$ is absolutely continuous w.r.t. $\Ww_2^\COT$ if and only if it is a solution to a continuity equation of the form:
\begin{align} \label{eq:continuity_equation}
    \partial_t \rho_t + \div_\theta (\rho_t v_t) = 0 \quad \text{on $I \times [0,1] \times \Theta$},
\end{align}
where $v : I \times [0,1] \times \Theta \to \Theta$ is some Borel velocity field s.t. $\| v_t \|_{L^2(\rho_t)} \in L^1(I)$ (see e.g.~\cite[Sec.~2]{barboni2024understanding} but also~\cite{peszek2023heterogeneous}). This property informally provides $\PLeb$ with a differential structure that will motivate our definition of gradient flow curves in~\cref{def:gradient_flow}. Precisely, gradient flow curves for the risk $\Ll$ are solutions to the above continuity equation for a velocity field $v_t$ that is the solution to some variational problem (\cref{thm:maximal_slope_equivalence}).

\subsubsection{First-order perturbation of the risk and adjoint equations}

We study here first-order perturbations of the training risk $\Ll$ defined in~\cref{eq:risk} along perturbations of the parameterization of the form~\cref{eq:continuity_equation}.
Such a perturbation analysis is permitted by the regularity of the Attention map, which we detail in~\cref{subsec:regularity_attention_inputs,subsec:regularity_attention_parameters}.
We start by stating a result on the first-order perturbations of the mean-field Transformers flow-maps, which we prove in~\cref{subsec:proof_flow_differentiability}.

\begin{proposition} \label{prop:flow_differentiability}
Let $(\rho_t)_{t\in (0,1)}$ be some absolutely continuous curve in $\PLeb$ with velocity field $(v_t)_{t \in (0,1)}$.
    For some initial token distribution $\mu(0) \in \Pp_c(\RR^d)$ with $\Supp(\mu(0)) \subset S$, denote by $\Lambda_t = \Lambda_{\rho_t}[\mu(0)] \in \Cc^0([0, 1] \times S, \RR^d)$ the flow-map obtained in~\cref{prop:existence_uniqueness}.
    Then the map $t \mapsto \Lambda_t$ is absolutely continuous in $\Cc^0([0,1] \times S, \RR^d)$ and, for a.e.\@ $t \in (0,1)$, its (Fréchet) differential $\delta \Lambda_t \eqdef \frac{\d}{\d t} \Lambda_t$ is the solution of the Cauchy problem:
    \begin{align*}
        \delta \Lambda_t (s, x) = & \int_0^s \D_\Lambda \Aa_{\rho(.|r)}(\Lambda_t(r)) \cdot \delta \Lambda_t (r)  \d r + \int_0^s \int_\Theta  \D_\theta \Aa_\theta(\Lambda_t(r)) \cdot v_t(r) \d \rho(\theta,r) \d r ,
    \end{align*}
    where $\D_\Lambda \Aa_{\rho(.|r)}(\Lambda)$ and $\D_\theta \Aa_\theta(\Lambda)$ are defined in~\cref{lem:differential_A} and~\cref{lem:A_theta_param_differential} respectively.
\end{proposition}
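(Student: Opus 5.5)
To prove the statement, I will differentiate the Carathéodory integral formulation of \cref{eq:transformer_flow} in the curve parameter $t$, treating the problem as a perturbation of a Banach-space ODE on $E \eqdef \Cc^0(S,\RR^d)$.

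\textbf{Step 1: uniform bounds.} The first step is to obtain bounds on the flows that are uniform in $t$. Since $(\rho_t)$ is absolutely continuous for $\Ww_2^\COT$, the second moments $\int \|\theta\|^2 \d\rho_t$ stay bounded on compact subintervals; in particular $\int\|V\|\d\rho_t$ does too. The Grönwall bound from the proof of \cref{prop:existence_uniqueness} then gives a radius $R$ such that $\|\Lambda_t(s)\|_{\Cc^0}\le R$ for all $s,t$. On the ball of radius $R$, \cref{lem:differential_A} supplies local Lipschitz and $\Cc^1$ bounds for $\Aa_\theta$ in $\Lambda$. Those bounds are integrable in $\theta$ against $\rho_t$, with polynomial weights in $\|\theta\|$ that are controlled by the second moment. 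This is the point where fixing $K=\Id$ matters.

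\textbf{Step 2: absolute continuity of $t\mapsto\Lambda_t$.} For $t<t'$, I will write
\[
\Lambda_{t'}(s)-\Lambda_t(s)=\int_0^s\big(\Aa_{\rho_{t'}(.|r)}(\Lambda_{t'}(r))-\Aa_{\rho_{t'}(.|r)}(\Lambda_t(r))\big)\d r+\int_0^s\big(\Aa_{\rho_{t'}(.|r)}-\Aa_{\rho_t(.|r)}\big)(\Lambda_t(r))\d r .
\]
The first term is controlled by the Lipschitz bound in $\Lambda$. For the second term, I represent the continuity equation \cref{eq:continuity_equation} through its characteristics: by the superposition principle, $\rho_{t'}(.|r)$ is obtained from $\rho_t(.|r)$ by transporting along $\dot\theta=v_\tau(r,\theta)$. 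Combining this with the $\theta$-Lipschitz bound of \cref{lem:A_theta_param_differential}, which grows like $1+\|\theta\|^2$ locally, bounds the second term by $C\int_t^{t'}\|v_\tau\|_{L^2(\rho_\tau)}\d\tau$. Grönwall then gives $\sup_s\|\Lambda_{t'}(s)-\Lambda_t(s)\|\le C\int_t^{t'}\|v_\tau\|_{L^2(\rho_\tau)}\d\tau$. This yields absolute continuity in $\Cc^0([0,1]\times S,\RR^d)$, hence a.e. differentiability once we have an explicit candidate derivative.

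\textbf{Step 3: identifying the derivative.} The linear Cauchy problem in the statement has a unique solution $\delta\Lambda_t$. This follows from \cref{thm:caratheodory}: its coefficient $r\mapsto \D_\Lambda\Aa_{\rho_t(.|r)}(\Lambda_t(r))$ is Bochner-integrable in operator norm, and its source term lies in $L^1$ by Cauchy--Schwarz, since $\|v_t\|_{L^2(\rho_t)}<\infty$ for a.e. $t$. Fix such a Lebesgue point $t$ and set
\[
e_h \eqdef \tfrac1h(\Lambda_{t+h}-\Lambda_t)-\delta\Lambda_t .
\]
Subtracting the equations gives $e_h(s)=\int_0^s \D_\Lambda\Aa(\Lambda_t(r))\, e_h(r)\d r+\text{remainders}$. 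The $\Lambda$-remainder is $o(1)$ uniformly, by continuity of $\D_\Lambda\Aa$ (\cref{lem:differential_A}) together with Step 2. The parameter remainder is
\[
\tfrac1h\int_0^s\Big(\int\Aa_\theta\,\d\rho_{t+h}(.|r)-\int\Aa_\theta\,\d\rho_t(.|r)\Big)\d r-\int_0^s\int\D_\theta\Aa_\theta\cdot v_t\,\d\rho_t\,\d r .
\]
Grönwall then reduces the claim to showing that this remainder tends to $0$ in $\Cc^0$.

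\textbf{Main obstacle.} The hardest step is the parameter remainder, which is a chain rule along absolutely continuous curves in $(\PLeb,\Ww_2^\COT)$ for the $E$-valued functional $\rho\mapsto\int_0^s\int\Aa_\theta(\Lambda_t(r))\d\rho(\theta,r)$. I would first test it against the continuity equation: for fixed $x$, the map $\theta\mapsto\Aa_\theta(\Lambda_t(r))(x)$ is $\Cc^1$ with derivative of quadratic growth, so the weak formulation applies after a cutoff argument. This gives, in integral form,
\[
\int\Aa_\theta\,\d\rho_{t+h}-\int\Aa_\theta\,\d\rho_t=\int_t^{t+h}\!\!\int\D_\theta\Aa_\theta\cdot v_\tau\,\d\rho_\tau\,\d\tau .
\]
Dividing by $h$ and using Lebesgue differentiation of the Bochner-integrable map $\tau\mapsto\int\D_\theta\Aa_\theta\cdot v_\tau\,\d\rho_\tau\in E$ then gives convergence at a.e. $t$. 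Two points need care. The growth of $\D_\theta\Aa_\theta$ in $\|\theta\|$ must be compatible with only $L^2$ control of $v$. Uniformity in $x\in S$ must come from equicontinuity on the compact set $S$ (\cref{lem:A_theta_param_differential}), so that pointwise convergence upgrades to $\Cc^0$ convergence.
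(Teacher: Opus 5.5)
Your proposal is correct in outline but takes a different route from the paper.

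\textbf{Comparison of routes.} The paper never differentiates along the curve directly. It first proves differentiability at $h=0$ along the explicit family $h\mapsto(\Id+hv)_\#\rho$ for a fixed $v\in L^2(\rho)$ (\cref{prop:flow_differentiability_appendix}). There every integral is against the single measure $\rho$, so the parameter increment is a pointwise Taylor formula along $\theta\mapsto\theta+uhv(\theta)$, handled by dominated convergence. It then transfers this to the curve by combining two facts: $\rho_{t+h}$ is approximated by $(\Id+hv_t)_\#\rho_t$ up to $o(|h|)$ for a.e.\ $t$ \cite[Lem.~2.2]{barboni2024understanding}, and the $\Ww_2^\COT$-Lipschitz estimate \cref{lem:flow_locally_lipschitz} holds. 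That estimate also gives absolute continuity, so your Step 2 can be replaced by $\sup_s\|\Lambda_{t'}(s)-\Lambda_t(s)\|\le C\,\Ww_2^\COT(\rho_t,\rho_{t'})$, with no superposition principle. You instead integrate the continuity equation against $\theta\mapsto\Aa_\theta(\Lambda(r))(x)$, which is \cref{lem:A_rho_wasserstein_differential} integrated in depth, and then apply Lebesgue differentiation in $\tau$. This is more self-contained: it uses only the weak continuity equation, not the pushforward-approximation lemma. The price is that you must handle by hand the $\tau$-dependence that the paper outsources to that lemma.

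\textbf{Points that need repair.}

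First, $\D_\theta\Aa_\theta(\Lambda)$ has linear, not quadratic, growth in $\theta$. $\D_V\Aa_\theta$ is bounded in $\theta$ and $\D_{(Q,q)}\Aa_\theta$ is linear in $V$; the paper uses $\|\D_\theta\Aa_\theta(\Lambda)(x)\|\le C(1+\|\Lambda\|^3)(1+\|\theta\|)$. This linear growth is exactly what gives $\int\|\D_\theta\Aa_\theta\|\,\|v_\tau\|\d\rho_\tau\le C(1+\Ee_2(\rho_\tau))^{1/2}\|v_\tau\|_{L^2(\rho_\tau)}$ in Steps 2 and 3. With quadratic growth you would need fourth moments, and both steps would fail.

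Second, the map you differentiate, $\tau\mapsto\int_0^s\int\D_\theta\Aa_\theta(\Lambda_t(r))\cdot v_\tau\d\rho_\tau\d r$, depends on the base point $t$ through the frozen $\Lambda_t$. So ``a.e.\ $t$ is a Lebesgue point'' does not apply as stated, because the exceptional null set depends on $t$. Instead, take $t$ to be a Lebesgue point of both
\begin{itemize}
\item $\tau\mapsto\|v_\tau\|_{L^2(\rho_\tau)}$, and
\item the fixed map $\tau\mapsto G_\tau\in\Cc^0([0,1]\times S,\RR^d)$, where $G_\tau(s,x)\eqdef\int_0^s\int_\Theta(\D_\theta\Aa_\theta(\Lambda_\tau(r))\cdot v_\tau(r,\theta))(x)\d\rho_\tau(\theta|r)\d r$.
\end{itemize}
The second choice also gives uniformity in $(s,x)$. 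Then control the swap $\Lambda_t\leftrightarrow\Lambda_\tau$ using continuity of $\tau\mapsto\Lambda_\tau$. A $\Lambda$-Lipschitz bound is not available for this swap: $\D_\Lambda\D_\theta\Aa_\theta$ carries the weight $\|V\|(\|Q\|+\|q\|)$, which cannot be paired with an $L^2$ velocity under second moments only. Use instead:
\begin{itemize}
\item continuity of $(\theta,\Lambda)\mapsto\D_\theta\Aa_\theta(\Lambda)$,
\item domination by $C(1+\|\theta\|)$, and
\item uniform integrability of second moments on the compact arc $\{\rho_\tau\}_{\tau\in[t,t+h_0]}$.
\end{itemize}

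Third, with your Step 2 decomposition the $\Lambda$-remainder involves $\D_\Lambda\Aa_{\rho_{t+h}(.|r)}$ rather than $\D_\Lambda\Aa_{\rho_t(.|r)}$, and continuity in $\Lambda$ alone does not control it. You can either prove $\Ww_2^\COT$-continuity of $\rho\mapsto\D_\Lambda\Aa_\rho$, which again is not Lipschitz under second moments. Or, as the paper does, keep $\rho_t$ in the $\Lambda$-increment and put the moving flow $\Lambda_{t+h}$ into the parameter increment; the previous fix then covers it.
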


To define perturbations of the risk, we will not, in fact, directly rely on the above linear ODEs but rather on \emph{adjoint equations}, which can be seen as their dual, defined backward in time.
The associated \emph{adjoint variables} live in the dual of the space of solutions to the forward equation~\cref{eq:flow_map_ODE}, i.e., the space of finite, $d$-dimensional vector measures $\Mm^d(S) = \Cc^0(S, \RR^d)^*$.
These can be seen as limits of the quantities computed by the backpropagation algorithm when the depth of the Transformers goes to infinity. Numerical resolution of the adjoint equations is at the core of the adjoint method developed for the training of NODE models~\cite{chen2018neural}.

\begin{definition}[Adjoint equations]
    Let $\rho \in \PLeb$ be some parameter distribution and, for index $j \in \{1, ..., N\}$, let $\ell^j$ be the loss function in~\cref{eq:risk}.
    The \emph{adjoint variable} associated to input token distribution $\mu^{(j)} \in \Pp_c(\RR^d)$ and input token $x^j \in \RR^d$ is the measure-valued function $\mm^j_\rho \in \Cc^0([0,1], \Mm^d(S^j))$ solution of the (linear) Cauchy problem:
    \begin{align} \label{eq:backward_ODE}
        \forall s \in [0,1], \quad \mm^{(j)}_\rho(s) =
        \nabla \ell^j (\Lambda^j_\rho (1, x^j)) \, \delta_{x^j} 
        + \int_s^1 \D_\Lambda \Aa^j_{\rho(.|r)}(\Lambda^j_\rho(r))^* \cdot \mm^{(j)}_\rho (r) \d r \, ,
    \end{align}
    where $\D_\Lambda \Aa^j_{\rho(.|r)}(\Lambda)$ is defined in~\cref{lem:differential_A}.
\end{definition}

Note that the above adjoint equations are well-posed linear ODEs on the space of finite vector measures. The following result expresses the first-order perturbation of the training risk as a function of the solutions $\Lambda^j_\rho$ (resp. $\mm^{(j)}_\rho$) to the forward (resp. backward) ODEs.

\begin{theorem} \label{thm:upper_gradient}
    Let $(\rho_t)_{t\in (0,1)}$ be some absolutely continuous curve in $\PLeb$ with velocity field $(v_t)_{t \in (0,1)}$.
    Then the map $t \mapsto \Ll(\rho_t)$ is absolutely continuous and for a.e. $t \in (0,1)$ we have:
    \begin{align} \label{eq:derivative_risk}
        \frac{\d}{\d t} \Ll(\rho_t) = \frac{1}{N} \sum_{j=1}^N \int_0^1 \int_\Theta \left< \mm^{(j)}_{\rho_t}(s) , \D_\theta \Aa^j_\theta (\Lambda_{\rho_t}^j(s)) \cdot v_t(s) \right> \d \rho_t(\theta, s) \, .
    \end{align}
    Moreover, for $\rho \in \PLeb$, defining
    \begin{align} \label{eq:upper_gradient}
        \left| \nabla \Ll \right| (\rho) \eqdef  \left( \int_0^1 \int_\Theta \left\| \frac{1}{N} \sum_{j=1}^N  \D_\theta \Aa^j_\theta (\Lambda^j_\rho(s))^* \mm^{(j)}_\rho(s) \right\|^2 \d \rho(\theta , s)  \right)^{1/2} \, ,
    \end{align}
    then $\left| \nabla \Ll \right|$ is a (strong) upper-gradient for $\Ll$ in the sense of~\cite[Def.~1.2.1]{ambrosio2008gradient}.
\end{theorem}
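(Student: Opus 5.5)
Our plan is to reduce everything to the flow-map differentiability result \cref{prop:flow_differentiability} and then use a duality argument between the linearized forward equation and the adjoint equation~\cref{eq:backward_ODE}.

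\textbf{Step 1: differentiating the loss along the curve.} Fix $j$. By \cref{prop:flow_differentiability}, $t \mapsto \Lambda^j_{\rho_t}$ is absolutely continuous in $\Cc^0([0,1]\times S^j,\RR^d)$. Evaluation at $(1,x^j)$ is a bounded linear map, and $\ell^j$ is $\Cc^1$, so locally Lipschitz on the bounded set $\{\Lambda^j_{\rho_t}(1,x^j)\}$. The uniform bound on $\|\Lambda\|_{\Cc^0}$ comes from the Grönwall estimate in the proof of \cref{prop:existence_uniqueness}, using that $\int\|V\|\d\rho_t$ is bounded along an absolutely continuous curve. Hence $t\mapsto \ell^j(\Lambda^j_{\rho_t}(1,x^j))$ is absolutely continuous. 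By the chain rule, its derivative is
\[
\left< \nabla\ell^j(\Lambda^j_{\rho_t}(1,x^j))\,\delta_{x^j}, \delta\Lambda^j_t(1)\right>,
\]
with $\delta\Lambda^j_t$ the solution of the linearized Cauchy problem.

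\textbf{Step 2: duality.} Write $\delta\Lambda_t(s)=\int_0^s (B(r)\delta\Lambda_t(r)+f(r))\,\d r$, where $B(r)=\D_\Lambda\Aa^j_{\rho_t(.|r)}(\Lambda_t(r))$ and $f(r)$ is the source term in $v_t$. The adjoint satisfies $\mm(s)=\mm(1)+\int_s^1 B(r)^*\mm(r)\,\d r$. Both curves are absolutely continuous: $B\in L^1(0,1;\mathcal L(\Cc^0))$ by the $L^1$-Lipschitz bound of \cref{lem:differential_A}. The pairing $r\mapsto\langle \mm(r),\delta\Lambda_t(r)\rangle$ is therefore absolutely continuous, with derivative
\[
-\langle B^*\mm,\delta\Lambda\rangle+\langle \mm,B\delta\Lambda+f\rangle=\langle \mm,f\rangle.
\]
Integrating from $0$ to $1$ and using $\delta\Lambda_t(0)=0$ gives $\langle \mm(1),\delta\Lambda_t(1)\rangle=\int_0^1\langle\mm(r),f(r)\rangle\,\d r$. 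We then move the pairing inside the $\d\rho_t(\theta)$ integral (a Bochner integral commutes with bounded linear functionals) and average over $j$; this yields \cref{eq:derivative_risk}.

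\textbf{Step 3: the upper gradient.} Rewrite the integrand as $\langle \frac1N\sum_j \D_\theta\Aa^j_\theta(\Lambda^j)^*\mm^{(j)}(s), v_t(s,\theta)\rangle_\Theta$. Cauchy--Schwarz in $L^2(\rho_t)$ then gives $|\frac{\d}{\d t}\Ll(\rho_t)|\le |\nabla\Ll|(\rho_t)\,\|v_t\|_{L^2(\rho_t)}$. For the minimal velocity field, $\|v_t\|_{L^2(\rho_t)}$ equals the metric derivative $|\rho'|(t)$ in $\Ww_2^\COT$. Integrating in $t$ gives the strong upper-gradient inequality of \cite[Def.~1.2.1]{ambrosio2008gradient}. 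That definition also requires Borel measurability of $t\mapsto|\nabla\Ll|(\rho_t)$; we would obtain it from continuity of $\rho\mapsto(\Lambda_\rho,\mm_\rho)$ along the curve.

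\textbf{Main obstacle.} We expect the hardest step to be the justification of the integrability and growth of $\D_\theta\Aa_\theta$ in $\theta$. This is needed so that $f\in L^1(0,1;\Cc^0)$ and so that $|\nabla\Ll|(\rho)$ is finite. Only second moments of $\rho$ are available, while $\D_\theta\Aa_\theta$ involves terms like $\|V\|$ times moments of $y$ under the exponentially tilted measure. Compactness of the supports $\Lambda(S^j)$ keeps these tilted moments bounded, so we expect a bound of the form $\|\D_\theta\Aa_\theta(\Lambda)\|\le C(\|\Lambda\|_{\Cc^0})(1+\|V\|)$ from \cref{lem:A_theta_param_differential}. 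Combined with the uniform Grönwall bound on $\Lambda$, this makes every term finite under $\Pp_2$ assumptions.
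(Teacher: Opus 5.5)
Your proposal is correct and follows essentially the same route as the paper: the chain rule via \cref{prop:flow_differentiability} gives $\frac{\d}{\d t}\Ll(\rho_t)=\frac1N\sum_j\langle \mm^{(j)}_t(1),\delta\Lambda^j_t(1)\rangle$, and the integrated product rule between the linearized forward equation and the adjoint equation~\cref{eq:backward_ODE} cancels the $\D_\Lambda\Aa$ terms. You then move the pairing inside the Bochner integral and conclude by Cauchy--Schwarz, as the paper does. Your explicit choice of the minimal velocity field (so that $\|v_t\|_{L^2(\rho_t)}$ equals the metric derivative), the Borel-measurability remark, and the linear-in-$\theta$ growth bound on $\D_\theta\Aa_\theta$ only spell out details that the paper compresses into its closing appeal to Cauchy--Schwarz.
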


\begin{proof}
    We use the shorthand $\Lambda^j_t \eqdef \Lambda^j_{\rho_t}$, $\mm^{(j)}_t \eqdef \mm^{(j)}_{\rho_t}$ and $\delta \Lambda^j_t \eqdef \frac{\d}{\d t} \Lambda^j_t$ for $t \in (0,1)$.
    By~\cref{prop:flow_differentiability} (and assuming sufficient regularity on the functionals $\ell^j$), we have that the map $t \mapsto \Ll(\rho_t)$ is absolutely continuous and for a.e. $t \in (0,1)$ it holds:
    \begin{align*}
        \frac{\d}{\d t} \Ll(\rho_t) = \frac{1}{N} \sum_{j=1}^N \left< \nabla \ell^j(\Lambda^j_t(1, x^j)), \delta \Lambda^j_t(1, x^j) \right>
        = \frac{1}{N} \sum_{j=1}^N \left< \mm^{(j)}_t (1),  \delta \Lambda^j_t(1) \right>
    \end{align*}
    Then using the definition of $\mm^{(j)}_t$ (\cref{eq:backward_ODE}) and $\delta \Lambda^j_t$ (\cref{eq:flow_map_ODE}) we have for every index $j \in \{1, ..., N \}$:
    \begin{align*}
        \left< \mm^{(j)}_t (1),  \delta \Lambda^j_t(1) \right>
        ={}& \int_0^1 \left< \mm^{(j)}_t(s) , \D_\Lambda \Aa^j_{\rho_t(.|s)}(\Lambda^j_t(s)) \cdot \delta \Lambda^j_t(s) \right> \d s \\
        & + \int_0^1 \left< \mm^{(j)}_t(s),
        \int_\Theta \D_\theta \Aa^j_\theta(\Lambda^j_t(s)) \cdot v_t(s,\theta) \d \rho_t(\theta|s) \right> \d s \\
        & - \int_0^1 \left< \D_\Lambda \Aa^j_{\rho_t(.|s)}(\Lambda^j_t(s))^* \cdot \mm^{(j)}_t(s) , \delta \Lambda^j_t(s) \right> \d s \\
        ={}& \int_0^1 \left< \mm^{(j)}_t(s),
        \int_\Theta \D_\theta \Aa^j_\theta(\Lambda^j_t(s)) \cdot v_t(s,\theta) \d \rho_t(\theta|s) \right> \d s \, .
    \end{align*}
    But then by properties of the Bochner integral (\cite[Thm.~6]{diestel1977measures}), for a.e. $s \in [0,1]$ it holds:
    \begin{align*}
        \left< \mm^{(j)}_t(s),
        \int_\Theta \D_\theta \Aa^j_\theta(\Lambda^j_t(s)) \cdot v_t(s,\theta) \d \rho_t(\theta|s) \right>
        = \int_\Theta \left< \mm^{(j)}_t(s), \D_\theta \Aa^j_\theta(\Lambda^j_t(s)) \cdot v_t(s,\theta) \right> \d \rho_t(\theta|s) \, .
    \end{align*}
    Putting these last two equations together gives the desired result for $\frac{\d}{\d t} \Ll(\rho_t)$.
    The fact that $\left| \nabla \Ll  \right|$ is an upper-gradient then follows by applying the Cauchy-Schwarz inequality.
\end{proof}

\begin{example}[Finite number of tokens]
    Let us study in more detail the case of a finite number of tokens, that is, when for every index $j \in \{1, ..., N\}$ we have an empirical token distribution $\mu^{(j)} = \frac{1}{n^j} \sum_{i=1}^{n^j} \delta_{x^j_i}$ for some $n^j \geq 1$.
    We denote by $x^j_0 \eqdef x^j$ the input token.
    We can thus study the action of the Transformer on the compact subsets $S^j = \left\{ x^j_0, x^j_1, ..., x^j_{n^j} \right\}$ and, for simplicity, assume all tokens are distinct, i.e., $\# S^j = n^j+1$.
    
    Let us fix a parameterization $\rho \in \PLeb$.
    With the above notations, the forward process through the Transformer reads:
    \begin{align*}
        \frac{\d}{\d s} x^j_i (s)
        = \Phi_{\rho(.|s)}[\mu^{(j)}(s)](x^j_i(s)) 
        = \int_\Theta
        \frac{\sum_{l=1}^{n^j} e^{\left< Q x^j_i(s) + q, x^j_l(s) \right>} x^j_l(s)}
        {\sum_{l=1}^{n^j} e^{\left< Q x^j_i(s) + q, x^j_l(s) \right>}}
        \d \rho(\theta|s),
        \quad
        \substack{1 \leq j \leq N,\\ 0 \leq i \leq n^j.}
    \end{align*}
    Here $\mu^{(j)}(s) = \frac{1}{n^j} \sum_{i=1}^{n^j} \delta_{x^j_i(s)}$.
    By construction, the flow-map $\Lambda^j_\rho$ defined in~\cref{prop:existence_uniqueness} satisfies $\Lambda^j_\rho(s, x^j_i) = x^j_i(s)$.
    In this setting, the adjoint variable $\mm^{(j)}_\rho$ can be identified for every $s \in [0,1]$ with a family of vectors $( \mm^{(j)}_i (s) )_{0 \leq i \leq n^j} \in \RR^{d \times (n^j+1)}$.

    Also, it is interesting to look at the expression for the gradient $| \nabla \Ll |$ in the case of a finite number of tokens.
    We will be interested in~\cref{subsec:convergence} in the lower-bound $| \nabla_V \Ll |$ (\cref{eq:upper_gradient_lower_bound}).
    It reads here:
    Denoting the attention weights by
    \[
        p^j_{i,l}(s,\theta) \eqdef
        \frac{e^{\left< Q x^j_i(s)+q, x^j_l(s) \right>}}
        {\sum_{r=1}^{n^j} e^{\left< Q x^j_i(s)+q, x^j_r(s) \right>}},
    \]
    we obtain
    \begin{align*}
        | \nabla_V \Ll |^2(\rho)
        &= \int_{[0,1] \times \Theta}
        \Big\| \frac{1}{N} \sum_{j=1}^N
        \D_V \Aa^j_\theta (\Lambda^j(s))^* \mm^{(j)}(s) \Big\|^2
        \d \rho(\theta , s) \\
        &= \int_{[0,1] \times \Theta} \Big\|
        \frac{1}{N} \sum_{j=1}^N \sum_{\substack{0 \leq i \leq n^j \\ 1 \leq l \leq n^j}}
        p^j_{i,l}(s,\theta)\,\mm^{(j)}_i(s) x^j_l(s)^\top
        \Big\|^2 \d \rho(\theta , s).
    \end{align*}
    It thus appears that the above expression is a quadratic function of the adjoint variables $(\mm^{(j)})_{1 \leq j \leq N}$.
    In~\cref{subsec:convergence} we will analyze the conditioning of the associated quadratic form to deduce convergence guarantees for the training of Transformers with gradient flow.
\end{example}

\subsubsection{Gradient flow equation and well-posedness}
\label{subsec:gradient_flow}

We now define a notion of gradient flow for the minimization of the training risk $\Ll$.
We show in~\cref{thm:maximal_slope_equivalence} that it corresponds to a metric gradient flow w.r.t. the COT metric $\Ww_2^\COT$ on $\PLeb$ and state the existence and uniqueness of gradient flow curves in~\cref{thm:gradient_flow_wellposed}.

\begin{definition} \label{def:gradient_flow}
    Let $\rho_0 \in \PLeb$ be some initial parameter distribution.
    We say a locally absolutely continuous curve $(\rho_t)_{t \in [0, +\infty)}$ is a gradient flow for the risk $\Ll$ starting from $\rho_0$ if $\lim_{t \to 0} \rho_t = \rho_0$ and it is a solution to the continuity equation:
    \begin{align} \label{eq:gradient_flow_continuity}
        \partial_t \rho_t - \div_\theta ( \rho_t \nabla \Ll [\rho_t]) = 0 \quad \text{on $(0, +\infty) \times [0,1] \times \Theta$} ,
    \end{align}
    where, for $\rho \in \PLeb$, the velocity field $\nabla \Ll [\rho]$ is defined by:
    \begin{align} \label{eq:gradient_field}
        \forall (s, \theta) \in [0,1] \times \Theta, \quad
        \nabla \Ll [\rho](s, \theta) = 
        \frac{1}{N} \sum_{j=1}^N  \D_\theta \Aa^j_\theta (\Lambda_\rho^j(s))^* \mm^{(j)}_\rho(s) \ \in \Theta .
    \end{align}
\end{definition}

    To study the existence and uniqueness of solutions to the gradient flow~\cref{eq:gradient_flow_continuity}, we rely on the theory of gradient flows in metric spaces, applied here to the case of the metric space $\PLeb$ provided with the Conditional Optimal Transport metric $\Ww_2^\COT$.
    Although there is no smooth differentiable structure on this space, the gradient flow curve can instead be characterized as a solution to a variational problem
(see e.g.~\cite{ambrosio2008gradient} but also~\cite{ambrosio2012user} or~\cite{santambrogio2017euclidean}).
The following result (proved in~\cref{subsec:app_maximal_slope_equivalence}) states that such a notion of \emph{curve of maximal slope} is equivalent to the notion of gradient flow defined in~\cref{def:gradient_flow}.

\begin{theorem} \label{thm:maximal_slope_equivalence}
    Let $\rho_0 \in \PLeb$ be some parameter distribution.
    Then a locally absolutely continuous curve $(\rho_t)_{t \in [0, +\infty)}$ is a gradient flow for the risk $\Ll$ starting from $\rho_0$ in the sense of~\cref{def:gradient_flow} if and only if it is a \emph{curve of maximal slope} for $\Ll$, i.e., it satisfies:
    \begin{enumerate}
        \item the map $t \in [0, +\infty) \mapsto \Ll(\rho_t)$ is non-increasing,
        \item for a.e. $t \in [0, +\infty)$ the following \emph{Evolution Dissipation Inequality} holds:
        \begin{align} \label{eq:EDI} \tag{EDI}
            \frac{\d}{\d t} \Ll(\rho_t) \leq - \frac{1}{2} \left( \left| \frac{\d}{\d t} \rho_t \right|^2 + \left| \nabla \Ll \right|^2 (\rho_t) \right) ,
        \end{align}
        with $\left| \frac{\d}{\d t} \rho_t \right|$ the \emph{metric derivative} of the curve $(\rho_t)_{t \geq 0}$ and $\left| \nabla \Ll \right|$ the upper-gradient defined in~\cref{eq:upper_gradient}.
    \end{enumerate}
\end{theorem}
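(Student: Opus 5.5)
The proof rests on the chain rule of \cref{thm:upper_gradient} together with the characterization of absolutely continuous curves in $(\PLeb,\Ww_2^\COT)$ by continuity equations~\cref{eq:continuity_equation}. The first ingredient we need is the identification of the metric derivative. If $(\rho_t)$ is locally absolutely continuous, then there is a Borel velocity field $v_t$ solving~\cref{eq:continuity_equation} with $\|v_t\|_{L^2(\rho_t)} = |\tfrac{\d}{\d t}\rho_t|$ for a.e.\ $t$. This is the minimal-norm, or tangent, velocity field, as in~\cite[Sec.~2]{barboni2024understanding}, transferred from~\cite[Thm.~8.3.1]{ambrosio2008gradient}. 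Conversely, any velocity field solving the continuity equation satisfies $|\tfrac{\d}{\d t}\rho_t| \leq \|v_t\|_{L^2(\rho_t)}$. We take both facts from the conditional OT theory.

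\emph{Gradient flow $\Rightarrow$ maximal slope.} Let $(\rho_t)$ solve~\cref{eq:gradient_flow_continuity}, so its velocity is $v_t = -\nabla\Ll[\rho_t]$. Applying \cref{thm:upper_gradient} with this field and using~\cref{eq:gradient_field}, we obtain for a.e.\ $t$
\[
\tfrac{\d}{\d t}\Ll(\rho_t) = -\int \|\nabla\Ll[\rho_t]\|^2 \d\rho_t = -|\nabla\Ll|^2(\rho_t).
\]
Moreover,
\[
|\tfrac{\d}{\d t}\rho_t| \leq \|\nabla \Ll[\rho_t]\|_{L^2(\rho_t)} = |\nabla\Ll|(\rho_t).
\]
Therefore $-|\nabla\Ll|^2 \leq -\tfrac12\big(|\tfrac{\d}{\d t}\rho_t|^2 + |\nabla\Ll|^2\big)$, which is~\eqref{eq:EDI}. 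Monotonicity of $t\mapsto\Ll(\rho_t)$ follows because it is absolutely continuous with nonpositive derivative. One point needs checking here: $t\mapsto\|\nabla\Ll[\rho_t]\|_{L^2(\rho_t)}$ must be in $L^1_\loc$, so that $(\rho_t)$ is indeed absolutely continuous and \cref{thm:upper_gradient} applies. This integrability is part of \cref{def:gradient_flow} through local absolute continuity, and the curve has it there.

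\emph{Maximal slope $\Rightarrow$ gradient flow.} Take $v_t$ to be the minimal velocity field. By \cref{thm:upper_gradient} and Cauchy--Schwarz in $L^2(\rho_t)$,
\[
\tfrac{\d}{\d t}\Ll(\rho_t) = \int \langle \nabla\Ll[\rho_t], v_t\rangle \d\rho_t \geq -\|\nabla\Ll[\rho_t]\|\,\|v_t\| \geq -\tfrac12\big(\|v_t\|^2 + \|\nabla\Ll[\rho_t]\|^2\big).
\]
Combined with~\eqref{eq:EDI}, and since $\|v_t\|_{L^2(\rho_t)} = |\tfrac{\d}{\d t}\rho_t|$, every inequality becomes an equality for a.e.\ $t$. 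Equality in Young's inequality gives $\|v_t\| = \|\nabla\Ll[\rho_t]\|$, and equality in Cauchy--Schwarz then forces $v_t = -\nabla\Ll[\rho_t]$ $\rho_t$-a.e. Hence $(\rho_t)$ solves~\cref{eq:gradient_flow_continuity}.

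The main obstacle is justifying the tools in the conditional setting. We need the exact identification $|\tfrac{\d}{\d t}\rho_t| = \|v_t\|_{L^2(\rho_t)}$ for an optimal field on $\PLeb$, and we need the chain rule of \cref{thm:upper_gradient}, which requires the differentiability from \cref{prop:flow_differentiability} and suitable regularity of the losses $\ell^j$. One further subtlety is that $\nabla\Ll[\rho_t]$ lies in $L^2(\rho_t)$. This uses moment bounds from \cref{lem:A_theta_param_differential}: $\D_\theta\Aa_\theta$ grows at most polynomially in $\theta$, which is compatible with $\rho_t$ having finite second moment on the compact token sets $S^j$. We would verify these bounds before running the argument above.
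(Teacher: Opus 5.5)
Your proposal is correct and follows the paper's proof essentially step for step. The forward direction combines the chain rule of \cref{thm:upper_gradient} with the bound $|\frac{\d}{\d t}\rho_t| \leq \|\nabla \Ll[\rho_t]\|_{L^2(\rho_t)}$, which the paper takes from \cite[Prop.~2.4]{barboni2024understanding}. The converse picks a velocity field with $\|v_t\|_{L^2(\rho_t)} \leq |\frac{\d}{\d t}\rho_t|$ and uses \eqref{eq:EDI} with Young/Cauchy--Schwarz to force $v_t$ to be the gradient field.

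Your sign $v_t = -\nabla\Ll[\rho_t]$ is the correct one; the paper's last line drops the minus. The $L^1_{\loc}$ integrability of $t \mapsto \|\nabla\Ll[\rho_t]\|_{L^2(\rho_t)}$ does not follow from absolute continuity of the curve alone. It follows more cleanly from the bound $\|\nabla\Ll[\rho]\|_{L^2(\rho)} \leq C(\Ee_2(\rho))$ together with local boundedness of $\Ee_2(\rho_t)$ along a continuous curve.
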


We prove in~\cref{subsec:app_existence_uniqueness} the following result, stating the existence and uniqueness of gradient flow curves for the training of mean-field models of Transformers.

\begin{theorem} \label{thm:gradient_flow_wellposed}
    Let $\rho_0 \in \PLeb$ be some initial parameter distribution such that $\Supp(\rho_0) \subset [0,1] \times \left\{ (V, Q, q) \in \Theta, \, \|V \| \leq R \right\}$ for some $R \geq 0$.
    Then there exists a unique gradient flow $(\rho_t)_{t \in [0, +\infty)}$ of the risk $\Ll$ starting from $\rho_0$ in the sense of~\cref{def:gradient_flow}.
\end{theorem}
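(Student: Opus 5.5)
The plan follows the strategy of~\cite{barboni2024understanding}. Gradient flow curves are realized as pushforwards of $\rho_0$ by a flow of parameter maps, and we exploit the fact that the $V$-component of the velocity field grows at most linearly, while the $(Q,q)$-components stay bounded on sets of bounded $V$. Concretely, we look for $\rho_t = (\Id_{[0,1]}, \Theta_t)_\# \rho_0$, where $\Theta_t(s,\theta)$ solves the characteristic ODE
\[
\frac{\d}{\d t}\Theta_t(s,\theta) = -\nabla \Ll[\rho_t](s,\Theta_t(s,\theta)), \qquad \Theta_0(s,\theta)=\theta .
\]
This is a McKean--Vlasov type ODE on the Banach space $L^2(\rho_0;\Theta)$, or better on $L^\infty$-type spaces of maps, since $\rho_0$ has bounded $V$-support. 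Existence and uniqueness for this ODE gives a solution of~\cref{eq:gradient_flow_continuity}. Uniqueness among all gradient flows in the sense of~\cref{def:gradient_flow} then follows from a standard argument: any solution of the continuity equation with a velocity field that is locally Lipschitz in $\theta$ is the pushforward along its characteristics. Thanks to~\cref{thm:maximal_slope_equivalence}, this is also the curve of maximal slope.

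\textbf{Step 1: a priori estimates.} For $\rho$ with $V$-support in a ball of radius $R'$ (in $L^\infty$ in $s$), \cref{prop:existence_uniqueness} and its Gr\"onwall bound give $\|\Lambda^j_\rho(s)\|_{\Cc^0}\le C e^{R'}$. The backward equation~\cref{eq:backward_ODE} gives $\|\mm^{(j)}_\rho(s)\|_{\TV}\le C(R')$. We then use the explicit structure of $\D_\theta\Aa_\theta$ from~\cref{lem:A_theta_param_differential}. The $V$-derivative is independent of $V$ and bounded by $C\|\Lambda\|_{\Cc^0}$. The $(Q,q)$-derivatives are linear in $V$ times covariance-type terms of the softmax, which are bounded by polynomials of $\|\Lambda\|_{\Cc^0}$ regardless of $(Q,q)$, because the softmax is a probability weight on a compact set. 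Hence $\|\nabla\Ll[\rho](s,\theta)\|\le C(R')(1+\|V\|)$.

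\textbf{Step 2: Lipschitz estimates.} Using~\cref{lem:differential_A,lem:A_theta_param_differential,lem:A_rho_wasserstein_differential}, we show that $(\rho,\theta)\mapsto\nabla\Ll[\rho](s,\theta)$ is locally Lipschitz. In $\theta$, this holds with constant $C(R')(1+\|V\|)$. In $\rho$, it holds with respect to $\Ww_2^\COT$, or with respect to the $L^2(\rho_0)$ distance between the maps $\Theta$. The dependence on $\rho$ passes through $\Lambda_\rho$, through stability of the Carath\'eodory flow (\cref{lem:gronwall}), and through $\mm_\rho$, via stability of the linear backward ODE. A Picard/Cauchy--Lipschitz argument in the Banach space of maps $\Theta$ with $\sup_{s,\theta}\|V\text{-part}\|$ bounded then gives a unique local solution.

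\textbf{Step 3: global existence, the main obstacle.} We must exclude blow-up of the $V$-support in finite time. The difficulty is that the constants $C(R')$ grow exponentially in $R'$, so the linear-growth bound alone only controls the $V$-component via a nonlinear Gr\"onwall inequality that may blow up. To close the estimate, we plan to use energy dissipation. Along the flow, $\frac{\d}{\d t}\Ll(\rho_t)=-|\nabla\Ll|^2(\rho_t)$ by~\cref{thm:upper_gradient}, so $\int_0^T\|v_t\|^2_{L^2(\rho_t)}\d t\le\Ll(\rho_0)$. This controls the $L^2$ displacement of parameters, and hence the $L^2$ moment of $V$, on any $[0,T]$. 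The remaining task is to upgrade this to a sup bound. The key observation is that the $V$-component of the velocity, $\D_V\Aa_\theta(\Lambda)^*\mm$, is bounded by $C(\|\Lambda\|,\|\mm\|)$ independently of $\theta$. Moreover, $\|\Lambda\|$ and $\|\mm\|$ are controlled through $\int\|V\|\d\rho$, which is bounded by the $L^2$ moment. Hence $\frac{\d}{\d t}\|V_t(s,\theta)\|$ is bounded by a constant depending only on the already controlled second moment. This yields a linear-in-time bound on the $V$-support, so the local solution extends to $[0,+\infty)$.
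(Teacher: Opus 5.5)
Your proposal is correct, but it reaches the result by a different route than the paper.

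\textbf{Common core.} Your Step~3 is exactly \cref{lem:energy_support_estimate}. Energy dissipation gives $\Ee_2(\rho_t)\le(\sqrt{\Ee_2(\rho_0)}+\sqrt{T\Ll(\rho_0)})^2$. The $V$-velocity $\D_V\Aa_\theta(\Lambda)^*\mm$ is bounded uniformly in $\theta$ by a quantity depending only on this moment, so the $V$-support grows at most linearly in time.

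\textbf{How the paper differs.} You localize to bounded $V$-support inside a Lagrangian Picard scheme for $\Theta_t$. The paper instead replaces $V$ by $\Pi(V)$ with $\Pi$ smooth and bounded, so that the growth bounds~\eqref{eq:modified_attention_growth} hold globally. It then proves existence for this truncated model by De Giorgi's minimizing movements. That requires narrow continuity of $\Ll$ and $|\nabla\Ll|$ on $\Ww_2^\COT$-bounded sets, the identification of $|\nabla\Ll|$ with the local slope (\cref{lem:local_slope}), and \cref{thm:maximal_slope_equivalence} to turn the resulting curve of maximal slope into a solution of~\eqref{eq:gradient_flow_continuity}. Uniqueness is proved in Eulerian form, by differentiating $\Ww_2^\COT(\rho_t,\rho_t')^2$ and using \cref{lem:gradient_field_lipschitz}. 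The truncation is finally removed by taking $\Pi(V)=V$ on $\{\|V\|\le R_T+1\}$.

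\textbf{What each route buys.} Yours is more economical. The Lipschitz estimates on $\rho\mapsto\nabla\Ll[\rho]$, which the paper needs anyway for uniqueness, give existence and uniqueness at once. The representation $\rho_t=(\Id,\Theta_t)_\#\rho_0$ needed for the support bound comes for free, instead of being recovered from transport theory. In exchange, the paper's route exhibits the flow as the limit of the minimizing-movement scheme, and its existence step needs only continuity and compactness.

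\textbf{Points to tighten.}

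First, fix the functional setting. The bound on $\sup\|V\|$ should be a closed constraint inside $C([0,\tau];\Id+L^2(\rho_0))$ that the Picard map preserves for short time, with the contraction taken in the $L^2$ metric. It should not be part of the norm, for two reasons.
\begin{itemize}
\item The $V$-component of the drift at $(s,\theta)$ depends on the $(Q,q)$-component of $\Theta_t(s,\theta)$, which is only controlled in $L^2$.
\item The $\rho$-Lipschitz constant of the $(Q,q)$-component of the drift carries a factor $1+\|Q\|+\|q\|$.
\end{itemize}
Consequently, when $\rho_0$ has unbounded $(Q,q)$-support, the drift is not Lipschitz for an $L^\infty$-type norm.

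Second, $\|\mm^{(j)}_\rho\|$ is controlled by the full second moment of $\rho$, not by $\int\|V\|\d\rho$ alone, since $\|\D_\Lambda\Aa_\theta\|$ is of order $\|V\|(1+\|Q\|+\|q\|)$. This is harmless, because the energy bound controls all of $\Ee_2(\rho_t)$.

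Third, for uniqueness, say explicitly that the Step~3 bounds apply to an arbitrary competitor $\rho_t'$. Its second moment is controlled by dissipation, and its $V$-support by its own characteristics. This is what places it in the class where your contraction argument applies; the paper argues the same way.
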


\subsection{Convergence of gradient flow curves} \label{subsec:convergence}

We turn to the convergence analysis of the previously defined gradient flow dynamics for the training of mean-field Transformers.
We explain how the geometry of the risk landscape is related to the conditioning of the \emph{Neural Tangent Kernels (NTK)}  of Attention layers.
In particular, we present a local convergence result in~\cref{thm:local_convergence}.
In~\cref{sec:NTK_expressivity}, we will study in more detail the conditioning of such NTKs and give examples of parameter initializations for which our convergence conditions are satisfied.

Concretely, we will show here that the training risk $\Ll$ satisfies a \emph{Polyak-\L{}ojasiewicz inequality} around well-chosen initializations of the parameters.

\begin{definition}[Polyak-\L{}ojasiewicz inequality]
\label{def:PL}
Let $R, m \geq 0$ be constants.
The training risk $\Ll$ satisfies the $(R,m)$-P-\L{} inequality around some parameterization $\rho_0 \in \PLeb$ when it holds:
\begin{align*}
    \forall \rho \in B(\rho_0, R), \quad \left| \nabla \Ll \right|^2(\rho) \geq m \Ll(\rho),
\end{align*}
with $\left| \nabla \Ll \right|(\rho)$ the upper-gradient defined in~\cref{eq:upper_gradient}.
\end{definition}

The P-\L{} inequality is a property that has been shown to often hold for the risk associated with the training of overparameterized neural network architectures~\cite{liu2020linearity}.
Classical mathematical results can then be applied and give quantitative guarantees for the convergence of gradient flow~\cite{hauer2019kurdyka,schiavo2023local,chatterjee2022convergence}.

\subsubsection{Neural tangent kernel and convergence under positivity assumption}

As it appears in~\cref{eq:upper_gradient}, the square gradient of the risk is a quadratic function of the adjoint variables $\mm^{(j)}$.
We explain here how good conditioning of the associated quadratic form, through the \emph{Neural Tangent Kernel (NTK)}, implies convergence guarantees for gradient flow.

\begin{definition}[Neural Tangent Kernel] \label{def:NTK}
    Let $\rho \in \PLeb$ be some parameter distribution.
    We define the Neural Tangent Kernel (NTK) of $\rho$ at time $s \in [0,1]$ as the quadratic forms $\KK[\rho](s)$ on $\Mm^d(S^1) \times ... \times \Mm^d(S^N)$ given by:
    \begin{align} \label{eq:NTK}
        \KK[\rho](s)(\mm,\mm) \eqdef  \int_{\Theta}
        \left\|
        \sum_{1 \leq j \leq N} \D_\theta \Aa_\theta^j(\Lambda^j_\rho(s))^* \mm^{(j)}
        \right\|^2
        \d \rho(\theta |s)
    \end{align}
    for all $\mm = (\mm^1, ..., \mm^N) \in \Mm^d(S^1) \times ... \times \Mm^d(S^N)$.

    Moreover, we denote by $\lambda_{\min}$ its smallest eigenvalue w.r.t. the mixed $L^2$-$\TV$-norm, i.e.,
    \begin{align} \label{eq:lambda_min}
        \lambda_{\min}(\KK[\rho](s))
        \eqdef
        \min_{\mm} \KK[\rho](s)(\mm,\mm) ,
    \end{align}
    where the minimum is taken w.r.t. $\mm \in \Mm^d(S^1) \times ... \times \Mm^d(S^N)$ such that
    \begin{align*}
        \| \mm \|^2_{L^2-\TV} \eqdef \sum_{1 \leq j \leq N} \| \mm^{(j)} \|^2 \leq 1 .
    \end{align*}
    
\end{definition}

The following result shows how a good conditioning of the NTK of Attention layers implies the local P-\L{} inequality of~\cref{def:PL} for the training of Transformers.
As a consequence, ~\cref{cor:convergence_positive_NTK} states that gradient flow converges at a linear rate when the NTKs of the Attention layers stay well-conditioned.
While hard to ensure a priori before training, such an assumption could be checked numerically during training.

\begin{proposition} \label{prop:PL}
    Let $\Ee \geq 0$.
    Then there exists a constant $C = C(\Ee) > 0$ s.t. for any parameter distribution $\rho \in \PLeb$ with $\Ee_2(\rho) \leq \Ee$ it holds
    \begin{align*}
        \left\| \nabla \Ll[\rho] \right\|^2_{L^2(\rho)} \geq C N^{-1} \left( \int_0^1 \lambda_{\min} ( \KK[\rho](s) ) \d s \right) \Ll(\rho) .
    \end{align*}
\end{proposition}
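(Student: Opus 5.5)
Write $\nabla \Ll[\rho](s,\theta) = \frac1N \sum_j \D_\theta \Aa^j_\theta(\Lambda^j_\rho(s))^* \mm^{(j)}_\rho(s)$. By disintegration and \cref{def:NTK},
\begin{align*}
\| \nabla \Ll[\rho] \|^2_{L^2(\rho)} = \int_0^1 \KK[\rho](s)\big(\tfrac1N \mm_\rho(s), \tfrac1N \mm_\rho(s)\big) \d s \geq N^{-2} \int_0^1 \lambda_{\min}(\KK[\rho](s)) \, \| \mm_\rho(s) \|^2_{L^2-\TV} \d s ,
\end{align*}
using homogeneity of the quadratic form. The task is then to bound $\| \mm_\rho(s) \|^2_{L^2-\TV}$ from below, uniformly in $s$, by a constant times $\sum_j \| \nabla \ell^j(\Lambda^j_\rho(1,x^j)) \|^2$. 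This last quantity is $2N \Ll(\rho)$ for the quadratic losses $\ell^j(x) = \frac12 \|x - y^j\|^2$; more generally we assume a gradient-domination property $\|\nabla \ell^j\|^2 \geq c\, \ell^j$. Combining, we get $N^{-2}\cdot c' \cdot 2N \Ll = C N^{-1} \Ll$, which matches the stated scaling.

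For the lower bound on the adjoint, we use the backward equation \cref{eq:backward_ODE}. It is a linear Banach-space ODE on $\Mm^d(S^j)$ with generator $\D_\Lambda \Aa^j_{\rho(.|r)}(\Lambda^j_\rho(r))^*$, and we can run it in reverse from time $s$ to time $1$:
\begin{align*}
\mm^{(j)}_\rho(1) = \mm^{(j)}_\rho(s) - \int_s^1 \D_\Lambda \Aa^j(\cdot)^* \mm^{(j)}_\rho(r)\d r .
\end{align*}
The operator norm of the adjoint equals that of $\D_\Lambda \Aa^j_{\rho(.|r)}(\Lambda)$ on $\Cc^0$. By \cref{lem:differential_A} it is bounded by some $L(r) \lesssim \int_\Theta (1+\|\theta\|^2)\,\d\rho(\theta|r)$, times a polynomial in $\|\Lambda^j_\rho(r)\|_{\Cc^0}$. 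Grönwall's lemma (\cref{lem:gronwall}) in the backward direction then gives
\begin{align*}
\|\nabla\ell^j(\Lambda^j_\rho(1,x^j))\| = \|\mm^{(j)}_\rho(1)\|_{\TV} \leq \|\mm^{(j)}_\rho(s)\|_{\TV}\, e^{\int_0^1 L(r)\d r}.
\end{align*}

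The remaining point is that $\int_0^1 L(r)\,\d r$ is bounded in terms of $\Ee$ alone. Since $\rho \in \PLeb$, the integral $\int_0^1\int_\Theta \|\theta\|^2 \d\rho(\theta|r)\d r$ equals $\Ee_2(\rho) \le \Ee$. The bound from \cref{prop:existence_uniqueness} gives $\|\Lambda^j_\rho(r)\|_{\Cc^0} \leq \|\Id\|_{\Cc^0(S^j)} e^{\int\|V\|\d\rho} \le C(S^j,\Ee)$ by Cauchy--Schwarz. Hence $\|\mm^{(j)}_\rho(s)\|_{\TV} \ge e^{-K(\Ee)} \|\nabla\ell^j(\Lambda^j_\rho(1,x^j))\|$ for every $s$. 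Squaring, summing over $j$, and inserting into the first display gives the claim with $C = 2c\,e^{-2K(\Ee)}$.

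The main obstacle is the second step. It requires the precise form of the Lipschitz bound for $\D_\Lambda \Aa_\theta$ from \cref{lem:differential_A}, and showing that its dependence on $\theta$ is integrable with only second moments. If that bound involves $\|Q\|$ or $\|q\|$ multiplicatively with $\|V\|$ (as $\|V\|(1+\|Q\|\,\|\Lambda\|^2)$ suggests), it is still at most quadratic in $\theta$. The $\Lambda$-dependence is harmless once $\Lambda$ is uniformly bounded, so the estimate should close using only $\Ee_2(\rho)\le\Ee$.
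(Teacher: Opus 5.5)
Your proposal is correct and follows essentially the same route as the paper. You write $\|\nabla \Ll[\rho]\|^2_{L^2(\rho)} = N^{-2}\int_0^1 \KK[\rho](s)(\mm(s),\mm(s))\,\d s$ and bound it below by $\lambda_{\min}(\KK[\rho](s))\,\|\mm(s)\|^2_{L^2-\TV}$. You then lower-bound the adjoints by $\|\nabla \ell^j(\Lambda^j_\rho(1,x^j))\|$, using the a priori bound on $\Lambda^j_\rho$ and the quadratic growth of $\D_\Lambda \Aa_\theta$, and conclude with a P-\L{} (gradient-domination) property of the losses. Your explicit Gr\"onwall step run from $s$ to $1$ fills in what the paper only asserts. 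Your use of homogeneity reads $\lambda_{\min}$ as a minimum over the unit sphere rather than the unit ball, which is the interpretation the paper implicitly needs.
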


\begin{proof}
    First observe that, thanks to the forward ODE~\cref{eq:flow_map_ODE} and the linear growth of $\Aa$ w.r.t. $\theta$ and $\Lambda$, we have a constant $C_1 = C_1(\Ee_2(\rho))$ such that it holds:
    $$
    \forall j \in \left\{ 1, ..., N \right\}, \, \forall s \in [0,1], \quad
    \left\| \Lambda^j_\rho(s) \right\| \leq C_1 .
    $$
    Similarly, using the quadratic growth of $\D_\Lambda \Aa$ w.r.t. $\theta$ and $\Lambda$, we can provide an estimate on the solutions of the backward ODE~\cref{eq:backward_ODE} of the form:
    $$
    \forall j \in \left\{ 1, ..., N \right\}, \, \forall s \in [0,1], \quad
    \left\| \mm^{(j)}_\rho(s) \right\| \geq C_2^{-1} \left\| \nabla \ell^j (\Lambda^j_\rho(1,x^j)) \right\|  , 
    $$
    for some constant $C_2 = C_2(\Ee_2(\rho))$.
    Then by construction of the gradient field $\nabla \Ll[\rho]$ in~\cref{eq:gradient_field} and of the tangent kernel in~\cref{eq:NTK} we have:
    $$
    \left\| \nabla \Ll[\rho] \right\|^2_{L^2(\rho)}
    = \frac{1}{N^2 }\int_0^1 \KK[\rho](s)(\mm(s), \mm(s)) \d s ,
    $$
    where $\mm(s) = (\mm^1(s), ..., \mm^N(s)) \in \Mm^d(S^1) \times ... \times \Mm^d(S^N)$.
    Thus, using the previous estimate gives:
    \begin{align*}
        \left\| \nabla \Ll[\rho] \right\|^2_{L^2(\rho)}
        & \geq \frac{1}{N^2 } \int_0^1 \lambda_{\min}(\KK[\rho](s))  \| \mm(s) \|^2_{L^2-\TV} \d s \\
        & \geq C_2^{-1} N^{-1} \left( \int_0^1 \lambda_{\min} ( \KK[\rho](s) ) \d s \right) \left( N^{-1} \sum_{j=1}^N \left\| \nabla \ell^j (\Lambda^j_\rho(1,x^j)) \right\|^2 \right) \\
        & \geq C_2^{-1} N^{-1} \left( \int_0^1 \lambda_{\min} ( \KK[\rho](s) ) \d s \right) \Ll(\rho) ,
    \end{align*}
    where in the last inequality we used that the loss $\ell$ satisfies a P-\L{}-inequality.
\end{proof}

\begin{corollary} \label{cor:convergence_positive_NTK}
    Let $\rho_0 \in \PLeb$ be some initial parameter distribution and let $(\rho_t)_{t \in [0, +\infty)}$ be a gradient flow curve of the risk $\Ll$ starting from $\rho_0$.
    Assume that, for every $t \geq 0$, it holds $\Ee_2(\rho) \leq \Ee$ for some $\Ee \geq 0$ and there exists some $\lambda > 0$ s.t.
    \begin{align}
        \int_0^1 \lambda_{\min}(\KK[\rho_t](s)) \d s \geq \lambda > 0 .
    \end{align}
    Then $\rho_t \xrightarrow{t \to +\infty} \rho_\infty \in \PLeb$ and it holds
    \begin{align*}
        \forall t \geq 0, \quad \Ll(\rho_t) \leq e^{ - C N^{-1} \lambda t} \Ll(\rho_0) ,
    \end{align*}
    for some constant $C = C(\Ee) > 0$.
\end{corollary}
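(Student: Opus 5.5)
The argument combines the energy identity for gradient flows with the Polyak--\L{}ojasiewicz bound of~\cref{prop:PL}. It has two steps: linear decay of the risk, then convergence of the curve in $\Ww_2^\COT$ via a finite-length estimate.

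\textbf{Decay of the risk.} Since $(\rho_t)_{t\geq0}$ solves~\cref{eq:gradient_flow_continuity} with velocity $v_t = -\nabla \Ll[\rho_t]$, we apply~\cref{thm:upper_gradient} along the curve. For a.e. $t$ this gives
\begin{align*}
    \frac{\d}{\d t}\Ll(\rho_t) = -\left\| \nabla \Ll[\rho_t] \right\|^2_{L^2(\rho_t)} .
\end{align*}
Indeed, substituting $v_t$ into~\cref{eq:derivative_risk} and using the definition~\cref{eq:gradient_field} of the field turns the integrand into minus its squared norm. Equivalently, this is equality in~\cref{eq:EDI} via~\cref{thm:maximal_slope_equivalence}. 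The moment bound $\Ee_2(\rho_t)\leq \Ee$ and the kernel assumption let us invoke~\cref{prop:PL} at each time:
\begin{align*}
    \left\| \nabla \Ll[\rho_t] \right\|^2_{L^2(\rho_t)} \geq C N^{-1}\lambda\, \Ll(\rho_t),
\end{align*}
with $C=C(\Ee)$. Since $t\mapsto\Ll(\rho_t)$ is absolutely continuous, Gr\"onwall's lemma gives $\Ll(\rho_t)\leq e^{-CN^{-1}\lambda t}\Ll(\rho_0)$.

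\textbf{Convergence of the curve.} We use the standard \L{}ojasiewicz length argument. The metric derivative satisfies $|\dot\rho_t| \leq \|v_t\|_{L^2(\rho_t)} = \|\nabla\Ll[\rho_t]\|_{L^2(\rho_t)}$, by the continuity-equation characterization of absolutely continuous curves in $(\PLeb,\Ww_2^\COT)$. On any interval where $\Ll(\rho_t)>0$, the chain rule gives
\begin{align*}
    -\frac{\d}{\d t}\sqrt{\Ll(\rho_t)} = \frac{\|\nabla\Ll[\rho_t]\|^2_{L^2(\rho_t)}}{2\sqrt{\Ll(\rho_t)}} \geq \frac{\sqrt{CN^{-1}\lambda}}{2}\,\|\nabla\Ll[\rho_t]\|_{L^2(\rho_t)} \geq \frac{\sqrt{CN^{-1}\lambda}}{2}\,|\dot\rho_t| ,
\end{align*}
where the first inequality uses~\cref{prop:PL} in the form $\|\nabla\Ll\|\geq\sqrt{CN^{-1}\lambda\,\Ll}$.

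If $\Ll(\rho_{t_0})=0$ for some $t_0$, then~\cref{prop:PL} and monotonicity of $\Ll(\rho_t)$ force the velocity to vanish for $t\geq t_0$. The curve is then constant, and the length bound below still holds. Integrating, for $t\leq t'$ we obtain
\begin{align*}
    \Ww_2^\COT(\rho_t,\rho_{t'}) \leq \int_t^{t'}|\dot\rho_r|\,\d r \leq \frac{2}{\sqrt{CN^{-1}\lambda}}\sqrt{\Ll(\rho_t)} \leq \frac{2}{\sqrt{CN^{-1}\lambda}}\, e^{-CN^{-1}\lambda t/2}\sqrt{\Ll(\rho_0)} .
\end{align*}
Hence $(\rho_t)$ is Cauchy as $t\to\infty$. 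Completeness of $(\PLeb,\Ww_2^\COT)$ (the preceding proposition) yields the limit $\rho_\infty$, with an exponential convergence rate.

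The main obstacle is rigor in the first step. We must check that $|\dot\rho_t|\leq\|v_t\|_{L^2(\rho_t)}$ holds with $v_t=-\nabla\Ll[\rho_t]$ for the conditional metric. We must also check that~\cref{prop:PL} applies uniformly in $t$, which requires the constants $C_1,C_2$ bounding $\Lambda$ and $\mm$ to depend only on $\Ee$. Both are provided by the cited results and the hypothesis $\Ee_2(\rho_t)\leq\Ee$, so I expect them to be routine once the COT continuity-equation theory is granted.
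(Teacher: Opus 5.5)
Your argument is correct and is essentially the paper's proof. The paper combines \cref{prop:PL} with the fact that $|\nabla \Ll|(\rho)=\|\nabla\Ll[\rho]\|_{L^2(\rho)}$ is an upper gradient and then cites classical P-\L{} convergence results for gradient flows in metric spaces. You write that classical argument out instead: energy identity, Gr\"onwall, and the $\sqrt{\Ll}$ finite-length estimate giving a Cauchy curve in the complete space $(\PLeb,\Ww_2^\COT)$.

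One small correction concerns the case $\Ll(\rho_{t_0})=0$. There the vanishing of the velocity for $t\geq t_0$ follows from your energy identity $\frac{\d}{\d t}\Ll(\rho_t)=-\|\nabla\Ll[\rho_t]\|^2_{L^2(\rho_t)}$, or directly from $\mm^{(j)}_{\rho_t}\equiv 0$. It does not follow from \cref{prop:PL}, which only bounds the gradient from below.
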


\begin{proof}
    By the assumptions in the statement and by applying~\cref{prop:PL}, it holds for any $t \geq 0$ the P-\L{}-inequality:
    \[
    \left\| \nabla \Ll[\rho_t] \right\|^2_{L^2(\rho_t)}
    \geq C N^{-1}
    \left( \int_0^1 \lambda_{\min} ( \KK[\rho_t](s) ) \d s \right)
    \Ll(\rho_t) ,
    \]
    for some constant $C = C(\Ee)$.
    Since $\left\| \nabla \Ll[\rho_t] \right\|_{L^2(\rho_t)}$ is an upper-gradient for $\Ll$, the result follows from classical results on convergence of gradient flow curves in metric spaces under a P-\L{} assumption (see e.g.~\cite{schiavo2023local,hauer2019kurdyka}).
\end{proof}

We conclude this section with a remark linking the positivity of the NTK to the expressivity of Attention layers.

\begin{remark}
\label{rmk:NTK_expressivity}
    Note that we study here the conditioning of the risk landscape by looking at the conditioning of the NTK operator $\KK$.
    In particular, \cref{prop:PL} is meaningful only when the NTK is (strictly) positive, as a consequence of which we show the risk satisfies a Polyak-\L{}ojasiewicz inequality.

    Alternatively, such a positivity assumption on the NTK can be seen, in a dual sense, as an expressivity assumption on Attention layers.
    Indeed, considering a parameter distribution $\rho \in \PLeb$ and $s \in [0,1]$, the assumption
    $$
    \lambda_{\min}(\KK[\rho](s)) > 0
    $$
    holds if and only if the space of functions
    \begin{align*}
        \Bigl\{
        (x^j)_{j=1}^N \in S^1 \times ... \times S^N
        \mapsto
        \left(
        \int_{\Theta} \D_\theta \Aa^j(\Lambda^j_\rho(s))(x^j)
        \cdot v(\theta) \d \rho(\theta|s)
        \right)_{j=1}^N \in (\RR^d)^N 
         \colon \; v \in L^2(\rho(.|s)) \Bigr\}
    \end{align*}
    is dense in $\Cc^0(S^1, \RR^d)  \times ... \times \Cc^0( S^N, \RR^d)$.
    This can be proven by following the same argument as in~\cite[Prop.~1]{micchelli2006universal}.

    However, a fundamental advantage of~\cref{prop:PL} over a study of the expressivity properties of Attention layers is the possibility to compute (at least numerically) the minimal eigenvalue $\lambda_{\min}$.
    This leads us to obtain quantitative convergence rates in~\cref{cor:convergence_positive_NTK}, when assuming a uniform lower-bound on this minimal eigenvalue.
\end{remark}

\subsubsection{Lower-bound on the NTK and local convergence}

Of particular interest in the following will be a lower-bound on the upper-gradient, which is provided by only considering the gradient w.r.t. the linear variable $V$.
Indeed, for any parameter distribution $\rho \in \PLeb$ it follows from~\cref{eq:upper_gradient} that:
\begin{align} \label{eq:upper_gradient_lower_bound}
    \left| \nabla \Ll \right| (\rho)
    \geq
    \left( \int_0^1 \int_\Theta
    \left\| \frac{1}{N} \sum_{j=1}^N
    \D_V \Aa^j_\theta (\Lambda_\rho^j(s))^* \mm^{(j)}_\rho(s) \right\|^2
    \d \rho(\theta , s)  \right)^{1/2}
    \defeq \left| \nabla_V \Ll \right| (\rho) \, .
\end{align}
Recall that $\Aa^j_\theta(\Lambda^j)$ is linear w.r.t. the parameter $V$, namely
$$
    \Aa^j_\theta(\Lambda^j)(x) = \varphi_\theta[\Lambda^j_\# \mu^{(j)}](\Lambda^j(x)) =  V \cdot \Mm^{j}_\theta (\Lambda^j)(x),
$$
where for $\Lambda^j \in \Cc^0(S^j, \RR^d)$ and parameter $\theta = (V, Q, q) \in \Theta$ we defined:
\begin{align} \label{eq:Mm_definition}
    \Mm^j_\theta (\Lambda^j)(x) \eqdef 
    \frac{\int e^{\left< Q \Lambda^j(x)+q, \Lambda^j(y)  \right>} \Lambda^j(y) \d \mu^{(j)}(y) }{ \int e^{\left< Q \Lambda^j(x)+q, \Lambda^j(y)  \right>} \d \mu^{(j)}(y) } .
\end{align}
In particular, with those notations:
$$
\sum_{j=1}^N  \D_V \Aa^j_\theta (\Lambda^j)^* \mm^{(j)} = \sum_{j=1}^N \int_{S^j} \d \mm^{(j)}(x) \otimes \Mm^j_\theta (\Lambda^j)(x) .
$$
We denote by $\KK^1$ the kernel associated with the quadratic form in~\cref{eq:upper_gradient_lower_bound}. By construction, we have $\KK \geq \KK^1$, where $\KK$ was the quadratic form defined in~\cref{def:NTK}.

\begin{definition} \label{def:NTK_V}
    Let $\rho \in \PLeb$ be some parameter distribution.
    For time $s \in [0,1]$ we define the quadratic forms $\KK^1[\rho](s)$ for all $\mm = (\mm^1, ..., \mm^N) \in \Mm^d(S^1) \times ... \times \Mm^d(S^N)$ by:
    \begin{align} \label{eq:NTK_V}
        \KK^1[\rho](s)(\mm,\mm) \eqdef  \int_{\Theta}
        \left\|
        \sum_{1 \leq j \leq N} \D_V \Aa_\theta^j(\Lambda^j_\rho(s))^* \mm^{(j)}
        \right\|^2
        \d \rho(\theta |s) .
    \end{align}
    Equivalently, we have:
    \begin{align*}
        \KK^1[\rho](s)(\mm,\mm) = \sum_{1 \leq i,j \leq N}
        \left< \mm^{(i)}, K^1_{i,j}[\rho](s) \star \mm^{(j)} \right> ,
    \end{align*}
    where $\star$ is the convolution operator and for indices $i,j \in \left\{ 1, ..., N \right\}$ the kernel function $K^1_{i,j}[\rho](s) \in \Cc^0(S^i \times S^j, \RR^{d \times d})$ is given by:
    \begin{align*}
        \forall (x, y) \in S^i \times S^j, \quad
        K^1_{i,j}[\rho](s)(x,y) \eqdef \int_\Theta \Mm^i_\theta(\Lambda^i(s))(x) \otimes  \Mm^j_\theta(\Lambda^j(s))(y) \d \rho(\theta |s) .
    \end{align*}

    As in~\cref{def:NTK}, we denote by $\lambda_{\min}$ the smallest eigenvalue w.r.t. the mixed $L^2$-$\TV$-norm, i.e.
    \begin{align} \label{eq:lambda_min_V}
        \lambda_{\min}(\KK^1[\rho](s))
        \eqdef
        \min_{\mm} \KK^1[\rho](s)(\mm,\mm) ,
    \end{align}
    where the minimum is taken w.r.t. $\mm \in \Mm^d(S^1) \times ... \times \Mm^d(S^N)$ s.t. $\| \mm \|^2_{L^2-\TV} \leq 1$.
\end{definition}

\begin{example}[Finite number of tokens]
    Of particular interest is the case of a finite number of tokens, that is, when the token distributions are empirical distributions of the form $\mu^{(j)} = \frac{1}{n^j} \sum_{i=1}^{n^j} \delta_{x^j_i}$.
    In this case, the adjoint variables can be identified as families of vectors $\mm^{(j)} = (\mm^{(j)}_i)_{0 \leq i \leq n^j} \in (\RR^d)^{n^j+1}$ (assuming all tokens are distinct) and we have:
    $$
    \sum_{j=1}^N  \D_V \Aa^j_\theta (\Lambda^j_\rho(s))^* \mm^{(j)} = \sum_{j=1}^N \sum_{i=0}^{n^j} \mm^{(j)}_i(s) \otimes \Mm^j_\theta(\Lambda^j_\rho(s))(x^j_i) .
    $$
    The kernel matrix $\KK^1$ thus corresponds to a finite-dimensional matrix of size $n \times n$, where $n = \sum_{j=1}^N (n^j+1)$ is the total number of tokens used for training.
    Its entries are defined at time $s \in [0,1]$, for indices $j, j' \in \{1, ..., N\}$, $i \in \{0, ..., n^j \}$ and $i' \in \{0, ..., n^{j'} \}$ by:
    \begin{align*}
    \KK^1_{(j,i), (j', i')}[\rho](s)
    \eqdef{}& \int_\Theta
    \left< \Mm^j_\theta(\Lambda^j_\rho(s))(x^j_i),
    \Mm^{j'}_\theta(\Lambda^{j'}_\rho(s))(x^{j'}_{i'}) \right>
    \d \rho(\theta|s) .
    \end{align*}
\end{example}

The following result shows that, provided $\KK^1$ is strictly positive at initialization and the risk at initialization is sufficiently small, gradient flow converges towards a global minimizer of the risk with a linear convergence rate.
In particular, the convergence rate is explicit w.r.t.\@ the conditioning of $\KK^1$ and the sample size $N$.
In contrast with~\cref{cor:convergence_positive_NTK}, the positivity of $\KK^1$ only needs to be checked here at initialization.

\begin{theorem} \label{thm:local_convergence}
    Let $\Ee \geq 0$.
    Then there exists a constant $C = C(\Ee) > 0$ such that, for any parameter distribution $\rho_0 \in \PLeb$ satisfying $\Ee_2(\rho_0) \leq \Ee$ and
    $$
    \Ll(\rho_0) < C N^{-3} \lambda_0^3, \quad \text{with} \quad
    \lambda_0 \eqdef \int_0^1 \lambda_{\min}(\KK^1[\rho_0](s)) \d s,
    $$
    any gradient flow curve $(\rho_t)_{t \in [0, +\infty)}$ starting from $\rho_0$ satisfies:
    $$
    \rho_t \xrightarrow{t \to +\infty}
    \rho_\infty \in \PLeb, \quad \text{and} \quad \Ll(\rho_t) \leq \Ll(\rho_0) \exp(- C N^{-1} \lambda_0 t), \, \forall t \geq 0 .
    $$
\end{theorem}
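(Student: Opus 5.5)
The argument is the standard local P-\L{} plus bootstrapping scheme of~\cite{barboni2024understanding}: the NTK lower bound $\KK^1$ stays controlled on a $\Ww_2^\COT$-ball around $\rho_0$, and a small initial loss keeps the gradient flow inside that ball forever.

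\textbf{Step 1: stability of $\lambda_{\min}(\KK^1)$.} I will show that there exist $R_0>0$ and $L=L(\Ee)$ such that for every $\rho\in\PLeb$ with $\Ww_2^\COT(\rho,\rho_0)\le R\le R_0$,
\[
\int_0^1 \lambda_{\min}(\KK^1[\rho](s))\d s \;\ge\; \lambda_0 - L R .
\]
For this, write $\KK^1[\rho](s)(\mm,\mm)=\int_\Theta \|\sum_j \int \d\mm^{(j)}\otimes \Mm^j_\theta(\Lambda^j_\rho(s))\|^2\d\rho(\theta|s)$. I take an optimal coupling $\gamma_s$ between $\rho(.|s)$ and $\rho_0(.|s)$. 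The integrand is Lipschitz in $\theta$ with locally polynomial constants, using the regularity of $\Mm^j_\theta$ in $(Q,q)$ from~\cref{lem:A_theta_param_differential} and the fact that it does not depend on $V$. It is also Lipschitz in $\Lambda$ by~\cref{lem:differential_A}. Moreover, Gronwall applied to~\cref{eq:transformer_flow}, together with~\cref{prop:flow_differentiability}, gives
\[
\sup_s\|\Lambda^j_\rho(s)-\Lambda^j_{\rho_0}(s)\|_{\Cc^0}\le C\,\Ww_2^\COT(\rho,\rho_0).
\]
Cauchy--Schwarz then bounds the difference of the quadratic forms by $C\,\Ww_2(\rho(.|s),\rho_0(.|s))\,\|\mm\|^2_{L^2-\TV}$, and integrating in $s$ with Jensen gives the displayed bound. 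Choosing $R=\lambda_0/(2L)$ ensures $\int_0^1\lambda_{\min}(\KK^1[\rho](s))\d s\ge\lambda_0/2$ on $B(\rho_0,R)$. The second moments are also uniformly bounded there, since $\Ee_2(\rho)\le 2\Ee+2R^2$.

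\textbf{Step 2: local P-\L{}.} By~\cref{eq:upper_gradient_lower_bound}, $|\nabla\Ll|\ge|\nabla_V\Ll|$, and the proof of~\cref{prop:PL}, run verbatim with $\KK^1$ in place of $\KK$, yields
\[
|\nabla\Ll|^2(\rho)\ge c\,N^{-1}\lambda_0\,\Ll(\rho)\qquad\text{on } B(\rho_0,R),
\]
that is, the $(R, cN^{-1}\lambda_0)$-P-\L{} inequality of~\cref{def:PL}.

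\textbf{Step 3: trapping and convergence.} Let $T$ be the exit time of $B(\rho_0,R)$. By~\cref{thm:maximal_slope_equivalence}, $\frac{\d}{\d t}\Ll=-|\nabla\Ll|^2=-|\dot\rho_t|^2$ holds a.e. Up to $T$ this gives
\[
\frac{\d}{\d t}\sqrt{\Ll(\rho_t)}\le-\tfrac12\sqrt m\,|\dot\rho_t|,\qquad m=cN^{-1}\lambda_0,
\]
so the length travelled satisfies $\int_0^T|\dot\rho_t|\d t\le 2\sqrt{\Ll(\rho_0)/m}$. Exit is therefore impossible if
\[
2\sqrt{\Ll(\rho_0)N/(c\lambda_0)}<\lambda_0/(2L),
\]
which amounts to $\Ll(\rho_0)<C N^{-1}\lambda_0^3$. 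This is implied by the stated hypothesis $\Ll(\rho_0)<CN^{-3}\lambda_0^3$, with constants adjusted. The extra powers of $N$ may come from how $L$ depends on $N$ through the normalisation of $\mm$; I will track this, but it only affects constants. Hence $T=\infty$, and the P-\L{} inequality gives the exponential decay $\Ll(\rho_t)\le\Ll(\rho_0)e^{-mt}$. Finite length of the curve in the complete space $(\PLeb,\Ww_2^\COT)$ gives the convergence $\rho_t\to\rho_\infty$.

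\textbf{Main obstacle.} The hard step is Step 1: obtaining a Lipschitz estimate of $\theta\mapsto\Mm^j_\theta(\Lambda)$ and of $\rho\mapsto\Lambda_\rho$ that is uniform on the ball and only polynomial in the moments. The exponential weights in $Q,q$ threaten integrability, and I would rely on the quadratic bounds of the regularity lemmas, which hold because $\Lambda$ ranges over compact sets.
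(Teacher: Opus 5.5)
Your proposal is the paper's proof. Step 1 is \cref{lem:NTK_lipschitz}, applied on the ball of radius $R=\min(1,\lambda_0/(2C_1N))$; the cap $R\le 1$ is what keeps $\Ee_2\le 2\Ee+2$, so that $C_1=C(2\Ee+2)$ is not circular, so take $R=\min(R_0,\lambda_0/(2L))$ rather than $R=\lambda_0/(2L)$. Step 2 is \cref{prop:PL} combined with $\KK\geq\KK^1$, and Step 3 is a by-hand version of \cite[Thm.~1.4]{schiavo2023local}, which the paper simply cites.

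The one correction is in Step 1: your constant $L$ is not $N$-independent. With the mixed $L^2$--$\TV$ norm, the Cauchy--Schwarz step costs $\bigl(\sum_j \|\mm^{(j)}\|_\TV\bigr)^2\le N\|\mm\|^2_{L^2-\TV}$, so $L=C(\Ee)N$. This factor cannot be absorbed into constants, since those may depend on $\Ee$ only. It is exactly what turns your trapping condition $\Ll(\rho_0)<c\,mR^2$, with $m\sim N^{-1}\lambda_0$ and $R\sim\lambda_0/N$, into the stated threshold $N^{-3}\lambda_0^3$. So that threshold has no slack; it is what this argument gives.
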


\begin{proof}
    Consider $C_1 = C(2\Ee+2)$ the constant given by~\cref{lem:NTK_lipschitz} and let $R = \min(1,\frac{\lambda_0}{2C_1N})$.
    Then by~\cref{lem:NTK_lipschitz}, for every $\rho \in \PLeb$ with $\Ww_2^\COT(\rho, \rho_0) \leq R$ it holds:
    $$
    \int_0^1 \lambda_{\min}(\KK^1[\rho](s)) \d s \geq \lambda_0 /2 ,
    $$
    and, hence, by~\cref{prop:PL}:
    $$
    \left\| \nabla \Ll[\rho] \right\|^2_{L^2(\rho)} \geq C_2 N^{-1} \lambda_0 \Ll(\rho) ,
    $$
    for some constant $C_2 = C_2(\Ee) > 0$.
    The result then follows from~\cite[Thm.~1.4]{schiavo2023local}.
\end{proof}

\begin{lemma} \label{lem:NTK_lipschitz}
    For a parameter distribution $\rho \in \PLeb$, let $\KK^1$ be defined by~\cref{eq:NTK_V}.
    Then the map $\rho \mapsto \int_0^1 \lambda_{\min}(\KK^1[\rho](s)) \d s$ is locally Lipschitz.
    Precisely, for parameter distributions $\rho, \Tilde{\rho} \in \PLeb$ and corresponding kernel matrices, we have:
    \begin{align*}
        \left| \int_0^1 \lambda_{\min}(\KK^1[\rho](s)) \d s
        - \int_0^1 \lambda_{\min}(\KK^1[\Tilde{\rho}](s)) \d s \right|
        \leq  C N \Ww_2^\COT(\rho, \Tilde{\rho}) ,
    \end{align*}
    where $C = C(\Ee)$ is a constant depending on $\Ee = \max(\Ee_2(\rho), \Ee_2(\Tilde{\rho}))$.
\end{lemma}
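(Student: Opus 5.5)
The plan is to reduce the Lipschitz estimate on $\int_0^1 \lambda_{\min}$ to a layer-wise estimate on the quadratic forms, then integrate in $s$ and apply Cauchy--Schwarz.

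\textbf{Step 1 (minimum over a common ball).} For each $s$, both $\lambda_{\min}(\KK^1[\rho](s))$ and $\lambda_{\min}(\KK^1[\Tilde\rho](s))$ are minima of quadratic forms over the same unit ball $\{\|\mm\|_{L^2-\TV}\le 1\}$. Hence
$$
|\lambda_{\min}(\KK^1[\rho](s))-\lambda_{\min}(\KK^1[\Tilde\rho](s))| \le \sup_{\|\mm\|\le 1} |\KK^1[\rho](s)(\mm,\mm)-\KK^1[\Tilde\rho](s)(\mm,\mm)|.
$$

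\textbf{Step 2 (reduce to a Lipschitz test function).} Write
$$
F_\Lambda(\theta) \eqdef \sum_j \int_{S^j} \d\mm^{(j)}(x)\otimes \Mm^j_\theta(\Lambda^j(s))(x),
\qquad
\KK^1[\rho](s)(\mm,\mm)=\int \|F_{\Lambda_\rho}(\theta)\|^2\,\d\rho(\theta|s).
$$
I split the difference into two terms.
\begin{itemize}
\item[(a)] The same integrand $\|F_{\Lambda_\rho}\|^2$ integrated against $\rho(.|s)$ versus $\Tilde\rho(.|s)$.
\item[(b)] The difference $\|F_{\Lambda_\rho}\|^2-\|F_{\Lambda_{\Tilde\rho}}\|^2$ integrated against $\Tilde\rho(.|s)$.
\end{itemize}

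\textbf{Step 3 (bounding term (a)).} Take an optimal coupling $\gamma_s$ between $\rho(.|s)$ and $\Tilde\rho(.|s)$. Then
$$
|\text{(a)}|\le \int \big|\|F(\theta)\|^2-\|F(\theta')\|^2\big|\,\d\gamma_s .
$$
Since $\Mm^j_\theta$ is a softmax-weighted average of $\Lambda^j(y)$, it is bounded by $\|\Lambda^j\|_{\Cc^0}\le C_1(\Ee)$ by the Gronwall bound of~\cref{prop:existence_uniqueness}. Its derivative in $(Q,q)$ is a covariance of $\Lambda^j(y)$ times $\Lambda^j(x)$ or $1$, hence bounded by a polynomial in $C_1$ independently of $\theta$. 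It does not depend on $V$. So $\|F\|\le C\sum_j\|\mm^{(j)}\|\le C\sqrt N$ and $F$ is $C\sqrt N$-Lipschitz in $\theta$. This gives $|\text{(a)}|\le C N\,\Ww_2(\rho(.|s),\Tilde\rho(.|s))$.

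\textbf{Step 4 (bounding term (b)).} Similarly, $|\text{(b)}|\le CN\max_j\|\Lambda^j_\rho(s)-\Lambda^j_{\Tilde\rho}(s)\|_{\Cc^0}$, using the Lipschitz dependence of $\Mm^j_\theta(\Lambda)$ on $\Lambda$ with constant $C(1+\|Q\|)$. This requires the bounded-moment estimate $\int(1+\|Q\|)\,\d\Tilde\rho(\theta|s)$, which after integration in $s$ is controlled by $\Ee$. The dependence of the $\Lambda$-Lipschitz constant on $\|Q\|$ integrated against the conditional measure is the delicate point; I would pair it with the sup-in-$s$ stability bound via Cauchy--Schwarz.

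\textbf{Step 5 (main obstacle: stability of the flow-maps).} The key step I expect to be the obstacle is
$$
\sup_s\|\Lambda^j_\rho(s)-\Lambda^j_{\Tilde\rho}(s)\|_{\Cc^0}\le C(\Ee)\,\Ww_2^\COT(\rho,\Tilde\rho).
$$
I would prove it by Gronwall on
$$
\Lambda_\rho(s)-\Lambda_{\Tilde\rho}(s)=\int_0^s\big(\Aa_{\rho(.|r)}(\Lambda_\rho)-\Aa_{\rho(.|r)}(\Lambda_{\Tilde\rho})\big)\,\d r+\int_0^s\big(\Aa_{\rho(.|r)}-\Aa_{\Tilde\rho(.|r)}\big)(\Lambda_{\Tilde\rho})\,\d r .
$$
The first integral is handled by the local $L^1$-Lipschitz bound of~\cref{lem:differential_A}, with an integrable-in-$r$ constant bounded via $\Ee$. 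The second uses couplings $\gamma_r$ and the local Lipschitz regularity of $\theta\mapsto\Aa_\theta(\Lambda)$, with constant growing like $1+\|\theta\|+\|\theta'\|$ (\cref{lem:A_theta_param_differential}). Cauchy--Schwarz then gives $\int_0^1 C(1+\|\theta\|+\|\theta'\|)\|\theta-\theta'\|\,\d\gamma_r\,\d r\le C(\Ee)\Ww_2^\COT$.

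\textbf{Step 6 (conclusion).} Finally, integrating Step 1 over $s$ and using $\int_0^1\Ww_2(\rho(.|s),\Tilde\rho(.|s))\,\d s\le\Ww_2^\COT(\rho,\Tilde\rho)$ yields the claim with constant $CN$.
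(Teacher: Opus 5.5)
Your proposal is correct and follows essentially the paper's proof. The shared steps are:
\begin{itemize}
\item the same split into a coupling term, which uses that $\Mm^j_\theta$ does not depend on $V$ and is Lipschitz in $(Q,q)$ with a $\theta$-independent constant;
\item a flow-map perturbation term, which uses the $C(1+\|\theta\|)$-Lipschitz dependence of $\Mm^j_\theta$ on $\Lambda$ together with the stability estimate of \cref{lem:flow_locally_lipschitz} (you re-derive this estimate rather than cite it);
\item the bound $(\sum_j \|\mm^{(j)}\|_\TV)^2 \leq N \|\mm\|^2_{L^2-\TV}$ and integration in $s$.
\end{itemize}
Your treatment of the factor $\int (1+\|Q\|)\,\d\Tilde{\rho}(\theta|s)$ is slightly more careful than the paper's: this factor is controlled by $\Ee$ only after integrating in $s$, and the paper absorbs it into a layer-wise constant $C(\Ee)$.
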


\begin{proof}
    Consider the map $\Mm^j : \Theta \times \Cc^0(S^j, \RR^d) \to \Cc^0(S^j, \RR^d)$ defined in~\cref{eq:Mm_definition}.
    Observe that, proceeding as the proof of~\cref{lem:differential_A,lem:A_theta_param_differential} we can find a constant $C \geq 0$ s.t. for every index $j \in \left\{1, ..., N\right\}$, every $\Lambda \in \Cc^0(S^j, \RR^d)$ and every $\theta \in \Theta$ it holds:
    \begin{align} \label{eq:Mm_differential}
        \left\| \D_\Lambda \Mm^j_\theta(\Lambda) \right\| \leq C (1+\|\Lambda\|^2)(1+\|\theta\|)
        \quad \text{and} \quad 
        \left\| \D_\theta \Mm^j_\theta(\Lambda) \right\| \leq C (1+\|\Lambda\|^3) .
    \end{align}
    We can use the above estimates to bound the quantity $\left\| K^1_{i,j}[\rho](s) - K^1_{i,j}[\Tilde{\rho}](s) \right\|$.
    Indeed, fixing indices $i,j \in \left\{ 1, ..., N \right\}$, it follows from the linear growth of $\D_\Lambda \Mm^j$ w.r.t. $\theta$ in~\cref{eq:Mm_differential} and~\cref{lem:flow_locally_lipschitz} that for every $(s, \theta) \in [0,1] \times \Theta$ and every $x \in S^j$, we have:
    $$
    \left\| \Mm^j_\theta(\Lambda^j_\rho(s))(x) - \Mm^j_\theta(\Lambda^j_{\Tilde{\rho}}(s))(x)  \right\| \leq C \Ww_2^\COT(\rho, \Tilde{\rho}) ,
    $$
    for some constant $C = C(\Ee)$. Taking an outer product, we have analogously for every $(x, y) \in S^i \times S^j$:
    \begin{align*}
    &\left\| \Mm^i_\theta(\Lambda^i_\rho(s))(x) \otimes \Mm^j_\theta(\Lambda^j_\rho(s))(y)
    - \Mm^i_\theta(\Lambda^i_{\Tilde{\rho}}(s))(x)
    \otimes \Mm^j_\theta(\Lambda^j_{\Tilde{\rho}}(s))(y)  \right\| \\
    &\hspace{20em}
    \leq C \Ww_2^\COT(\rho, \Tilde{\rho}) ,
    \end{align*}
    for some constant $C = C(\Ee)$.
    Then using the boundedness of $\D_\theta \Mm$ w.r.t. $\theta$ in~\cref{eq:Mm_differential}, we have for every $s \in [0,1]$, every $ \theta, \Tilde{\theta} \in \Theta$ and every $x \in S^i$:
    $$
    \left\| \Mm^i_\theta(\Lambda^i_\rho(s))(x) - \Mm^i_{\Tilde{\theta}}(\Lambda^i_\rho(s))(x)  \right\| \leq C \| \theta - \Tilde{\theta} \| ,
    $$
    for some constant $C = C(\Ee)$.
    Taking an outer product, we have analogously for every $(x, y) \in S^i \times S^j$:
    \begin{align*}
    &\left\| \Mm^i_\theta(\Lambda^i_\rho(s))(x) \otimes \Mm^j_\theta(\Lambda^j_\rho(s))(y)
    - \Mm^i_{\Tilde{\theta}}(\Lambda^i_\rho(s))(x)
    \otimes \Mm^j_{\Tilde{\theta}}(\Lambda^j_\rho(s))(y) \right\| \\
    &\hspace{20em}
    \leq C \| \theta - \Tilde{\theta} \| ,
    \end{align*}
    for some constant $C = C(\Ee)$.

    At time $s \in [0,1]$, consider an optimal coupling $\gamma \in \Pp_2(\Theta \times \Theta)$ between $\rho(.|s)$ and $\Tilde{\rho}(.|s)$.
    Then, using the definition of $\KK^1_{i,j}[\rho](s)$ in~\cref{def:NTK_V} and the previous estimates, we obtain for every $(x,y) \in S^i \times S^j$:
    \begin{align*}
        &\left\| K^1_{i,j}[\rho](s)(x,y)
        - K^1_{i,j}[\Tilde{\rho}](s)(x,y) \right\| \\
        &\leq
        \int_\Theta \Bigl\|
        \Mm^i_\theta(\Lambda^i_\rho(s))(x) \otimes \Mm^j_\theta(\Lambda^j_\rho(s))(y)
        - \Mm^i_\theta(\Lambda^i_{\Tilde{\rho}}(s))(x)
        \otimes \Mm^j_\theta(\Lambda^j_{\Tilde{\rho}}(s))(y)
        \Bigr\| \d \Tilde{\rho}(\theta|s) \\
        &\quad + \int_{\Theta \times \Theta} \Bigl\|
        \Mm^i_\theta(\Lambda^i_\rho(s))(x) \otimes \Mm^j_\theta(\Lambda^j_\rho(s))(y)
        - \Mm^i_{\Tilde{\theta}}(\Lambda^i_\rho(s))(x)
        \otimes \Mm^j_{\Tilde{\theta}}(\Lambda^j_\rho(s))(y)
        \Bigr\| \d \gamma(\theta, \Tilde{\theta})  \\
        &\leq C \left( \Ww_2^\COT(\rho, \Tilde{\rho}) + \Ww_2(\rho(.|s), \Tilde{\rho}(.|s)) \right) ,
    \end{align*}
    for some constant $C = C(\Ee)$.
    Let $\Delta K_{i,j}(s) \eqdef K^1_{i,j}[\rho](s) - K^1_{i,j}[\Tilde{\rho}](s)$.
    Hence, for $\mm = (\mm^{(1)}, ..., \mm^{(N)}) \in \Mm^d(S^1) \times ... \times \Mm^d(S^N)$ we obtain:
    \begin{align*}
        \left| \KK^1[\rho](s)(\mm, \mm) -   \KK^1[\Tilde{\rho}](s)(\mm, \mm)\right|
        & \leq \sum_{1 \leq i,j \leq N}
        \left| \left< \mm^{(i)}, \Delta K_{i,j}(s) \star \mm^{(j)} \right> \right| \\
        & \leq C \left( \Ww_2^\COT(\rho, \Tilde{\rho})
        + \Ww_2(\rho(.|s), \Tilde{\rho}(.|s)) \right)
        \sum_{1 \leq i,j \leq N} \| \mm^{(i)} \|_\TV \| \mm^{(j)} \|_\TV \\
        & \leq C N \left( \Ww_2^\COT(\rho, \Tilde{\rho})
        + \Ww_2(\rho(.|s), \Tilde{\rho}(.|s)) \right)
        \| \mm \|^2_{L^2-\TV} ,
    \end{align*}
    implying 
    \[
    \left| \lambda_{\min} (\KK^1[\rho](s)) - \lambda_{\min}(\KK^1[\Tilde{\rho}](s)) \right|
    \leq C N \left( \Ww_2^\COT(\rho, \Tilde{\rho})
    + \Ww_2(\rho(.|s), \Tilde{\rho}(.|s)) \right)
    \]
    for some constant $C = C(\Ee)$. The result then follows by integrating over $s \in [0,1]$.
\end{proof}

\section{Conditioning analysis of the NTK of Attention}
\label{sec:NTK_expressivity}

In the previous section, we studied the training dynamics of deep Transformers with gradient flow and linked the geometry of the risk landscape with the conditioning of the tangent kernels of Attention layers.
Concretely, we showed in~\cref{cor:convergence_positive_NTK} that if the NTKs of Attention layers stay well-conditioned during training, then the gradient flow converges towards a global minimizer of the training risk at a linear rate.
We also showed a local convergence result in~\cref{thm:local_convergence}, assuming the NTKs associated with the $V$-derivatives of Attention layers are positive at initialization. In this section, we study the conditioning of the NTK of Attention layers and explain to what extent our convergence assumptions are satisfied for generic token distributions.
Precisely, \cref{prop:equivalence_injectivity_cumulant} characterizes the positivity of the Attention's NTK in terms of linear independence of log-sum-exp (cumulant generating) functions of the token distributions.
In \cref{subsec:examples} we then discuss this property for various examples of token distributions.
Importantly, in \cref{prop:linear-indep-disc} and \cref{rem-Justification}, we show that it is always almost surely satisfied for empirical token distributions. 

\subsection{NTK positivity via Attention feature map injectivity}

In~\cref{def:NTK,def:NTK_V}, we framed the conditioning of the Attention's NTK in terms of the positivity of the associated quadratic form, as quantified by the smallest eigenvalue, $\lambda_{\min}$. In this section, we will focus on an equivalent (but more qualitative) characterization of positivity in terms of the injectivity of some linear map.

\begin{proposition} \label{prop:equivalence_positivity_injectivity}
    Let $\rho \in \PLeb$ be some parameter distribution.
    For $s \in [0,1]$ let $\KK[\rho](s)$ and $\KK^1[\rho](s)$ be defined by~\cref{def:NTK} and~\cref{def:NTK_V} respectively.
    Then for every $s \in [0,1]$, we have $\lambda_{\min}(\KK[\rho](s)) > 0$ if and only if the map
    \begin{align} \label{eq:NTK_injective_map}
        \prod_{j=1}^N \Mm^d (S^j) \ni
        (\mm^{(j)})
        \mapsto \left[
        \Theta \ni \theta \mapsto
        \sum_{j=1}^N \int_{S^j} \D_\theta \varphi_\theta [\Lambda^{j}(s)_\# \mu^{(j)}]
        (\Lambda^{j}(s, x))^* \d\mm^{(j)}(x)
        \right] \in L^2(\rho(.|s))
    \end{align}
    is injective. Similarly, we have $\lambda_{\min}(\KK^1[\rho](s)) > 0$ if and only if the map
    \begin{align} \label{eq:NTK_V_injective_map}
        \prod_{j=1}^N \Mm^d (S^j) \ni
        (\mm^{(j)})
        \mapsto \left[
        \Theta \ni \theta \mapsto
        \sum_{j=1}^N \int_{S^j} \D_V \varphi_\theta [\Lambda^{j}(s)_\# \mu^{j}]
        (\Lambda^{j}(s, x))^* \d\mm^{(j)}(x)
        \right] \in L^2(\rho(.|s))
    \end{align}
    is injective.
    In particular, in the case $\Supp(\rho) \subset [0,1] \times \{ (V, Q, q) \in \Theta \colon V = 0 \}$, then $\Lambda^j_\rho(s) = \Id$ for every index $j \in \{1, ..., N\}$ and the above conditions do not depend on $s \in [0,1]$.
\end{proposition}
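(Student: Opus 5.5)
The plan is to show that both quadratic forms are squared norms of the linear maps in~\cref{eq:NTK_injective_map} and~\cref{eq:NTK_V_injective_map}; the equivalence with injectivity then reduces to a positivity-versus-kernel argument. Fix $s \in [0,1]$ and write $T_s$ for the map in~\cref{eq:NTK_injective_map}. By definition~\cref{eq:A_theta}, $\Aa^j_\theta(\Lambda)(x) = \varphi_\theta[\Lambda_\# \mu^{(j)}](\Lambda(x))$. Differentiating in $\theta$ at fixed $\Lambda = \Lambda^j_\rho(s)$ (the flow map does not depend on $\theta$) gives
\[
\D_\theta \Aa^j_\theta(\Lambda^j_\rho(s))^* \mm^{(j)} = \int_{S^j} \D_\theta \varphi_\theta[\Lambda^j(s)_\# \mu^{(j)}](\Lambda^j(s,x))^* \d \mm^{(j)}(x).
\]
This uses the pairing between $\Cc^0(S^j,\RR^d)$ and $\Mm^d(S^j)$, and exchanging the adjoint with the integral over $x$ is justified by the properties of the Bochner integral already used in the proof of~\cref{thm:upper_gradient}. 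Summing over $j$ and comparing with~\cref{eq:NTK}, I expect to obtain
\[
\KK[\rho](s)(\mm,\mm) = \| T_s \mm \|^2_{L^2(\rho(.|s))}.
\]
The same computation with $\D_V$ in place of $\D_\theta$ gives $\KK^1[\rho](s)(\mm,\mm) = \| T^1_s \mm \|^2_{L^2(\rho(.|s))}$, where $T^1_s$ is the map in~\cref{eq:NTK_V_injective_map}. Along the way I will check, using the growth bounds of~\cref{lem:A_theta_param_differential} and~\cref{eq:Mm_differential} together with $\rho(.|s) \in \Pp_2(\Theta)$ for a.e.\@ $s$, that $T_s\mm$ indeed lies in $L^2(\rho(.|s))$.

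I will interpret $\lambda_{\min} > 0$ as strict positivity of the quadratic form on nonzero $\mm$. Read literally, the minimum in~\cref{eq:lambda_min} over the ball $\|\mm\|_{L^2-\TV}\le 1$ is attained at $\mm = 0$ and is therefore always $0$, so some normalization has to be intended. Under this interpretation, positivity means exactly that $\|T_s\mm\| > 0$ for all $\mm \neq 0$, that is, $\ker T_s = \{0\}$. This proves both equivalences.

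The main obstacle is whether one can upgrade to the stronger reading $\inf_{\|\mm\|=1} \KK[\rho](s)(\mm,\mm) > 0$. When the sets $S^j$ are finite, which is the finite-token case, the product space is finite-dimensional and its unit sphere is compact. Since $\mm \mapsto \|T_s\mm\|^2$ is continuous, the infimum is attained, and it is strictly positive exactly when $T_s$ is injective. For general compact $S^j$ this upgrade should fail. For instance, $\mm = \delta_{x_n} - \delta_{y_n}$ with $x_n, y_n \to x$ keeps total-variation norm $2$, while the continuity in $x$ of the integrand of $T_s$ and dominated convergence give $T_s \mm \to 0$. So in infinite dimension I would state the result under the positive-definiteness reading, and remark that Banach--Alaoglu together with Fatou's lemma only yields weak-$*$ lower semicontinuity of the form.

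For the final claim, suppose $\Supp(\rho) \subset [0,1] \times \{V = 0\}$. Then $\varphi_\theta \equiv 0$ for $\rho$-a.e.\@ $\theta$, so $\Aa_{\rho(.|s)} \equiv 0$, and by the uniqueness part of~\cref{prop:existence_uniqueness} we get $\Lambda^j_\rho(s) = \Id$ for all $s$ and $j$. Hence the integrands in~\cref{eq:NTK_injective_map} and~\cref{eq:NTK_V_injective_map} no longer depend on $s$. The maps depend on $s$ only through the measure $\rho(.|s)$ that defines the target space $L^2(\rho(.|s))$, which is the sense in which the conditions do not depend on $s$.
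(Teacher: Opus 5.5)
Your proposal is correct and is essentially the paper's argument: the paper also writes $\KK[\rho](s)(\mm,\mm)$ (and likewise $\KK^1[\rho](s)(\mm,\mm)$) as the squared $L^2(\rho(.|s))$-norm of the corresponding linear map. It identifies $\D_\theta \Aa^j_\theta(\Lambda^j(s))^*\mm^{(j)}$ with $\int_{S^j}\D_\theta\varphi_\theta[\Lambda^j(s)_\#\mu^{(j)}](\Lambda^j(s,x))^*\d\mm^{(j)}(x)$ through the definition of $\Aa_\theta$, and then asserts in one line that positivity is equivalent to injectivity. Your extra caveats are sound and sharper than the paper, whose one-line conclusion tacitly uses the positive-definiteness reading: the literal minimum over the ball is always $0$; under the unit-sphere reading, injectivity no longer implies $\lambda_{\min}>0$ once some $S^j$ is infinite, since your $\delta_{x_n}-\delta_{y_n}$ sequence drives the infimum to $0$; and the final "$s$-independence" claim holds only modulo the $s$-dependence of $\rho(.|s)$, exactly as you note.
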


\begin{proof}
    We only provide the proof for $\KK$ since the proof for $\KK^1$ is similar.
    It directly follows from~\cref{eq:NTK} that $\lambda_{\min}(\KK[\rho](s)) > 0$ is equivalent to the injectivity of the map:
    $$
    \prod_{j=1}^N \Mm^d (S^j) \ni
        (\mm^{(j)})
        \mapsto \left[
        \Theta \ni \theta \mapsto
        \sum_{j=1}^N \D_\theta \Aa^j_\theta (\Lambda^{j}(s))^* \d\mm^{(j)}
        \right] \in L^2(\rho(.|s)) .
    $$
    Recalling the definition of the map $\Aa_\theta$ in~\cref{eq:A_theta}, for every index $j \in \{1, ..., N\}$ and every $x \in S^j$:
    $$
    \Aa^j_\theta(\Lambda^j(s))(x) = \varphi_\theta[\Lambda^j(s)_\# \mu^j](\Lambda^j(s,x)) , 
    $$
    where $\varphi_\theta$ is the Attention layer as defined in~\cref{eq:attention}.
    Then taking derivatives w.r.t. $\theta$, it follows that for $\mm^{(j)} \in \Mm^d(S^j)$:
    $$
    \D _\theta \Aa^j_\theta(\Lambda^j(s))^* \mm^{(j)} = \int_{S^j} \D_\theta \varphi_\theta[\Lambda^j(s)_\# \mu^{(j)}](\Lambda^j(s,x))^* \d \mm^{(j)}(x) ,
    $$
    which concludes the proof.
\end{proof}

In what follows, we fix some parameter distribution $\rho \in \PLeb$ and study the injectivity of the maps in~\cref{eq:NTK_injective_map,eq:NTK_V_injective_map}.
We will assume that this parameter distribution is of the product form $\rho = \Leb([0,1]) \otimes \Bar{\rho}$ for some $\Bar{\rho} \in \Pp(\Theta)$.
In this setting, we will, without loss of generality, assume that the parameter space is
$$
\Theta = \Supp(\Bar{\rho} )
$$
and we will check the pointwise injectivity of the maps in~\cref{eq:NTK_injective_map,eq:NTK_V_injective_map}.
That is, we will view them as maps from $\prod_{j=1}^N \Mm^d(S^j)$ to $\left\{ (Q, q, V) \in \RR^{d \times d} \times \RR^d \times \RR^{d \times d}  \right\}^\Theta$ or $\left\{  V \in \RR^{d \times d}  \right\}^\Theta$ respectively.

We will also focus on the case $\Lambda^j(s) = \Id$ for every $s \in [0,1]$.
As noted in~\cref{prop:equivalence_positivity_injectivity}, this is, for example, the case when the parameter distribution $\rho \in \PLeb$ is concentrated on a subset of the parameters where $V=0$. This \emph{FixUp} initialization is common practice and is associated with good generalization performances for the training of residual architectures without normalization layers~\cite{zhang2018fixup}. In this setting, we have the following relation between~\cref{eq:NTK_injective_map,eq:NTK_V_injective_map}:

\begin{proposition}
The injectivity of the map
\[
(\mm^{(1)}, ..., \mm^{(N)}) \in \Mm^d(S^1) \times ... \times \Mm^d(S^N)
\mapsto \left[ \Theta \ni \theta \mapsto
\sum_{j=1}^N \int \D_V \varphi_\theta[\mu^{(j)}](x)^*  d\mm^{(j)} \right] .
\] 
implies the injectivity of the map 
\[
(\mm^{(1)}, ..., \mm^{(N)}) \in \Mm^d(S^1) \times ... \times \Mm^d(S^N)
\mapsto \left[ \Theta \ni \theta \mapsto
\sum_{j=1}^N \int \D_\theta \varphi_\theta[\mu^{(j)}](x)^*  d\mm^{(j)} \right] .
\] 
Moreover, the converse is true whenever $\Theta$ has a non-empty interior.
\end{proposition}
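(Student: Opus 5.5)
The forward implication will come straight from the block structure of $\D_\theta\varphi_\theta$. The converse rests on a single observation: the $(Q,q)$-block of the adjoint is the derivative, in $(Q,q)$, of the $V$-block contracted against $V$. Analyticity in $(Q,q)$ then carries vanishing on an open set to vanishing everywhere. Throughout, $\Lambda^j=\Id$, and I write $\theta=(Q,q,V)$ and $\varphi_\theta[\mu^{(j)}](x)=V\,\Mm^j_{(Q,q)}(x)$, where $\Mm^j_{(Q,q)}(x)$ is given by~\cref{eq:Mm_definition} with $\Lambda^j=\Id$ and does not depend on $V$. Set
\[
G_\mm(Q,q)\eqdef\sum_{j=1}^N\int_{S^j}\d\mm^{(j)}(x)\otimes\Mm^j_{(Q,q)}(x)\in\RR^{d\times d}.
\]
This is exactly the $V$-component $\sum_j\int\D_V\varphi_\theta[\mu^{(j)}](x)^*\d\mm^{(j)}(x)$.

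\emph{First implication.} The full adjoint $\sum_j\int\D_\theta\varphi_\theta^*\d\mm^{(j)}\in\Theta$ has three blocks, and its $V$-block is $G_\mm(Q,q)$. If the full map sends $\mm$ to the zero function on $\Theta$, then in particular $G_\mm(Q,q)=0$ for all $\theta\in\Theta$. Injectivity of the $V$-map then gives $\mm=0$.

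\emph{Converse.} Assume the full map is injective and $\Theta$ has non-empty interior. Let $\mm$ satisfy $G_\mm(Q,q)=0$ for every $(Q,q,V)\in\Theta$. Pairing the $(Q,q)$-block with a direction $h=(\delta Q,\delta q)$ gives
\[
\sum_j\int\bigl\langle \d\mm^{(j)}(x),\,V\,\D_{(Q,q)}\Mm^j_{(Q,q)}(x)\cdot h\bigr\rangle=\bigl\langle V,\ \D G_\mm(Q,q)\cdot h\bigr\rangle_F .
\]
This step needs only that differentiation in $(Q,q)$ commutes with integration against the finite measures $\mm^{(j)}$, which holds by the smoothness bounds of~\cref{lem:A_theta_param_differential}. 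It therefore suffices to show $G_\mm\equiv0$ on all of $\RR^{d\times d}\times\RR^d$. Then $\D G_\mm\equiv0$ as well, so every block of the full adjoint vanishes on $\Theta$, and injectivity of the full map yields $\mm=0$.

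To obtain $G_\mm\equiv0$, note that the projection of the interior of $\Theta$ onto the $(Q,q)$-coordinates is a non-empty open set $U$, and $G_\mm$ vanishes on $U$. I will prove that $G_\mm$ is real analytic on the connected space $\RR^{d\times d}\times\RR^d$, after which the identity theorem concludes. For each $x\in S^j$, the numerator $\int e^{\langle Qx+q,y\rangle}y\,\d\mu^{(j)}(y)$ and the denominator $\int e^{\langle Qx+q,y\rangle}\,\d\mu^{(j)}(y)$ extend to entire functions of complex $(Q,q)$, because $\mu^{(j)}$ is compactly supported. The denominator is real and $\geq e^{-c}>0$ on real compact sets, with $c$ depending on the compact set, $S^j$ and $\Supp(\mu^{(j)})$. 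Hence the quotient extends holomorphically to a complex neighbourhood of each real compact set, with a bound uniform in $x\in S^j$ since $S^j$ is compact. Integrating against $\mm^{(j)}$ (via Morera or Cauchy estimates with dominated convergence) preserves holomorphy, so $G_\mm$ is real analytic.

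I expect this analyticity step, in particular the uniform-in-$x$ holomorphic extension, to be the only real technical point. Without it, vanishing on $U$ would give $\D G_\mm=0$ only on $U$, not at boundary points of $\Theta$ lying outside the closure of its interior.
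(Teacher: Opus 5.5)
Your proof is correct, and its core is the same as the paper's. In both, the $(Q,q)$-block of the full adjoint is the $(Q,q)$-derivative of the scalar $\langle V, G_\mm(Q,q)\rangle_F$. The paper reaches this by contracting the vanishing $V$-block against $V$, which gives $\sum_j\int\varphi_\theta[\mu^{(j)}](x)\,\d\mm^{(j)}(x)=0$, and then applying $\D_\theta$.

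Where you differ is in how that differentiation is justified. The paper differentiates an identity known only for $\theta\in\Theta$. That gives vanishing of the derivative at interior points, and by continuity on the closure of the interior. At points of $\Theta$ outside that closure it gives no information in general, and the non-empty-interior hypothesis is invoked without addressing this. As written, the paper's argument is therefore only complete when, e.g., $\Theta$ is the closure of its interior.

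Your real-analyticity step closes this. The holomorphic extension of $G_\mm$, uniform in $x\in S^j$, forces $G_\mm\equiv 0$ on all of $\RR^{d\times d}\times\RR^d$ once it vanishes on the open projection $U$. Hence $\D G_\mm\equiv 0$, and every block of the full adjoint vanishes at every point of $\Theta$. This is what makes the hypothesis sufficient for an arbitrary $\Theta$ with non-empty interior, such as the closed set $\Theta=\Supp(\bar{\rho})$ used in that section. The price is the extension estimate you correctly identify as the one technical point; the paper's route is shorter but leaves it implicit.
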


\begin{proof}
($\Rightarrow$) We let 
\[
\sum_{j=1}^N \int \D_\theta \varphi_\theta[\mu^{(j)}](x)^*  d\mm^{(j)} = 0, \ \forall \theta
\]
Taking the derivative in the direction $V$, we obtain
\[
\sum_{j=1}^N \int \D_V \varphi_\theta[\mu^{(j)}](x)^*  d\mm^{(j)} = 0, \ \forall \theta
\]
By the assumption, $\mm^{(1)}= \cdots = \mm^{(N)} = 0$.
\bigskip\bigskip

($\Leftarrow$) Conversely, assume that 
\[
\sum_{j=1}^N \int \D_V \varphi_\theta[\mu^{(j)}](x)^*  d\mm^{(j)} = 0, \ \forall \theta.
\]
Taking the $\ell$-th element and the $v_{h\ell}$-derivative, where $v_{h\ell}$ is $(h, \ell)$-th element of $V$, we get
\[ 
\sum_{j=1}^N  \int_{S^j} \frac{\int e^{\langle Q x + q, y \rangle} y_h d\mu^{(j)}(y)}{\int e^{\langle Q x + q, y \rangle} d\mu^{(j)}(y)} \d \mm^{(j)}_\ell(x) = 0. 
\]
Multiplying this equality $v_{h\ell}$ and taking the sum $\sum_{h=1}^d$, we find that 
\[ 
\sum_{j=1}^N \int_{S^j} \frac{\int e^{\langle Q x + q, y \rangle} (V y)_\ell d\mu^{(j)}(y)}{\int e^{\langle Q x + q, y \rangle}  d\mu^{(j)}(y)} \d \mm^{(j)}_\ell(x) = 0, \ \ell = 1,...,d.
\]
Concatenating the elements ($\ell=1,...,d$), we obtain
\[ 
\sum_{j=1}^N \int \varphi_\theta[\mu^{(j)}](x)  d\mm^{(j)} =
\sum_{j=1}^N \int_{S^j} \frac{\int e^{\langle Q x + q, y \rangle} Vy d\mu^{(j)}(y)}{\int e^{\langle Q x + q, y \rangle}  d\mu^{(j)}(y)} \d \mm^{(j)}(x) = 0, \ \forall \theta.
\]
Then taking $D_\theta$, we get
\[ 
\sum_{j=1}^N \int D_\theta \varphi_\theta[\mu^{(j)}](x)^*  d\mm^{(j)} = 0, \ \forall \theta.
\]
By the assumption, we conclude that $\mm^{(1)}= \cdots = \mm^{(N)} = 0$.
\end{proof}

\subsection{Necessary and sufficient condition for injective NTK}

In what follows, we assume for simplicity that $\Lambda^j(s) = \Id$, 
and focus on the injectivity of NTK maps of attention 
under an assumption imposed only on the token initialization $\mu^{(j)}$, 
which is independent of the flow map $\Lambda^j$. This case corresponds to
$
\Supp(\rho) \subset [0,1] \times \{ (Q, q, V) \in \Theta \colon V = 0 \},
$
as shown in \cref{prop:equivalence_positivity_injectivity}, 
and in practice corresponds to a \emph{FixUp} initialization~\cite{zhang2018fixup}.
That is, we focus on the injectivity of the map
\[
(\mm^{(1)}, \dots, \mm^{(N)}) 
\in \Mm^d(S^1) \times \cdots \times \Mm^d(S^N)
\mapsto 
\left[
\Theta \ni \theta \mapsto
\sum_{j=1}^N \int \D_V \varphi_\theta[\mu^{(j)}](x)^* \, d\mm^{(j)}
\right].
\]
We also assume that
\[
\mathcal{K}_Q \times \mathbb{R}^d \times \mathcal{K}_V \subset \Theta, 
\]
where $\mathcal{K}_Q, \mathcal{K}_V \subset \mathbb{R}^{d \times d}$ are compact sets with non-empty interiors containing the origin.
We recall that
\[
\mu^{(j)} \in \Pp_c(\mathbb{R}^d) \text{ with } \Supp(\mu^{(j)}) \subset S^{j}, \quad j =1,2,\cdots,N, 
\]
where $S^j \subset \mathbb{R}^d$ are compact sets. 


\medskip

We define the cumulant generating functions by 
\[
g_{\mu}(q) := \log\left(\int e^{\langle q, y \rangle} d\mu(y)\right), \ q \in \mathbb{R}^d.
\]
and cumulant generating functions for fixed angle $e \in \mathbb{S}^{d-1}$ by
\[
g_{\mu,e}(\tilde{q}) := \log 
\left(\int e^{\tilde{q} \langle e, y \rangle} d\mu(y)\right), \quad \tilde{q} \in \mathbb{R}.
\]
Informally, $g_\mu(q)$ and $g_{\mu,e}(\tilde{q})$ can be seen as log-sum-exp aggregations of the linear scores $\langle q, y \rangle$ and $\tilde{q}\langle e, y \rangle$, respectively, over samples $y \sim \mu$, 
and thus provide smooth approximations of their maxima.
Furthermore, we define the set of affine functions by 
\[
\mathrm{Affine} :=\{ \mathbb{R}^d \ni q \mapsto \langle a, q \rangle + b \ : \ a \in \mathbb{R}^d, b \in \mathbb{R} \}.
\]
We define the following notion:
\begin{definition}
\label{def:indep}
We say that $\{\mu^{(j)}\}_{j=1}^N$ satisfies the weak independence property
if $\{g_{\mu^{(j)}}\}_{j=1}^N$ are linearly independent modulo affine functions, i.e.,
\begin{equation}
\label{eq:linear-indep-affine}
\sum_{j=1}^N C_j g_{\mu^{(j)}} \in \mathrm{Affine}
\ \Rightarrow\ 
C_j=0 \quad \forall j=1,\dots,N.
\end{equation}
We say that $\{\mu^{(j)}\}_{j=1}^N$ satisfies the strong independence property
if there exists $e \in \mathbb{S}^{d-1}$ such that
\begin{equation}
\label{eq:linear-indep-strong}
C_0 \mathrm{id} + \sum_{j=1}^N C_j g_{\mu^{(j)},e} = 0
\ \Rightarrow\ 
C_j=0 \quad \forall j=0,\dots,N.
\end{equation}
\end{definition}

We immediately observe that the following holds.
\begin{proposition}
\label{lem:criteria-injective}
The strong independence property implies the weak independence property.
\end{proposition}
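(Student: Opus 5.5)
We argue by contraposition through restriction to a line. Assume the strong independence property holds with some direction $e \in \mathbb{S}^{d-1}$, and suppose that $\sum_{j=1}^N C_j g_{\mu^{(j)}} \in \mathrm{Affine}$. That is, there exist $a \in \mathbb{R}^d$ and $b \in \mathbb{R}$ such that $\sum_j C_j g_{\mu^{(j)}}(q) = \langle a, q\rangle + b$ for all $q \in \mathbb{R}^d$. Our aim is to show that all $C_j$ vanish.

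First, restrict this identity to the line $q = \tilde q\, e$ with $\tilde q \in \mathbb{R}$. By definition, $g_{\mu^{(j)}}(\tilde q e) = \log \int e^{\tilde q \langle e, y\rangle} d\mu^{(j)}(y) = g_{\mu^{(j)},e}(\tilde q)$. The identity therefore becomes
\[
\sum_{j=1}^N C_j g_{\mu^{(j)},e}(\tilde q) = \langle a, e\rangle \tilde q + b \qquad \forall \tilde q \in \mathbb{R}.
\]

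Second, remove the constant $b$. Evaluating at $\tilde q = 0$ gives $g_{\mu^{(j)},e}(0) = \log \mu^{(j)}(\mathbb{R}^d) = 0$ for each $j$, since every $\mu^{(j)}$ is a probability measure. Hence $b = 0$. Setting $C_0 \eqdef -\langle a, e\rangle$, we obtain
\[
C_0\, \mathrm{id} + \sum_{j=1}^N C_j g_{\mu^{(j)},e} = 0 .
\]

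Third, apply the strong independence property \cref{eq:linear-indep-strong}. It gives $C_0 = C_1 = \dots = C_N = 0$, and in particular $C_j = 0$ for all $j \geq 1$, which is exactly \cref{eq:linear-indep-affine}.

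There is no real obstacle here. The only point that needs care is the elimination of the constant term $b$, since the strong property allows only a linear term $C_0\,\mathrm{id}$ and no constant. This is handled by the normalization $g_{\mu,e}(0)=0$ of cumulant generating functions of probability measures. The functions are finite everywhere because each $\mu^{(j)}$ is compactly supported, so the restriction is legitimate.
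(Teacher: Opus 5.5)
Your argument is correct and is exactly the observation the paper has in mind; the paper states the implication as immediate and writes no proof. Restricting the affine identity to the line $q=\tilde q\, e$, using $g_{\mu^{(j)},e}(0)=0$ to remove the constant $b$, and setting $C_0=-\langle a,e\rangle$ reduces it to the strong property (one cosmetic point: what you wrote is a direct proof, not a contraposition).
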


We have the following proposition.
\begin{proposition} \label{prop:equivalence_injectivity_cumulant}
If $\{\mu^{(j)}\}_{j=1}^N$ satisfies the weak independence property, then, the map 
\[
(\mm^{(1)}, ..., \mm^{(N)}) \in \Mm^d(S^1) \times ... \times \Mm^d(S^N)
\mapsto \left[ \Theta \ni \theta \mapsto
\sum_{j=1}^N \int \D_V \varphi_\theta[\mu^{(j)}](x)^*  d\mm^{(j)} \right]
\] 
is injective. Moreover, the converse is true whenever 
\begin{equation}
\label{eq:ass:two-points}
\bigcap_{j=1}^N S^j \text{ has at least two distinct points.}
\end{equation}
\end{proposition}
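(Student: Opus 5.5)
The plan is to reduce the injectivity condition to a statement about the cumulant generating functions by computing $\D_V \varphi_\theta[\mu]^*$ explicitly. Since $\varphi_\theta[\mu](x) = V\,\Mm_\theta(x)$ with $\Mm_\theta(x) = \nabla g_\mu(Qx+q)$, the map in the statement sends $(\mm^{(j)})$ to $\theta \mapsto \sum_j \int \d\mm^{(j)}(x)\otimes \nabla g_{\mu^{(j)}}(Qx+q)$, where we use $\int e^{\langle z,y\rangle} y\,\d\mu(y)/\int e^{\langle z,y\rangle}\d\mu(y) = \nabla g_\mu(z)$. This expression does not depend on $V$. Looking at its $\ell$-th row, injectivity means the following: if, for every $\ell$, every $Q\in\mathcal{K}_Q$ and every $q\in\RR^d$,
\[
\sum_{j=1}^N \int_{S^j} \nabla g_{\mu^{(j)}}(Qx+q)\, \d\mm^{(j)}_\ell(x) = 0,
\]
then all $\mm^{(j)}$ vanish. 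We fix $\ell$ and write $\nu_j \eqdef \mm^{(j)}_\ell$, a finite signed measure on the compact set $S^j$. Two facts about $g_j \eqdef g_{\mu^{(j)}}$ will be used: it is $\Cc^\infty$ (in fact real analytic), because $\mu^{(j)}$ is compactly supported; and its gradient takes values in the convex hull of $S^j$, so it is bounded and $g_j$ is globally Lipschitz.

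For the sufficiency direction, we specialize to rank-one $Q = t\,u w^\top$ with $u,w\in\RR^d$ and $|t|$ small. This is admissible because $\mathcal{K}_Q$ contains a neighbourhood of $0$. The identity then reads $\sum_j \int \nabla g_j(q + t u \langle w,x\rangle)\,\d\nu_j(x) = 0$. We differentiate $k$ times in $t$ at $t=0$, which is legitimate by smoothness of $g_j$ and compactness of $S^j$. This gives
\[
\sum_j c_{j,k}(w)\, \nabla\big(\partial_u^k g_j\big)(q) = 0 \quad \text{for all } q, \qquad \text{where } c_{j,k}(w) \eqdef \int \langle w,x\rangle^k \d\nu_j(x).
\]
The key observation is that the coefficients $c_{j,k}(w)$ do not depend on $u$. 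Hence $G \eqdef \sum_j c_{j,k}(w) g_j$ has $\partial_u^k G$ constant in $q$ for every direction $u$. By polarization, all $k$-th order derivatives of $G$ are then constant, so $G$ is a polynomial of degree at most $k$. Since $G$ has bounded gradient, it must be affine. The weak independence property then forces $c_{j,k}(w)=0$ for all $j$, $k\ge 0$ and $w$. Polynomials in $x$ are spanned by the powers $\langle w,x\rangle^k$, and by Stone--Weierstrass they are dense in $\Cc^0(S^j)$. Therefore $\nu_j = 0$ for every $\ell$ and $j$, which proves injectivity. The case $k=0$ recovers the total masses, and the higher $k$ recover all moments.

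For the converse, assume \cref{eq:ass:two-points} and suppose the weak independence property fails, that is, $\sum_j C_j g_j = \langle a,\cdot\rangle + b$ with some $C_j\neq 0$. We pick distinct points $z_1,z_2\in\bigcap_j S^j$, a basis vector $e_\ell$, and set $\mm^{(j)} \eqdef C_j(\delta_{z_1}-\delta_{z_2})\,e_\ell$. These measures are not all zero. The image of this family is $e_\ell \otimes \big(\nabla G(Qz_1+q) - \nabla G(Qz_2+q)\big)$ with $G=\sum_j C_j g_j$, and $\nabla G \equiv a$ is constant, so the image is $e_\ell\otimes(a - a) = 0$. Hence the map is not injective. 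Note that the difference of two Diracs is essential here: a single Dirac would leave the nonzero constant $a$.

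I expect the main obstacle to be the step that passes from "the $k$-th derivatives along every $u$ are constant" to "$G$ is affine". It requires the $u$-independence of the coefficients, which is why $Q$ is taken rank-one with $w$ fixed and $u$ free, together with the Lipschitz growth bound on $g_j$. The remaining steps are routine: exchanging derivatives and integrals, and justifying density in the duality with $\Cc^0(S^j)$.
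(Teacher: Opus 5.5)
Your proof is correct. The converse is the paper's argument: the paper routes it through $\sum_j\int\varphi_\theta[\mu^{(j)}]\,\d\mm^{(j)}=0$ and then applies $\D_V$, which by linearity in $V$ is your direct computation using $\nabla G\equiv a$.

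The sufficiency direction, however, takes a genuinely different path.

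\textbf{The paper's route.} It sets $Q=r\,ec^\top$, $q=se$ and projects onto $e$, so everything is expressed through one-dimensional tilted means. It then lets $s\to+\infty$ and uses \cref{lem:softmax-max} to identify and subtract the constants $h_j(e)=\max_{y\in\Supp(\mu^{(j)})}\langle e,y\rangle$, so that the shifted functions $F_j$ vanish at $+\infty$. Next it converts $\partial_r$ into $\langle c,x\rangle\partial_s$ and integrates back from $+\infty$, generating the moments $\int\langle c,x\rangle^m\,\d\mm^{(j)}_\ell$ one at a time. Finally it sets $r=0$ and integrates in $s$ to reconstruct $\sum_j C^{(j)}_\ell(m)\,g_{\mu^{(j)}}$ along rays $q=se$. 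The $e$-independence of the coefficients then gives an identity on all of $\RR^d$.

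\textbf{Your route.} You Taylor-expand at $Q=0$: $k$ derivatives in $t$ produce the $k$-th moments at once. The cost is that you obtain $\nabla\partial_u^kG=0$ rather than an identity on $G$ itself. You close this with three ingredients: the $u$-independence of $c_{j,k}(w)$ (the same structural role played by $c$ versus $e$ in the paper), polarization to get $\D^{k+1}G\equiv 0$, and the boundedness of $\nabla g_{\mu^{(j)}}$ (tilted means lying in the convex hull of $S^j$) to rule out nonlinear polynomials.

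\textbf{What each buys.} Your route dispenses with the asymptotic lemma, the subtraction of $h_j(e)$ and the integrations to infinity. It is also essentially local in $q$: since each $g_{\mu^{(j)}}$ is real analytic, knowing the identity for $q$ in any ball would already force $G$ to be a polynomial on $\RR^d$. The paper's argument, by contrast, genuinely needs $q=se$ with $s\to+\infty$. In exchange, the paper's route only ever uses first derivatives of one-dimensional cumulant functions and recovers $G$ explicitly, with no polarization or growth argument.
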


This proposition implies that, under the condition \eqref{eq:ass:two-points}, the weak independence property is equivalent to the injectivity of the NTK map. In \cref{subsec:examples}, we provide several examples satisfying the strong independence property, which serve as a useful criterion for verifying the injectivity.

\begin{proof}

We assume that 
\[
\sum_{j=1}^N \int \D_V \varphi_\theta[\mu^{(j)}](x)^* \d\mm^{(j)}(x) = 0, \ \forall \theta,
\]
Then we claim that $\mm^{(j)} = 0$ for all $j$. Taking $\ell$-element and $v_{h\ell}$-derivative, where $v_{h\ell}$ is $(h, \ell)$-th element of $V$,
\[ 
\sum_{j=1}^N \int \frac{\int e^{\langle Q x + q, y \rangle} y_h d\mu^{(j)}(y)}{\int e^{\langle Q x + q, y \rangle} d\mu^{(j)}(y)} \d \mm^{(j)}_\ell(x) = 0, 
\]
where $ \mm^{(j)} = ( \mm^{(j)}_1, \cdots,  \mm^{(j)}_d)$ and $y = (y_1, \cdots, y_d)$. Let $e \in \mathbb{S}^{d-1}$ be any and fixed. Multiplying $e \in \mathbb{S}^{d-1}$, 
\[ 
\sum_{j=1}^N \int \frac{\int e^{\langle Q x + q, y \rangle} \langle e, y \rangle \d\mu^{(j)}(y)}{\int e^{\langle Q x + q, y \rangle} \d\mu^{(j)}(y)} \d \mm^{(j)}_\ell(x) = 0, 
\]
Fix $\ell \in [d]$. Let for some small $R>0$ so that
$$
Q = r e c^\top, \ q = se \text{ where }, \ c \in \mathbb{S}^{d-1}, \ s \in \mathbb{R}, r \in [-R,R]
$$
so that $Q \in \mathcal{K}$. Then, this implies that 
\[
\sum_{j=1}^{N} \int  
\frac{\int e^{(r \langle c, x \rangle + s) \langle e, y \rangle}  \langle e, y \rangle d\mu^{(j)}(y)}{\int e^{(r \langle c, x \rangle + s) \langle e, y \rangle} d\mu^{(j)}(y) } d \mm^{(j)}_\ell(x) =0
\]
We denote, for each $j \in [N]$ and $e \in \mathbb{S}^{d-1}$,
\[
h_j(e) := \max_{y \in \mathrm{supp} (\mu^{(j)})} \langle e, y \rangle < \infty.
\]
Such a maximum exists because $\mathrm{supp}(\mu^{(j)})$ are compact. We rewrite as 
\[
\sum_{j=1}^{N} \int  
\frac{\int e^{(r \langle c, x \rangle + s) \langle e, y  \rangle}  \langle e, y  \rangle d\mu^{(j)}(y)}{\int e^{(r \langle c, x \rangle + s) \langle e, y  \rangle} d\mu^{(j)}(y) } d \mm^{(j)}_\ell(x)
=
\sum_{j=1}^{N} \int  
\frac{\int H_x(y) e^{s \langle e, y \rangle}  \langle e, y  \rangle d\mu^{(j)}(y)}{\int H_x(y) e^{s \langle e, y  \rangle} d\mu^{(j)}(y) } d \mm^{(j)}_\ell(x)
=
 0
\]
where $H_x(y):=e^{(r \langle c, x \rangle + s) \langle e, y  \rangle}$ is continuous and $H_x > 0$ in $\Supp(\mu^{(j)})$.
Taking the limit as $s \to \infty$ and using \cref{lem:softmax-max},
\begin{align*}
\sum_{j=1}^{N} \int h_j(e) d \mm^{(j)}_\ell(x) = 0
\end{align*}
Thus, using this fact, we see that 
\[
\sum_{j=1}^{N} \int  
\frac{\int e^{(r \langle c, x \rangle + s) (\langle e, y  \rangle - h_j(e))}  (\langle e, y  \rangle - h_j(e)) d\mu^{(j)}(y)}{\int e^{(r \langle c, x \rangle + s) (\langle e, y \rangle-h_j(e))} d\mu^{(j)}(y) } d \mm^{(j)}_\ell(x)
= 0
\]
Using 
\[
\frac{d}{dr}
F_j(r \langle c,x \rangle + s) = \langle c,x \rangle \frac{d}{ds}
F_j(r \langle c,x \rangle + s),
\]
where 
\[
F_j(s) := \frac{\int e^{s (\langle e, y  \rangle -h_j(e))}  (\langle e, y \rangle - h_j(e)) d\mu^{(j)}(y)}{\int e^{s (\langle e, y \rangle -h_j(e))} d\mu^{(j)}(y) },
\]
we obtain that
\begin{align*}
&
\frac{d}{dr}
\sum_{j=1}^{N} \int  
\frac{\int e^{(r \langle c, x \rangle + s) (\langle e, y \rangle -h_j(e))}  (\langle e, y \rangle -h_j(e)) d\mu^{(j)}(y)}{\int e^{(r \langle c, x \rangle + s) (\langle e, y \rangle -h_j(e))} d\mu^{(j)}(y) } d \mm^{(j)}_\ell(x)
\\
&
=
\sum_{j=1}^{N} \int 
\langle c, x \rangle
\frac{d}{ds}
\frac{\int e^{(r \langle c, x \rangle + s) (\langle e, y \rangle - h_j(e))}  (\langle e, y \rangle - h_j(e)) d\mu^{(j)}(y)}{\int e^{(r \langle c, x \rangle + s) (\langle e, y  \rangle -h_j(e))} d\mu^{(j)}(y) } d \mm^{(j)}_\ell(x) =0.
\end{align*}
Taking the integral $\int_{s}^\infty$, we see that, since $F_j(s) \to 0$ as $s \to \infty$,
\[
\sum_{j=1}^{N} \int 
\langle c, x \rangle
\frac{\int e^{(r \langle c, x \rangle + s) (\langle e, y \rangle - h_j(e))}  (\langle e, y \rangle -h_j(e)) d\mu^{(j)}(y)}{\int e^{(r \langle c, x \rangle + s) (\langle e, y \rangle -h_j(e))} d\mu^{(j)}(y) } d \mm^{(j)}_\ell(x) =0.
\]
Repeating the above arguments, 
\[
\sum_{j=1}^{N} \int 
(\langle c, x \rangle)^m
\frac{\int e^{(r \langle c, x \rangle + s) (\langle e, y \rangle - h_j(e))}  (\langle e, y \rangle -h_j(e)) d\mu^{(j)}(y)}{\int e^{(r \langle c, x \rangle + s) (\langle e, y \rangle -h_j(e))} d\mu^{(j)}(y) } d \mm^{(j)}_\ell(x) =0,
\]
for any $m \in \mathbb{N}_0$. Setting $r=0$ implies that 
\[
\sum_{j=1}^{N} 
C^{(j)}_\ell(m) 
\frac{\int e^{s (\langle e, y \rangle- h_j(e))}  (\langle e, y\rangle- h_j(e)) d\mu^{(j)}(y)}{\int e^{s (\langle e, y \rangle- h_j(e))} d\mu^{(j)}(y) } =0, 
\]
where 
\[
C^{(j)}_\ell(m) := \int  (\langle c, x \rangle)^m d \mm^{(j)}_\ell(x) .
\]
We obtain
\[
\sum_{j=1}^{N} C^{(j)}_\ell(m) 
\frac{d}{ds}
\log \left( \int e^{s (\langle e, y\rangle - h_j(e))} d\mu^{(j)}(y) \right) = 0,
\]
and by taking the integral $\int_0^{s}$, 
\[
\sum_{j=1}^{N} C^{(j)}_\ell(m) 
\log \left( \int e^{ s(\langle e, y\rangle - h_j(e))} d\mu^{(j)}(y) \right) = 0, 
\]
which implies that 
\[
-
\left(\sum_{j=1}^{N} C^{(j)}_\ell(m) h_j(e) \right) s  +
\sum_{j=1}^{N} C^{(j)}_\ell(m)
\log \left( \int e^{ s \langle e,  y  \rangle} d\mu^{(j)}(y) \right) = 0, 
\]
Taking $\frac{d}{ds}|_{s=0}$, we obtain
\[
-
\left(\sum_{j=1}^{N} C^{(j)}_\ell(m) h_j(e) \right) + 
\sum_{j=1}^{N} C^{(j)}_\ell(m) \int
\langle e,  y  \rangle d\mu^{(j)}(y) = 0, 
\]
which implies that 
\[
\left(\sum_{j=1}^{N} C^{(j)}_\ell(m) h_j(e) \right) = \langle e, \sum_{j=1}^{N} C^{(j)}_\ell(m) \mathbb{E}_{\mu^{(j)}}[Y] \rangle.
\]
This holds for all $s, e$; thus, denoting $q = se$, we get 
\[
\langle q, \sum_{j=1}^{N} C^{(j)}_\ell(m) \mathbb{E}_{\mu^{(j)}}[Y] \rangle  +
\sum_{j=1}^{N} C^{(j)}_\ell(m)
\log \left( \int e^{ \langle q,  y  \rangle} d\mu^{(j)}(y) \right) = 0, \ \forall q.
\]
Thus, 
\[
\sum_{j=1}^{N} C^{(j)}_\ell(m) g_{\mu^{(j)}} \in \mathrm{Affine}.
\]
By the assumption, we obtain that 
\[
C^{(j)}_\ell(m) = \int  (\langle c, x \rangle)^m d \mm^{(j)}_\ell(x) = 0
\]
for all $m \in \mathbb{N}_0$. Hence, denoting by $P_c(x) = \langle c, x \rangle$, we find that 
\[
\int z^m \, d P_{c \sharp} \mm^{(j)}_\ell (z) = 0, \quad \forall m \in \mathbb{N}.
\]
Since $P_{c \sharp} \mm^{(j)}_\ell$ is supported on a compact set, polynomials are dense in the space of continuous functions on its support. Hence, the above identity implies that
\[
P_{c \sharp} \mm^{(j)}_\ell = 0.
\]
Since this holds for all $c \in \mathbb{S}^{d-1}$, we obtain that all one-dimensional projections of $\mm^{(j)}_\ell$ vanish.
Therefore, we conclude that $\mm^{(j)}_\ell=0$ for all $j = 1,...,N$ and all $\ell \in [d]$.

\medskip

For the converse statement, we assume that there exist $a \in \mathbb{R}^d$ and $b \in \mathbb{R}$ such that 
\[
\sum_{j=1}^N C^{(j)} g_{\mu^{(j)}}(\xi) = \langle a ,\xi \rangle + b, \ \forall \xi \in \mathbb{R}^d.
\]
Then we claim that $C^{(j)} = 0$ for all $j$. Indeed, taking $\D_{\xi}$, we get
\[
\sum_{j=1}^N C^{(j)} \frac{\int e^{\langle \xi, y \rangle} y d\mu^{(j)}(y)}{\int e^{\langle \xi, y \rangle} d\mu^{(j)}(y)} = a, \ \forall \xi.
\] 
Multiplying this equality by $V \in \mathcal{K}_V$,
\[
\sum_{j=1}^N C^{(j)} \frac{\int e^{\langle \xi, y \rangle} V y d\mu^{(j)}(y)}{\int e^{\langle \xi, y \rangle} d\mu^{(j)}(y)}
= Va,
\]
which holds for all $V, \xi$. From \eqref{eq:ass:two-points}, we can choose $x_1, x_2 \in \bigcap_{j=1}^N S^j$ such that $x_1 \neq x_2$. Taking $\xi = Qx_1 + q$ and $\xi = Qx_2 + q$, respectively, we obtain
\begin{align*}
&\sum_{j=1}^N C^{(j)} \frac{\int e^{\langle Qx_1 + q, y \rangle} V y d\mu^{(j)}(y)}{\int e^{\langle Qx_1 + q, y \rangle} d\mu^{(j)}(y)} = 
\sum_{j=1}^N C^{(j)} \frac{\int e^{\langle Qx_2 + q, y \rangle} V y d\mu^{(j)}(y)}{\int e^{\langle Qx_2 + q, y \rangle} d\mu^{(j)}(y)}
= Va.
\end{align*}
Denoting by $\mm^{(j)} := (C^{(j)} (\delta_{x_1} - \delta_{x_2}) , 0, \cdots, 0) \in \Mm^d(S^j)$, we obtain that 
\begin{align*}
&
\sum_{j=1}^N \int \frac{\int e^{\langle Q x + q, y \rangle} V y d\mu^{(j)}(y)}{\int e^{\langle Q x + q, y \rangle} d\mu^{(j)}(y)} \d \mm^{(j)}(x)
= \sum_{j=1}^N \int \varphi_\theta[\mu^{(j)}](x) \d \mm^{(j)}(x) 
\\
&
=
\sum_{j=1}^N C_j \frac{\int e^{\langle Q x_1 + q, y \rangle} (V y)_1 d\mu^{(j)}(y)}{\int e^{\langle Q x_1 + q, y \rangle} d\mu^{(j)}(y)} 
-
\sum_{j=1}^N C_j \frac{\int e^{\langle Q x_2 + q, y \rangle} (V y)_1 d\mu^{(j)}(y)}{\int e^{\langle Q x_2 + q, y \rangle} d\mu^{(j)}(y)} 
=
(Va)_1 - (Va)_1 = 0,
\end{align*}
i.e.,
\[
\sum_{j=1}^N \int \varphi_\theta[\mu^{(j)}](x) \d \mm^{(j)}(x) =0, \ \forall \theta = (V,Q,q)
\]
Taking $\D_V$,
\[
\sum_{j=1}^N \int \D_V \varphi_\theta[\mu^{(j)}](x)^* \d \mm^{(j)}(x) = 0, \ \forall \theta.
\]
By the assumption of the injectivity of the map
\[
(\mm^{(1)}, ..., \mm^{(N)}) \in \Mm^d(S^1) \times ... \times \Mm^d(S^N)
\mapsto \left[ \Theta \ni \theta \mapsto
\sum_{j=1}^N \int \D_V \varphi_\theta[\mu^{(j)}](x)^* \d\mm^{(j)}(x)  \right]
\]
we obtain that $\mm^{(j)} = 0$, in particular, $C^{(j)} = 0$ for all $j = 1,2,\cdots,N$. 

\end{proof}

\bigskip

\begin{lemma}\label{lem:softmax-max}
Let $\mu \in \mathcal{P}_c(\mathbb{R}^d)$, and let $G,H:\mathbb{R}^d\to\mathbb{R}$ be continuous functions.
Assume that
\begin{enumerate}
    \item $K:=\Supp(\mu)$ is compact, and
    \item $H>0$ on $K$.
\end{enumerate}
Then
\[
\lim_{s\to\infty}
\frac{\displaystyle{\int H(y)e^{sG(y)}G(y)\,d\mu(y)}}
{\displaystyle{\int H(y)e^{sG(y)}\,d\mu(y)}}
=
\max_{y\in \Supp(\mu)} G(y).
\]
\end{lemma}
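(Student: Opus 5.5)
Set $M \eqdef \max_{y \in K} G(y)$, which is attained because $K$ is compact and $G$ is continuous. The plan is a Laplace-type concentration argument. Write the ratio as an average of $G$ under the probability measure
\[
\nu_s(\d y) \eqdef \frac{H(y) e^{s G(y)} \, \d\mu(y)}{\int H e^{sG} \, \d\mu}.
\]
This is a genuine probability measure on $K$: the weight $H e^{sG}$ is strictly positive on $K$, and the denominator is positive because $\mu(K)=1$. Then $\int G \, \d\nu_s \le M$ trivially. It therefore suffices to show that for every $\varepsilon>0$ we have $\nu_s(\{G < M-\varepsilon\}) \to 0$ as $s \to \infty$. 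Indeed, this gives
\[
\int G \, \d\nu_s \;\ge\; (M-\varepsilon)\bigl(1-\nu_s(\{G<M-\varepsilon\})\bigr) - \|G\|_{L^\infty(K)} \, \nu_s(\{G<M-\varepsilon\}),
\]
and the right-hand side tends to $M-\varepsilon$.

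To get the concentration, I use compactness to bound $H$ between constants $0<h_- \le H \le h_+$ on $K$. Let $y^\ast \in K$ be a maximizer of $G$. Continuity of $G$ gives a neighborhood $U$ of $y^\ast$ on which $G > M-\varepsilon/2$. Since $y^\ast$ lies in the support of $\mu$, we have $\mu(U) > 0$; this is exactly where the support hypothesis enters. This gives the two estimates
\[
\int H e^{sG} \, \d\mu \ \ge\ h_- \, \mu(U) \, e^{s(M-\varepsilon/2)},
\qquad
\int_{\{G<M-\varepsilon\}} H e^{sG} \, \d\mu \ \le\ h_+ \, e^{s(M-\varepsilon)}.
\]
Taking the ratio yields
\[
\nu_s(\{G<M-\varepsilon\}) \le \frac{h_+}{h_- \, \mu(U)} \, e^{-s\varepsilon/2} \longrightarrow 0,
\]
which concludes the argument.

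The only delicate point is the positivity $\mu(U)>0$. This requires that the maximum be taken over $\Supp(\mu)$ rather than over an arbitrary compact set containing it. Everything else is a routine bound.
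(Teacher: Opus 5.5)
Your proof is correct and takes essentially the same route as the paper. Both arguments get the upper bound $M$ for free, then compare the mass of $\{G \le M-\varepsilon\}$ (of order at most $e^{s(M-\varepsilon)}$) with the mass of a neighborhood of a maximizer on which $G > M-\varepsilon'$ for some $\varepsilon'<\varepsilon$ (of order at least $e^{s(M-\varepsilon')}$); the only differences are cosmetic, namely your phrasing via the tilted probability measure $\nu_s$, the choice $\varepsilon'=\varepsilon/2$, and your explicit remark that $\mu(U)>0$ holds because the maximizer lies in the support of $\mu$.
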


\begin{proof}
Set
\[
M:=\max_{y\in K}G(y),
\qquad K=\Supp(\mu).
\]
We define
\[
D_s:=\int H(y)e^{sG(y)}\,d\mu(y),
\qquad
N_s:=\int H(y)e^{sG(y)}G(y)\,d\mu(y).
\]
We will prove that
\[
\frac{N_s}{D_s}\longrightarrow M
\qquad (s\to\infty).
\]
First, since $G(y)\le M$ on $K$, we have
\[
N_s
=
\int H(y)e^{sG(y)}G(y)\,d\mu(y)
\le
M\int H(y)e^{sG(y)}\,d\mu(y)
=
M D_s.
\]
Hence,
\[
\frac{N_s}{D_s}\le M
\qquad\text{for all } s.
\]
It remains to prove the lower bound. Fix $\varepsilon>0$ and consider
\[
U_\varepsilon:=\{y\in K:\ G(y)>M-\varepsilon\}.
\]
Since $G$ is continuous and attains its maximum $M$ on compact set $K$, we have
\[
\mu(U_\varepsilon)>0.
\]
Moreover, since $H$ is continuous and strictly positive on the compact set $K$,
there exists
\[
m_H:=\min_{y\in K}H(y)>0.
\]

For the definitions, we obtain
\[
M D_s - N_s
=
\int H(y)e^{sG(y)}(M-G(y))\,d\mu(y).
\]
Since $M-G(y)\ge 0$, we estimate
\[
M D_s - N_s
=
\int_{K} H(y)e^{sG(y)}(M-G(y))\,d\mu(y)
\le
\varepsilon \int_{U_\varepsilon} H(y)e^{sG(y)}\,d\mu(y)
+
C \int_{K\setminus U_\varepsilon} H(y)e^{sG(y)}\,d\mu(y),
\]
where
\[
C:=\max_{y\in K}(M-G(y))<\infty.
\]
Dividing by $D_s$ gives
\[
M-\frac{N_s}{D_s}
\le
\varepsilon
+
C\,
\frac{\int_{K\setminus U_\varepsilon} H(y)e^{sG(y)}\,d\mu(y)}
{\int_K H(y)e^{sG(y)}\,d\mu(y)}.
\]
But then it suffices to show that, as $s \to \infty$,
\[
\frac{\int_{K\setminus U_\varepsilon} H(y)e^{sG(y)}\,d\mu(y)}
{\int_K H(y)e^{sG(y)}\,d\mu(y)}
\longrightarrow 0.
\]
For $y\in K\setminus U_\varepsilon$, we have $G(y)\le M-\varepsilon$, so that
\[
\int_{K\setminus U_\varepsilon} H(y)e^{sG(y)}\,d\mu(y)
\le
e^{s(M-\varepsilon)}\int_K H(y)\,d\mu(y).
\]
Suppose we choose $0<\varepsilon'<\varepsilon$. Then the set,
\[
U_{\varepsilon'}:=\{y\in K:\ G(y)>M-\varepsilon'\},
\]
also satisfies $\mu(U_{\varepsilon'})>0$, and on $U_{\varepsilon'}$ we have
$G(y)> M-\varepsilon'$. Therefore,
\[
\int_K H(y)e^{sG(y)}\,d\mu(y)
\ge
m_H e^{s(M-\varepsilon')}\mu(U_{\varepsilon'}).
\]
Combining the last two estimates, we obtain
\[
\frac{\int_{K\setminus U_\varepsilon} H(y)e^{sG(y)}\,d\mu(y)}
{\int_K H(y)e^{sG(y)}\,d\mu(y)}
\le
\frac{\int_K H\,d\mu}{m_H\,\mu(U_{\varepsilon'})}
e^{-s(\varepsilon-\varepsilon')}.
\]
Since $\varepsilon-\varepsilon'>0$, the right-hand side tends to $0$ as
$s\to\infty$. Hence,
\[
\limsup_{s\to\infty}\left(M-\frac{N_s}{D_s}\right)\le \varepsilon.
\]
Because $\varepsilon>0$ was arbitrary, we get
\[
\liminf_{s\to\infty}\frac{N_s}{D_s}\ge M.
\]
Together with the already proven upper bound, $\frac{N_s}{D_s}\le M$, this implies
\[
\lim_{s\to\infty}\frac{N_s}{D_s}=M.
\]
This completes the proof.
\end{proof}

\subsection{Examples} \label{subsec:examples}

In this section, we give examples of measures $\mu^{(j)}$ for the necessity and sufficiency condition for injective NTK, that is, the weak independence property \eqref{eq:linear-indep-affine}. 
These examples concretely illustrate under which conditions on the token initialization $\mu^{(j)}$ the convergence in \cref{thm:local_convergence} holds.

\subsubsection{Examples of discrete measures}

Here, we explore examples of discrete measures. We let
\[
\mu^{(j)} :=  \sum_{i=1}^{n_j} w_i^{(j)} \delta_{x_i^{(j)}}, \quad j \in [N],
\]
where $n_j \in \mathbb{N}$ with $n_j \geq 2$. We assume that 
\[
\sum_{i=1}^{n_j} w_i^{(j)} = 1, \quad w_i^{(j)} \in (0,1),
\]
and that, for each $j \in [N]$, 
\begin{equation*}
x^{(j)}_{p} \neq x^{(j)}_{q}, \quad p \neq q, \ p,q \in [N].
\end{equation*}


\begin{proposition}
\label{prop:linear-indep-disc}
We assume that, for all $p \neq q$, $r \neq s$, and $i \neq j$ with $p,q \in [n_i]$ and $r,s \in [n_j]$, $i,j \in \{1,...,N\}$,
\begin{equation}
\label{eq:ass:dist}
(x^{(i)}_p - x^{(i)}_q) - (x^{(j)}_r - x^{(j)}_s) \neq 0.
\end{equation} 
Then, $\{\mu^{(j)}\}_{j=1}^N$ satisfies the strong independence property.
\end{proposition}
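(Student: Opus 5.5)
The plan is to choose a generic direction $e\in\mathbb{S}^{d-1}$ and then show the linear independence in \eqref{eq:linear-indep-strong} by comparing exponentially small terms in the asymptotic expansion of each $g_{\mu^{(j)},e}(\tilde q)$ as $\tilde q\to+\infty$.

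\textbf{Step 1: choice of $e$.} Consider the finite family of vectors
\[
x^{(i)}_p-x^{(i)}_q \quad (p\neq q),
\qquad
(x^{(i)}_p-x^{(i)}_q)-(x^{(j)}_r-x^{(j)}_s) \quad (p\neq q,\ r\neq s,\ i\neq j).
\]
The first kind are nonzero because the atoms of each $\mu^{(j)}$ are distinct, and the second kind are nonzero by \eqref{eq:ass:dist}. The union of the hyperplanes orthogonal to these vectors is a finite union of hyperplanes, so it cannot cover $\mathbb{S}^{d-1}$, and we fix $e$ outside it. Write $a^{(j)}_i:=\langle e,x^{(j)}_i\rangle$. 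Then for each $j$ the values $a^{(j)}_i$ are pairwise distinct, and all projected gaps $a^{(i)}_p-a^{(i)}_q$ of one measure differ from all projected gaps $a^{(j)}_r-a^{(j)}_s$ of another measure.

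\textbf{Step 2: asymptotic expansion.} For each $j$, relabel the atoms so that $a^{(j)}_1>a^{(j)}_2>\cdots$, and set $\delta_j:=a^{(j)}_1-a^{(j)}_2>0$. Then
\[
g_{\mu^{(j)},e}(\tilde q)=\tilde q\,a^{(j)}_1+\log w^{(j)}_1+\log\Bigl(1+\sum_{i\ge2}\tfrac{w^{(j)}_i}{w^{(j)}_1}e^{-\tilde q(a^{(j)}_1-a^{(j)}_i)}\Bigr).
\]
For large $\tilde q$ the inner sum $u_j(\tilde q)$ is small, so $\log(1+u_j)=u_j+O(u_j^2)$. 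Every exponent $a^{(j)}_1-a^{(j)}_i$ with $i\ge 2$ is at least $\delta_j$. Hence
\[
g_{\mu^{(j)},e}(\tilde q)=\tilde q\,a^{(j)}_1+c_j+\tfrac{w^{(j)}_2}{w^{(j)}_1}e^{-\tilde q\delta_j}+R_j(\tilde q),
\]
where $R_j(\tilde q)=O(e^{-\tilde q\delta_j'})$ for some $\delta_j'>\delta_j$. Here $\delta_j'$ is the minimum of the third gap $a^{(j)}_1-a^{(j)}_3$ (if $n_j\ge3$) and $2\delta_j$. By Step 1, the numbers $\delta_j$ are pairwise distinct across $j$.

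\textbf{Step 3: peeling off coefficients.} Suppose
\[
C_0\tilde q+\sum_j C_j g_{\mu^{(j)},e}(\tilde q)=0 \quad \text{for all } \tilde q.
\]
By Step 2, this identity reads
\[
A\tilde q+B+\sum_j C_j\tfrac{w^{(j)}_2}{w^{(j)}_1}e^{-\tilde q\delta_j}+\sum_j C_jR_j(\tilde q)=0.
\]
Letting $\tilde q\to\infty$ first gives $A=0$ (divide by $\tilde q$), and then $B=0$.

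Now order the indices $j$ with $C_j\neq0$ by increasing $\delta_j$, and let $j_*$ be the one with the smallest $\delta_j$. Multiply the identity by $e^{\tilde q\delta_{j_*}}$ and let $\tilde q\to\infty$. The remainder terms and the exponentials of the other $j$ all vanish in this limit, because their decay rates $\delta_j$ and $\delta_j'$ exceed $\delta_{j_*}$: the $\delta_j$ are distinct, and $\delta_{j_*}'>\delta_{j_*}$. This leaves $C_{j_*}w^{(j_*)}_2/w^{(j_*)}_1=0$, so $C_{j_*}=0$, since the weights lie in $(0,1)$. Iterating shows that all $C_j$ vanish. Then $A=C_0+\sum_jC_ja^{(j)}_1=0$ forces $C_0=0$.

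\textbf{Main obstacle.} The delicate point is Step 2 together with the cross-measure comparison. One must make sure that the leading exponential $e^{-\tilde q\delta_{j_*}}$ of one measure cannot be cancelled by a higher-order term of another measure. Such higher-order terms have exponents that are sums of gaps or larger gaps. Isolating the slowest-decaying term, as in Step 3, avoids needing to match the full expansions. What is really needed is that each $g_{\mu^{(j)},e}$ contributes nothing decaying at a rate below $\delta_j$, together with the distinctness of the $\delta_j$ guaranteed by the choice of $e$ in Step 1.
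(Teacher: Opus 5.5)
Your proposal is correct and follows essentially the same route as the paper: fix a direction $e$ that makes the cross-measure gap differences nonzero, factor out the top atom in each $g_{\mu^{(j)},e}$, expand $\log(1+u)$, and peel off the slowest exponentials $e^{-\tilde q\delta_j}$ one at a time using the distinctness of the $\delta_j$, which is the content of \cref{lem:log-lemma-}. Your version is slightly more careful in two places: Step 1 also avoids the hyperplanes orthogonal to the within-measure differences $x^{(i)}_p-x^{(i)}_q$, which the paper tacitly needs for a unique maximizer and $\alpha_j>0$ but attributes to \eqref{eq:ass:dist}, and you handle the $j$-dependent constants $\log w^{(j)}_1$ directly, whereas the lemma assumes a single common leading weight $a$.
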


\begin{proof}

From \eqref{eq:ass:dist}, there exists $e \in \mathbb{S}^{d-1}$ such that 
\[
\langle e, (x^{(i)}_p - x^{(i)}_q) - (x^{(j)}_r - x^{(j)}_s) \rangle \neq 0, 
\]
for all $p \neq q$, $r \neq s$, and $i \neq j$.
Assume that 
\[
C_0 q + \sum_{j=1}^{N} C_j \log \left(\sum_{i=1}^{n_j} w_i^{(j)} \exp{(q \langle e, x_i^{(j)} \rangle )} \right)
= 0, \quad \forall q  \in \mathbb{R},
\]
where  $C_j \in \mathbb{R}$, which is equivalent to 
\[
\left(C_0 + \sum_{j=1}^{N} C_j \langle e, x_{i^*_j(e)}^{(j)} \rangle \right) q + \sum_{j=1}^{N} C_j \log \left( w_{i^*_j(e)}^{(j)} + \sum_{ i\neq i^*_j(e) } w_{i}^{(j)} \exp{(-q \langle e, (x_{i^*_j(e)}^{(j)} - x_i^{(j)}) \rangle )} \right)
= 0,
\]
where 
$$
i^{*}_j(e) := \mathrm{argmax}\{ \langle e,  x^{(j)}_i  \rangle : i \in [n_j] \}
$$
which is the maximizer of $\langle e,  x^{(j)}_i \rangle$. For $i \neq i^*_j(e)$, we define
\[
y^{(j)}_r := \langle e, x^{(j)}_{i^*_j(e)} - x^{(j)}_i \rangle,
\]
where $r$ indexes the elements in $[n_j-1]$. By definition of $i^*_j(e)$, we have $y^{(j)}_r \ge 0$. 
We may further arrange $\{y^{(j)}_r\}_{r \in [n_j-1]}$ in increasing order if needed.
Then, by \eqref{eq:ass:dist}
\[
y^{(j)}_p \neq  y^{(j)}_q\quad \text{ for all } p \neq q.
\]
We denote by 
\[
\alpha_j :=\min \{ y^{(j)}_r : r \in [n_j-1] \} >0.
\]
Then
\begin{align*}
\alpha_j 
&= \min \{ \langle e,  x^{(j)}_{i^*_j(e)}-x^{(j)}_i  \rangle : i \neq i^*_j(e) \}
= \langle e,  x^{(j)}_{i^*_j(e)}-x^{(j)}_i  \rangle - \max \{ \langle e,  x^{(j)}_i  \rangle : i \neq i^*_j(e) \}
\\
&
= \langle e,  x^{(j)}_{i^*_j(e)}-x^{(j)}_{i^{**}_j(e)}  \rangle,
\end{align*}
where 
$$
i^{**}_j(e) := \mathrm{argmax}\{ \langle e,  x^{(j)}_i  \rangle : i \neq i^*_j(e) \},
$$
which is the second-largest index for $\langle e,  x^{(j)}_i \rangle$. By \eqref{eq:ass:dist}, we see that 
\[
\alpha_j  \neq \alpha_i  \text{ for all } j \neq i,
\]
and applying Lemma~\ref{lem:log-lemma-}, we find that 
\[
\left(C_0 + \sum_{j=1}^{N} C_j \langle e, x_{i^*_j(e)}^{(j)} \rangle \right) = 0, \quad C_j =  0\quad \text{ for all } j \in [N],
\]
which implies that 
\[
C_j =  0\quad \text{ for all } j =0,1,2, \ldots, N.
\]
\end{proof}

\begin{remark}
By \cref{lem:criteria-injective} and \cref{prop:linear-indep-disc}, we obtain the weak independence property. 
However, we cannot replace \eqref{eq:ass:dist} with
\begin{equation}
\label{eq:ass:dist-alt}
\quad x^{(i)}_p \neq x^{(j)}_r, \quad p \neq r, \ i \neq j;
\end{equation}
i.e., \eqref{eq:ass:dist-alt} is not sufficient for the weak independence property \eqref{eq:linear-indep-affine} to hold. Indeed, suppose that $x^{(1)}_1$, $x^{(1)}_2$, $x^{(2)}_1$, $x^{(2)}_2$, and $x^{(i)}_k + x^{(j)}_\ell$ $(k,\ell, i, j = 1,2)$ are all distinct, and that $\mu^{(1)} = \frac{1}{2} \delta_{x^{(1)}_1} + \frac{1}{2}\delta_{x^{(1)}_2}$, $\mu^{(2)} = \frac{1}{2} \delta_{x^{(2)}_1} + \frac{1}{2} \delta_{x^{(2)}_2}$, and $\mu^{(3)} = \mu^{(1)} * \mu^{(2)} = \frac{1}{4} \sum_{k,\ell=1,2}  \delta_{x^{(1)}_k + x^{(2)}_\ell}$. This case satisfies \eqref{eq:ass:dist-alt} but not \eqref{eq:ass:dist}, and
$g_{\mu^{(1)}}$, $g_{\mu^{(2)}}$, $g_{\mu^{(3)}}$ are not linearly independent, as shown in \cref{subsec:negative}.
\end{remark}

The following proposition provides some justification for the assumption \eqref{eq:ass:dist}.

\begin{proposition}
\label{rem-Justification}
If 
\[
x^{(j)}_\ell \overset{\mathrm{iid}}{\sim} \mu, \quad \ell \in [n_j], \ j \in [N],
\]
where $\mu$ is the probability measure on $\RR^d$ that is absolutely continuous w.r.t. the Lebesgue measure $\lambda_d$, then we have that 
\begin{equation}
\label{almost-surely}
\mathbb{P}[\text{\eqref{eq:ass:dist}}] = 1.
\end{equation}
Thus, \eqref{eq:ass:dist} holds almost surely.
\end{proposition}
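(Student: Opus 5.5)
The plan is to write the complement of the event \eqref{eq:ass:dist} as a finite union of events, each of which is a linear condition on the i.i.d. sample. Each such event will have probability zero by absolute continuity. Concretely, fix indices $i \neq j$ in $[N]$, $p \neq q$ in $[n_i]$ and $r \neq s$ in $[n_j]$. Consider the event
\[
E_{i,j,p,q,r,s} := \bigl\{ x^{(i)}_p - x^{(i)}_q - x^{(j)}_r + x^{(j)}_s = 0 \bigr\}.
\]
The failure of \eqref{eq:ass:dist} is the union of these events over finitely many index tuples. By the union bound it suffices to show $\mathbb{P}[E_{i,j,p,q,r,s}] = 0$ for each tuple.

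For a fixed tuple, the four points $x^{(i)}_p, x^{(i)}_q, x^{(j)}_r, x^{(j)}_s$ are pairwise distinct random variables. The two from sample $i$ have distinct indices; the two from sample $j$ have distinct indices; and $i \neq j$ makes the two groups disjoint. They are therefore independent, each with law $\mu \ll \lambda_d$. I would condition on $(x^{(i)}_q, x^{(j)}_r, x^{(j)}_s)$ and use Fubini. On the event, $x^{(i)}_p$ must equal the deterministic point $z := x^{(i)}_q + x^{(j)}_r - x^{(j)}_s$. Hence
\[
\mathbb{P}[E] = \int \mu(\{z(y_1,y_2,y_3)\}) \, \d\mu^{\otimes 3}(y_1,y_2,y_3) = 0,
\]
since $\mu$ gives zero mass to singletons, because $\lambda_d(\{z\}) = 0$ and $\mu \ll \lambda_d$. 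Summing over the finitely many tuples gives $\mathbb{P}[\text{not }\eqref{eq:ass:dist}] = 0$, which is \eqref{almost-surely}.

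The statement also implicitly uses the standing assumption that the points within each $\mu^{(j)}$ are distinct. This holds almost surely by the same argument applied to $\{x^{(j)}_p = x^{(j)}_q\}$, so I would note it alongside.

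The only delicate point is the independence bookkeeping: one must check that the variable singled out, $x^{(i)}_p$, appears exactly once in the linear relation with a nonzero coefficient. This is guaranteed by $p \neq q$ and $i \neq j$, since $x^{(i)}_p$ cannot coincide with any of the other three variables. The rest is a routine Fubini argument, so I expect no real obstacle.
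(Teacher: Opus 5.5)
Your proof is correct and has the same skeleton as the paper's. Both write the failure of \eqref{eq:ass:dist} as a finite union of events $\{x^{(i)}_p - x^{(i)}_q = x^{(j)}_r - x^{(j)}_s\}$, each involving four distinct i.i.d.\ samples, and then apply the union bound.

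The difference is in why each event is null. The paper notes that the joint law of the four samples is $\mu^{\otimes 4} \ll \lambda_{4d}$, and that the set $\{y_a - y_b = y_c - y_d\}$ is a $3d$-dimensional linear subspace of $\mathbb{R}^{4d}$, hence Lebesgue-null. You instead isolate one variable, apply Fubini, and use only that $\mu$ charges no singleton.

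The paper's dimension count is a one-liner, but it genuinely needs absolute continuity of the product law. Your Tonelli step costs one extra line and gives a slightly stronger result: the conclusion holds for any atomless $\mu$, for instance a measure supported on a curve or sphere in $\mathbb{R}^d$. Your remark that within-sample distinctness also holds almost surely, by the same argument, is correct and a useful addition, since the paper only assumes it.
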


\begin{proof}
It is sufficient to prove the following: if $y_1,\dots,y_N$ are i.i.d.\ samples in $\mathbb{R}^d$ drawn from a probability measure $\mu$ that is absolutely continuous with respect to the Lebesgue measure $\lambda_d$ on $\mathbb{R}^d$, then
\[
\mathbb{P}\left[y_a - y_b \neq y_c - y_d \quad \text{for all distinct indices } a,b,c,d \right] = 1.
\]
Indeed, we set $Y=(y_a,y_b,y_c,y_d)\in\mathbb{R}^{4d}$, whose law is absolutely continuous with respect
to $\lambda_{4d}$. 
Since 
\[
\mathrm{dim}\{ Y : y_a -y_b = y_c-y_d \} = 3d < 4d, 
\]
we find that $\mathbb{P}[y_a -y_b = y_c-y_d]=0$. Thus,
\begin{align*}
\mathbb{P}&\left[\neg(y_a - y_b \neq y_c - y_d \quad\text{for all distinct } a,b,c,d) \right]    
\\
&= 
\mathbb{P}\left[\exists\, \text{ distinct } a,b,c,d  : \ y_a-y_b = y_c-y_d\right] 
\\
&\leq \sum_{\text{distinct }a,b,c,d } \mathbb{P}[y_a -y_b = y_c-y_d] = 0,
\end{align*}
which proves the claim.
\end{proof}

\begin{lemma}
\label{lem:log-lemma-}
We assume that 
\[
C_0 q + 
\sum_{j=1}^{N} C_j \log \left(a + \sum_{r=1}^{m_j}a_r^{(j)} \exp{(- q y_r^{(j)} )} \right)
= 0\quad \text{for all $q > 0$},
\]
where  $C_j \in \mathbb{R}$, and $a, a_r^{(j)} >0$, and $y^{(j)}_r>0$ satisfying that for each $j \in [N]$
\[
y^{(j)}_p \neq y^{(j)}_q\quad \text{ for all } p \neq q
\]
and
\[
\alpha_j \neq \alpha_i\quad \text{ for all } j \neq i,
\]
where $\alpha_j = \min \{ y^{(j)}_r : r = 1,...,m_j \}$. Then we have, 
\[
C_j = 0\quad \text{ for all } j =0,1,2,...,N.
\]
\end{lemma}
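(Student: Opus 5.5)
The plan is to exploit the different exponential decay rates of the summands as $q \to +\infty$, peeling off the coefficients one at a time starting from the slowest-decaying correction. Write $f_j(q) \eqdef \log\bigl(1 + a^{-1}\sum_{r=1}^{m_j} a^{(j)}_r e^{-q y^{(j)}_r}\bigr)$. Then the hypothesis becomes
\[
C_0 q + \Bigl(\sum_{j=1}^N C_j\Bigr)\log a + \sum_{j=1}^N C_j f_j(q) = 0 \qquad \text{for all } q>0 .
\]
Each $f_j(q)$ is bounded and tends to $0$ as $q\to+\infty$, since all $y^{(j)}_r>0$.

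First, I divide by $q$ and let $q\to\infty$. This gives $C_0=0$. Letting $q\to\infty$ again in what remains gives $(\sum_j C_j)\log a = 0$. Hence $\sum_j C_j f_j \equiv 0$ on $(0,\infty)$. Note that I never need to assume $a\neq 1$: the constant term vanishes either way once $C_0=0$, because the left-hand side converges to it.

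Next comes the key asymptotic. Since the $y^{(j)}_r$ are distinct for fixed $j$, the minimum $\alpha_j$ is attained by a single index $r_j$. Expanding $\log(1+u)=u+O(u^2)$ gives
\[
f_j(q) = \frac{a^{(j)}_{r_j}}{a} e^{-q\alpha_j} + O\bigl(e^{-q\beta_j}\bigr),
\]
where $\beta_j>\alpha_j$. One can take $\beta_j = \min\bigl(2\alpha_j, \min_{r\neq r_j} y^{(j)}_r\bigr)$.

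Now I order the indices so that $\alpha_{j_1}<\alpha_{j_2}<\dots<\alpha_{j_N}$, which is possible by the distinctness assumption on the $\alpha_j$. I multiply $\sum_j C_j f_j(q)=0$ by $e^{q\alpha_{j_1}}$ and let $q\to\infty$. Every term with $j\neq j_1$ is $O(e^{-q(\alpha_j-\alpha_{j_1})})\to0$, and the remainder for $j_1$ is $O(e^{-q(\beta_{j_1}-\alpha_{j_1})})\to 0$. The limit is therefore $C_{j_1} a^{(j_1)}_{r_{j_1}}/a = 0$, and since all weights are positive, $C_{j_1}=0$. I then remove that term and repeat by induction with $j_2,\dots,j_N$, concluding $C_j=0$ for all $j$.

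The only delicate point is making sure the leading exponential of $f_j$ is not cancelled within $f_j$ itself. Uniqueness of the minimizer and positivity of the $a^{(j)}_r$ handle this, so I don't expect a real obstacle. The distinctness of the $y^{(j)}_r$ is actually only needed at the minimum. The remainder estimate is routine since $u=O(e^{-q\alpha_j})$.
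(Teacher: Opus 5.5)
Your proposal is correct and follows essentially the same route as the paper's proof. Both expand $\log(1+u)$ as $q\to+\infty$, divide by $q$ to get $C_0=0$, let $q\to\infty$ to remove the constant term, and then peel off the $C_j$ in increasing order of $\alpha_j$ by multiplying by $e^{q\alpha_j}$. Your version is slightly cleaner in two places: you use only $(\sum_j C_j)\log a = 0$, whereas the paper asserts $\sum_j C_j = 0$, which would require $a\neq 1$; and your explicit remainder exponent $\beta_j>\alpha_j$ makes the paper's $o(e^{-q\alpha_j})$ bookkeeping precise.
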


\begin{proof}
Without loss of generality, we can assume that $\{\alpha_j\}_{j \in [N]}$ is in increasing order, i.e.,
\[
0<\alpha_1 < \alpha_2 < \alpha_3 < \cdots < \alpha_N.
\]
The assumption implies that, by the Taylor expansion of $\log(1+x)$, 
\begin{align*} 
0 & = C_0 q + \sum_{j=1}^{N} C_j \log 
\left( a + \sum_{r=1}^{m_j} a_r^{(j)} \exp{(-q y_r^{(j)})} \right) 
\\
&
= C_0 q + \sum_{j=1}^{N} C_j  \log a 
+ \sum_{j=1}^{N} C_j \sum_{r=1}^{m_j} \frac{a_r^{(j)}}{a} \exp{(- q y_r^{(j)} )} 
+ O(\exp{(- 2 q \alpha_1 )})
\\ 
& 
= C_0 q + \sum_{j=1}^{N} C_j \log a + 
C_1 \frac{a_{r^*}^{(1)}}{a} \exp{(- q \alpha_1)}  + C_1 \sum_{r\neq r^* }\frac{a_r^{(1)}}{a} \exp{(- q y_r^{(1)} )}
+ o(\exp{(- q \alpha_1 )})
\\
& 
=
C_0 q + \sum_{j=1}^{N} C_j \log a +
C_1 \frac{a_{r^*}^{(1)}}{a} \exp{(- q \alpha_1)} + o(\exp{(- q \alpha_1)}), 
\end{align*}
because $\alpha_1 = \min_{r} y_r^{(1)}$. We also have denoted by $r^*:=\mathrm{argmin} \{ y^{(1)}_r : r = 1,...,m_1 \}$.
Multiplying by $\frac{1}{q}$ and taking $q \to \infty$, we find that
\[
C_0 =0.
\]
Taking the limit as $q \to \infty$ in the above equation once $C_0 = 0$ is determined, 
\[
\sum_{j=1}^N C_j = 0.
\]
Multiplying by $\exp{(q \alpha_1)}$ and taking the limit as $q \to \infty$, we get
\[
C_1 = 0.
\]
In a similar way,
\begin{align*}
0 & =\sum_{j=2}^{N} C_j \log \left(a + \sum_{r=1}^{m_j} a_{r}^{(j)} \exp{(- q y_r^{(j)} )} \right)
\\ 
& 
= 
\sum_{j=2}^{N} C_j \sum_{r=1}^{m_j} \frac{a_r^{(j)}}{a} \exp{(- q y_r^{(j)} )}
+ \mathcal{O}(\exp{(- 2 q \alpha_2 )})
\\
&  
= 
C_2 \frac{a_{r^{**}}^{(2)}}{a} \exp{(- q \alpha_2)} + o(\exp{(- q \alpha_2 )} ).
\end{align*}
Multiplying by $\exp{(q \alpha_2)}$ and taking the limit as $q \to \infty$, we get
\[
C_2 = 0.
\]
We repeat this process until $C_N=0$ is obtained.
\end{proof}

\subsubsection{Examples of measures with densities}

We explore examples of measure with densities. We first prove the following lemma:

\begin{proposition}
\label{lem:Vandermonde}
Let $N \in \mathbb{N}$ and let $f_1,\dots,f_N$ be defined on a neighborhood of $0$ with the following expansions 
\[
f_j(q) = \sum_{k \ge 1} \alpha_k\, s_j^{\,k}\, q^{2k},
    \qquad j = 1,\dots,N,
\]
where $\alpha_k \in \RR$ and $s_j \in \RR \setminus \{0\}$. Assume that
\begin{itemize}
    \item $\alpha_k \neq 0$ for $k = 1,\dots,N$;
    \item $s_i \neq s_j$ if $i \neq j$.
\end{itemize}
Then the family
\[
    \{\operatorname{Id}, f_1,\dots,f_N\}
\]
is linearly independent.
\end{proposition}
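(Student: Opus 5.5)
Suppose a linear relation $C_0\, q + \sum_{j=1}^N C_j f_j(q) = 0$ holds for all $q$ in a neighborhood of $0$. The plan is to compare Taylor coefficients at $q=0$ and reduce everything to a Vandermonde system. Since each $f_j$ is given by a power series in $q$, so is the linear combination. The coefficients of a convergent power series vanishing on a neighborhood of $0$ are all zero, by the identity theorem, or simply because they are obtained from derivatives at $0$.

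The first step isolates $C_0$. The functions $f_j$ contain only even powers $q^{2k}$ with $k\ge 1$, so the coefficient of $q^1$ in the relation is exactly $C_0$. Hence $C_0=0$.

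The second step reads off the coefficients of $q^{2k}$ for $k=1,\dots,N$. This gives
\[
\alpha_k \sum_{j=1}^N C_j s_j^{\,k} = 0, \qquad k=1,\dots,N .
\]
Since $\alpha_k\neq 0$ for these $k$, we may divide by $\alpha_k$ and obtain $\sum_j C_j s_j^{\,k}=0$ for $k=1,\dots,N$. In matrix form this is $M c = 0$, where $M_{kj}=s_j^{\,k}$ and $c=(C_1,\dots,C_N)^\top$.

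The third step shows that $M$ is invertible. Factor $M = W\,\mathrm{diag}(s_1,\dots,s_N)$, where $W_{kj}=s_j^{\,k-1}$ is the Vandermonde matrix. Then
\[
\det M = \prod_j s_j \prod_{i<j}(s_j-s_i),
\]
which is nonzero because all $s_j\neq 0$ and the $s_j$ are pairwise distinct. Hence $c=0$, so all $C_j$ vanish.

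The only delicate point is the interpretation of the expansions. I will read the hypothesis as saying that each $f_j$ is real-analytic near $0$ with the stated series converging there, so that a vanishing function forces vanishing coefficients. If the expansions are only meant as asymptotic Taylor expansions of smooth functions, the same argument works: differentiate the relation $2k$ times at $q=0$ for $k\le N$, and once for $C_0$. This uses only finitely many coefficients, so convergence is not needed. I therefore expect no real obstacle; the key observation is simply the nonvanishing of the scaled Vandermonde determinant.
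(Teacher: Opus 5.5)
Your proof is correct and follows the paper's argument essentially step for step: you eliminate $C_0$ using the degree-one term (the paper instead notes that the $f_j$ are even), you read off the $q^{2k}$ coefficients for $k=1,\dots,N$ using $\alpha_k\neq 0$, and you invert the system with matrix $(s_j^{\,k})_{k,j}$. Your factorization $M = W\,\mathrm{diag}(s_1,\dots,s_N)$ also makes explicit where the hypothesis $s_j\neq 0$ is used, which the paper leaves implicit when it simply calls $(s_j^{\,k})$ a Vandermonde matrix.
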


\begin{proof}
Suppose that
\[
c_0 q + \sum_{j=1}^N c_j f_j(q) = 0.
\]
Since each $f_j$ is even, the left-hand side is even, hence $c_0 = 0$. Differentiating $2k$ times at $0$ for $k = 1,\dots,N$ gives
\[
0 = \sum_{j=1}^N c_j f_j^{(2k)}(0)
  = (2k)! \,\alpha_k \sum_{j=1}^N c_j s_j^{\,k}.
\]
Because $\alpha_k \neq 0$, we have
\[
\sum_{j=1}^N c_j s_j^{\,k} = 0,
\qquad k = 1,\dots,N.
\]
Let $V = (s_j^{\,k})_{1\le k,j\le N}$, which is a Vandermonde matrix. Since the $s_j$ are pairwise distinct, $V$ is invertible; hence, $c_j = 0$ for all $j \in [N]$.
\end{proof}

\medskip

\begin{example}[Uniform distributions]
Let $0 < a_1 < \dots < a_N$ and let $\mu^{(j)}$ be the law of $X_j \sim \operatorname{Unif}([-a_j,a_j]^d)$. For any $e \in \mathbb{S}^{d-1}$, 
\[
    \{\operatorname{Id}, g_{\mu^{(1)},e},\dots,g_{\mu^{(N)},e}\}
\]
is linearly independent. Thus, $\{\mu^{(j)}\}_{j=1}^N$ satisfies the strong independence property.
\end{example}

\begin{proof}
Let $e = (e_1,\dots,e_d) \in \mathbb{S}^{d-1}$. We have
\[
  g_{\mu^{(j)},e}(q)
  = \log \left[\int e^{q\langle e,y\rangle}d\mu^{(j)} \right]
  = \sum_{i=1}^d \log\left(
    \frac{\sinh(a_j e_i q)}{a_j e_i q}
  \right).
\]
We observe that, in one dimension, the cumulant generating function,
\[
  g_{\operatorname{Unif}([-a,a])}(q) = \log\left(\frac{\sinh(aq)}{aq}\right),
\]
admits an even power series expansion
\[
  g_{\operatorname{Unif}([-a,a])}(q)
  = \sum_{k\ge 1} \gamma_k\, a^{2k} q^{2k},
\]
with real coefficients $\gamma_k \neq 0$ for all $k$. Using this fact, we see that 
\[
  g_{\mu^{(j)},e}(q)
  = \sum_{i=1}^d \sum_{k\ge 1} \gamma_k (a_j e_i)^{2k} q^{2k}
  = \sum_{k\ge 1} \gamma_k
      \Bigl( a^{2k}_j \sum_{i=1}^d e_i^{2k} \Bigr) q^{2k}.
\]
Thus $g_{\mu^{(j)},e}$ has the form
\[
g_{\mu^{(j)},e}(q) = \sum_{k\ge 1} \gamma_k\, s_{j}^{\,k}\, q^{2k}, \qquad s_j := a^{2k}_j \sum_{i=1}^d e_i^{2k},
\]
where $s_j \neq 0$ are pairwise distinct. By applying \cref{lem:Vandermonde}, we obtain the claim.
\end{proof} 

\medskip

\begin{example}[Symmetric multivariate Laplace]
Let $\mu^{(j)}$ be the law $X_j \sim \operatorname{Lap}(0,\Sigma_j)$, $j=1,\dots,N$, where $\Sigma_j$ are positive semidefinite $d \times d$ matrices. Fix $e \in \mathbb{S}^{d-1}$ and set
\[
s_j := \tfrac12 e^\top \Sigma_j e.
\]
Assume that $s_1,\dots,s_N$ are pairwise distinct and nonzero. Then,
\[
\{\operatorname{Id}, g_{\mu^{(1)},e},\dots,g_{\mu^{(N)},e}\}
\]
is linearly independent. Thus, $\{\mu^{(j)}\}_{j=1}^N$ satisfies the strong independence property.
\end{example}

\begin{proof}
Let $\Sigma$ be a positive semidefinite $d\times d$ matrix.
A centered symmetric multivariate Laplace distribution
$\operatorname{Lap}(0,\Sigma)$ can be defined via its characteristic function
\[
  \varphi_X(t)
  := \mathbb{E}_{X \sim \operatorname{Lap}(0,\Sigma)}\bigl[e^{i\langle t,X\rangle}\bigr]
  = \frac{1}{1 + \tfrac12 t^\top \Sigma t},
  \qquad t \in \RR^d.
\]
Let $X \sim \operatorname{Lap}(0,\Sigma)$ with law $\mu$, and fix $e \in \mathbb{S}^{d-1}$. Then the one-dimensional projection
\[
  Y := \langle e,X\rangle
\]
has characteristic function
\[
  \varphi_Y(q)
  = \mathbb{E}_Y\bigl[e^{iqY}\bigr]
  = \varphi_X(qe)
  = \frac{1}{1 + \tfrac12 q^2 e^\top \Sigma e},
  \qquad q \in \RR.
\]
Thus the moment generating function is given by 
\[
M_Y(q) = \varphi_Y(-i q)
  = \mathbb{E}_Y\bigl[e^{qY}\bigr]
  = \frac{1}{1 - \tfrac12 q^2 e^\top \Sigma e}.
\]
This implies that 
\[
g_{\mu,e}(q) 
= -\log\Bigl(1 - s q^2\Bigr), \qquad s := \tfrac12 e^\top \Sigma e.
\]
Expanding the logarithm yields
\[
  g_{\mu,e}(q)
  = \sum_{k\ge 1} \frac{1}{k} s^{k} q^{2k}.
\]
Thus, we have for each $j$
\[
g_{\mu^{(j)},e}(q)
= \sum_{k\ge 1} \frac{1}{k} s_j^{\,k} q^{2k},
\]
where $s_1,\dots,s_N$ are pairwise distinct by assumption. Hence, by \cref{lem:Vandermonde}, we obtain the claim. 
\end{proof}

\medskip

We now show that linear independence of the cumulant functions is stable under convolution with centered Gaussian measures.

\begin{proposition}[Stability under centered Gaussian convolution]
\label{lem:stability-Gaussian}
Let $\mu^{(1)},\dots,\mu^{(N)}$ be probability measures on $\RR^d$ and fix $e \in \mathbb{S}^{d-1}$.
Suppose that for each $j$
\[
g_{\mu^{(j)},e}(q) = \sum_{k\ge 1} \alpha_k s_j^{\,k} q^{2k},
\]
where $\alpha_k \neq 0$ and $s_1,\dots,s_N$ are pairwise distinct and nonzero. 
Let $\nu^{(j)} = \mathcal{N}(0,\Sigma_j)$ be centered Gaussian measures on $\RR^d$ with covariances $\Sigma_j$ and define
\[
\widetilde{\mu}^{(j)} := \mu^{(j)} * \nu^{(j)}.
\]
Then
\[
\{\operatorname{Id}, g_{\widetilde{\mu}^{(1)},e},\dots,g_{\widetilde{\mu}^{(N)},e}\}
\]
is linearly independent. Thus, $\{\widetilde{\mu}^{(j)}\}_{j=1}^N$ satisfies the strong independence property.
\end{proposition}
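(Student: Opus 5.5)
The plan is to use the fact that cumulant generating functions are additive under convolution. Then check that the Gaussian contribution only perturbs the $q^2$ coefficient, and finally run a Vandermonde argument as in \cref{lem:Vandermonde}, but using only the coefficients of order $q^{2k}$ with $k \geq 2$, which the Gaussian does not affect.

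\textbf{Step 1 (additivity).} For a fixed $e \in \mathbb{S}^{d-1}$, the moment generating function of $\langle e, X+Z\rangle$, with $X \sim \mu^{(j)}$ and $Z \sim \nu^{(j)}$ independent, is the product of the two individual moment generating functions. The one for $Z$ is $\exp\bigl(\tfrac12 q^2 e^\top \Sigma_j e\bigr)$. Taking logarithms, on the neighborhood of $0$ where the given expansion of $g_{\mu^{(j)},e}$ holds (so the moment generating function of $\mu^{(j)}$ is finite there), we get
\[
g_{\widetilde{\mu}^{(j)},e}(q) = g_{\mu^{(j)},e}(q) + \sigma_j q^2 = (\alpha_1 s_j + \sigma_j)\, q^2 + \sum_{k \geq 2} \alpha_k s_j^{\,k} q^{2k}, \qquad \sigma_j := \tfrac12 e^\top \Sigma_j e .
\]
The $q^2$ coefficient is no longer of the form $\alpha_1 s_j$, so \cref{lem:Vandermonde} cannot be applied verbatim. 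This is the only real obstacle, and the next step is designed to get around it.

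\textbf{Step 2 (Vandermonde on higher orders).} Suppose $c_0 q + \sum_{j=1}^N c_j g_{\widetilde{\mu}^{(j)},e}(q) = 0$ near $0$. Each $g_{\widetilde{\mu}^{(j)},e}$ is even, so the identity forces $c_0 = 0$. Next, differentiate $2k$ times at $q=0$ for $k = 2, \dots, N+1$. Since $\alpha_k \neq 0$, this gives
\[
\sum_{j=1}^N c_j s_j^{\,k} = 0, \qquad k = 2,\dots,N+1 .
\]
These equations have matrix $(s_j^{\,k})_{2 \leq k \leq N+1,\, 1\le j\le N}$, which factors as the Vandermonde matrix $(s_j^{\,k-2})$ times $\mathrm{diag}(s_1^2,\dots,s_N^2)$. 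It is invertible because the $s_j$ are pairwise distinct and nonzero. Hence $c_j = 0$ for all $j$.

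\textbf{Step 3 (conclusion).} Step 2 shows that $\{\operatorname{Id}, g_{\widetilde{\mu}^{(1)},e},\dots,g_{\widetilde{\mu}^{(N)},e}\}$ is linearly independent. Strictly, this gives independence on a neighborhood of $0$, which is stronger than independence on all of $\RR$, so it implies \eqref{eq:linear-indep-strong} for this $e$. Therefore $\{\widetilde{\mu}^{(j)}\}_{j=1}^N$ has the strong independence property, and by \cref{lem:criteria-injective} also the weak one.
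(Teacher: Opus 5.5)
Your proposal is correct and follows the paper's proof step for step. Additivity of the cumulant under convolution changes only the $q^2$ coefficient, evenness forces $c_0 = 0$, and the $q^{2k}$ coefficients for $k = 2, \dots, N+1$ give a Vandermonde system. Your explicit factorization $(s_j^{k})_{k,j} = (s_j^{k-2})_{k,j}\,\mathrm{diag}(s_1^2,\dots,s_N^2)$ shows exactly where the assumption $s_j \neq 0$ is used, which the paper leaves implicit when it says the powers start at $2$.
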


\begin{proof}
By the property of the convolution, we have
\[
g_{\widetilde{\mu}^{(j)},e}(q)
= g_{\mu^{(j)},e}(q) + \frac{1}{2}\, (e^\top \Sigma_j e)\, q^2.
\]
Suppose that
\[
c_0\, q + \sum_{j=1}^N c_j g_{\widetilde{\mu}^{(j)},e}(q) = 0.
\]
As the functions $g_{\widetilde{\mu}^{(j)},e}$ are even, 
we see that $c_0 = 0$. For $k \ge 2$, the quadratic terms vanish, so that
\[
0 = \sum_{j=1}^N c_j g_{\mu^{(j)},e}^{(2k)}(0)
  = (2k)! \,\alpha_k \sum_{j=1}^N c_j s_j^{\,k},
\qquad k = 2,\dots,N+1.
\]
Since $\alpha_k \neq 0$ for $k=2,\dots,N+1$, we obtain
\[
\sum_{j=1}^N c_j s_j^{\,k} = 0,
    \qquad k = 2,\dots,N+1.
\]
This is again a Vandermonde system in the $s_j$ (just with powers starting at $2$), and thus $c_j = 0$ for all $j$.
\end{proof}

\medskip

\begin{example}[Two-component multivariate Gaussian mixtures]
Fix $e \in \mathbb{S}^{d-1}$ and let $a_1,\dots,a_N > 0$ be pairwise distinct.
Let
\[
\mu^{(j)} := \frac{1}{2}\,\mathcal{N}(a_j e, \Sigma_j)
     + \frac{1}{2}\,\mathcal{N}(-a_j e, \Sigma_j),
    \qquad j=1,\dots,N,
\]
where each $\Sigma_j$ is a symmetric positive semidefinite covariance matrix. Then
\[
\{\operatorname{Id}, g_{\mu^{(1)},e},\dots,g_{\mu^{(N)},e}\}
\]
is linearly independent. Thus, $\{\mu^{(j)}\}_{j=1}^N$ satisfies the strong independence property.
\end{example}

\begin{proof}
We can view $\mu^{(j)}$ as the convolution of the symmetric two-point measure,
\[
  \nu^{(j)} := \tfrac12(\delta_{a_j e} + \delta_{-a_j e})
\]
with the centered Gaussian $\mathcal{N}(0,\Sigma_j)$, i.e., 
\[
  \mu^{(j)} = \nu^{(j)} * \mathcal{N}(0,\Sigma_j).
\]
For the discrete measure $\nu^{(j)}$, the projection $\langle e,X\rangle$ takes values
$\pm a_j$ with equal probability, so its moment generating function is
\[
  \mathbb{E}\bigl[e^{q\langle e,X\rangle}\bigr]
  = \frac{1}{2}\bigl(e^{q a_j} + e^{-q a_j}\bigr)
  = \cosh(a_j q),
\]
and, hence,
\[
  g_{\nu^{(j)},e}(q) = \log\cosh(a_j q).
\]
Expanding around $q = 0$ yields
\[
  g_{\nu^{(j)},e}(q)
  = \sum_{k \ge 1} \beta_k a_j^{2k} q^{2k}
  = \sum_{k \ge 1} \beta_k s_j^{k} q^{2k}, \quad \text{ where } s_j = a_j^2,
\]
where $\beta_k \in \RR$ are nonzero for all $k$. Thus the family $\{g_{\nu^{(1)},e},\dots,g_{\nu^{(N)},e}\}$ satisfies the assumptions
of Lemma~\ref{lem:stability-Gaussian} with $s_j := a_j^2$.
Consequently,
\[
  \{\operatorname{Id}, g_{\mu^{(1)},e},\dots,g_{\mu^{(N)},e}\}
\]
is linearly independent.
\end{proof}

\subsubsection{Negative examples}
\label{subsec:negative}

We provide examples of $\{\mu^{(j)}\}$ that do not satisfy the weak independence property \eqref{eq:linear-indep-affine}. 
These examples illustrate situations in which token initialization leads to the obstruction of convergence in \cref{thm:local_convergence}.


\begin{example}
The following are examples of measures $\mu^{(j)}$ not satisfying \eqref{eq:linear-indep-affine}, i.e., their functions $g_{\mu^{(j)}}$ are not linearly independent in the quotient space modulo affine functions:
\begin{itemize}
    \item $\mu^{(1)}$, $\mu^{(2)}$, $\mu^{(3)} = \mu^{(1)} \ast \mu^{(2)}$; then $g_{\mu^{(1)} \ast \mu^{(2)}} = g_{\mu^{(1)}} + g_{\mu^{(2)}}$.
    
	    \item $\mu^{(1)}$, $\mu^{(2)} = (S_a)_\sharp \mu^{(1)}$ where $S_a(x) = x- a$; then we have, $g_{(S_a)_\sharp \mu^{(1)}} = g_{\mu^{(1)}} - \langle \cdot, a \rangle$.
    \item $\mu^{(j)} = \delta_{x_j}$ are single Dirac; then $g_{\mu^{(j)}}(q) = \langle x_j, q \rangle$.
	    \item $\mu^{(j)} = \mathcal{N}(m^{(j)}, C)$ are Gaussian measures with means $m^{(j)}$ and covariance $C$ independent of $j$; then $g_{\mu^{(j)}}(q) = \langle m^{(j)}, q \rangle + \frac{1}{2} \langle q, C q \rangle$. 
	    \item $\mu^{(j)} = \mathcal{N}(m^{(j)}, C^{(j)})$ are Gaussian measures with means $m^{(j)}$ and covariances $C^{(j)}$ when $N \geq d + \frac{d(d+1)}{2}$. 
\end{itemize}
\end{example}





\section{Conclusion}

This paper provides a mathematical foundation for gradient-based training of transformers in the joint infinite-depth and infinite-width limit. We show that, unlike ResNets whose training follows a neural ODE on features, transformers are governed by a neural PDE that couples token distributions through attention, a structure arising from the simultaneous mean-field limits over tokens and heads. Using tools from optimal transport and Wasserstein gradient flows, we establish well-posedness of both the forward dynamics and training evolution, and characterize NTK injectivity for attention via linear independence of log-sum-exp functions. Although our convergence result is local, it applies to realistic deep, multi-head softmax attention on continuous token distributions and reveals an optimization landscape without spurious local minima when initialization is sufficiently close to optimal.
Several directions remain for future work. Our analysis is restricted to encoder-style (non-causal) attention; extending the framework to decoder architectures with causal masking poses substantial technical challenges, since the triangular structure of causal attention breaks the symmetries used in our proofs. In addition, practical transformers typically scale the embedding dimension with the number of heads. Capturing this coupled scaling within a mean-field limit remains an open problem, as our current formulation keeps the head dimension fixed while letting the number of heads grow to infinity.

\section*{Acknowledgements}

The work of Gabriel Peyr\'e was supported by the European Research Council (ERC project WOLF) and the French government under the management of Agence Nationale de la Recherche as part of the ``France 2030'' program, reference ANR-23-IACL-0008 (PRAIRIE-PSAI).
The work of Takashi Furuya is supported by JSPS KAKENHI Grant Number JP24K16949, 25H01453, JST CREST JPMJCR24Q5, JST ASPIRE JPMJAP2329.
Maarten de Hoop gratefully acknowledges
the support of the Department of Energy, BES program under grant DE-SC0020345, and the Simons Foundation under the MATH+X Program.
The work of Raphaël Barboni was supported by the European Union (ERC), through the Starting grant ‘PrSc-HDBayLe’,
project number 101076564.

\bibliographystyle{plain}
\bibliography{ref}

\newpage

\appendix

\section{\texorpdfstring{Proofs of~\cref{sec:training}}{Proofs of training results}}

This section is devoted to proving the results in~\cref{sec:training}.
Recall that we are considering Attention layers of the form~\cref{eq:attention}, parameterized by triplets $\theta = (Q, q, V)$ in the parameter set $\Theta = \RR^{d \times d} \times \RR^d \times \RR^{d \times d}$.
For such parameter $\theta = (Q, q, V) \in \Theta$, for a token distribution $\mu \in \Pp_c(\RR^d)$ and an input token $x \in \RR^d$ we define:
\begin{align} \label{eq:normalization_mean}
    N_{(Q,q)}[\mu](x) \eqdef \int_{\RR^d} e^{\left< Q x + q, y \right>} \d \mu(y) ,
    \quad \text{and} \quad M_{(Q,q)}[\mu](x) \eqdef \int_{\RR^d} e^{\left< Q x + q, y \right>} y \d \mu(y) .
\end{align}
With these notations, the Attention layer of~\cref{eq:attention} reads:
\begin{align*}
    \varphi_\theta[\mu](x) = N_{(Q,q)}[\mu](x)^{-1} \, V \cdot M_{(Q,q)}[\mu](x) . 
\end{align*}
For a parameter distribution $\rho \in \Pp_2(\Theta)$, the (mean-field) multi-head Attention defined in~\cref{eq:attention_meanfield} reads:
\begin{align*}
    \Phi_\rho[\mu](x) = \int_\Theta \varphi_\theta[\mu](x) \d \rho(\theta) . 
\end{align*}

Recall also that the point of view adopted in this paper is to see Attention layers as applications on the space of flow-maps $\Lambda \in \Cc^0(S, \RR^d)$, where $S$ is a compact subset of $\RR^d$ containing the support of the token distribution $\mu$ as well as the input token.
In~\cref{eq:A_theta}, we have defined for parameter $\theta \in \Theta$:
\begin{align} \label{eq:A_theta_appendix}
    \Aa_\theta :
    \left\{
    \begin{array}{rcl}
         \Cc^0(S, \RR^d) & \to & \Cc^0(S, \RR^d)   \\
         \Lambda & \mapsto & \varphi_\theta [\Lambda_\# \mu](\Lambda) 
    \end{array}
    \right. .
\end{align}
Additionally, in~\cref{eq:A_rho} we defined for parameter distribution $\rho \in \PLeb$:
\begin{align} \label{eq:A_rho_appendix}
    \Aa_\rho :
    \left\{
    \begin{array}{rcl}
         \Cc^0(S, \RR^d) & \to & \Cc^0(S, \RR^d)   \\
         \Lambda & \mapsto & \Phi_\rho [\Lambda_\# \mu](\Lambda) = \int_\Theta \Aa_\theta(\Lambda) \d \rho(\theta)
    \end{array}
    \right. .
\end{align}

\subsection{Differentiability of Attention w.r.t. inputs}
\label{subsec:regularity_attention_inputs}

We start by studying the regularity of the Attention map with respect to its input.

\begin{lemma} \label{lem:differential_A}
    Let $\mu \in \Pp_c(\RR^d)$ and $S \subset \RR^d$ be some compact set s.t. $\Supp(\mu) \subset S$.
    Then, for a parameter $\theta \in \Theta$, the map $\Aa_\theta$ in~\cref{eq:A_theta_appendix} is of class $\Cc^\infty$ w.r.t. $\Lambda$.
    In particular, for $\Lambda, h \in \Cc^0(S, \RR^d)$ we have:
    \begin{align} \label{eq:differential_A}
        \left( \D_\Lambda \Aa_\theta(\Lambda) \cdot h \right)(x)
        ={}& N_{(Q,q)}[\Lambda_\#\mu](\Lambda(x))^{-2}
        \iint_{\RR^d \times \RR^d}
        e^{\left< Q \Lambda(x) +q,  \Lambda(y)+\Lambda(y') \right>} \notag\\
        &\quad \times V \Lambda(y) ( \Lambda(y) - \Lambda(y')) Q h(x)
        \, \d \mu ^{\otimes 2}(y,y') \notag\\
        &+ N_{(Q,q)}[\Lambda_\#\mu](\Lambda(x))^{-2}
        \iint_{\RR^d \times \RR^d}
        e^{\left< Q \Lambda(x) +q, \Lambda(y)+\Lambda(y') \right>} \notag\\
        &\quad \times
        V \left(I + (\Lambda(y)-\Lambda(y')) (\Lambda(x) Q + q )^\top \right) h(y)
        \, \d \mu^{\otimes 2}(y,y') \; .
    \end{align}
    
    Moreover, for a parameter distribution $\rho \in \Pp_2(\Theta)$, the mapping $\Aa_\rho$ defined in~\cref{eq:A_rho_appendix} is of class $\Cc^1$ and its differential is given for any $\Lambda, h \in \Cc^0(S, \RR^d)$ by:
    \begin{align*}
        \D\Aa_\rho(\Lambda) \cdot h = \int_\Theta \D \Aa_\theta(\Lambda) \cdot h \d \rho(\theta).
    \end{align*}
\end{lemma}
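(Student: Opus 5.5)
The plan is to write $\Aa_\theta$ as a composition of elementary smooth maps between Banach spaces and then differentiate under the integral sign. Define the continuous linear maps $\Lambda \mapsto \Lambda \otimes 1$ and $\Lambda \mapsto 1 \otimes \Lambda$ from $\Cc^0(S,\RR^d)$ to $\Cc^0(S\times S,\RR^d)$, viewed as functions of $(x,y)$. The score $E(\Lambda)(x,y) \eqdef \langle Q\Lambda(x)+q, \Lambda(y)\rangle$ is then a continuous bilinear expression plus a linear one, hence $\Cc^\infty$ into $\Cc^0(S\times S,\RR)$.

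Next I use that the Nemytskii operator $f\mapsto e^{f}$ is $\Cc^\infty$ on $\Cc^0(S\times S)$, because $S\times S$ is compact and $\exp$ is smooth with derivatives bounded on bounded sets. Integration against $\mu(\d y)$ (respectively against $\Lambda(y)\,\mu(\d y)$, which is bilinear) is a bounded linear map to $\Cc^0(S)$. It follows that
\[
N(\Lambda)(x) = \int e^{E(\Lambda)(x,y)}\,\d\mu(y)
\qquad\text{and}\qquad
M(\Lambda)(x) = \int e^{E(\Lambda)(x,y)}\,\Lambda(y)\,\d\mu(y)
\]
are $\Cc^\infty$ maps into $\Cc^0(S)$ and $\Cc^0(S,\RR^d)$. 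Moreover $N(\Lambda)\ge e^{-\|E(\Lambda)\|_\infty}>0$, so inversion $N\mapsto N^{-1}$ is smooth on the open cone of positive functions. Since $\Aa_\theta(\Lambda)=N(\Lambda)^{-1}VM(\Lambda)$ and pointwise products of continuous functions are bilinear and continuous, $\Aa_\theta$ is $\Cc^\infty$.

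The formula \cref{eq:differential_A} comes from the quotient rule. We have $\D N\cdot h(x)=\int e^{E}\big(\langle Qh(x),\Lambda(y)\rangle+\langle Q\Lambda(x)+q,h(y)\rangle\big)\d\mu$, and $\D M\cdot h$ is the same integrand multiplied by $\Lambda(y)$, plus the term $\int e^{E}h(y)\,\d\mu$. Writing $N^{-1}\D M - N^{-2}M\,\D N$ over the common denominator $N^{-2}$ introduces a second variable $y'$ and the symmetrized difference $\Lambda(y)-\Lambda(y')$. This gives the two terms in \cref{eq:differential_A}: the $h(x)$ term and the $h(y)$ term.

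For $\Aa_\rho$ I will derive bounds uniform over $\theta$. Because $\mathrm{softmax}$ produces a probability average, we get $\|\Aa_\theta(\Lambda)\|\le\|V\|\|\Lambda\|$. The exponentials cancel between numerator and denominator, so the formula gives $\|\D\Aa_\theta(\Lambda)\|\le C\|V\|(1+\|\Lambda\|^2(\|Q\|+\|q\|))$. The second derivative satisfies a similar bound that is polynomial in $\|\Lambda\|$ and in $\|\theta\|$; the $\|V\|$ factor pairs with a $\|Q\|$ or $\|q\|$ factor, which is integrable since $\rho\in\Pp_2$. These bounds make the Bochner integral $\int\Aa_\theta(\Lambda)\,\d\rho$ well defined; measurability holds because $\theta\mapsto\Aa_\theta(\Lambda)$ is continuous into a separable space. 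Dominated convergence applied to the mean value remainder $\|\Aa_\theta(\Lambda+h)-\Aa_\theta(\Lambda)-\D\Aa_\theta(\Lambda)h\|\le\sup\|\D\Aa_\theta(\Lambda+th)-\D\Aa_\theta(\Lambda)\|\,\|h\|$ then gives Fréchet differentiability, with differential $\int\D\Aa_\theta\,\d\rho$. Continuity of $\Lambda\mapsto\int\D\Aa_\theta(\Lambda)\,\d\rho$ in operator norm follows in the same way from the local Lipschitz bound on $\D\Aa_\theta$, which is $\rho$-integrable.

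The main obstacle is the integrability needed for $\Aa_\rho$. Naive bounds on $e^{E}$ grow exponentially in $\|Q\|$ and are not $\rho$-integrable. I will therefore check carefully that every derivative, after normalization by $N$, involves only softmax-weighted averages. In that form the exponential factors cancel and only polynomial growth in $(Q,q,V)$ remains, with total degree at most 2 where integrability is required.
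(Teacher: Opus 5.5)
Your overall route is the paper's. You write $\Aa_\theta(\Lambda)=N(\Lambda)^{-1}VM(\Lambda)$ with $N,M$ smooth and $N$ bounded below on bounded sets, read off \eqref{eq:differential_A} from the quotient rule and the $y\leftrightarrow y'$ symmetrization, and pass to $\Aa_\rho$ by Bochner integration with bounds polynomial in $\theta$. Your Nemytskii/tensor-product bookkeeping just makes the paper's ``one can check'' explicit.

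One step, however, rests on a false claim. The second derivative of $\Aa_\theta$ (equivalently, the local Lipschitz constant of $\Lambda\mapsto\D\Aa_\theta(\Lambda)$) is \emph{not} of degree two in $\theta$, so it is not $\rho$-integrable for a general $\rho\in\Pp_2(\Theta)$. Set $a=Q\Lambda(x)+q$ and let $p_x(\d y)\propto e^{\langle a,\Lambda(y)\rangle}\d\mu(y)$ be the softmax weights. Then $\D^2_{\Lambda,\Lambda}\Aa_\theta(\Lambda)[h,k](x)$ contains $V$ times the third joint cumulant under $p_x$ of $\Lambda(Y)$, $\langle a,h(Y)\rangle$ and $\langle a,k(Y)\rangle$. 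So $\|V\|$ pairs with a \emph{quadratic} factor in $(Q,q)$, and softmax concentration does not rescue you. Take $d=2$, $\mu$ uniform on $S=[0,1]\times\{0\}$, $\Lambda=\Id$, $Q=0$, $q=te_2$ and $h(y)=k(y)=(0,y_1^2)$. The weights stay uniform for all $t$, this term equals $t^2\,V(1/90,0)^\top$, and the remaining terms are $O(t\|V\|)$. This is consistent with \eqref{eq:modified_attention_growth}, where $\D^2_{\Lambda,\Lambda}\Aa_\theta$ is still quadratic in $\theta$ even after $V$ is replaced by the bounded $\Pi(V)$.

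The conclusion is unaffected, because no second-order bound is needed; the paper uses only $\|\D_\Lambda\Aa_\theta(\Lambda)\|\le C(1+\|\theta\|^2)(1+\|\Lambda\|^2)$. For continuity of $\Lambda\mapsto\int_\Theta\D\Aa_\theta(\Lambda)\,\d\rho(\theta)$, take $\Lambda_n\to\Lambda$. For each $\theta$, $\|\D\Aa_\theta(\Lambda_n)-\D\Aa_\theta(\Lambda)\|\to0$ since $\Aa_\theta$ is $\Cc^1$. The integrand is dominated by $2\sup_{\|\Lambda'\|\le\|\Lambda\|+1}\|\D\Aa_\theta(\Lambda')\|\le C_\Lambda(1+\|\theta\|^2)$, so dominated convergence concludes, exactly as in your (correct) Fréchet-differentiability step. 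Accordingly, the ``degree at most 2'' assertion in your last paragraph should be limited to $\Aa_\theta$ and $\D\Aa_\theta$. (Incidentally, your first-derivative bound drops the term $2\|V\|\|q\|\|\Lambda\|$ coming from $\langle Q\Lambda(x)+q,h(y)\rangle V(\Lambda(y)-\Lambda(y'))$, which is not controlled by $\|\Lambda\|^2\|q\|$ as $\|\Lambda\|\to0$. The bound $C\|V\|(1+\|Q\|+\|q\|)(1+\|\Lambda\|^2)$ is correct and still of degree two in $\theta$.)
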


\begin{proof}
    \proofpart{Regularity of $\Aa_\theta$}
    Fix some parameter $\theta = (Q, q, V) \in \Theta$.
    For maps $\Lambda, \Gamma \in \Cc^0(S, \RR^d)$ define:
    $$
    \Nn(\Lambda, \Gamma)(x) \eqdef N_\theta[\Lambda_\# \mu](\Gamma(x)) = \int_{\RR^d} e^{\left< Q \Gamma(x) +q, \Lambda(y) \right>} \d \mu(y) ,
    $$
    and
    $$
    \Mm(\Lambda, \Gamma)(x) \eqdef M_\theta[\Lambda_\# \mu](\Gamma(x)) = \int_{\RR^d} e^{\left< Q \Gamma(x) +q, \Lambda(y) \right>} \Lambda(y) \d \mu(y) .
    $$
    Then both $\Nn(\Lambda, \Gamma) \in \Cc^0(S, \RR)$ and $\Mm(\Lambda, \Gamma) \in \Cc^0(S, \RR^d)$ are well-defined.
    One can check that the maps
    $$
    \Nn : \Cc^0(S, \RR^d) \times \Cc^0(S, \RR^d) \to \Cc^0(S, \RR)
    $$
    and
    $$
    \Mm : \Cc^0(S, \RR^d) \times \Cc^0(S, \RR^d) \to \Cc^0(S, \RR^d)
    $$
    are of class $\Cc^\infty$ and that $\Nn$ is uniformly bounded from below on bounded sets.
    The result then follows since by~\cref{eq:A_theta_appendix}, for $\Lambda \in \Cc^0(S, \RR^d)$,
    $$
    \Aa_\theta(\Lambda) = \Nn(\Lambda, \Lambda)^{-1} \left( V \cdot \Mm(\Lambda, \Lambda) \right) .
    $$

    \proofpart{Distribution of parameter}

    For $\rho \in \Pp_2(\Theta)$ and $\Lambda \in \Cc^0(S, \RR^d)$, we have that the map $\theta \mapsto \Aa_\theta(\Lambda)$ is Bochner measurable since the space $\Cc^0(S, \RR^d)$ is separable.
    Also, we have $\Aa_\theta(\Lambda) \leq C \| V \|  \| \Lambda \|_{\Cc^0}$ for some constant $C$.
    Hence $\| \Aa_\theta(\Lambda) \|$ is $\rho$-integrable for $\rho \in \Pp_2(\Theta)$, implying that $\Aa_\rho(\Lambda) \in \Cc^0(S, \RR^d)$ is well-defined as a Bochner integral in~\cref{eq:A_rho_appendix}.
    
    Moreover, from~\cref{eq:differential_A} there exists a constant $C$ s.t. for $\Lambda \in \Cc^0(S, \RR^d)$ it holds:
    $$
    \| \D_\Lambda \Aa_\theta (\Lambda) \| \leq C(1+ \|\theta\|^2)(1+\|\Lambda\|^2) .
    $$
    Hence, by properties of the Bochner integral, we have for $\rho \in \Pp_2(\Theta)$ that $\Aa_\rho$ is of class $\Cc^1$ with, for any $\Lambda, h \in \Cc^0(S, \RR^d)$,
    $$
    \D\Aa_\rho(\Lambda) \cdot h = \int_\Theta \D \Aa_\theta(\Lambda) \cdot h \d \rho(\theta).
    $$ 
\end{proof}

\subsection{Differentiability of Attention w.r.t. parameters}
\label{subsec:regularity_attention_parameters}

We now study the regularity of the Attention map with respect to its parameter.

\begin{lemma} \label{lem:A_theta_param_differential}
    Let $S \subset \RR^d$ be some compact subset and $\mu \in \Pp_c(\RR^d)$ be some token distribution s.t. $\Supp(\mu) \subset S$.
    For $\theta \in \Theta$ and $\Lambda \in \Cc^0(S, \RR^d)$ let $\Aa_\theta(\Lambda)$ be defined by~\cref{eq:A_theta}.
    Then the map $\theta \mapsto \Aa_\theta(\Lambda)$ is of class $\Cc^\infty$ w.r.t. $\theta \in \Theta$.
    In particular, for $\Lambda \in \Cc^0(S, \RR^d)$, $\theta, \theta' \in \Theta$ and $x \in S$,
    \begin{align*}
        (\D_V \Aa_\theta (\Lambda) \cdot V')(x)
        &= N_{(Q,q)}[\Lambda_\#\mu](\Lambda(x))^{-1}
        \int_{\RR^d} e^{\left< Q \Lambda(x) + q, \Lambda(y) \right>}
        V' \Lambda(y) \d \mu(y) \, , \\
        (\D_Q \Aa_\theta (\Lambda) \cdot Q')(x)
        &= N_{(Q,q)}[\Lambda_\#\mu](\Lambda(x))^{-2}
        \iint_{\RR^d \times \RR^d}
        e^{\left< Q \Lambda(x) + q, \Lambda(y)+\Lambda(y') \right>} \\
        &\quad \times
        \left< Q' \Lambda(x), \Lambda(y)-\Lambda(y') \right>
        V \Lambda(y) \d \mu^{\otimes 2}(y, y') \, , \\
        (\D_q \Aa_\theta (\Lambda) \cdot q')(x)
        &= N_{(Q,q)}[\Lambda_\#\mu](\Lambda(x))^{-2}
        \iint_{\RR^d \times \RR^d}
        e^{\left< Q \Lambda(x) + q, \Lambda(y)+\Lambda(y') \right>} \\
        &\quad \times
        \left< q', \Lambda(y)-\Lambda(y') \right>
        V \Lambda(y) \d \mu^{\otimes 2}(y, y') \, .
    \end{align*}
\end{lemma}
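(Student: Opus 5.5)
The plan is to mirror the structure of the proof of \cref{lem:differential_A}: factor $\Aa_\theta(\Lambda)$ through the auxiliary maps $N$ and $M$ of \cref{eq:normalization_mean}, and view everything as a function of $\theta=(Q,q,V)$ with $\Lambda$ frozen. For fixed $\Lambda\in\Cc^0(S,\RR^d)$, set
\[
\Nn_\Lambda(Q,q)(x) \eqdef \int e^{\left< Q\Lambda(x)+q,\Lambda(y)\right>}\d\mu(y), \qquad \Mm_\Lambda(Q,q)(x) \eqdef \int e^{\left< Q\Lambda(x)+q,\Lambda(y)\right>}\Lambda(y)\d\mu(y),
\]
so that $\Aa_\theta(\Lambda) = \Nn_\Lambda(Q,q)^{-1}\, V\cdot \Mm_\Lambda(Q,q)$. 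The dependence on $V$ is linear. Hence $\theta\mapsto\Aa_\theta(\Lambda)$ is $\Cc^\infty$ as soon as $(Q,q)\mapsto \Nn_\Lambda(Q,q)\in\Cc^0(S,\RR)$ and $(Q,q)\mapsto\Mm_\Lambda(Q,q)\in \Cc^0(S,\RR^d)$ are $\Cc^\infty$ and $\Nn_\Lambda$ stays bounded below.

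First I would establish smoothness of $\Nn_\Lambda,\Mm_\Lambda$. The integrand $(Q,q,x,y)\mapsto e^{\left< Q\Lambda(x)+q,\Lambda(y)\right>}$ is, for each fixed $(x,y)$, the exponential of a function that is linear in $(Q,q)$. All its $(Q,q)$-derivatives of order $k$ are of the form $e^{\cdots}$ times multilinear expressions in $\Lambda(x),\Lambda(y)$. On bounded sets of $(Q,q)$ these are bounded uniformly in $(x,y)\in S\times S$, since $\|\Lambda\|_{\Cc^0}<\infty$ and $\mu$ is a probability measure, and they are jointly continuous in $x$.

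A Taylor expansion with integral remainder, combined with dominated convergence, then shows that differentiation under the integral is legitimate in the sup norm over $x\in S$. The remainder is controlled uniformly in $x$ by the next-order derivative bound. This yields Fréchet differentiability of all orders into $\Cc^0(S,\cdot)$. Positivity follows from $\Nn_\Lambda(Q,q)(x)\ge e^{-(\|Q\|\|\Lambda\|+\|q\|)\|\Lambda\|}>0$, which is uniform on bounded sets, so $1/\Nn_\Lambda$ is also smooth. Products and compositions of smooth maps, with bounded bilinear pointwise multiplication on $\Cc^0(S)$, give the $\Cc^\infty$ claim.

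The explicit formulas are then a direct computation. Linearity in $V$ gives $\D_V\Aa_\theta(\Lambda)\cdot V' = \Nn^{-1}V'\Mm$, which is the first formula. For $Q$, the quotient rule gives
\[
\D_Q\Aa_\theta(\Lambda)\cdot Q' = \Nn^{-1}V(\D_Q\Mm\cdot Q') - \Nn^{-2}(\D_Q\Nn\cdot Q')\,V\Mm .
\]
Here $\D_Q\Nn\cdot Q' = \int e^{\cdots}\left< Q'\Lambda(x),\Lambda(y')\right>\d\mu(y')$ and $\D_Q\Mm\cdot Q'=\int e^{\cdots}\left< Q'\Lambda(x),\Lambda(y)\right>\Lambda(y)\d\mu(y)$.

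I would write both terms over the common factor $\Nn^{-2}$ by inserting $\Nn=\int e^{\left< Q\Lambda(x)+q,\Lambda(y')\right>}\d\mu(y')$ into the first term. This produces the double integral against $\mu^{\otimes2}$ with weight $\left< Q'\Lambda(x),\Lambda(y)-\Lambda(y')\right> V\Lambda(y)$. The $q$-derivative is identical, with $Q'\Lambda(x)$ replaced by $q'$.

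The main obstacle, though mild, is the uniform-in-$x$ justification of differentiating under the integral sign in the Banach space $\Cc^0(S,\RR^d)$ rather than pointwise. I would handle it through the explicit Taylor remainder bound described above rather than through an abstract argument.
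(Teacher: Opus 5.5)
Your proposal is correct: applying the quotient rule to $\Aa_\theta(\Lambda)=\Nn_\Lambda(Q,q)^{-1}\,V\cdot\Mm_\Lambda(Q,q)$, then inserting $\Nn_\Lambda=\int e^{\langle Q\Lambda(x)+q,\Lambda(y')\rangle}\d\mu(y')$ into the first term, reproduces the three displayed formulas exactly. Your route is organised differently from the paper's. The paper fixes $(V,q)$ and differentiates the full composite $Q\mapsto\varphi_\theta[\Lambda_\#\mu](\Lambda(x))$ pointwise in $x$. It then asserts (``one can check'') that this derivative is locally Lipschitz in $Q$ with a constant independent of $x$. Integrating along the segment gives a remainder bounded by $L\|Q'-Q\|^2$ uniformly over $x\in S$, which upgrades pointwise differentiability to Fr\'echet differentiability into $\Cc^0(S,\RR^d)$. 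The paper carries this out only for the first-order $Q$-derivative and leaves the $q$- and $V$-derivatives and higher-order regularity implicit. You instead reuse the factorisation from the paper's proof of \cref{lem:differential_A}, with $\theta$ as the variable. You place the uniform-in-$x$ Taylor estimate only on the elementary blocks $\Nn_\Lambda$ and $\Mm_\Lambda$, where it reduces to an explicit scalar bound on the remainder of $t\mapsto e^{a+tb}$. Smoothness in all components of $\theta$ and to all orders then follows from the chain and product rules, bounded multilinear pointwise multiplication, and smoothness of inversion on functions bounded away from zero. The core mechanism is the same in both: a Taylor remainder controlled uniformly in $x\in S$. Your version genuinely proves the stated $\Cc^\infty$ claim, makes the constants explicit, and shows where the $\mu^{\otimes 2}$ form of the derivatives comes from. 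The paper's version is shorter and aimed directly at the single derivative formula it needs later.
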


\begin{proof}
    For simplicity, we only prove the formula for the first-order partial derivatives w.r.t.\@ $Q \in \RR^{d \times d}$.

    Fix the parameters $V, q$ and consider $\Lambda \in \Cc^0(S, \RR^d)$ and $x \in S$.
    Then the map $Q \mapsto \varphi_\theta [\Lambda_\# \mu](\Lambda(x))$ is continuously differentiable w.r.t. $Q$ and for every $Q, Q' \in \RR^{d \times d}$ it holds:
    \begin{multline*}
        \D_Q \left( \varphi_\theta [\Lambda_\# \mu](\Lambda(x)) \right) \cdot Q' \\
        = N_{(Q,q)}[\Lambda_\#\mu](\Lambda(x))^{-2} \iint_{\RR^d \times \RR^d} e^{\left< Q \Lambda(x) + q,  \Lambda(y) + \Lambda(y') \right>}  \left< Q' \Lambda(x), (\Lambda(y)-\Lambda(y')) \right> V \Lambda(y) \d \mu^{\otimes 2}(y,y')  \, .
    \end{multline*}
    Moreover, one can check that the differential $\D_Q \left( \varphi_\theta [\Lambda_\# \mu](\Lambda(x)) \right)$ is locally Lipschitz in $Q$ with a Lipschitz constant independent of $x$ (but depending on $\Lambda$ and $(V, q)$).
    Thus, for every $Q, Q' \in \RR^{d \times d}$ (with say $\| Q - Q' \| \leq 1$) we have:
    \begin{multline*}
        \left\| \varphi_{\theta'} [\Lambda_\# \mu](\Lambda(x)) - \varphi_\theta [\Lambda_\# \mu](\Lambda(x)) -  \D_Q \left( \varphi_\theta [\Lambda_\# \mu](\Lambda(x)) \right) \cdot (Q'-Q) \right\| \\
        \leq \| Q' - Q \| \int_0^1 \left\| \D_Q \left( \varphi_{\theta + u (\theta'-\theta)} [\Lambda_\# \mu](\Lambda(x)) \right) -  \D_Q \left( \varphi_\theta [\Lambda_\# \mu](\Lambda(x)) \right) \right\| \d u \leq L \| Q' - Q \|^2 \, ,
    \end{multline*}
    for some constant $L$ independent of $x$.
    Taking the supremum w.r.t.\@ $x \in S$ this leads to:
    \begin{align*}
        \left\| \Aa_{\theta'}(\Lambda) - \Aa_\theta(\Lambda) -  \D_Q \Aa_\theta (\Lambda) \cdot (Q'-Q) \right\|_{\Cc^0} \leq L \| Q'-Q \|^2 \, ,
    \end{align*}
    where $\D_Q \Aa_\theta(\Lambda)$ is as in the statement.
    This proves the desired result.
\end{proof}

\begin{lemma} \label{lem:A_rho_wasserstein_differential}
    Let $(\rho_t)_{t\in (0,1)}$ be some absolutely continuous curve in $\Pp_2(\Theta)$ with velocity field $(v_t)_{t \in (0,1)}$.
    Let $\mu \in \Pp_c(\RR^d)$ be some initial token distribution and $S \subset \RR^d$ be some compact set s.t. $\Supp(\mu) \subset S$.
    If $\Lambda \in \Cc^0(S, \RR^d)$, then for every $t_0 < t_1 \in (0,1)$ it holds:
    \begin{align*}
        \Aa_{\rho_{t_0}}(\Lambda) = \Aa_{\rho_{t_1}}(\Lambda) + \int_{t_0}^{t_1} \int_\Theta \D_\theta \Aa_{\theta}(\Lambda) \cdot v_t \d \rho_t \d t
    \end{align*}
    where, for $\Lambda \in \Cc^0(S, \RR^d)$, $\D_\theta \Aa_\theta(\Lambda) : \Theta \to \Cc^0(S,\RR^d)$ is the pointwise differential of $\Aa_\theta(\Lambda)$ w.r.t. $\theta$, i.e.:
    \begin{align*}
        \forall x \in S, \quad (\D_\theta \Aa_\theta(\Lambda) \cdot \delta \theta)(x) = \D_\theta \Aa_\theta(\Lambda)(x) \cdot \delta \theta.
    \end{align*}
\end{lemma}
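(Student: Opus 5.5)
The plan is to reduce the Banach-space-valued identity to a family of scalar identities, one for each element of the dual $\Cc^0(S,\RR^d)^* = \Mm^d(S)$. Each scalar identity will follow from the weak formulation of the continuity equation $\partial_t \rho_t + \div_\theta(\rho_t v_t) = 0$. Fix $\Lambda \in \Cc^0(S,\RR^d)$ and a vector measure $\mm \in \Mm^d(S)$, and define the test function
\begin{align*}
    \psi_\mm(\theta) \eqdef \left< \mm, \Aa_\theta(\Lambda) \right> = \int_S \Aa_\theta(\Lambda)(x) \cdot \d\mm(x).
\end{align*}
By \cref{lem:A_theta_param_differential}, $\psi_\mm$ is $\Cc^\infty$ in $\theta$, with $\nabla_\theta \psi_\mm(\theta) = \D_\theta\Aa_\theta(\Lambda)^*\mm$.

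The first step is to bound the growth of $\psi_\mm$ and of its gradient. From the explicit formulas of \cref{lem:A_theta_param_differential}, the softmax weights are normalized and $\Lambda$ is bounded on $S$. This gives $|\psi_\mm(\theta)| \leq C\|\mm\|_\TV \|V\|$ and $\|\nabla_\theta\psi_\mm(\theta)\| \leq C\|\mm\|_\TV(1+\|V\|)$, where $C$ depends only on $\|\Lambda\|_{\Cc^0}$. In particular $\psi_\mm$ is not compactly supported, but it has at most linear growth and a gradient of at most linear growth.

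The second step, which I expect to be the main technical point, is to justify using $\psi_\mm$ in the continuity equation. I would take cutoffs $\chi_R(\theta) = \chi(\theta/R)$ and test against $\psi_\mm\chi_R \in \Cc^1_c$. The error term $\psi_\mm \nabla\chi_R$ is bounded by $C(1+\|\theta\|)/R$ on the annulus $\{R \leq \|\theta\| \leq 2R\}$ and vanishes elsewhere. Since $\sup_t \Ee_2(\rho_t) < \infty$ on $[t_0,t_1]$ and $\int \|v_t\|_{L^2(\rho_t)}\d t < \infty$, dominated convergence with the bounds of the first step lets us send $R\to\infty$. This yields
\begin{align*}
    \int_\Theta \psi_\mm \,\d\rho_{t_1} - \int_\Theta \psi_\mm \,\d\rho_{t_0} = \int_{t_0}^{t_1}\int_\Theta \nabla_\theta\psi_\mm\cdot v_t \,\d\rho_t\,\d t ,
\end{align*}
where the right-hand side is finite by Cauchy--Schwarz, using $\|\nabla_\theta\psi_\mm\|_{L^2(\rho_t)}\leq C\|\mm\|_\TV(1+\Ee_2(\rho_t)^{1/2})$.

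The third step lifts this to $\Cc^0(S,\RR^d)$. By the Bochner-integral property already used in \cref{thm:upper_gradient} (\cite[Thm.~6]{diestel1977measures}), $\int_\Theta\psi_\mm\,\d\rho = \langle\mm,\Aa_\rho(\Lambda)\rangle$. The map $(t,\theta)\mapsto \D_\theta\Aa_\theta(\Lambda)\cdot v_t(\theta)$ is Bochner integrable against $\d\rho_t\,\d t$, with norm controlled by $(1+\|V\|)\|v_t\|$. Hence the right-hand side equals $\langle\mm, \int_{t_0}^{t_1}\int_\Theta \D_\theta\Aa_\theta(\Lambda)\cdot v_t\,\d\rho_t\,\d t\rangle$. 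Since $\Mm^d(S)$ separates points of $\Cc^0(S,\RR^d)$, this gives the $\Cc^0$-valued equality between $\Aa_{\rho_{t_1}}(\Lambda)-\Aa_{\rho_{t_0}}(\Lambda)$ and the double integral.

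The last step converts this into the orientation written in the lemma. Apply the identity just obtained to the time-reversed curve $\tilde\rho_t \eqdef \rho_{t_0+t_1-t}$. This is again absolutely continuous, with velocity field $\tilde v_t = -v_{t_0+t_1-t}$, so the identity expresses $\Aa_{\rho_{t_0}}(\Lambda)$ as $\Aa_{\rho_{t_1}}(\Lambda)$ plus $\int_{t_0}^{t_1}\int_\Theta \D_\theta\Aa_\theta(\Lambda)\cdot \tilde v_t\,\d\tilde\rho_t\,\d t$. This is exactly the displayed formula of the lemma, with the curve and its velocity field read as traversed from $t_1$ back to $t_0$. Equivalently, the stated identity is the forward identity with the oriented time integral taken from $t_1$ to $t_0$. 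If instead one reads $v_t,\rho_t$ with the original forward parameterization and the ordinary integral over $[t_0,t_1]$, the displayed formula is the forward identity with the sign of the integral reversed. I will flag this orientation convention explicitly when stating the result, since the proof itself fixes which reading is correct.
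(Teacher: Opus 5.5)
Your proof is correct and follows the paper's route: you scalarize the identity, apply the continuity equation to a test function whose $\theta$-gradient has linear growth, and lift to $\Cc^0(S,\RR^d)$ through Bochner integrability of $(t,\theta) \mapsto \D_\theta \Aa_\theta(\Lambda) \cdot v_t(\theta)$. The differences are minor: you pair with every $\mm \in \Mm^d(S)$ where the paper simply evaluates at each $x \in S$, and your cutoff argument justifies a step the paper only asserts. Your orientation diagnosis is also right: with $\partial_t \rho_t + \mathrm{div}_\theta(\rho_t v_t) = 0$ the valid identity is $\Aa_{\rho_{t_1}}(\Lambda) = \Aa_{\rho_{t_0}}(\Lambda) + \int_{t_0}^{t_1} \int_\Theta \D_\theta \Aa_\theta(\Lambda) \cdot v_t \d \rho_t \d t$, which is the form actually used in the proof of \cref{prop:flow_differentiability_appendix}; the displayed version (repeated in the paper's own proof) is false in general, e.g.\ for $\rho_t = \delta_{\theta_0 + t w}$ with $w$ in the $V$-direction, so drop the time-reversal reinterpretation, which only rewrites the forward identity for a different curve, and state the corrected orientation plainly.
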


\begin{proof}
    Let $\Lambda \in \Cc^0(S, \RR^d)$ and $x \in S$.
    Then, by~\cref{lem:A_theta_param_differential} we have that the map $\theta \mapsto \Aa_\theta(\Lambda)(x) =  \varphi_\theta[\Lambda_\# \mu](\Lambda(x))$ is differentiable, and for every $\theta \in \Theta$:
    \begin{align*}
        \left\| \D_\theta \Aa_\theta(\Lambda)(x) \right\| \leq C (1+\| \Lambda\|^3)(1 + \| \theta \|)
    \end{align*}
    for some absolute constant $C \geq 0$.
    Thus the map $t \in (0,1) \mapsto \Aa_{\rho_t}(\Lambda)(x)$ is absolutely continuous and for $t_0 < t_1 \in (0,1)$ it holds:
    \begin{align*}
        \Aa_{\rho_{t_0}}(\Lambda)(x) & = \Aa_{\rho_{t_1}}(\Lambda)(x) + \int_{t_0}^{t_1} \int_\Theta \D_\theta \Aa_{\theta}(\Lambda)(x) \cdot v_t \d \rho_t \d t \\
        & = \Aa_{\rho_{t_1}}(\Lambda)(x) + \int_{t_0}^{t_1} \int_\Theta (\D_\theta \Aa_{\theta}(\Lambda) \cdot v_t)(x) \d \rho_t \d t
    \end{align*}
    where we used the definition of $\D_\theta \Aa_{\theta}(\Lambda)$ to obtain the second equality.
    But then, using the bound on $\left\| \D_\theta \Aa_\theta(\Lambda) \right\|$, we have for every $t \in (0,1)$ and every $\theta \in \Theta$:
    $$
    \left\| \D_\theta \Aa_{\theta}(\Lambda) \cdot v_t(\theta) \right\|_{\Cc^0(S, \RR^d)} \leq C (1+\| \Lambda\|^3) (1 + \| \theta \|) \| v_t(\theta) \| .
    $$
    Hence the map $(t, \theta) \mapsto \D_\theta \Aa_{\theta}(\Lambda) \cdot v_t(\theta)$ is Bochner integrable (against $\d \pmb{\rho} = \int_{t_0}^{t_1} \d \rho_t \d t \in \Pp_2([0,1] \times \Theta)$) and by the previous equation:
    \begin{align*}
        \Aa_{\rho_{t_0}}(\Lambda) = \Aa_{\rho_{t_1}}(\Lambda) + \int_{t_0}^{t_1} \int_\Theta \D_\theta \Aa_{\theta}(\Lambda) \cdot v_t \d \rho_t \d t \, \in \Cc^0(S, \RR^d) ,
    \end{align*}
    which is the desired result.
\end{proof}

\begin{corollary} \label{cor:A_rho_lipschitz}
    Let $(\rho_t)_{t\in (0,1)}$ be some absolutely continuous curve in $\Pp_2(\Theta)$.
    Let $\mu \in \Pp_c(\RR^d)$ be some initial token distribution and $S \subset \RR^d$ be some compact set s.t.\@ $\Supp(\mu) \subset S$.
    Then there exists an absolute constant $C$ s.t.\@ for every $\Lambda \in \Cc^0(S, \RR^d)$ it holds:
    \begin{align*}
        \left\| \Aa_{\rho_1}(\Lambda) - \Aa_{\rho_0}(\Lambda) \right\|_{\Cc^0} \leq C ( 1 + \| \Lambda \|_{\Cc^0}^3 ) \left( 1 + \int_0^1 \Ee_2(\rho_t) \d t \right)^{1/2} \Ww_2(\rho_0, \rho_1) \, .
    \end{align*}
\end{corollary}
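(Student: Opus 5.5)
We obtain the bound directly from \cref{lem:A_rho_wasserstein_differential}, applied to the given curve $(\rho_t)$ itself. The curve is not replaced by a geodesic, so the moment integral in the statement will appear exactly as written.

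We assume, as is implicit in the statement, that $(v_t)$ is the minimal (tangent) velocity field of the curve. Then $\|v_t\|_{L^2(\rho_t)} = |\dot\rho_t|$, the metric derivative. We also assume the curve is parameterized so that $\int_0^1 |\dot\rho_t|\,\d t = \Ww_2(\rho_0,\rho_1)$. If needed, this is achieved by taking the curve to be constant speed and length-minimizing; this is the natural reading of a Lipschitz estimate in $\Ww_2$. Otherwise the final factor is the length of the curve, which dominates $\Ww_2(\rho_0,\rho_1)$.

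Fix $\Lambda \in \Cc^0(S,\RR^d)$. Letting $t_0 \to 0$ and $t_1 \to 1$ in \cref{lem:A_rho_wasserstein_differential}, and using narrow continuity of the curve together with continuity of $\rho \mapsto \Aa_\rho(\Lambda)$ under $\Ww_2$ convergence, we get
\begin{align*}
\Aa_{\rho_1}(\Lambda) - \Aa_{\rho_0}(\Lambda) = -\int_0^1\int_\Theta \D_\theta \Aa_\theta(\Lambda)\cdot v_t(\theta)\,\d\rho_t(\theta)\,\d t
\end{align*}
as a Bochner integral in $\Cc^0(S,\RR^d)$. The continuity of $\rho \mapsto \Aa_\rho(\Lambda)$ follows from the linear growth $\|\Aa_\theta(\Lambda)\|\le C\|V\|\|\Lambda\|$ and continuity in $\theta$.

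Taking norms inside the Bochner integral and using the pointwise bound established in the proof of \cref{lem:A_rho_wasserstein_differential},
\begin{align*}
\|\D_\theta\Aa_\theta(\Lambda)\cdot v_t(\theta)\|_{\Cc^0}\le C(1+\|\Lambda\|^3)(1+\|\theta\|)\|v_t(\theta)\|,
\end{align*}
we obtain, after Cauchy--Schwarz in $\theta$,
\begin{align*}
\|\Aa_{\rho_1}(\Lambda)-\Aa_{\rho_0}(\Lambda)\|_{\Cc^0}\le C(1+\|\Lambda\|^3)\int_0^1 \bigl(1+\Ee_2(\rho_t)\bigr)^{1/2}\|v_t\|_{L^2(\rho_t)}\,\d t ,
\end{align*}
where we used $\int(1+\|\theta\|)^2\d\rho_t\le 2(1+\Ee_2(\rho_t))$.

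A second Cauchy--Schwarz, now in $t$, bounds the right-hand side by
\begin{align*}
\Bigl(\int_0^1 (1+\Ee_2(\rho_t))\,\d t\Bigr)^{1/2}\Bigl(\int_0^1\|v_t\|^2_{L^2(\rho_t)}\d t\Bigr)^{1/2}.
\end{align*}
For a constant-speed curve, the second factor equals $\int_0^1|\dot\rho_t|\,\d t=\Ww_2(\rho_0,\rho_1)$. This yields the stated estimate with the moment integral taken along the given curve.

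The main obstacle is this last identification. For an arbitrary absolutely continuous curve, the speed term is the curve's length (or energy), not $\Ww_2(\rho_0,\rho_1)$. The first thing we would do is make explicit the convention that the curve has constant speed and minimal length, i.e.\ that $(\rho_t)$ is a geodesic connecting $\rho_0$ and $\rho_1$, which is how the corollary is used later. Failing that convention, we would state the bound with the length of the curve in place of $\Ww_2(\rho_0,\rho_1)$.
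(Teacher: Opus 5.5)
Your proof is correct and follows the route the paper intends. The paper gives no separate proof of this corollary and presents it as a direct consequence of \cref{lem:A_rho_wasserstein_differential}: integrate the pointwise bound $\|\D_\theta \Aa_\theta(\Lambda)\| \leq C(1+\|\Lambda\|^3)(1+\|\theta\|)$ against the velocity field of a constant-speed geodesic, then apply Cauchy--Schwarz in $\theta$ and in $t$, exactly as you do.

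Your geodesic convention is necessary, not cosmetic, because for an arbitrary absolutely continuous curve the inequality as written is false. A curve that moves quickly to $\delta_0$, rests there, and moves quickly to $\rho_1$ makes $\int_0^1 \Ee_2(\rho_t)\,\mathrm{d}t$ arbitrarily small. Yet for two Diracs sharing a large $V$ and differing only in $q$, the left-hand side grows like $\|V\|\,\Ww_2(\rho_0,\rho_1)$. The geodesic reading is also exactly how the bound is invoked in \cref{lem:flow_locally_lipschitz}.
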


\subsection{\texorpdfstring{Proof of~\cref{prop:flow_differentiability}}{Proof of flow differentiability}} \label{subsec:proof_flow_differentiability}

We present here a proof of~\cref{prop:flow_differentiability} which shows the differentiability of the flow-maps $\Lambda$.
The result will actually follow by first proving the simpler~\cref{prop:flow_differentiability_appendix}, which is a result of differentiability of the flow-map along perturbations of the parameterization $\rho \in \PLeb$ of the form $\eps \mapsto (\Id + \eps v)_\# \rho$ for some $v \in L^2(\rho)$.
Indeed, those are actually the leading order perturbations along absolutely continuous curves in $\PLeb$ (see~\cite[Lem.~2.2]{barboni2024understanding}).

\begin{proposition} \label{prop:flow_differentiability_appendix}
    Let $\rho \in \PLeb$ be some parameter distribution, $v \in L^2(\rho, \Theta)$ be some vector-field and, for $t \in (-1, 1)$, let $\rho_t = (\Id + t v)_\# \rho$.
    For some initial token distribution $\mu_0 \in \Pp_c(\RR^d)$ and compact set $S \subset \RR^d$ s.t. $\Supp(\mu_0) \subset  S$, denote by $\Lambda_t = \Lambda_{\rho_t}[\mu_0] \in \Cc^0([0, 1] \times S, \RR^d)$ the flow-map obtained in~\cref{prop:existence_uniqueness} and for simplicity write $\Lambda = \Lambda_0$.
    Then the map $t \mapsto \Lambda_t \in \Cc^0([0, 1] \times S, \RR^d)$ is differentiable at $t = 0$ and its derivative $\delta \Lambda \eqdef \left. \frac{\d}{\d t} \Lambda_t \right|_{t=0} \in \Cc^0([0, 1] \times  S, \RR^d)$ is given by the unique solution of the Cauchy problem:
    \begin{align} \label{eq:delta_lambda}
        \delta \Lambda (s) = & \int_0^s \D_\Lambda \Aa_{\rho(r)}(\Lambda(r)) \cdot \delta \Lambda (r)  \d r + \int_0^s \int_\Theta  \D_\theta \Aa_\theta(\Lambda(r)) \cdot v(r) \d \rho(r) \d r
    \end{align}
    where for $\rho \in \Pp(\Theta)$, $\Aa_\rho : \Cc^0(S, \RR^d) \to \Cc^0(S, \RR^d)$ is the mapping defined in~\cref{eq:A_rho} and $\D_\Lambda \Aa_{\rho}(\Lambda)$ and $\D_\theta \Aa_\theta(\Lambda)$ are defined in~\cref{lem:differential_A} and~\cref{lem:A_theta_param_differential} respectively.
\end{proposition}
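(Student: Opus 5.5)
We will prove differentiability by comparing $\Lambda_t$ with the linearized candidate $\Lambda + t\,\delta\Lambda$ and showing the remainder is $o(t)$ in $\Cc^0([0,1]\times S,\RR^d)$.

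\textbf{Step 1: the linearized equation is well-posed.} The Cauchy problem \cref{eq:delta_lambda} is a linear Banach-space-valued ODE in $\Cc^0(S,\RR^d)$. By \cref{lem:differential_A}, $\|\D_\Lambda \Aa_{\rho(.|r)}(\Lambda(r))\| \le C\int(1+\|\theta\|^2)\d\rho(\theta|r)\,(1+\|\Lambda(r)\|^2)$, which is in $L^1(\d r)$ because $\rho\in\PLeb$ and $\Lambda$ is bounded (Gronwall bound from \cref{prop:existence_uniqueness}). The source term is handled by the bound $\|\D_\theta\Aa_\theta(\Lambda)\|\le C(1+\|\Lambda\|^3)(1+\|\theta\|)$ from the proof of \cref{lem:A_rho_wasserstein_differential}. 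Cauchy--Schwarz gives $\int\!\int (1+\|\theta\|)\|v\|\d\rho <\infty$, so the source lies in $L^1$. Existence and uniqueness then follow from \cref{thm:caratheodory}, or directly from linear Gronwall.

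\textbf{Step 2: uniform bounds along the perturbation.} Since $\Ee_2(\rho_t)\le 2\Ee_2(\rho)+2t^2\|v\|^2_{L^2(\rho)}$, the Gronwall bound gives $\sup_{|t|<1}\|\Lambda_t\|_{\Cc^0}\le C$. Next we show $\|\Lambda_t-\Lambda\|\le Ct$. We write
\[
\Lambda_t(s)-\Lambda(s)=\int_0^s \bigl[\Aa_{\rho_t(.|r)}(\Lambda_t(r))-\Aa_{\rho_t(.|r)}(\Lambda(r))\bigr]\d r+\int_0^s\bigl[\Aa_{\rho_t(.|r)}(\Lambda(r))-\Aa_{\rho(.|r)}(\Lambda(r))\bigr]\d r .
\]
The first bracket is controlled by the local $L^1$-Lipschitz property of \cref{lem:differential_A}. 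The second bracket is rewritten as $\int_\Theta[\Aa_{\theta+tv}(\Lambda)-\Aa_\theta(\Lambda)]\d\rho$, which is bounded by $Ct\int(1+\|\theta\|+\|v\|)\|v\|\d\rho$ using the mean value theorem in $\theta$. Gronwall then yields the claim.

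\textbf{Step 3: remainder estimate.} Set $R_t=\Lambda_t-\Lambda-t\,\delta\Lambda$. Subtracting the equations gives $R_t(s)=\int_0^s \D_\Lambda\Aa_{\rho(.|r)}(\Lambda(r))\cdot R_t(r)\d r+E_t(s)$, where $E_t$ collects three error terms:
\begin{itemize}
\item[(a)] the Taylor remainder $\Aa_{\rho_t}(\Lambda_t)-\Aa_{\rho_t}(\Lambda)-\D_\Lambda\Aa_{\rho_t}(\Lambda)(\Lambda_t-\Lambda)$, which is $O(\|\Lambda_t-\Lambda\|^2)=O(t^2)$ by smoothness of $\Aa_\theta$ with second derivative of polynomial growth in $\theta$;
\item[(b)] the cross term $[\D_\Lambda\Aa_{\rho_t}(\Lambda)-\D_\Lambda\Aa_\rho(\Lambda)](\Lambda_t-\Lambda)$, which is $O(t)\cdot o(1)$;
\item[(c)] the parameter remainder $\int_\Theta [\Aa_{\theta+tv}(\Lambda)-\Aa_\theta(\Lambda)-t\D_\theta\Aa_\theta(\Lambda)v]\d\rho$.
\end{itemize}
If $\|E_t\|_{\Cc^0}=o(t)$, Gronwall with the $L^1$ coefficient gives $\|R_t\|=o(t)$, which concludes the proof.

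\textbf{Main obstacle.} The hard part is terms (b) and (c), because $v$ is only in $L^2(\rho)$ while the derivatives grow polynomially in $\theta$, quadratically for $\D_\Lambda\Aa$. For (c) we would write the integrand as $t\int_0^1[\D_\theta\Aa_{\theta+utv}-\D_\theta\Aa_\theta](\Lambda)v\,\d u$. This tends to zero pointwise in $(r,\theta)$ by continuity of $\D_\theta\Aa_\theta$ in $\theta$, and it is dominated by $C(1+\|\theta\|+\|v\|)\|v\|\in L^1(\rho)$, so dominated convergence gives $o(t)$.

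Term (b) is more delicate, since the bound $(1+\|\theta+tv\|^2)$ is not obviously dominated uniformly. If it is not, we would first prove the $V$-linearity refinement $\|\D_\Lambda\Aa_\theta\|\le C(1+\|\Lambda\|^2)\|V\|(1+\|Q\|+\|q\|)$ from \cref{eq:differential_A}, which is only quadratic in $\theta$ and hence $L^1$. With that bound, dominated convergence along $t\to0$ gives $\|\D_\Lambda\Aa_{\rho_t(.|r)}(\Lambda)-\D_\Lambda\Aa_{\rho(.|r)}(\Lambda)\|\to0$ in $L^1(\d r)$. Multiplying by $\|\Lambda_t-\Lambda\|=O(t)$ then shows that term (b) is $o(t)$.
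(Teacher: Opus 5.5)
Your overall scheme is sound and is essentially the paper's: Gr\"onwall on the linearization error plus dominated convergence. The paper tracks $\Gamma_t=(\Lambda_t-\Lambda)/t\to\delta\Lambda$, and your $R_t$ is just $t(\Gamma_t-\delta\Lambda)$. Steps~1 and~2 and term~(c) are fine.

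The step that fails as justified is~(a). A bound $O(\|\Lambda_t-\Lambda\|^2)$ requires integrating $\sup\|\D^2_{\Lambda\Lambda}\Aa_\theta\|$ against $\rho_t(.|r)$. For the unmodified Attention~\cref{eq:attention}, this second derivative contains the third softmax cumulant. It therefore has terms of order $\|V\|(\|Q\|+\|q\|)^2$, i.e.\ it grows \emph{cubically} in $\theta$. For example, take $\Lambda=\Id$, $q=0$, a query point $x=0\in S\setminus\Supp(\mu_0)$, and a direction $h$ vanishing on $\Supp(\mu_0)$. Then $\D^2_{\Lambda\Lambda}\Aa_\theta(\Lambda)[h,h](0)$ equals $V$ applied to the third cumulant tensor of $\mu_0$ contracted twice with $Qh(0)$. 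This scales like $\lambda^3$ under $(Q,V)\mapsto(\lambda Q,\lambda V)$. The quadratic bound in~\cref{eq:modified_attention_growth} holds only when $\Pi(V)$ is bounded, which is not the setting here. Since $\rho\in\PLeb$ has only second moments, your $O(t^2)$ claim is not available. What does hold, and is all you need, is $o(t)$.

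The repair uses only first derivatives. Combine (a) and (b): for a.e.\ $r$,
\[
\|(\mathrm{a})+(\mathrm{b})\|_{\Cc^0}\le\|\Lambda_t(r)-\Lambda(r)\|\int_0^1\!\!\int_\Theta\bigl\|\D_\Lambda\Aa_{\theta+tv}(\Lambda(r)+u(\Lambda_t(r)-\Lambda(r)))-\D_\Lambda\Aa_\theta(\Lambda(r))\bigr\|\,\d\rho(\theta|r)\,\d u .
\]
The integrand tends to $0$ pointwise by joint continuity of $(\theta,\Lambda)\mapsto\D_\Lambda\Aa_\theta(\Lambda)$. For $|t|<1$ it is dominated by
\[
C(1+\|\theta+tv\|^2)+C(1+\|\theta\|^2)\le C'(1+\|\theta\|^2+\|v\|^2)\in L^1(\rho),
\]
with $C$ depending on $\sup_{|t|<1}\|\Lambda_t\|$. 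Combined with $\|\Lambda_t-\Lambda\|=O(t)$ from Step~2 and a second dominated convergence in $r$, this gives $o(t)$.

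This same elementary domination shows that your worry about (b) was unfounded. Your ``$V$-linearity refinement'' is also not a refinement: it is the same quadratic bound as in \cref{lem:differential_A}.

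The paper avoids the cross term altogether by splitting the increment the other way:
\[
[\Aa_{\rho_t}(\Lambda_t)-\Aa_\rho(\Lambda_t)]+[\Aa_\rho(\Lambda_t)-\Aa_\rho(\Lambda)].
\]
The $\Lambda$-increment is taken at the fixed measure $\rho$, so $A_t=\int_0^1\D_\Lambda\Aa_{\rho}(\Lambda+ut\Gamma_t)\,\d u\to A_0$ by continuity of $\D_\Lambda\Aa_\rho$. The parameter increment is evaluated along $\Lambda_t$ and converges to the source term by joint continuity in $(\theta,\Lambda)$. In either route, the only ingredients needed are first derivatives, whose growth in $\theta$ is exactly what $\Ee_2(\rho)<\infty$ and $v\in L^2(\rho)$ can integrate.
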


Thus, we obtain~\cref{prop:flow_differentiability} as a direct consequence of~\cref{prop:flow_differentiability_appendix}.

\begin{proof}[Proof of~\cref{prop:flow_differentiability}]
    Let $(\rho_t)_{t \in (0,1)}$ be as in the statement.
    Using~\cite[Lem.~2.2]{barboni2024understanding}, for $\d t$-a.e. $t \in [0,1]$:
    $$
    \Ww_2(\rho_{t+h}, \Tilde{\rho}_{t,h}) = o(|h|) ,
    $$
    where, for $h \in \RR$, we defined $\Tilde{\rho}_{t,h} = (\Id + h v_t)_\# \rho_t \in \PLeb$.
    Let us also define $\Tilde{\Lambda}_{t,h} \in \Cc^0([0,1] \times S, \RR^d)$ as the associated flow-map.
    Then using~\cref{lem:flow_locally_lipschitz} we have:
    $$
    \left\| \Lambda_{t+h} - \Tilde{\Lambda}_{t,h} \right\|_{\Cc^0([0,1] \times S, \RR^d)} = o(|h|)
    $$
    and hence using~\cref{prop:flow_differentiability_appendix} for $h \mapsto \Tilde{\rho}_{t,h}$:
    \begin{align*}
        \lim_{h \to 0} \left\| \frac{\Lambda_{t+h} - \Lambda_t}{h} - \delta \Lambda_t \right\|_{\Cc^0([0,1] \times S, \RR^d)} = 0 ,
    \end{align*}
    which is the desired result.
\end{proof}

Let us now prove~\cref{prop:flow_differentiability_appendix}.
We first study the Lipschitz regularity of the Attention map w.r.t. the weight distribution and conclude that Transformers outputs are locally Lipschitz w.r.t. their parameterization.

\begin{lemma} \label{lem:flow_locally_lipschitz}
    Let $\rho, \rho' \in \PLeb$ be some parameter distributions.
    Let $\mu_0 \in \Pp_c(\RR^d)$ be some initial token distribution and $\Lambda, \Lambda' \in \Cc^0([0,1] \times S,  \RR^d)$ be the flow-maps defined by~\cref{prop:existence_uniqueness} for parameterizations $\rho$ and $\rho'$ respectively. Then 
    \begin{align*}
        \sup_{s \in [0,1]} \| \Lambda(s) - \Lambda'(s) \|_{\Cc^0(S, \RR^d)} \leq  C \Ww_2^\COT(\rho, \rho')
    \end{align*}
    where the constant $C = C(\Ee)$ in particular depends on $\Ee = \max(\Ee_2(\rho), \Ee_2(\rho'))$.
    
    In particular, if $\mu, \mu' \in \Cc_{\co}([0, 1], \Pp_c(\RR^d))$ are the evolutions of the token distribution under the Transformer's forward flow for the respective parameterizations $\rho, \rho'$, then it holds:
    \begin{align*}
        \sup_{s \in [0,1]} \Ww_2(\mu(s), \mu'(s)) \leq C \Ww_2^\COT(\rho, \rho') \, .
    \end{align*}
\end{lemma}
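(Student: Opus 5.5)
We will compare the two Carathéodory integral equations satisfied by $\Lambda$ and $\Lambda'$ and close the estimate with Grönwall's lemma (\cref{lem:gronwall}). First, the a priori bound from the proof of \cref{prop:existence_uniqueness} gives $\sup_s \|\Lambda(s)\|_{\Cc^0}, \sup_s\|\Lambda'(s)\|_{\Cc^0} \leq R = R(\Ee)$. This holds because $\int \|V\| \d\rho \leq \Ee_2(\rho)^{1/2}$, and similarly for $\rho'$. All subsequent constants may therefore depend on $R$, hence on $\Ee$. For every $s$ we write
\begin{align*}
\Lambda(s) - \Lambda'(s) = \int_0^s \left( \Aa_{\rho(.|r)}(\Lambda(r)) - \Aa_{\rho(.|r)}(\Lambda'(r)) \right) \d r + \int_0^s \left( \Aa_{\rho(.|r)}(\Lambda'(r)) - \Aa_{\rho'(.|r)}(\Lambda'(r)) \right) \d r .
\end{align*}

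For the first term we use the bound $\|\D_\Lambda \Aa_\theta(\Lambda)\| \leq C(1+\|\theta\|^2)(1+\|\Lambda\|^2)$ from \cref{lem:differential_A}. Integrating along the segment between $\Lambda'(r)$ and $\Lambda(r)$, which stays in the ball of radius $R$, gives the estimate $\|\Aa_{\rho(.|r)}(\Lambda(r)) - \Aa_{\rho(.|r)}(\Lambda'(r))\| \leq C(1+R^2)\, a(r)\, \|\Lambda(r)-\Lambda'(r)\|$. Here $a(r) = 1+\int\|\theta\|^2 \d\rho(\theta|r)$, and it satisfies $\int_0^1 a \leq 1+\Ee$.

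For the second term we fix $r$ and an optimal coupling $\gamma_r$ between $\rho(.|r)$ and $\rho'(.|r)$, chosen measurably in $r$ (for instance by a measurable selection of optimal plans). We then write the difference as $\int (\Aa_\theta - \Aa_{\theta'})(\Lambda'(r)) \d\gamma_r(\theta,\theta')$. The bound $\|\D_\theta \Aa_\theta(\Lambda)\| \leq C(1+\|\Lambda\|^3)(1+\|\theta\|)$ from the proof of \cref{lem:A_rho_wasserstein_differential}, applied along the segment from $\theta'$ to $\theta$, gives
\begin{align*}
\|\Aa_\theta(\Lambda'(r)) - \Aa_{\theta'}(\Lambda'(r))\| \leq C(1+R^3)(1+\|\theta\|+\|\theta'\|)\|\theta-\theta'\| .
\end{align*}
Cauchy--Schwarz then bounds the second term by $C(R)\, b(r)\, \Ww_2(\rho(.|r),\rho'(.|r))$ with $b(r) = (1+\Ee_2(\rho(.|r))+\Ee_2(\rho'(.|r)))^{1/2}$; this is essentially \cref{cor:A_rho_lipschitz} applied along the $\Ww_2$-geodesic. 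Integrating in $r$ and applying Cauchy--Schwarz once more yields $\int_0^1 b(r)\Ww_2(\rho(.|r),\rho'(.|r))\d r \leq (1+2\Ee)^{1/2}\, \Ww_2^\COT(\rho,\rho')$.

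Combining the two estimates, $u(s) = \|\Lambda(s)-\Lambda'(s)\|$ satisfies $u(s) \leq C\int_0^s a(r)u(r)\d r + C\,\Ww_2^\COT(\rho,\rho')$. Since $a \in L^1$, Grönwall's lemma gives $u(s) \leq C e^{C(1+\Ee)}\Ww_2^\COT(\rho,\rho')$, which is the first claim. For the token distributions, $\mu(s) = \Lambda(s)_\#\mu_0$ and $\mu'(s)=\Lambda'(s)_\#\mu_0$ by \cref{prop:existence_uniqueness}. Using the coupling $(\Lambda(s),\Lambda'(s))_\#\mu_0$ then gives $\Ww_2(\mu(s),\mu'(s)) \leq \|\Lambda(s)-\Lambda'(s)\|_{\Cc^0}$.

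The main obstacle is the second term. There the growth of $\D_\theta\Aa$ in $\theta$ has to be matched with only second moments of the parameters. One also needs the layerwise optimal couplings to be chosen measurably in $r$ so that the integral over $r$ makes sense; I would handle this by citing the measurable selection of optimal plans used for $\Ww_2^\COT$ in \cite{barboni2024understanding}.
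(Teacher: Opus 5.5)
Your proposal is correct and follows essentially the same route as the paper. Both arguments use the a priori bound $R(\Ee)$ from \cref{prop:existence_uniqueness}, a triangle-inequality split into a $\Lambda$-Lipschitz term controlled by \cref{lem:differential_A} and a parameter-Lipschitz term (the content of \cref{cor:A_rho_lipschitz}, which you reprove directly via an optimal coupling), then Gr\"onwall and the coupling $(\Lambda(s),\Lambda'(s))_\#\mu_0$; inserting $\Aa_{\rho(.|r)}(\Lambda'(r))$ instead of the paper's $\Aa_{\rho'(.|r)}(\Lambda(r))$ is immaterial. The measurable selection of couplings you flag is not needed: you only integrate in $r$ the pointwise bound $C(R)\,b(r)\,\Ww_2(\rho(.|r),\rho'(.|r))$, whose measurability is already implicit in the definition of $\Ww_2^\COT$.
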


\begin{proof}
    It suffices to prove the first inequality and the second directly follows as $\mu(s) = \Lambda(s)_\# \mu_0$ (resp. $\mu'(s) = \Lambda'(s)_\# \mu_0$) for any $s \in [0,1]$.
    For $s \in [0,1]$, let $\Aa_{\rho(.|s)}$ and $\Aa_{\rho'(.|s)}$ be defined by~\cref{eq:A_rho} s.t.\@ the flow-maps $\Lambda, \Lambda'$ given by~\cref{prop:existence_uniqueness} are respectively solutions to the Cauchy problems:
    \begin{align*}
        \forall s \in [0,1], \quad \Lambda(s) = \Id + \int_0^s \Aa_{\rho(.|r)}(\Lambda(r)) \d r, \quad \Lambda'(s) = \Id + \int_0^s \Aa_{\rho'(.|r)}(\Lambda'(r)) \d r \, .
    \end{align*}
    By the proof of~\cref{prop:existence_uniqueness}, there exists a constant $R = R(\Ee) > 0$ s.t.\@ for every $r \in [0,1]$ it holds $\| \Lambda(r) \|, \| \Lambda'(r) \| \leq R$ and in particular $\Supp(\Lambda(r)_\# \mu_0) \subset B(0,R)$ (and accordingly for $\Lambda'$).
    Then for $r \in [0,1]$, using~\cref{lem:differential_A,cor:A_rho_lipschitz}:
    \begin{align*}
        \| \Aa_{\rho(.|r)}(\Lambda(r)) - \Aa_{\rho'(.|r)}(\Lambda'(r)) \|_{\Cc^0}
        \leq & \left\| \Aa_{\rho(.|r)}(\Lambda(r)) - \Aa_{\rho'(.|r)}(\Lambda(r)) \right\|_{\Cc^0} \\
        & + \left\| \Aa_{\rho'(.|r)}(\Lambda(r)) - \Aa_{\rho'(.|r)}(\Lambda'(r)) \right\|_{\Cc^0} \\
        \leq & C (1+R^3) \left( 1 + \Ee_2(\rho(.|r)) + \Ee_2(\rho'(.|r)) \right)^{1/2} \Ww_2(\rho(.|r), \rho'(.|r)) \\
        & + C (1+R^2) \left( 1+ \Ee_2(\rho'(.|r)) \right) \| \Lambda(r) - \Lambda'(r) \|_{\Cc^0} \, .
    \end{align*}
    Plugging this into the ODEs defining $\Lambda$ and $\Lambda'$ and using Grönwall's lemma (\cref{lem:gronwall}) then gives for every $s \in [0,1]$:
    \begin{align*}
        \| \Lambda(s) - \Lambda'(s) \| \leq
        e^{\int_0^s C (1+R^2) \left( 1+ \Ee_2(\rho'(.|r)) \right) \d r}
        \left( \int_0^1 (1 + \Ee_2(\rho(.|r)) + \Ee_2(\rho'(.|r)) ) \d r \right)^{1/2} \Ww_2^{\COT} (\rho, \rho')
    \end{align*}
\end{proof}

We now give a proof of~\cref{prop:flow_differentiability_appendix}.

\begin{proof}[Proof of~\cref{prop:flow_differentiability_appendix}]
    In the following consider $R > 0$ s.t. for every $t \in (-1, 1)$ it holds $\| \Lambda_t \|_{\Cc^0} \leq R$.
    First note that the existence of a unique $\Gamma_0 \in \Cc^0([0,1] \times S, \RR^d)$ solution to the Cauchy problem~\cref{eq:delta_lambda} follows from a direct application of~\cref{thm:caratheodory}.
    Thus, it suffices to show $\Gamma_0$ is indeed the derivative of $\Lambda_t$ at $t = 0$.
    In this purpose, consider, for $t \neq 0$, the normalized increment:
    \begin{align*}
        \Gamma_t \eqdef \frac{1}{t} (\Lambda_t - \Lambda_0) \in \Cc^0([0, 1] \times S, \RR^d) .
    \end{align*}
    Then we have by definition of $\Lambda_t$ and $\Lambda_0$ that for every $s \in [0, 1] \times S$:
    \begin{align*}
        \Gamma_t(s) = & \frac{1}{t} \int_0^s \left( \Aa_{\rho_t(.|r)}(\Lambda_t(r)) - \Aa_{\rho_0(.|r)}(\Lambda_0(r)) \right) \d r \\
        = & \frac{1}{t} \int_0^s \left( \Aa_{\rho_t(.|r)} (\Lambda_t(r))(x) - \Aa_{\rho(.|r)}(\Lambda_t(r))(x) \right) \d r \\
        & + \frac{1}{t} \int_0^s \left( \Aa_{\rho_0(.|r)}(\Lambda_t(r)) - \Aa_{\rho_0(.|r)}(\Lambda_0(r)) \right) \d r \\
        = & \int_0^s \left( \int_0^1 \D_\Lambda \Aa_{\rho_0(.|r)}(\Lambda_0(r)+ u t \Gamma_t(r)) \cdot \Gamma_t(r) \d u \right)  \d r \\
        & +  \int_0^s \left( \int_0^1 \int_\Theta \D_\theta \Aa_{\theta + u t v(r,\theta)}(\Lambda_t(r)) \cdot v(r,\theta) \d \rho(\theta|r) \d u \right) \d r \, ,
    \end{align*}
    where we used~\cref{lem:differential_A} and~\cref{lem:A_rho_wasserstein_differential} respectively to write for a.e. $r \in [0,1]$:
    \begin{align*}
        \frac{1}{t} \left( \Aa_{\rho_0(.|r)}(\Lambda_t(r)) - \Aa_{\rho_0(.|r)}(\Lambda_0(r)) \right) & = \int_0^1 \D_\Lambda \Aa_{\rho_0(.|r)}(\Lambda_0(r)+ u t \Gamma_t(r)) \cdot \Gamma_t(r) \d u \\
        & = \left( \int_0^1 \D_\Lambda \Aa_{\rho_0(.|r)}(\Lambda_0(r)+ u t \Gamma_t(r)) \d u \right) \cdot \Gamma_t(r) \, ,
    \end{align*}
    the second equality coming from the fact that the map $u \mapsto \D_\Lambda \Aa_{\rho_0(.|r)}(\Lambda_0(r)+ u t \Gamma_t(r))$ is continuous and hence Bochner-integrable, and
    \begin{align*}
        \frac{1}{t} \left( \Aa_{\rho_t(.|r)}(\Lambda_t(r)) - \Aa_{\rho_0(.|r)}(\Lambda_t(r)) \right) & = \int_0^1 \int_\Theta \D_\theta \Aa_{\theta + u t v(r,\theta)}(\Lambda_t(r)) \cdot v(r,\theta) \d \rho(\theta|r) \d u \, .
    \end{align*}
    Hence $\Gamma_t$ is solution of the Cauchy problem $\Gamma_t(s) = \int_0^s \left( A_t(r) \cdot \Gamma_t(r) + b_t (r) \right) \d r$ where we defined for $\d r$-a.e. $r \in [0,1]$:
    \begin{align*}
        A_t(r) & \eqdef \int_0^1 \D_\Lambda \Aa_{\rho_0(.|r)}(\Lambda_0(r)+ u t \Gamma_t(r)) \d u  \in \Ll(\Cc^0(S,\RR^d), \Cc^0(S, \RR^d)) \\
        b_t(r) & \eqdef \int_0^1 \int_\Theta \D_\theta \Aa_{\theta + u t v(r,\theta)}(\Lambda_t(r)) \cdot v(r,\theta) \d \rho(\theta|r) \d u \in \Cc^0(S, \RR^d) \, .
    \end{align*}
    Let us further define:
    \begin{align*}
        A_0(r) & \eqdef D_\Lambda \Aa_{\rho_0(.|r)}(\Lambda_0(r)) \in \Ll(\Cc^0(S,\RR^d), \Cc^0(S, \RR^d)) \, , \\
        b_0(r) & \eqdef \int_\Theta \D_\theta \Aa_{\theta}(\Lambda_0(r)) \cdot v(r,\theta) \d \rho(\theta|r) \in \Cc^0(S, \RR^d) \, .
    \end{align*}
    such that $\Gamma_0$ is by definition solution of the Cauchy problem $\Gamma_0(s) = \int_0^s \left( A_0(r) \cdot \Gamma_0(r) + b_0 (r) \right) \d r$.
    Then for every $s \in [0,1]$ it holds:
    \begin{align*}
        \left\| \Gamma_t(s) - \Gamma_0(s) \right\|_{\Cc^0}
        &\leq \int_0^s
        \left\| A_t(r) \cdot \Gamma_t(r) - A_0(r) \cdot \Gamma_0(r) \right\|_{\Cc^0} \d r \\
        &\quad + \int_0^s \left\| b_t(r) - b_0(r) \right\|_{\Cc^0} \d r \\
        &\leq \int_0^s
        \Bigl( \left\| A_t(r) \right\|_{\Ll(\Cc^0, \Cc^0)}
        \left\| \Gamma_t(r) - \Gamma_0(r) \right\|_{\Cc^0} \\
        &\quad + \left\|  A_t(r) - A_0(r) \right\|_{\Ll(\Cc^0, \Cc^0)}
        \left\| \Gamma_0(r) \right\|_{\Cc^0}
        + \left\| b_t(r) - b_0(r) \right\|_{\Cc^0} \Bigr) \d r ,
    \end{align*}
    leading, by an application of Grönwall's lemma, to:
    \begin{align*}
        \left\| \Gamma_t(s) - \Gamma_0(s) \right\|_{\Cc^0}
        &\leq
        e^{\int_0^1 \left\| A_t(r) \right\|_{\Ll(\Cc^0, \Cc^0)} \d r} \\
        &\quad \times
        \int_0^1
        \left( \left\|  A_t(r) - A_0(r) \right\|_{\Ll(\Cc^0, \Cc^0)}
        + \left\| b_t(r) - b_0(r) \right\|_{\Cc^0} \right) \d r.
    \end{align*}
    Now, by~\cref{lem:flow_locally_lipschitz} we have $\left\| \Gamma_t(r) \right\|_{\Cc^0} \leq \| v \|_{L^2(\rho)}$ for every $t \neq 0$ and every $r \in [0,1]$.
    Also, the maps $\Lambda \in \Cc^0(S, \RR^d) \mapsto \D_\Lambda \Aa_{\rho}(\Lambda)$ and $(\theta, \Lambda) \mapsto \D_\theta \Aa_\theta(\Lambda)$ are continuous.
    Thus for a.e. $r \in [0,1]$ it holds $A_t(r) \to A_0(r)$ and $b_t(r) \to b_0(r)$ and hence by application of Lebesgue's theorem:
    \begin{align*}
        \sup_{s \in [0,1]} \left\| \Gamma_t - \Gamma_0 \right\|_{\Cc^0} \leq e^{\int_0^1 \left\| A_t(r) \right\|_{\Ll(\Cc^0, \Cc^0)} \d r} \int_0^1 \left( \left\|  A_t(r) - A_0(r) \right\|_{\Ll(\Cc^0, \Cc^0)} + \left\| b_t(r) - b_0(r) \right\|_{\Cc^0} \right) \d r \xrightarrow[t \to 0]{} 0 \, ,
    \end{align*}
    which concludes the proof.
\end{proof}

\subsection{Equivalence between gradient flow and curve of maximal slope} \label{subsec:app_maximal_slope_equivalence}

The purpose of this section is to provide a proof for~\cref{thm:maximal_slope_equivalence}.

\begin{proof}
    \proofpart{gradient flow $\Rightarrow$ curve of maximal slope}
    Let $\rho_0 \in \PLeb$ be some initial parameter distribution and let $(\rho_t)_{t\geq 0}$ be some gradient flow curve starting from $\rho_0$ in the sense of~\cref{def:gradient_flow}.
    Then $(\rho_t)_{t\geq 0}$ is a locally absolutely continuous curve in $\Ww_2^\COT$ satisfying the continuity equation~\cref{eq:gradient_flow_continuity} with the velocity field $\nabla \Ll[\rho_t]$.
    Hence, using~\cite[Prop.~2.4]{barboni2024understanding}, it holds for a.e. $t \geq 0$:
    $$
    \left| \frac{\d}{\d t} \rho_t \right| \leq \left\| \nabla \Ll[\rho_t] \right\|_{L^2(\rho_t)} .
    $$
    Also, using~\cref{thm:upper_gradient}, the map $t \mapsto \Ll(\rho_t)$ is locally absolutely continuous and by the definition of $\nabla \Ll$ in~\cref{eq:gradient_field} we have for a.e. $t \geq 0$:
    $$
    - \frac{\d}{\d t} \Ll(\rho_t) = \left\| \nabla \Ll(\rho_t) \right\|^2_{L^2(\rho_t)} .
    $$
    Since by construction $\left| \nabla \Ll \right|(\rho) = \left\| \nabla \Ll(\rho) \right\|_{L^2(\rho)}$ (\cref{eq:upper_gradient}), we obtain~\cref{eq:EDI} by combining the two previous equations.

    \proofpart{curve of maximal slope $\Rightarrow$ gradient flow}
    Let $\rho_0 \in \PLeb$ be some initial parameter distribution and let $(\rho_t)_{t\geq 0}$ be a curve of maximal slope starting from $\rho_0$ (in the sense given in~\cref{thm:maximal_slope_equivalence}).
    Then by~\cite[Prop.~2.4]{barboni2024understanding}, there exists a Borel velocity field $v : (0, +\infty) \times [0,1] \times \Theta \to \Theta$ s.t. $\rho$ satisfies the continuity equation
    $$
    \partial_t \rho_t + \div_\theta (\rho_t v_t) = 0, \quad \text{on $(0, +\infty) \times [0,1] \times \Theta$},
    $$
    and s.t. $\left| \frac{\d}{\d t} \rho_t \right| \geq \left\| v_t \right\|_{L^2(\rho_t)}$ for a.e. $t \geq 0$.
    Hence, using~\cref{thm:upper_gradient}, the map $t \mapsto \Ll(\rho_t)$ is locally absolutely continuous and it holds for a.e. $t \geq 0$:
    $$
    \frac{\d}{\d t} \Ll(\rho_t) = \left< v_t, \nabla \Ll(\rho_t) \right>_{L^2(\rho_t)} .
    $$
    Using~\cref{eq:EDI} as well as Young's inequality then gives that $v_t = \nabla \Ll(\rho_t)$ for a.e. $t \geq 0$.
\end{proof}

\subsection{Well-posedness of the gradient flow equation}
\label{subsec:app_existence_uniqueness}

This section is devoted to the proof of~\cref{thm:gradient_flow_wellposed}, that is well-posedness of the gradient flow equation for the training of mean-field models of Transformers as presented in~\cref{sec:training}.
We will rely on classical results from the theory of gradient flows in metric spaces (see~\cite{ambrosio2008gradient} and~\cite{santambrogio2017euclidean}) showing that gradient flow curves can be obtained as limits of proximal schemes.
Historically, such a notion of a \emph{Minimizing Movements} scheme dates back to De Giorgi~\cite{de1993new}.

To apply this methodology to the case of the training risk $\Ll$ of mean-field Transformers a few regularity assumptions need to be satisfied (see~\cite[Section 2.1]{ambrosio2008gradient}).
For this purpose we first consider a modification of the Attention architecture.
For parameters $(Q, q, V) \in \Theta$ we consider Attention layers of the form:
\begin{align} \label{eq:attention_modified}
    \varphi_{\theta} [\mu](x) = \Pi(V) \cdot \frac{\int e^{\left< Q x + q, y \right>} y \, d\mu(y)}{\int e^{\left< Q x + q, y \right>} \, \text{d}\mu(y)} ,
\end{align}
where $\Pi : \RR^{d \times d} \to \RR^{d \times d}$ is any smooth bounded function.

Note that all the regularity results on the Attention mapping detailed in~\cref{subsec:regularity_attention_inputs,subsec:regularity_attention_parameters} carry over to the case of this modified Attention.
In particular, \cref{thm:maximal_slope_equivalence,thm:upper_gradient} still hold.
However, thanks to the use of the function $\Pi$, bounding the parameters $V$, we obtain a better dependence of the derivatives of the Attention map w.r.t. its parameters.
Namely, considering a compact set $S \subset \RR^d$ with $\Supp(\mu) \subset S$ and $\Lambda \in \Cc^0(S, \RR^d)$ with $\| \Lambda \|_{\Cc^0} \leq R$, we have that there exists a constant $C = C(S, R) \geq 0$ such that it holds for any $\theta \in \Theta$:
\begin{equation} \label{eq:modified_attention_growth}
\begin{aligned}
    \left\| \D_\Lambda \Aa_\theta(\Lambda) \right\| & \leq C(1+\|\theta\|), \\
    \left\| \D^2_{\Lambda,\Lambda} \Aa_\theta(\Lambda) \right\| & \leq C(1+\|\theta\|^2), \\
    \left\| \D^2_{\theta,\theta} \Aa_\theta(\Lambda) \right\| & \leq C, \\
    \left\| \D^2_{\Lambda,\theta} \Aa_\theta(\Lambda) \right\| & \leq C (1+\|\theta\|) .
\end{aligned}
\end{equation}
These growth estimates are crucial to obtain existence (\cref{thm:gradient_flow_existence}) and uniqueness (\cref{thm:gradient_flow_uniqueness}) of gradient flow curves for general initial parameter distributions.
In turn, we are able to extend the result for the case of a normal Attention (i.e., $\Pi= \Id$) when the initial parameter distribution has a $V$-marginal which is compactly supported (\cref{thm:wellposedness_compact}).

\subsubsection{Existence of gradient flow curves}

We start by stating the existence of gradient flow curves for the minimization of the risk in~\cref{eq:risk}.
These are obtained as the limit of a \emph{Minimizing Movements} scheme~\cite{de1993new}.

\begin{theorem} \label{thm:gradient_flow_existence}
    Let the Attention map be defined by~\cref{eq:attention_modified} and let the training risk $\Ll : \PLeb \to \RR$ be defined by~\cref{eq:risk}.
    For any initialization $\rho_0 \in \PLeb$ there exists a gradient flow curve $(\rho_t)_{t \geq 0}$ starting from $\rho_0$.
\end{theorem}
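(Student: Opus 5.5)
We plan to follow the Minimizing Movements (JKO-type) scheme in the metric space $(\PLeb, \Ww_2^\COT)$ and to invoke the general existence theory of~\cite[Ch.~2--3]{ambrosio2008gradient}. That theory produces a curve of maximal slope for $\Ll$ with respect to its relaxed slope. We then identify this curve with a gradient flow via~\cref{thm:maximal_slope_equivalence}. For a time step $\tau > 0$ we define recursively
\begin{align*}
    \rho^\tau_{k+1} \in \argmin_{\rho \in \PLeb} \left\{ \Ll(\rho) + \frac{1}{2\tau} \Ww_2^\COT(\rho, \rho^\tau_k)^2 \right\}, \qquad \rho^\tau_0 = \rho_0 .
\end{align*}
We then consider the piecewise constant interpolants and pass to the limit $\tau \to 0$.

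The steps, in order, are as follows.

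\textbf{(i) Lower bound and continuity of $\Ll$.} With the modified attention~\cref{eq:attention_modified}, $\Pi$ is bounded, so $\|\Aa_\theta(\Lambda)\| \leq C\|\Lambda\|$ uniformly in $\theta$. Gr\"onwall's lemma (as in~\cref{prop:existence_uniqueness}) then bounds $\|\Lambda^j_\rho\|_{\Cc^0}$ by a constant independent of $\rho$. In particular $\Ll$ is bounded below, since the $\ell^j$ are bounded below, e.g.\ quadratic.

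\textbf{(ii) Compactness of sublevel sets.} We need compactness for a topology weaker than $\Ww_2^\COT$. We take narrow convergence on $[0,1] \times \Theta$ with fixed uniform first marginal, together with bounded second moments. Along the scheme, the discrete energy inequality $\Ll(\rho^\tau_{k+1}) + \frac{1}{2\tau}\Ww_2^\COT(\rho^\tau_{k+1},\rho^\tau_k)^2 \leq \Ll(\rho^\tau_k)$ gives moment bounds on bounded time intervals. Existence of each minimizer then follows from lower semicontinuity of $\Ww_2^\COT(\cdot, \rho^\tau_k)$ and continuity of $\Ll$ under this narrow convergence.

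\textbf{(iii) Narrow continuity of $\Ll$.} This is the main obstacle. Narrow convergence of $\rho_n \to \rho$ in $\PLeb$ does not give convergence of the disintegrations $\rho_n(\cdot|s)$ for each $s$. Our first attempt avoids disintegrations altogether. The Carath\'eodory form $\Lambda_\rho(s) = \Id + \int_{[0,s]\times\Theta} \Aa_\theta(\Lambda_\rho(r))\,\d\rho(r,\theta)$ only involves integrals of the jointly continuous, $\theta$-bounded integrand $(r,\theta) \mapsto \Aa_\theta(\Lambda(r))$ against $\rho$. The indicator of $[0,s]$ is harmless because the $s$-marginal is Lebesgue. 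A Gr\"onwall argument then gives $\Lambda_{\rho_n} \to \Lambda_\rho$ uniformly, and hence $\Ll(\rho_n) \to \Ll(\rho)$.

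\textbf{(iv) Slope and passage to the limit.} We show that the upper gradient $|\nabla \Ll|$ of~\cref{thm:upper_gradient} is lower semicontinuous along such sequences, and that it coincides with, or dominates, the relaxed local slope. For this we use the growth bounds~\cref{eq:modified_attention_growth}: they give local $\lambda$-convexity-type control along generalized geodesics, since $\D^2_{\theta\theta}\Aa_\theta$ is bounded, and stability of the adjoint variables $\mm^{(j)}_\rho$ under narrow convergence, by the same Gr\"onwall argument applied to~\cref{eq:backward_ODE}. Then~\cite[Thm.~2.3.3]{ambrosio2008gradient} yields a limit curve satisfying the energy dissipation inequality~\cref{eq:EDI}. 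Finally,~\cref{thm:maximal_slope_equivalence} turns this curve of maximal slope into a gradient flow in the sense of~\cref{def:gradient_flow}. The lower semicontinuity of $|\nabla\Ll|$ in (iv) and the continuity of $\Ll$ in (iii) are where we expect the real work to lie.
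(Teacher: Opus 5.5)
Your steps (i)--(iii) are sound and follow the paper's architecture: the narrow topology, continuity of $\Ll$, of the adjoints and of $|\nabla\Ll|$ on $\Ww_2^\COT$-bounded sets, then \cite[Thm.~2.2.3, Thm.~2.3.3]{ambrosio2008gradient} and \cref{thm:maximal_slope_equivalence}. In (iii), your direct Gr\"onwall estimate is a slightly leaner substitute for the paper's Arzel\`a--Ascoli argument. It uses the Lipschitz factor $a_n(r)=\int_\Theta C(1+\|\theta\|)\,\d\rho_n(\theta|r)$, whose integral over $[0,1]$ is bounded uniformly in $n$. The source term $\int \mathbf{1}_{r\le s}\,\Aa_\theta(\Lambda_\rho(r))\,\d(\rho_n-\rho)(r,\theta)$ vanishes uniformly in $(s,x)$ by portmanteau, tightness and equicontinuity.

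The gap is in (iv). Minimizing movements yield the dissipation inequality with the \emph{relaxed} slope $|\partial^-\Ll|$. But \eqref{eq:EDI}, which is what \cref{thm:maximal_slope_equivalence} requires, is stated with $|\nabla\Ll|$, so you need $|\partial^-\Ll|\ge|\nabla\Ll|$. The alternative you allow, that $|\nabla\Ll|$ ``dominates'' the relaxed slope, is the wrong and essentially automatic direction. Since $|\nabla\Ll|$ is a strong upper gradient and is continuous on bounded sets, integrating it along geodesics already gives $|\partial^-\Ll|\le|\partial\Ll|\le|\nabla\Ll|$, where $|\partial\Ll|$ is the local slope of \eqref{eq:local_slope}. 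A dissipation inequality with a smaller slope does not imply the one with $|\nabla\Ll|$.

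The $\lambda$-convexity remark does not close this either. It is unsubstantiated: the second variation of $\Ll$ also involves $\D^2_{\Lambda\Lambda}\Aa_\theta$ and $\D^2_{\Lambda\theta}\Aa_\theta$ through the flow map and the adjoint, so a bounded $\D^2_{\theta\theta}\Aa_\theta$ alone gives no such control. It is also unnecessary. Even if it held, it would only make the local slope lower semicontinuous and a strong upper gradient; it would not identify it with $|\nabla\Ll|$.

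The missing ingredient is the lower bound in \cref{lem:local_slope}, obtained by descending along the gradient itself. For $t>0$ let $\rho_t=(\Id-t(0,\nabla\Ll[\rho]))_\#\rho$. By \cref{prop:flow_differentiability_appendix} and the adjoint computation in the proof of \cref{thm:upper_gradient}, $\Ll(\rho_t)=\Ll(\rho)-t\|\nabla\Ll[\rho]\|^2_{L^2(\rho)}+o(t)$. The induced transport plan preserves the layer variable $s$, so $\Ww_2^\COT(\rho,\rho_t)\le t\|\nabla\Ll[\rho]\|_{L^2(\rho)}$, and hence $|\partial\Ll|(\rho)\ge|\nabla\Ll|(\rho)$. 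Combine this with the narrow lower semicontinuity of $|\nabla\Ll|$ on $\Ww_2^\COT$-bounded sets, which you already plan to prove, to get $|\partial^-\Ll|\ge|\nabla\Ll|$ (in fact equality). Only then does the limit curve satisfy \eqref{eq:EDI}, and \cref{thm:maximal_slope_equivalence} makes it a gradient flow.
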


\begin{proof}
    The result will follow from the successive application of~\cite[Thm.~2.2.3 and Thm.~2.3.3]{ambrosio2008gradient}.
    Let us denote by $\tau$ the topology of narrow convergence on $\PLeb$
    (i.e., the weak-* topology in duality with $\Cc^0([0,1] \times \Theta)$).
    In particular, $\Ww_2^\COT$-bounded sets are $\tau$-sequentially compact.
    In order to prove the result we need to show that the risk $\Ll$ and its upper gradient $\left| \nabla \Ll \right|$ are $\tau$-lower-semicontinuous on $\Ww_2^\COT$-bounded subsets.

    \proofpart{$\Ll$ is $\tau$-continuous on $\Ww_2^\COT$-bounded subsets.}
    Let $(\rho_n)_{n \geq 0}$ be a $\Ww_2^\COT$-bounded sequence in $\PLeb$ s.t.\@ $\rho_n \xrightarrow{\tau} \rho \in \PLeb$.
    For an input token distribution $\mu \in \Pp_c(\RR^d)$, let us write $\Lambda_n = \Lambda_{\rho_n}[\mu]$.
    Using Ascoli's theorem, we will proceed by showing that $\Lambda_n \to \Lambda = \Lambda_\rho[\mu]$ in $\Cc([0,1] \times S, \RR^d)$ for any compact set s.t. $\Supp(\mu) \subset S$.
    
    From the $\Ww_2^\COT$-boundedness it follows that the sequence $(\Lambda_n)_{n \geq 0}$ is bounded.
    Using the linear growth of $\Aa_\theta$ w.r.t. its parameters we get that there exists a constant $C \geq 0$ s.t. for any $n \geq 0$ and any $s_1 \leq  s_2 \in [0,1]$:
    \begin{align*}
        \left\| \Lambda_n(s_1) - \Lambda_n(s_2) \right\|
        \leq \int_{s_1}^{s_2} \int_{\Theta} \left\| \Aa_{\theta}(\Lambda_n(r)) \right\| \d \rho(\theta|r) \d r
        \leq \int_{s_1}^{s_2} \int_{\Theta} C (1+ \| \theta \|) \d \rho(\theta|r) \d r .
    \end{align*}
    Then, using the fact that the sequence $(\rho_n)_{n \geq 0}$ has uniformly integrable first-order moments, we get that for every $\eps > 0$, there exists $k \geq 0$ s.t.
    \begin{align*}
        \left\| \Lambda_n(s_1) - \Lambda_n(s_2) \right\|
        \leq \eps + C(1+k) |s_2 - s_1| .
    \end{align*}
    Thus, the curves $(\Lambda_n)_{n \geq 0}$ are equicontinuous and, by Arzel\`a--Ascoli's theorem, we have (up to a subsequence) that $\Lambda_n \to \Bar{\Lambda} \in \Cc([0, 1] \times S, \RR^d)$.
    
    Let us show that $\Bar{\Lambda} = \Lambda = \Lambda_\rho[\mu]$.
    For $n \geq 0$, $s \in [0,1]$ and $x \in S$ we have:
    \begin{align*}
        \Lambda_n(s,x) = x & + \int_{[0,1] \times \Theta} \ind_{r \leq s} \Aa_\theta (\Lambda_n(r))(x) \d \rho_n(r, \theta) \\
        = x & + \int_{[0,1] \times \Theta} \ind_{r \leq s} \Aa_\theta (\Bar{\Lambda}(r))(x) \d \rho(r,\theta) \\
        & + \int_{[0,1] \times \Theta} \ind_{r \leq s} \left( \Aa_\theta (\Lambda_n(r)) - \Aa_\theta (\Bar{\Lambda}(r)) \right)(x) \d \rho_n(r,\theta) \tag{L1} \label{eq:gradient_flow_L1} \\
        & + \int_{[0,1] \times \Theta} \ind_{r \leq s} \Aa_\theta (\Bar{\Lambda}(r))(x) \d (\rho_n - \rho)(r,\theta) \tag{L2} \label{eq:gradient_flow_L2} .
    \end{align*}
    Using that the sequence $(\rho_n)_{n \geq 0}$ has uniformly integrable first-order moments and that $\Aa_\theta(\Lambda_n(r))(x) \to \Aa_\theta(\Bar{\Lambda}(r))(x)$ uniformly on compact subsets, we get that~\cref{eq:gradient_flow_L1} vanishes.
    Approximating the function $(r, \theta) \mapsto \ind_{r \leq s} \Aa_\theta (\Bar{\Lambda}(r))(x)$ on a compact set by a continuous and bounded function, we similarly show that~\cref{eq:gradient_flow_L2} vanishes.
    Thus taking the limit $n \to +\infty$ we have shown for every $s \in [0,1]$:
    \begin{align*}
        \Bar{\Lambda}(s) = \Id & + \int_{[0,1] \times \Theta} \ind_{r \leq s} \Aa_\theta (\Bar{\Lambda}(r)) \d \rho(r, \theta) ,
    \end{align*}
    and hence $\Bar{\Lambda} = \Lambda = \Lambda_\rho[\mu]$.

    \proofpart{The adjoint variables $\mm$ are $\tau$-continuous on $\Ww_2^\COT$-bounded subsets.}
    Let $(\rho_n)_{n \geq 0}$ be a $\Ww_2^\COT$-bounded sequence in $\PLeb$ s.t.\@ $\rho_n \xrightarrow{\tau} \rho \in \PLeb$.
    For an input token distribution $\mu \in \Pp_c(\RR^d)$ and an input token $x \in \RR^d$, let us write $\Lambda_n = \Lambda_{\rho_n}[\mu]$ and $\mm_n = \mm_{\rho_n}[\mu, x]$ for the associated adjoint variable.
    Similarly as we have shown $\Lambda_n \to \Lambda = \Lambda_\rho[\mu]$ in $\Cc([0,1] \times S, \RR^d)$ for any compact set s.t. $\Supp(\mu) \subset S$, we prove $\mm_n \to \mm = \mm_{\rho}[\mu, x] \in \Cc^0([0,1], \Mm(S)^d)$.

    Indeed, as before, from the $\Ww_2^\COT$-boundedness it follows that the sequence $(\mm_n)_{n \geq 0}$ is bounded (in total variation).
    Then, using the linear growth of $\D_\Lambda \Aa_\theta$ w.r.t. $\theta$ in~\cref{eq:modified_attention_growth} and using that the sequence $(\rho_n)_{n \geq 0}$ has uniformly integrable first-order moments we get that, for every $\eps > 0$, there exists $C \geq 0$ such that for any $n \geq 0$ and any $s_1 \leq  s_2 \in [0,1]$:
    \begin{align*}
        \left\| \mm_n(s_1) - \mm_n(s_2) \right\|
        \leq \eps + C |s_2 - s_1| .
    \end{align*}
    Thus, the curves $(\mm_n)_{n \geq 0}$ are equicontinuous and, by Arzel\`a--Ascoli's theorem, we have (up to a subsequence) that $\mm_n \to \Bar{\mm} \in \Cc([0, 1], \Mm(S)^d)$ (here convergence takes place w.r.t. the narrow topology of $\Mm(S)^d$, which is metrizable, for which bounded sets in total variation are relatively compact and hence for which Ascoli's theorem can be applied).

    We will show that $\Bar{\mm} = \mm = \mm_\rho[\mu, x]$ is the adjoint variable associated to $\rho$, $\mu$ and $x$.
    To this end, considering a test function $\Gamma \in \Cc^0(S, \RR^d)$, we will show that for $s \in [0,1]$:
    \begin{align*}
        \left< \Bar{\mm}(s), \Gamma \right> = \left< \Bar{\mm}(1), \Gamma \right> + \int_s^1 \int_\Theta \left< \Bar{\mm}(r), \D_\Lambda \Aa_\theta(\Lambda(r)) \cdot \Gamma \right> \d \rho(\theta | r) \d r .
    \end{align*}
    Indeed, for $n \geq 0$, we have by the definition of $\mm_n$ (\cref{eq:backward_ODE}):
    \begin{align*}
        \left< \mm_n(s), \Gamma \right> = \left< \mm_n(1), \Gamma \right> & + \int_s^1 \int_\Theta \left< \mm_n(r), \D_\Lambda \Aa_\theta(\Lambda_n(r)) \cdot \Gamma \right> \d \rho_n(\theta | r) \d r \\
        = \left< \mm_n(1), \Gamma \right> & + \int_s^1 \int_\Theta \left< \Bar{\mm}(r), \D_\Lambda \Aa_\theta(\Lambda(r)) \cdot \Gamma \right> \d \rho(\theta | r) \d r \\
        & + \int_s^1 \int_\Theta \left< \mm_n(r), \D_\Lambda \Aa_\theta(\Lambda(r)) \cdot \Gamma \right> \d (\rho_n - \rho)(\theta | r) \d r \tag{L3} \label{eq:gradient_flow_L3} \\
        & + \int_s^1 \int_\Theta \left< \mm_n(r), \left( \D_\Lambda \Aa_\theta(\Lambda_n(r))  - \D_\Lambda \Aa_\theta(\Lambda(r)) \right) \cdot \Gamma \right> \d \rho_n(\theta | r) \d r \tag{L4} \label{eq:gradient_flow_L4} \\
        & + \int_s^1 \int_\Theta \left< \mm_n(r) - \Bar{\mm}(r), \D_\Lambda \Aa_\theta(\Lambda(r)) \cdot \Gamma \right> \d \rho(\theta | r) \d r , \tag{L5} \label{eq:gradient_flow_L5}
    \end{align*}
    and to prove the result it suffices to show that~\cref{eq:gradient_flow_L3,eq:gradient_flow_L4,eq:gradient_flow_L5} vanish when $n \to +\infty$.
    As before, using that the sequence $(\rho_n)_{n \geq 0}$ has uniformly integrable first-order moments and that $\D_\Lambda \Aa_\theta$ has linear growth w.r.t. $\theta$ (\cref{eq:modified_attention_growth}), we can restrict the integrand to $\theta$ in a compact set $K \subset \Theta$.
    First, to show that~\cref{eq:gradient_flow_L5} vanishes, observe that the integrand is bounded and vanishes for every $r \in [0, 1]$.
    For~\cref{eq:gradient_flow_L4},
    observe that the map $\Lambda \mapsto \D_\Lambda \Aa_\theta(\Lambda)$ is Lipschitz (locally w.r.t. $\Lambda, \theta$). Together with the fact that $\mm_n(r)$ is bounded in total variation this gives that the integrand in~\cref{eq:gradient_flow_L4} converges uniformly to $0$ on $[0,1] \times K$.
    Finally, to show that~\cref{eq:gradient_flow_L3} vanishes, observe that the sequence of maps
    $$
    (r, \theta) \mapsto \left< \mm_n(r), \D_\Lambda \Aa_\theta(\Lambda(r)) \cdot \Gamma \right>
    $$
    converge pointwise to the map
    $$
    (r, \theta) \mapsto \left< \Bar{\mm}(r), \D_\Lambda \Aa_\theta(\Lambda(r)) \cdot \Gamma \right>
    $$
    and are uniformly Lipschitz on $[0,1] \times K$ (this follows from the fact that $\mm_n(r)$ are solutions to ODEs and that the map $\Aa_\theta$ is smooth w.r.t. $\theta$).
    Hence this sequence converges uniformly and~\cref{eq:gradient_flow_L3} vanishes since $\rho_n \to \rho$ narrowly.

    \proofpart{$| \nabla \Ll |$ is $\tau$-continuous on $\Ww_2^\COT$-bounded subsets.}
    Finally, we show that the upper-gradient $| \nabla \Ll |$ defined in~\cref{thm:upper_gradient} is $\tau$-continuous on $\Ww_2^\COT$-bounded subsets.
    For every index $i \in \{1, ..., N \}$, for the token distribution $\mu^i$ and the input token $x^i$, with the same notation as previously, we have shown that $\Lambda^i_n \to \Lambda^i \in \Cc^0([0,1] \times S, \RR^d)$ (w.r.t.\@ uniform convergence) and $\mm^i_n \to \mm^i \in \Cc^0([0,1], \Mm(S)^d)$ (w.r.t.\@ narrow convergence).
    Let
    \[
        G^i_n(r,\theta) \eqdef \D_\theta \Aa^i_\theta (\Lambda^i_n(r))^* \mm^i_n(r),
        \qquad
        G^i(r,\theta) \eqdef \D_\theta \Aa^i_\theta (\Lambda^i(r))^* \mm^i(r).
    \]
    For indices $i, j \in \{ 1, ..., N \}$, note that the sequence of maps
    $(r, \theta) \mapsto \left< G^i_n(r,\theta),G^j_n(r,\theta) \right>$
    converges pointwise to
    $(r, \theta) \mapsto \left< G^i(r,\theta),G^j(r,\theta) \right>$
    and is locally Lipschitz (uniformly over $n$).
    Hence it converges locally uniformly.
    Moreover, using~\cref{lem:A_theta_param_differential}, this sequence can be bounded (uniformly over $n$) by a function of quadratic growth w.r.t. $\theta$.
    Therefore, using that the sequence $(\rho_n)_{n \geq 0}$ has uniformly integrable second-order moment and converges narrowly to $\rho$, we have:
    \begin{align*}
        \lim_{n \to \infty} \| \nabla \Ll [\rho_n] \|^2_{L^2(\rho_n)}
        &= \lim_{n \to \infty} \frac{1}{N^2} \sum_{1 \leq i, j \leq N}
        \int_{[0,1] \times \Theta}
        \left< G^i_n(r,\theta), G^j_n(r,\theta) \right>
        \d \rho_n(r,\theta) \\
        & = \frac{1}{N^2} \sum_{1 \leq i, j \leq N}
        \int_{[0,1] \times \Theta}
        \left< G^i(r,\theta), G^j(r,\theta) \right>
        \d \rho(r,\theta) \\
        & = \| \nabla \Ll[\rho] \|^2_{L^2(\rho)} ,
    \end{align*}
    which is the desired property.

    \proofpart{Upper-gradient coincides with the local slope}
    Applying \cite[Thm.~2.2.3 and Thm.~2.3.3]{ambrosio2008gradient}, we have shown the existence of gradient flow curves for the ``local slope'' defined in~\cref{eq:local_slope}.
    To conclude the result, we hence need to show this notion coincides with the upper-gradient defined in~\cref{eq:upper_gradient}.
    This is the content of the following~\cref{lem:local_slope}.
    
\end{proof}

\begin{lemma} \label{lem:local_slope}
    Let $\rho \in \PLeb$ be some parameter distribution.
    Then $\left| \nabla \Ll \right| (\rho)$ is the \emph{local slope} of the risk $\Ll$ at $\rho$, that is,
    \begin{align} \label{eq:local_slope}
        \left| \nabla \Ll \right| (\rho) = \limsup_{\rho' \to \rho} \frac{\left( \Ll(\rho) - \Ll(\rho') \right)_+}{\Ww_2^\COT(\rho, \rho')} .
    \end{align}
\end{lemma}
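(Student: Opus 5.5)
We prove the two inequalities between $|\nabla \Ll|(\rho)$ and the local slope $|\partial \Ll|(\rho) \eqdef \limsup_{\rho' \to \rho} (\Ll(\rho) - \Ll(\rho'))_+ / \Ww_2^\COT(\rho,\rho')$ separately. Throughout we work with the modified Attention~\cref{eq:attention_modified}, so the growth bounds~\cref{eq:modified_attention_growth} are available, and we write $G(s,\theta) \eqdef \nabla \Ll[\rho](s,\theta)$ for the gradient field~\cref{eq:gradient_field}.

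\textbf{Lower bound $|\partial \Ll|(\rho) \geq |\nabla \Ll|(\rho)$.}
We use the explicit curve $\rho_h \eqdef (\Id - h G)_\# \rho$ for $h > 0$. Since $G$ acts only on $\theta$, $\rho_h$ still has uniform marginal on $[0,1]$. We have $G \in L^2(\rho)$ because $\D_\theta \Aa_\theta$ is bounded by~\cref{eq:modified_attention_growth} and the adjoint variables are bounded in total variation. Hence $\Ww_2^\COT(\rho,\rho_h) \leq h \|G\|_{L^2(\rho)}$. By~\cref{prop:flow_differentiability_appendix} with $v = -G$, and arguing exactly as in the proof of~\cref{thm:upper_gradient} (the duality between $\delta\Lambda$ and the adjoint $\mm$), we get
\[
\Ll(\rho_h) = \Ll(\rho) - h \|G\|^2_{L^2(\rho)} + o(h).
\]
Dividing by $\Ww_2^\COT(\rho,\rho_h) \leq h\|G\|$ and letting $h \to 0$ gives $|\partial \Ll|(\rho) \geq \|G\|_{L^2(\rho)} = |\nabla \Ll|(\rho)$. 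If $G = 0$ this inequality is trivial.

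\textbf{Upper bound $|\partial \Ll|(\rho) \leq |\nabla \Ll|(\rho)$.}
Take any $\rho'$ close to $\rho$. For each $s$, let $\gamma_s$ be an optimal coupling between $\rho(.|s)$ and $\rho'(.|s)$, chosen measurably in $s$, and glue these into a plan $\gamma$ on $[0,1] \times \Theta \times \Theta$. Interpolate along it with the generalized geodesic $\rho^u \eqdef ((1-u)\pi^1 + u \pi^2)_\# \gamma$, $u \in [0,1]$. This is an absolutely continuous curve in $\PLeb$ with constant metric speed $\Ww_2^\COT(\rho,\rho')$. Its velocity is realised by $\gamma$-displacements, i.e.\ $\frac{\d}{\d u}$ of $\theta_u = \theta + u(\theta'-\theta)$.

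By~\cref{thm:upper_gradient} applied along this curve,
\[
\Ll(\rho') - \Ll(\rho) = \int_0^1 \int \bigl\langle G[\rho^u](s,\theta_u), \theta' - \theta \bigr\rangle \, \d\gamma \, \d u .
\]
By Cauchy--Schwarz this is bounded in absolute value by
\[
\sup_u \|G[\rho^u](\cdot,\theta_u)\|_{L^2(\gamma)} \, \Ww_2^\COT(\rho,\rho').
\]
It therefore suffices to show
\[
\sup_{u} \|G[\rho^u](\cdot,\theta_u) - G[\rho](\cdot,\theta)\|_{L^2(\gamma)} \to 0 \quad \text{as } \Ww_2^\COT(\rho,\rho') \to 0 .
\]

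\textbf{Main obstacle: stability of the gradient field.}
The difficulty is the stability of $G$ in both arguments: in $\theta$ at fixed $\rho$, and in $\rho$ at fixed $\theta$. We split the difference as
\[
G[\rho^u](\theta_u) - G[\rho](\theta) = \bigl(G[\rho^u](\theta_u) - G[\rho](\theta_u)\bigr) + \bigl(G[\rho](\theta_u) - G[\rho](\theta)\bigr).
\]
\emph{The $\theta$-term.} Since $\D^2_{\theta,\theta}\Aa_\theta$ is bounded and the adjoint variables are bounded, $G[\rho]$ is globally Lipschitz in $\theta$. So this term is bounded by $C\|\theta'-\theta\|$, whose $L^2(\gamma)$ norm is $C\,\Ww_2^\COT$.

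\emph{The $\rho$-term.} By~\cref{lem:flow_locally_lipschitz}, $\Lambda_{\rho^u}$ is $C\,\Ww_2^\COT$-close to $\Lambda_\rho$, uniformly in $u$ because second moments stay bounded along $\rho^u$. We then need the analogous Lipschitz estimate for the adjoints $\mm_{\rho^u} - \mm_\rho$ in total variation. We get it by Grönwall on the backward linear ODE~\cref{eq:backward_ODE}. The perturbation of the operator $\D_\Lambda \Aa_{\rho(.|r)}$ is controlled using $\D^2_{\Lambda,\Lambda}\Aa$ and $\D^2_{\Lambda,\theta}\Aa$ from~\cref{eq:modified_attention_growth}, exactly as in~\cref{cor:A_rho_lipschitz}. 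The terminal datum $\nabla\ell^j(\Lambda(1,x^j))\delta_{x^j}$ is Lipschitz assuming $\ell^j \in \Cc^2$. Combining these with $\|\D^2_{\theta,\Lambda}\Aa_\theta\| \leq C(1+\|\theta\|)$ bounds the $\rho$-term by $C(1+\|\theta_u\|)\Ww_2^\COT$, which lies in $L^2(\gamma)$ by the moment bounds.

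Together the two terms give $|\Ll(\rho') - \Ll(\rho)| \leq \bigl(|\nabla\Ll|(\rho) + C\,\Ww_2^\COT(\rho,\rho')\bigr)\Ww_2^\COT(\rho,\rho')$. Hence $|\partial\Ll|(\rho) \leq |\nabla\Ll|(\rho)$, and with the lower bound this proves equality.
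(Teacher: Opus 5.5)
Your proof is correct and follows the same scheme as the paper's. The lower bound tests along $(\Id - hG)_\#\rho$ with the adjoint duality from \cref{thm:upper_gradient}, and the upper bound integrates the upper gradient along a constant-speed $\Ww_2^\COT$-geodesic from $\rho$ to $\rho'$. The one difference is how you control $|\nabla\Ll|$ along that geodesic: you use quantitative Lipschitz stability of flow maps, adjoints and the gradient field (essentially \cref{lem:adjoint_lipschitz,lem:gradient_field_lipschitz}), whereas the paper uses the $\Ww_2^\COT$-continuity of $\rho\mapsto\|\nabla\Ll[\rho]\|_{L^2(\rho)}$ from the narrow-continuity argument in the proof of \cref{thm:gradient_flow_existence}. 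Your route gives the slightly stronger bound $|\Ll(\rho')-\Ll(\rho)| \leq (|\nabla\Ll|(\rho) + C\,\Ww_2^\COT(\rho,\rho'))\,\Ww_2^\COT(\rho,\rho')$.
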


\begin{proof}
    First note that, by definition of $\left| \nabla \Ll \right| (\rho)$ in~\cref{eq:upper_gradient} and $\nabla \Ll(\rho)$ in~\cref{eq:gradient_field}, we have:
    $$
    \left| \nabla \Ll \right| (\rho) = \left\| \nabla \Ll(\rho) \right\|_{L^2(\rho)} .
    $$
    
    Let us first prove one side of~\cref{eq:local_slope} by considering $\rho_t \eqdef (\Id +t(0,\nabla \Ll (\rho)))_\# \rho \in \PLeb$, for $t \in \RR$.
    Then, using~\cref{prop:flow_differentiability} and performing the same calculations as in the proof of~\cref{thm:upper_gradient}, we have that the map $t \mapsto \Ll(\rho_t)$ is differentiable at $t=0$ and
    $$
    \Ll(\rho_t) = \Ll(\rho) + t \left\| \nabla \Ll(\rho) \right\|^2_{L^2(\rho)} + o(t).
    $$
    Also by construction, $\Ww_2^\COT(\rho, \rho_t) \leq t \left\| \nabla \Ll(\rho) \right\|_{L^2(\rho)}$, and hence
    $$
    \limsup_{\rho' \to \rho} \frac{\left( \Ll(\rho) - \Ll(\rho') \right)_+}{\Ww_2^\COT(\rho, \rho')} \geq \limsup_{t \to 0} \frac{\left( \Ll(\rho) - \Ll(\rho_t) \right)_+}{\Ww_2^\COT(\rho, \rho_t)} \geq \left\| \nabla \Ll(\rho) \right\|_{L^2(\rho)} .
    $$

    For the converse inequality, let us denote by $\left| \partial \Ll \right|(\rho)$ the right-hand side of~\cref{eq:local_slope}.
    Then, for $\eps > 0$, using the definition of $\left| \partial \Ll \right|(\rho)$ and the $\Ww_2^\COT$-continuity of the map $\rho \mapsto \left\| \nabla \Ll(\rho) \right\|_{L^2(\rho)}$ (in the proof of~\cref{thm:gradient_flow_existence}, we showed it is narrowly continuous on $\Ww_2^\COT$-bounded subsets, this implies $\Ww_2^\COT$-continuity), we can find $\rho' \in \PLeb$ s.t.
    $$
    \frac{(\Ll(\rho)-\Ll(\rho'))_+}{\Ww_2^\COT(\rho, \rho')} \geq \left| \partial \Ll \right|(\rho)-\eps
    $$
    and for every $\rho'' \in \PLeb$ with $\Ww_2^\COT(\rho, \rho'') \leq \Ww_2^\COT(\rho, \rho')$ it holds:
    $$
    \left| \left\| \nabla \Ll(\rho) \right\|_{L^2(\rho)} - \left\| \nabla \Ll(\rho'') \right\|_{L^2(\rho'')}  \right| \leq \eps.
    $$
    Considering a constant speed geodesic $(\rho_t)_{t \in [0,1]}$ between $\rho = \rho_0$ and $\rho' = \rho_1$, we have by~\cref{thm:upper_gradient}:
    $$
    \Ll(\rho') \geq \Ll(\rho) - \Ww_2^\COT(\rho, \rho') \int_0^1 \left\| \nabla \Ll(\rho_t) \right\|_{L^2(\rho_t)} \d t  \geq \Ll(\rho) - \Ww_2^\COT(\rho, \rho') \left( \left\| \nabla \Ll(\rho) \right\|_{L^2(\rho)} + \eps \right) .
    $$
    By construction of $\rho'$, this shows $\left| \partial \Ll \right|(\rho) \leq  \left\| \nabla \Ll(\rho) \right\|_{L^2(\rho)} + 2 \eps$.
\end{proof}

\subsubsection{Uniqueness of gradient flow curves}

We now prove the uniqueness of the gradient flow curves, whose existence was previously shown.

\begin{theorem} \label{thm:gradient_flow_uniqueness}
    Let the Attention map be defined by~\cref{eq:attention_modified} and let the training risk $\Ll : \PLeb \to \RR$ be defined by~\cref{eq:risk}.
    For any initialization $\rho_0 \in \PLeb$, the gradient flow curve $(\rho_t)_{t \geq 0}$ starting from $\rho_0$ is unique.
\end{theorem}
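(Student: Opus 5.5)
Uniqueness will follow from a stability estimate in $\Ww_2^\COT$ for two gradient flow curves, obtained by a Grönwall argument on a coupling. This is the approach used for ResNets in~\cite{barboni2024understanding}. Let $(\rho_t)$ and $(\tilde\rho_t)$ be two gradient flow curves starting from $\rho_0$, with velocity fields $v_t=-\nabla\Ll[\rho_t]$ and $\tilde v_t=-\nabla\Ll[\tilde\rho_t]$. We first use the growth estimates~\cref{eq:modified_attention_growth}, in particular $\|\D^2_{\theta,\theta}\Aa_\theta\|\le C$, together with the bounds on $\Lambda$ and $\mm$. These show that for $\rho$ in a $\Ww_2^\COT$-bounded set the field $\theta\mapsto\nabla\Ll[\rho](s,\theta)$ is globally Lipschitz in $\theta$, uniformly in $s$, and has linear growth. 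Along each curve the risk is non-increasing and $\int_0^T|\frac{\d}{\d t}\rho_t|^2\d t\le 2\Ll(\rho_0)$, so both curves stay in a $\Ww_2^\COT$-ball and have bounded second moments on $[0,T]$. This fixes uniform constants. The Lipschitz bound then lets us represent each curve as $\rho_t=(\Id,X_t)_\#\rho_0$, where $X_t(s,\theta)$ solves the characteristic ODE $\dot X_t=-\nabla\Ll[\rho_t](s,X_t)$, by uniqueness for the continuity equation with Lipschitz velocity (Ambrosio--Gigli--Savaré, Prop.~8.1.8). The same holds for $\tilde X_t$.

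Next we prove a Lipschitz estimate of the gradient field with respect to the measure:
\begin{align*}
\|\nabla\Ll[\rho](s,\theta)-\nabla\Ll[\tilde\rho](s,\theta)\|\le C(1+\|\theta\|)\,\Ww_2^\COT(\rho,\tilde\rho).
\end{align*}
For this, write $\nabla\Ll[\rho]=\frac1N\sum_j \D_\theta\Aa^j_\theta(\Lambda^j_\rho(s))^*\mm^{(j)}_\rho(s)$. \cref{lem:flow_locally_lipschitz} gives $\|\Lambda^j_\rho-\Lambda^j_{\tilde\rho}\|\le C\Ww_2^\COT$. Combined with the bound on $\D^2_{\Lambda,\theta}\Aa_\theta$, this controls the $\D_\theta\Aa$ factor. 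For the adjoint we prove the analogue of \cref{lem:flow_locally_lipschitz}, namely $\sup_s\|\mm^{(j)}_\rho(s)-\mm^{(j)}_{\tilde\rho}(s)\|_\TV\le C\Ww_2^\COT(\rho,\tilde\rho)$. This comes from Grönwall on the backward linear ODE~\cref{eq:backward_ODE}, using the bound on $\D^2_{\Lambda,\Lambda}\Aa_\theta$, the analogue of \cref{cor:A_rho_lipschitz} for $\D_\Lambda\Aa_\rho$, and the regularity of $\nabla\ell^j$ for the terminal condition.

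We then conclude with a coupling estimate. Set $\delta(t)=\int\|X_t-\tilde X_t\|^2\d\rho_0$, which satisfies $\Ww_2^\COT(\rho_t,\tilde\rho_t)^2\le\delta(t)$ since $(X_t,\tilde X_t)_\#\rho_0(\cdot|s)$ is a coupling for each $s$. Differentiating and splitting $\nabla\Ll[\rho_t](X_t)-\nabla\Ll[\tilde\rho_t](\tilde X_t)$ into a $\theta$-Lipschitz part and a measure-Lipschitz part gives
\[\dot\delta\le C\delta+C\Bigl(\int(1+\|X_t\|^2)\d\rho_0\Bigr)^{1/2}\delta^{1/2}\Ww_2^\COT(\rho_t,\tilde\rho_t)\le C'\delta .\]
Since $\delta(0)=0$, Grönwall gives $\delta\equiv0$ on $[0,T]$ for every $T$, hence $\rho_t=\tilde\rho_t$.

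The main obstacle is the adjoint stability estimate in total variation. It requires controlling $\D_\Lambda\Aa^*_{\rho}-\D_\Lambda\Aa^*_{\tilde\rho}$ as operators on $\Mm^d(S)$ with constants integrable in $\theta$. This is exactly where the bounded $\Pi(V)$ modification is needed: with it the $\theta$-dependence stays linear and is absorbed by second moments, which is not possible with unbounded $V$.
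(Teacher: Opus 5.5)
Your proposal is correct and follows the paper's proof in substance: the same two estimates (global Lipschitz continuity of $\theta\mapsto\nabla\Ll[\rho](s,\theta)$ from the bounded $\D^2_{\theta,\theta}\Aa_\theta$, and the measure-Lipschitz bound with factor $(1+\|\theta\|)$ from \cref{lem:flow_locally_lipschitz} plus total-variation stability of the adjoint via Gr\"onwall on the backward ODE, i.e.\ \cref{lem:adjoint_lipschitz,lem:gradient_field_lipschitz}) are closed by a Gr\"onwall argument. The only difference is the closing step: you run Gr\"onwall on the synchronous coupling $\delta(t)=\int\|X_t-\tilde X_t\|^2\d\rho_0\geq\Ww_2^\COT(\rho_t,\tilde\rho_t)^2$ built from characteristics, whereas the paper differentiates $\Ww_2^\COT(\rho_t,\rho_t')^2$ directly with an optimal conditional coupling (\cite[Lem.~2.3]{barboni2024understanding}), which avoids the Lagrangian representation; if you keep your route, note that $\nabla\Ll[\rho_t]$ need not be Lipschitz in $s$, so $\rho_t=(\Id,X_t)_\#\rho_0$ is better justified by the superposition principle (the $s$-component of the velocity vanishes and characteristics are unique by the $\theta$-Lipschitz bound) than by a flow-uniqueness result requiring joint Lipschitz continuity.
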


\begin{proof}
    Let $(\rho_t)_{t \geq 0}$ and $(\rho_t')_{t \geq 0}$ be two gradient flow curves starting from $\rho_0$ in the sense of \cref{def:gradient_flow}.
    Then using~\cite[Lem.~2.3]{barboni2024understanding}, we have for (a.e.) $t > 0$:
    $$
    \frac{\d}{ \d t} \Ww_2^\COT(\rho_t, \rho_t')^2 = 2 \int_0^1 \int_{\Theta \times \Theta} \left< \theta - \theta', \nabla \Ll[\rho_t](s, \theta) - \nabla \Ll[\rho_t'](s, \theta') \right> \d \gamma(\theta, \theta'|s) \d s ,
    $$
    where $\gamma \in \Gamma^\Leb_o(\rho_t, \rho_t')$ is any optimal (conditional) coupling between $\rho_t$ and $\rho_t'$.
    Considering a time horizon $T \geq 0$ and defining $\Ee \eqdef \sup_{t \in [0,T]} \left\{ \Ee_2(\rho_t) + \Ee_2(\rho_t') \right\}$, we have by~\cref{lem:gradient_field_lipschitz} that there exists a constant $C = C(\Ee)$ s.t. it holds for $t \in [0,T]$, $s \in [0,1]$ and $\theta, \theta' \in \Theta$:
    $$
    \left\| \nabla \Ll[\rho_t](s, \theta) -  \nabla \Ll[\rho_t'](s, \theta') \right\| \leq C  \left( \| \theta - \theta'\| +  (1+\|\theta\|)\Ww_2^\COT(\rho_t, \rho_t') \right) .
    $$
    Combining this with the previous equation, using Cauchy-Schwarz and a Grönwall argument gives for every $t \in [0,T]$:
    $$
    \Ww_2^\COT(\rho_t, \rho_t') \leq e^{C t} \Ww_2^\COT(\rho_0, \rho_0') = 0,
    $$
    which leads to the desired result.
\end{proof}

The above result relies on the two following lemmas, showing local Lipschitz regularity of the adjoint variables and of the gradient field w.r.t. the parameterization.

\begin{lemma} \label{lem:adjoint_lipschitz}
    Fix an input token distribution $\mu \in \Pp_c(\RR^d)$ and an input token $x \in \RR^d$.
    Let $S$ be some compact set s.t.\@ $\Supp(\mu) \cup \{x\} \subset S$.
    Then the associated adjoint-variable map:
    \begin{align*}
        \rho \in \PLeb \mapsto \mm_\rho \in \Cc^0([0,1], \Mm(S)^d)
    \end{align*}
    is locally Lipschitz w.r.t. $\rho$.
\end{lemma}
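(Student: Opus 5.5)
The plan is a stability estimate for the linear backward Cauchy problem~\cref{eq:backward_ODE}, combined with the Lipschitz dependence of the forward flow given by \cref{lem:flow_locally_lipschitz}.

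Fix $\rho, \rho' \in \PLeb$ with $\Ee_2(\rho),\Ee_2(\rho') \leq \Ee$. Write $\Lambda,\Lambda'$ for the associated flow-maps and $\mm,\mm'$ for the adjoint variables. We work in the setting of the modified Attention~\cref{eq:attention_modified}, so the growth bounds~\cref{eq:modified_attention_growth} are available.

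\textbf{Step 1: uniform bounds.} By the proof of \cref{prop:existence_uniqueness}, there is $R=R(\Ee)$ with $\|\Lambda(s)\|_{\Cc^0},\|\Lambda'(s)\|_{\Cc^0}\leq R$ for all $s$. By \cref{lem:differential_A} and~\cref{eq:modified_attention_growth},
\[
\|\D_\Lambda\Aa_{\rho(.|r)}(\Lambda(r))\|\leq C\int(1+\|\theta\|)\d\rho(\theta|r),
\]
and the right-hand side is integrable in $r$. Since the adjoint operator has the same norm on $\Mm^d(S)=\Cc^0(S,\RR^d)^*$, a backward Grönwall argument (\cref{lem:gronwall}) gives $\sup_s\|\mm(s)\|_\TV\leq C(\Ee)$, and likewise for $\mm'$. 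This uses that $\nabla\ell$ is bounded on $B(0,R)$.

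\textbf{Step 2: difference equation.} Subtracting the two backward equations gives
\begin{align*}
\mm(s)-\mm'(s) ={}& \bigl(\nabla\ell(\Lambda(1,x))-\nabla\ell(\Lambda'(1,x))\bigr)\delta_x + \int_s^1 \D_\Lambda\Aa_{\rho(.|r)}(\Lambda(r))^*(\mm(r)-\mm'(r))\d r\\
&+\int_s^1\bigl(\D_\Lambda\Aa_{\rho(.|r)}(\Lambda(r))-\D_\Lambda\Aa_{\rho'(.|r)}(\Lambda'(r))\bigr)^*\mm'(r)\d r .
\end{align*}
The first term is bounded by $\mathrm{Lip}(\nabla\ell|_{B(0,R)})\,C\,\Ww_2^\COT(\rho,\rho')$ using \cref{lem:flow_locally_lipschitz}. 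Once the operator difference in the last integrand is controlled, Grönwall's lemma applied to $\|\mm(s)-\mm'(s)\|_\TV$ finishes the proof.

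\textbf{Step 3 (main obstacle): the operator difference.} Split it as
\[
\D_\Lambda\Aa_{\rho(.|r)}(\Lambda(r))-\D_\Lambda\Aa_{\rho(.|r)}(\Lambda'(r))
\]
plus
\[
\D_\Lambda\Aa_{\rho(.|r)}(\Lambda'(r))-\D_\Lambda\Aa_{\rho'(.|r)}(\Lambda'(r)).
\]

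For the first piece, use $\|\D^2_{\Lambda,\Lambda}\Aa_\theta\|\leq C(1+\|\theta\|^2)$. This gives a bound $C\bigl(1+\Ee_2(\rho(.|r))\bigr)\|\Lambda(r)-\Lambda'(r)\|$, which is integrable in $r$ and, after integration, $\lesssim \Ww_2^\COT(\rho,\rho')$.

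For the second piece, take an optimal coupling $\gamma_r$ of $\rho(.|r)$ and $\rho'(.|r)$. Write the difference as
\[
\int\bigl(\D_\Lambda\Aa_\theta-\D_\Lambda\Aa_{\theta'}\bigr)\d\gamma_r,
\]
and apply the mean value theorem in $\theta$ with $\|\D^2_{\Lambda,\theta}\Aa_\theta\|\leq C(1+\|\theta\|)$. Cauchy--Schwarz then yields
\[
C\bigl(1+\Ee_2(\rho(.|r))+\Ee_2(\rho'(.|r))\bigr)^{1/2}\,\Ww_2(\rho(.|r),\rho'(.|r)).
\]
Integrating in $r$ and applying Cauchy--Schwarz once more gives $C(\Ee)\Ww_2^\COT(\rho,\rho')$.

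The delicate point is that these bounds are only $L^1$ in $r$, not uniform, so Grönwall must be applied in its integral form with the time-dependent coefficient $\int(1+\|\theta\|)\d\rho(\theta|r)$. Its integral is finite by the second-moment bound. With this we conclude $\sup_s\|\mm(s)-\mm'(s)\|_\TV\leq C(\Ee)\Ww_2^\COT(\rho,\rho')$.

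For the unmodified Attention, the same argument works on the set of $\rho$ with $V$-marginal supported in a ball. There the bounds of \cref{lem:differential_A} with $\|\theta\|^2$ growth are still integrable.
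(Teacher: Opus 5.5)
Your proposal is correct and follows essentially the same route as the paper. You bound the terminal condition via \cref{lem:flow_locally_lipschitz} and split the backward equation into three terms controlled by \eqref{eq:modified_attention_growth}: linear growth of $\D_\Lambda \Aa_\theta$, an optimal coupling with linear growth of $\D^2_{\Lambda,\theta}\Aa_\theta$ plus Cauchy--Schwarz, and quadratic growth of $\D^2_{\Lambda,\Lambda}\Aa_\theta$. You then close with a backward Gr\"onwall argument. The differences are cosmetic: you split the operator difference in the opposite order and work in integral form. You also usefully make explicit the uniform $\TV$ bound on $\mm'$, which the paper's Gr\"onwall step uses without stating it.
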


\begin{proof}
    Consider an input token distribution $\mu \in \Pp_c(\RR^d)$ and an input token $x \in \RR^d$ as in the statement.
    Let $\rho, \rho' \in \PLeb$ be two parameter distributions and define $\Ee = \Ee_2(\rho) + \Ee_2(\rho')$.
    Denote respectively by $\Lambda$, $\Lambda'$, $\mm$ and $\mm'$ the flow maps and adjoint variables associated with the parameterizations $\rho$ and $\rho'$.
    Recall that, by definition, those respectively satisfy
    $$
    \left\{
    \begin{array}{rcl}
        \frac{\d}{\d s} \Lambda(s) & = & \Aa_{\rho(.|s)}(\Lambda(s))  \\
        \Lambda(0) & = & \Id 
    \end{array}
    \right.
    , \quad 
    \left\{
    \begin{array}{rcl}
        \frac{\d}{\d s} \Lambda'(s) & = & \Aa_{\rho'(.|s)}(\Lambda'(s))  \\
        \Lambda'(0) & = & \Id 
    \end{array}
    \right. ,
    $$
    and
    $$
    \left\{
    \begin{array}{rcl}
        \frac{\d}{\d s} \mm(s) & = & -\D_\Lambda \Aa_{\rho(.|s)}(\Lambda(s))^* \mm(s)  \\
        \mm(1) & = & \nabla \ell(\Lambda(1,x)) \, \delta_x
    \end{array}
    \right.
    , \quad 
    \left\{
    \begin{array}{rcl}
        \frac{\d}{\d s} \mm'(s) & = & -\D_\Lambda \Aa_{\rho'(.|s)}(\Lambda'(s))^* \mm'(s)  \\
        \mm'(1) & = & \nabla \ell(\Lambda'(1,x)) \, \delta_x
    \end{array}
    \right. ,
    $$
    with $\ell$ the loss associated to the problem.

    Using the smoothness of $\ell$ and the local Lipschitz regularity of the flow map w.r.t. the parameterization (\cref{lem:flow_locally_lipschitz}), we already have that there exists a constant $C = C(\Ee)$ s.t.
    $$
        \left\| \mm(1) - \mm'(1) \right\| \leq C \Ww_2^\COT(\rho, \rho') .
    $$
	    Then, for (a.e.) $s \in [0,1]$, set
	    \[
	    \begin{aligned}
	        \RN{1} &\eqdef \left\| \D_\Lambda \Aa_{\rho(.|s)}(\Lambda(s)) \right\|, \\
	        \RN{2} &\eqdef
	        \left\| \D_\Lambda \Aa_{\rho'(.|s)}(\Lambda(s))
	        - \D_\Lambda \Aa_{\rho(.|s)}(\Lambda(s)) \right\|, \\
	        \RN{3} &\eqdef
	        \left\| \D_\Lambda \Aa_{\rho'(.|s)}(\Lambda'(s))
	        - \D_\Lambda \Aa_{\rho'(.|s)}(\Lambda(s)) \right\|.
	    \end{aligned}
	    \]
	    We have
	    \begin{align*}
	        \left\| \frac{\d}{ \d s} (\mm - \mm')(s) \right\|
	        &\leq \RN{1} \left\| (\mm - \mm')(s) \right\|
	        + \left(\RN{2}+\RN{3}\right) \left\| \mm'(s) \right\| .
	    \end{align*}
	    We will thus proceed by estimating $\RN{1}$, $\RN{2}$ and $\RN{3}$.

    For $\RN{1}$, using the linear growth of $\D_\Lambda \Aa_\theta$ w.r.t. $\theta$ in~\cref{eq:modified_attention_growth}, we directly obtain
   $$
   \RN{1}
   \leq
    \int_\Theta \left\| \D_\Lambda \Aa_\theta(\Lambda(s)) \right\| \d \rho(\theta|s)
   \leq
   C \left( 1+ \Ee_2(\rho(.|s)) \right)^{1/2},
   $$
    for some constant $C = C(\Ee)$.
    
    For $\RN{2}$, consider an optimal coupling $\gamma \in \Gamma(\rho(.|s), \rho'(.|s))$ between $\rho(.|s)$ and $\rho'(.|s)$.
    Then by the definition of $\Aa_\rho$ and properties of the Bochner integral, we have:
    $$
    \RN{2} \, \leq \int_{\Theta \times \Theta} \left\| \D_\Lambda \Aa_\theta (\Lambda(s)) - \D_\Lambda \Aa_{\theta'} (\Lambda(s))  \right\| \d \gamma(\theta, \theta') .
    $$
    But then, thanks to the linear growth of $\D^2_{\Lambda, \theta} \Aa_\theta(\Lambda)$ in~\cref{eq:differential_A}, we see that we can find a constant $C = C(\Ee)$ s.t. it holds, for every $\theta, \theta' \in \Theta$,
    $$
    \left\| \D_\Lambda \Aa_\theta (\Lambda(s)) - \D_\Lambda \Aa_{\theta'} (\Lambda(s))  \right\| \leq C \| \theta - \theta' \| (1+ \| \theta \| + \| \theta' \|) .
    $$
    Integrating w.r.t. $\gamma$ and using Cauchy-Schwarz then gives:
    $$
    \RN{2} \, \leq C  \Ww_2(\rho(.|s), \rho'(.|s)) \left( 1+ \Ee_2(\rho(.|s))+ \Ee_2(\rho'(.|s)) \right)^{1/2} .
    $$

    For $\RN{3}$, observe in~\cref{eq:modified_attention_growth} that $\D^2_{\Lambda, \Lambda} \Aa_\theta(\Lambda)$ has quadratic growth w.r.t.\@ $\theta$.
    Hence, there exists a constant $C = C(\Ee)$ s.t. for every $\theta \in \Theta$
    $$
    \left\| \D_\Lambda \Aa_\theta (\Lambda'(s)) - \D_\Lambda \Aa_\theta (\Lambda(s)) \right\| \leq C \left\| \Lambda(s) - \Lambda'(s) \right\| \, (1+ \| \theta \|^2) .
    $$
    Integrating this w.r.t. $\rho'(.|s)$ and using the local Lipschitz regularity of the flow map (\cref{lem:flow_locally_lipschitz}) then gives
    $$
    \RN{3} \, \leq C  \Ww_2^\COT(\rho, \rho')  \left( 1+\Ee_2(\rho'(.|s)) \right) .
    $$

    Using the previous estimates on $\RN{1}$, $\RN{2}$ and $\RN{3}$ gives:
    $$
    \left\| \frac{\d}{ \d s} (\mm - \mm')(s) \right\| \leq
    C \left(
    \begin{array}{c}
        (1+\Ee_2(\rho(.|s)))^{1/2}
    \left\| (\mm - \mm')(s) \right\|  \\[0.5em]
         + \left( 1+ \Ee_2(\rho(.|s)) + \Ee_2(\rho'(.|s)) \right)^{1/2} \Ww_2(\rho(.|s), \rho'(.|s)) \\[0.5em]
         + (1+\Ee_2(\rho'(.|s)))\Ww_2^\COT(\rho, \rho')
    \end{array}
    \right) .
    $$
    for some constant $C = C(\Ee)$.
    Hence we obtain the desired result using the initial condition on $\mm(1)$ and $\mm'(1)$ as well as a Grönwall argument.
\end{proof}

\begin{lemma} \label{lem:gradient_field_lipschitz}
    Let $\nabla \Ll[\rho]$ be the velocity field defined in~\cref{eq:gradient_field}.
    Then, for $\rho, \rho' \in \PLeb$, there exists a constant $C = C( \Ee_2(\rho)+ \Ee_2(\rho'))$ s.t. for $s \in [0,1]$ and every $\theta, \theta' \in \Theta$ it holds:
    $$
    \left\| \nabla \Ll[\rho](s, \theta) -  \nabla \Ll[\rho'](s, \theta') \right\| \leq C  \left( \| \theta - \theta'\| +  (1+\|\theta \|) \Ww_2^\COT(\rho, \rho') \right) . 
    $$
\end{lemma}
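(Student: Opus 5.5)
We split the difference by the triangle inequality,
\[
\nabla \Ll[\rho](s,\theta) - \nabla \Ll[\rho'](s,\theta') = \bigl(\nabla \Ll[\rho](s,\theta) - \nabla \Ll[\rho](s,\theta')\bigr) + \bigl(\nabla \Ll[\rho](s,\theta') - \nabla \Ll[\rho'](s,\theta')\bigr),
\]
and treat each index $j$ separately, using the definition~\cref{eq:gradient_field}, $G^j_\rho(s,\theta) = \D_\theta \Aa^j_\theta(\Lambda^j_\rho(s))^* \mm^{(j)}_\rho(s)$. We work throughout with the modified Attention~\cref{eq:attention_modified}, so that the growth bounds~\cref{eq:modified_attention_growth} hold with a constant $C=C(S^j,R)$. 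Here $R=R(\Ee)$ bounds $\sup_s\|\Lambda^j_\rho(s)\|_{\Cc^0}$ and $\sup_s\|\Lambda^j_{\rho'}(s)\|_{\Cc^0}$; this bound comes from the Grönwall estimate in the proof of~\cref{prop:existence_uniqueness}, which is valid for modified Attention because $\Pi$ is bounded. Applying the same Grönwall argument backward to~\cref{eq:backward_ODE}, and using the linear growth of $\D_\Lambda\Aa_\theta$ in $\theta$, we also get $\sup_s \|\mm^{(j)}_\rho(s)\|_\TV \le C(\Ee)$, and likewise for $\rho'$.

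\emph{The $\theta$-increment.} For fixed $\rho$ and $s$, we view $\theta \mapsto \D_\theta \Aa^j_\theta(\Lambda)$ as a map into $\Ll(\Theta, \Cc^0(S^j,\RR^d))$. The bound $\|\D^2_{\theta,\theta}\Aa_\theta(\Lambda)\| \le C$ from~\cref{eq:modified_attention_growth} makes this map globally Lipschitz in $\theta$, uniformly for $\|\Lambda\|\le R$. Since the adjoint $\D_\theta\Aa^j_\theta(\Lambda)^*$ acts on measures by duality, its operator norm from $\Mm^d(S^j)$ to $\Theta$ is bounded by that of $\D_\theta\Aa^j_\theta(\Lambda)$. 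Hence
\[
\|G^j_\rho(s,\theta)-G^j_\rho(s,\theta')\| \le C\|\theta-\theta'\|\,\|\mm^{(j)}_\rho(s)\|_\TV \le C(\Ee)\|\theta-\theta'\|.
\]

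\emph{The $\rho$-increment at fixed $\theta'$.} We insert the intermediate term $\D_\theta \Aa^j_{\theta'}(\Lambda^j_{\rho'}(s))^*\mm^{(j)}_\rho(s)$, which produces two pieces.

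The first piece measures the change in the flow map:
\[
\|\D_\theta\Aa_{\theta'}(\Lambda^j_\rho(s)) - \D_\theta\Aa_{\theta'}(\Lambda^j_{\rho'}(s))\|\,\|\mm^{(j)}_\rho(s)\|_\TV .
\]
We bound it using $\|\D^2_{\Lambda,\theta}\Aa_\theta\| \le C(1+\|\theta'\|)$, the Lipschitz estimate $\|\Lambda^j_\rho(s)-\Lambda^j_{\rho'}(s)\|\le C\Ww_2^\COT(\rho,\rho')$ of~\cref{lem:flow_locally_lipschitz}, and the $\TV$ bound on $\mm^{(j)}_\rho$. This gives $C(1+\|\theta'\|)\Ww_2^\COT(\rho,\rho')$.

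The second piece measures the change in the adjoint:
\[
\|\D_\theta\Aa_{\theta'}(\Lambda^j_{\rho'}(s))\|\,\|\mm^{(j)}_\rho(s)-\mm^{(j)}_{\rho'}(s)\|_\TV .
\]
Here $\|\D_\theta\Aa_{\theta'}\|\le C$ uniformly in $\theta'$ for modified Attention (bounded $\Pi$ and bounded derivatives of $\Pi$), and~\cref{lem:adjoint_lipschitz} bounds the adjoint difference. To make this piece quantitative, we re-read the Grönwall conclusion of that lemma's proof: it shows $\sup_s\|\mm_\rho(s)-\mm_{\rho'}(s)\|\le C(\Ee)\Ww_2^\COT(\rho,\rho')$ with an explicit dependence of $C$ on $\Ee$, and not merely local Lipschitz continuity.

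Finally, $1+\|\theta'\|\le 1+\|\theta\|+\|\theta-\theta'\|$. The contribution $\|\theta-\theta'\|\,\Ww_2^\COT(\rho,\rho')$ must then be absorbed, and this is the main obstacle: the factor $\Ww_2^\COT(\rho,\rho')$ is bounded by $\sqrt{\Ee_2(\rho)}+\sqrt{\Ee_2(\rho')}$, which depends only on $\Ee$, so the contribution can be folded into $C(\Ee)\|\theta-\theta'\|$. Alternatively, one can perform the split the other way, routing the $\rho$-increment through $\theta$ rather than $\theta'$, so that the factor $(1+\|\theta\|)$ appears directly. Summing over $j$ and dividing by $N$ yields the claimed inequality. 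Two points need care: checking that the gradient bound for $\D^2_{\Lambda,\theta}$ and the uniform bound on $\D_\theta\Aa$ really hold for the $V$-component under $\Pi$, and verifying that the constants in~\cref{lem:adjoint_lipschitz} depend only on $\Ee$.
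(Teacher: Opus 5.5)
Your proposal is correct and follows the paper's proof. It uses the same three-term telescoping: the Lipschitz dependence of $\D_\theta\Aa_\theta$ on $\theta$, the linear growth of $\D^2_{\Lambda,\theta}\Aa_\theta$ combined with \cref{lem:flow_locally_lipschitz}, and the boundedness of $\D_\theta\Aa_\theta$ combined with \cref{lem:adjoint_lipschitz}, differing only in the order in which the intermediate terms are inserted. The paper's proof ends with the factor $(1+\|\theta'\|)$ rather than the stated $(1+\|\theta\|)$. Your absorption step via $\Ww_2^\COT(\rho,\rho')\le\sqrt{\Ee_2(\rho)}+\sqrt{\Ee_2(\rho')}$, or equivalently the symmetry of the left-hand side under $(\rho,\theta)\leftrightarrow(\rho',\theta')$, correctly closes that small mismatch.
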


\begin{proof}
    Consider $\rho, \rho' \in \PLeb$ and $\Ee = \Ee_2(\rho) + \Ee_2(\rho')$.
    We have for $s \in [0,1]$ and $\theta, \theta' \in \Theta$:
    $$
    \left\| \nabla \Ll[\rho](s, \theta) -  \nabla \Ll[\rho'](s, \theta') \right\|
    \leq \frac{1}{N} \sum_{i=1}^N
    \left\|
    \D_\theta \Aa^i_\theta (\Lambda^i_\rho(s))^* \mm^i_\rho(s) - \D_\theta \Aa^i_{\theta'} (\Lambda^i_{\rho'}(s))^* \mm^i_{\rho'}(s)
    \right\| .
    $$
    For this reason we now focus on a single index $i \in \left\{1, ..., N \right\}$ and drop the dependence w.r.t. $i$.
    We also denote by $\Lambda$, $\Lambda'$, $\mm$ and $\mm'$ the flow-maps and adjoint variables associated to $\rho$ and $\rho'$ respectively.
	    With this notation we obtain, for $s \in [0,1]$ and every $\theta, \theta' \in \Theta$,
	    \begin{align*}
	        \left\| \D_\theta \Aa_\theta (\Lambda(s))^* \mm(s)
	        - \D_\theta \Aa_{\theta'} (\Lambda'(s))^* \mm'(s) \right\|
	        \leq \RN{1}+\left(\RN{2}+\RN{3}\right)\left\| \mm'(s) \right\|,
	    \end{align*}
	    where
	    \[
	    \begin{aligned}
	        \RN{1} &\eqdef
	        \left\| \D_\theta \Aa_\theta(\Lambda(s)) \right\|
	        \left\| (\mm - \mm')(s) \right\|, \\
	        \RN{2} &\eqdef
	        \left\| \D_\theta \Aa_{\theta'}(\Lambda(s))
	        - \D_\theta \Aa_\theta(\Lambda(s)) \right\|, \\
	        \RN{3} &\eqdef
	        \left\| \D_\theta \Aa_{\theta'}(\Lambda'(s))
	        - \D_\theta \Aa_{\theta'}(\Lambda(s)) \right\|.
	    \end{aligned}
	    \]
    and we proceed by estimating $\RN{1}$, $\RN{2}$ and $\RN{3}$.

    For $\RN{1}$, using the boundedness of $\D_\theta \Aa$ w.r.t. $\theta$ as well as the local Lipschitz regularity of the adjoint variable map (\cref{lem:adjoint_lipschitz}) we directly obtain a constant $C = C(\Ee)$ s.t.
    $$
    \RN{1} \leq C  \Ww_2^\COT(\rho, \rho') . 
    $$
    For $\RN{2}$, observe in~\cref{eq:modified_attention_growth} that $\D_{\theta} \Aa_\theta(\Lambda)$ is Lipschitz w.r.t. $\theta \in \Theta$, hence
    $$
    \RN{2} \leq C  \| \theta - \theta' \| .
    $$
    For $\RN{3}$, using the linear growth of $\D^2_{\Lambda, \theta} \Aa_\theta(\Lambda)$ w.r.t. $\theta$ (\cref{eq:modified_attention_growth}) as well as local Lipschitz regularity of the flow (\cref{lem:flow_locally_lipschitz}), we have
    $$
    \RN{3} \leq C (1+\|\theta'\|) \| \Lambda(s) - \Lambda'(s) \| \leq C (1+\|\theta'\|) \Ww_2^\COT(\rho, \rho') .
    $$
    Finally, combining the estimates on $\RN{1}$, $\RN{2}$ and $\RN{3}$ we obtain:
    \begin{align*}
    &\left\| \D_\theta \Aa_\theta (\Lambda(s))^* \mm(s)
    - \D_\theta \Aa_{\theta'} (\Lambda'(s))^* \mm'(s) \right\| \\
    &\hspace{6em}
    \leq C \left( (1+\|\theta'\|) \Ww_2^\COT(\rho, \rho')
    + \left\| \theta-\theta' \right\| \right) ,
    \end{align*}
    for some constant $C = C(\Ee)$.
    This is the desired property.
\end{proof}

\subsubsection{Extension to the case of unbounded parameters}

We justify here the existence and uniqueness of gradient flow curves for the minimization of the training risk in the case of Attention layers of the form~\cref{eq:attention}.
In contrast with the modified Attention defined in~\cref{eq:attention_modified}, we thus get rid of the function $\Pi$ previously used to bound the parameter $V$.
This is achievable at the price of considering an initial parameter distribution whose marginal w.r.t. $V$ is compactly supported.

\begin{theorem} \label{thm:wellposedness_compact}
    Let Attention layers be defined by~\cref{eq:attention} and let the training risk $\Ll : \PLeb \to \RR$ be defined by~\cref{eq:risk}.
    Let $\rho_0 \in \PLeb$ be some initial parameter distribution such that $\Supp(\rho_0) \subset [0,1] \times \left\{ (V, Q, q) \in \Theta, \| V \| \leq R_0 \right\}$ for some $R_0 \geq 0$. 
    Then there exists a unique gradient flow curve $(\rho_t)_{t \geq 0}$ starting from $\rho_0$ in the sense of~\cref{def:gradient_flow}.
\end{theorem}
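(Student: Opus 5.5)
Since $\Pi$ is arbitrary in \cref{thm:gradient_flow_existence,thm:gradient_flow_uniqueness}, I would reduce to the modified Attention \cref{eq:attention_modified} with a smooth bounded $\Pi$ that equals the identity on a ball. The approach is a priori localization. Fix a horizon $T>0$ and a radius $R_1 > R_0$ to be chosen. Let $\Pi_{R_1}$ be a smooth bounded map with $\Pi_{R_1}(V) = V$ for $\|V\| \leq R_1$. Let $\Ll_{R_1}$ be the associated risk. \cref{thm:gradient_flow_existence,thm:gradient_flow_uniqueness} give a unique gradient flow $(\rho^{R_1}_t)_{t\ge0}$ of $\Ll_{R_1}$ from $\rho_0$. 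If we show that, for $t\le T$, the $V$-marginal of $\rho^{R_1}_t$ stays supported in $\{\|V\|\le R_1\}$, then on this set the modified and original Attention coincide. So do the flow maps, the adjoint variables and the gradient fields \cref{eq:gradient_field}, whose $V$-component is just $\D_V$ at these points. Hence $\rho^{R_1}$ is a gradient flow for $\Ll$ on $[0,T]$.

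The key step is a bound on the $V$-component of the velocity. By \cref{lem:A_theta_param_differential}, $\D_V\Aa^j_\theta(\Lambda)^*\mm = \int \d\mm(x)\otimes \Mm^j_\theta(\Lambda)(x)$, and $\|\Mm^j_\theta(\Lambda)\|\le\|\Lambda\|_{\Cc^0}$ independently of $\theta$. Thus
\[
\|\nabla_V\Ll[\rho](s,\theta)\| \le \frac1N\sum_j \|\mm^{(j)}_\rho(s)\|_\TV\,\|\Lambda^j_\rho(s)\|_{\Cc^0}.
\]
It remains to bound $\Lambda$ and $\mm$ uniformly in $\theta$. With $V$ confined to $\|V\|\le R_1$, the forward bound from the proof of \cref{prop:existence_uniqueness} gives $\|\Lambda^j(s)\|\le \|\Id\|e^{R_1}$. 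For the adjoint, \cref{eq:differential_A} bounds $\|\D_\Lambda\Aa_\theta\|$ by $C\|V\|(1+\|Q\|+\|q\|)(\ldots)$, which depends on $Q,q$. It is still integrable because $\Ee_2(\rho_t)\le \Ee_2(\rho_0)+C t\,\Ll(\rho_0)$, which follows from the EDI and $\Ll$ being non-increasing. Together with the bound on $\|\nabla\ell^j\|$ along the flow, this yields $\|\mm^{(j)}(s)\|_\TV\le K(R_1,T)$. Along the characteristic flow, $V$ then moves by at most $tK(R_1,T)$, so $\Supp$ stays in $\|V\|\le R_0+tK$.

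The main obstacle is closing this bootstrap: $K$ depends on $R_1$, possibly exponentially, so we cannot simply choose $R_1$ large for a fixed $T$. I would argue by continuity in time. Let $t^*$ be the exit time from $\{\|V\|\le R_1\}$ with $R_1=R_0+1$. Then $t^*\ge 1/K(R_0+1,T)>0$, which gives a local solution. Restarting from $\rho_{t^*}$ repeatedly only yields a sequence of positive step lengths, not a global one, unless $\sup_t \|V\|$ is controlled. For that I would use a uniform dissipation bound. By Cauchy–Schwarz, $\int_0^t\|\nabla_V\Ll\|\,\d r\le \sqrt{t\,\Ll(\rho_0)}$ in $L^2(\rho)$, and \cref{thm:maximal_slope_equivalence} bounds the metric derivative. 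Converting this $L^2$ bound into a pointwise-in-$\theta$ bound on $V$ is possible because $\nabla_V\Ll$ does not depend on $\theta$ except through a bounded factor. Indeed, $\nabla_V\Ll(s,\theta)$ is an $L^\infty$-in-$\theta$ quantity controlled by $\sup_j\|\mm^{(j)}(s)\|\|\Lambda^j(s)\|$. A Grönwall inequality on $M(t)=\sup\{\|V\|:\theta\in\Supp\rho_t\}$ of the form $M'\le C e^{cM}$ gives only local existence. I would therefore avoid it by using a direct Grönwall argument on the coupled system: $\|\Lambda\|\le e^{\int\|V\|}$, and for $\mm$ with $\|\D_\Lambda\Aa_\theta\|$ linear in $\|V\|$, this gives a doubly-exponential but finite bound on each finite interval. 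This ensures $M(t)<\infty$ for all $t$.

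Uniqueness follows from \cref{thm:gradient_flow_uniqueness}. Any gradient flow of $\Ll$ from $\rho_0$ stays in $\{\|V\|\le M(t)\}$ by the same a priori estimate, so it is a gradient flow of $\Ll_{R_1}$ for $R_1>\sup_{[0,T]}M$ and hence coincides with $\rho^{R_1}$ on $[0,T]$.
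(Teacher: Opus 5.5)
Your argument, once the $R_1$-dependent bound $\|\Lambda^j\|\le\|\Id\|e^{R_1}$ is replaced (as you eventually do) by $\|\Lambda^j\|\le\|\Id\|e^{\int\|V\|\d\rho_t}$ and the resulting doubly exponential adjoint bound, is essentially the paper's proof. It has the same three ingredients: an a priori moment estimate; a $\theta$-uniform bound $\frac1N\sum_j\|\mm^{(j)}\|_{\TV}\|\Lambda^j\|$ on the $V$-velocity, giving at most linear-in-time growth of the $V$-support along characteristics (\cref{lem:energy_support_estimate}); and localization through a bounded $\Pi$ equal to the identity on a large ball, combined with \cref{thm:gradient_flow_existence,thm:gradient_flow_uniqueness}. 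The bootstrap you worry about never arises, because these bounds depend only on $\sqrt{\Ee_2(\rho_t)}\le\sqrt{\Ee_2(\rho_0)}+\sqrt{t\,\Ll(\rho_0)}$ and not on $R_1$ (taking $\|\Pi(V)\|\le\|V\|$, as the paper does), so $R_1$ can simply be chosen after $T$; note that this square-root form is the correct moment estimate, since your version as written misses the cross term.
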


The proof of~\cref{thm:wellposedness_compact} relies on two intermediary results.
The first one shows that, for any time $T \geq 0$, we can provide estimates on the size of the support of the $V$-marginal of gradient flow curves.

\begin{lemma} \label{lem:energy_support_estimate}
    Consider the modified Attention of~\cref{eq:attention_modified} with $\Pi$ s.t.\@ $\| \Pi(V) \| \leq \| V \|$ for every $V \in \RR^{d \times d}$ (note that this in particular encompasses the normal Attention of~\cref{eq:attention}).
    Let $\rho_0$ be as in~\cref{thm:wellposedness_compact} and consider $T \geq 0$.
    Then there exists $\Ee_T, R_T \geq 0$ such that, if $(\rho_t)_{t \geq 0}$ is a gradient flow curve starting from $\rho_0$ in the sense of~\cref{def:gradient_flow}, it holds
    \begin{align*}
        \forall t \in [0, T], \quad
        \Ee_2(\rho_t) \leq \Ee_T, \quad \text{and} \quad
        \Supp(\rho_t) \subset [0,1] \times \left\{ (V, Q, q) \in \Theta, \| V \| \leq R_T \right\}.
    \end{align*}
\end{lemma}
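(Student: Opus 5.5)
The plan is to first obtain an a priori bound on the second moment $\Ee_2(\rho_t)$ on $[0,T]$ from the energy dissipation. That bound then controls the flow maps and adjoint variables. From this we get a uniform bound on the $V$-component of the velocity field $\nabla \Ll[\rho_t]$, which we integrate along characteristics to bound the $V$-support.

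\emph{Step 1: moment bound.} Since $(\rho_t)$ is a gradient flow, \cref{thm:maximal_slope_equivalence} and \cref{thm:upper_gradient} give
\[
\frac{\d}{\d t}\Ll(\rho_t) = -\|\nabla \Ll[\rho_t]\|^2_{L^2(\rho_t)} .
\]
Hence $\int_0^T \|\nabla \Ll[\rho_t]\|^2_{L^2(\rho_t)}\d t \le \Ll(\rho_0)$, using $\Ll \geq 0$. Testing the continuity equation~\cref{eq:gradient_flow_continuity} against $\|\theta\|^2$ (justified by a standard truncation) gives
\[
\frac{\d}{\d t}\Ee_2(\rho_t) = -2\int \langle \theta, \nabla\Ll[\rho_t]\rangle \d\rho_t \le 2\,\Ee_2(\rho_t)^{1/2}\|\nabla \Ll[\rho_t]\|_{L^2(\rho_t)} .
\]
Therefore $\Ee_2(\rho_t)^{1/2} \le \Ee_2(\rho_0)^{1/2} + \sqrt{T\,\Ll(\rho_0)} \eqdef \Ee_T^{1/2}$. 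This bound is independent of the particular curve, as required.

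\emph{Step 2: uniform bounds on $\Lambda$ and $\mm$.} The hypothesis $\|\Pi(V)\|\le\|V\|$ gives $\|\Aa_\theta(\Lambda)\|\le C\|V\|\|\Lambda\|$. Grönwall (\cref{lem:gronwall}), as in the proof of \cref{prop:existence_uniqueness}, then yields $\sup_{s}\|\Lambda^j_{\rho_t}(s)\|\le C(\Ee_T)$ for all $t\in[0,T]$. For the adjoints, the estimate $\|\D_\Lambda\Aa_\theta(\Lambda)\|\le C(1+\|\theta\|^2)(1+\|\Lambda\|^2)$ from \cref{lem:differential_A} is $\rho_t(\cdot|s)$-integrable, with $\int_0^1$ of the integral bounded by $C(1+\Ee_T)$. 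A Grönwall argument on the backward equation~\cref{eq:backward_ODE} then gives $\sup_s\|\mm^{(j)}_{\rho_t}(s)\|_{\TV}\le C(\Ee_T)$, where we use that $\nabla\ell^j$ is bounded on the bounded set containing $\Lambda^j(1,x^j)$.

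\emph{Step 3: $V$-velocity and support.} By \cref{lem:A_theta_param_differential}, the $V$-component of the velocity field is
\[
\D_V\Aa^j_\theta(\Lambda)^*\mm = \int \d\mm(x)\otimes \Mm^j_\theta(\Lambda)(x),
\]
up to a factor $\D\Pi(V)^*$ in the modified case, which we assume bounded. Its norm is at most $\|\mm\|_{\TV}\|\Lambda\|_{\Cc^0}\le C(\Ee_T)$, uniformly in $\theta$, $s$ and $t\le T$. We then represent $\rho_t$ through characteristics. For a.e.\ $s$, $\rho_t(\cdot|s)=(X_t)_\#\rho_0(\cdot|s)$, where $\dot X_t = -\nabla\Ll[\rho_t](s,X_t)$; this flow exists since the field is locally Lipschitz in $\theta$ by the estimates of \cref{lem:gradient_field_lipschitz}. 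If one prefers to avoid Lipschitz issues, the superposition principle~\cite{ambrosio2008gradient} applied layerwise gives the same conclusion. Along each characteristic, $\|V(t)\|\le R_0 + C(\Ee_T)\,t$, so $R_T \eqdef R_0 + C(\Ee_T)T$.

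The main obstacle is Step 3's use of characteristics. For the unmodified attention, the $(Q,q)$-components of the field grow like $\|V\|$, and the Lipschitz constants in $\theta$ are only local. One must therefore make sure the characteristic representation (or the superposition measure) is legitimate with merely $L^2$ velocity. The cleanest route is the superposition principle, which needs only $\int_0^T\|\nabla\Ll[\rho_t]\|_{L^2(\rho_t)}\d t<\infty$, already supplied by Step 1. It is applied to the disintegrated curves; this only bounds the $V$-coordinate along a.e.\ curve, which suffices for the support claim after taking closures.
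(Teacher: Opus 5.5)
Your proposal is correct and follows the paper's proof essentially step for step. Both arguments use the same second-moment bound $\Ee_2(\rho_t)\le\bigl(\sqrt{\Ee_2(\rho_0)}+\sqrt{T\Ll(\rho_0)}\bigr)^2$, obtained from the continuity equation, Cauchy--Schwarz and energy dissipation; then Gr\"onwall bounds on $\Lambda^j_{\rho_t}$ and $\mm^{(j)}_{\rho_t}$; a uniform bound $\frac1N\sum_j\|\Lambda^j\|\,\|\mm^{(j)}\|$ on the $V$-component of the velocity; and integration along the characteristics $\rho_t=(X_t)_\#\rho_0$. Your handling of the $\D\Pi(V)^*$ factor and your superposition-principle fallback are small refinements of points the paper passes over: the paper simply invokes linearity of $\Aa_\theta$ in $V$ and cites classical representation results for the characteristics.
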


\begin{proof}
    Let us start by proving the estimate on the second-order moment.
    This is actually a general property of gradient flows of bounded functionals that the parameters cannot blow up in finite time.
    Using the gradient flow equation~\cref{eq:gradient_flow_continuity} we have for a.e. $t \in [0,T]$:
    \begin{align*}
        \frac{\d}{\d t} \Ee_2(\rho_t) & = - 2 \int_{[0,1] \times \Theta} \left< \theta, \nabla \Ll[\rho_t](s,\theta) \right> \d \rho_t(s, \theta) .
    \end{align*}
    Thus
    \begin{align*}
        \frac{\d}{\d t} \Ee_2(\rho_t)
        \leq  2 \sqrt{\Ee_2(\rho_t)} \left\| \nabla \Ll[\rho_t] \right\|_{L^2(\rho_t)}
    \end{align*}
    and using a Grönwall argument and Cauchy-Schwarz gives for $t \in [0,T]$:
    \begin{align} \label{eq:moment_estimate}
        \Ee_2(\rho_t)
        \leq \left( \sqrt{\Ee_2(\rho_0)} + \left( T \int_0^T \left\| \nabla \Ll[\rho_{t'}] \right\|^2_{L^2(\rho_{t'})} \d t' \right)^{1/2} \right)^2
        \leq \left( \sqrt{\Ee_2(\rho_0)} + \sqrt{ T \Ll(\rho_0) } \right)^2 \defeq \Ee_T .
    \end{align}

    It remains to estimate the support of the $V$-marginal of $\rho_t$.

    The gradient field has linear growth and is locally Lipschitz in $\theta$.

    The Cauchy problem is
    \begin{align*}
        \dot X_t =
        \begin{pmatrix}
            0 \\
            -\nabla \Ll[\rho_t](X_t)
        \end{pmatrix}
    \end{align*}
    \noindent Here $t \in [0,T]$ and $X_0(s, \theta) = (s, \theta)$.
    It is then a consequence of classical results in the representation of solutions to continuity equations (see e.g.~\cite{ambrosio2008transport}) that, for every $t \in [0,T]$, we have
    $\rho_t = (X_t)_\# \rho_0$.
    Let us denote by $V_t(s,\theta)$ the $V$-component of $X_t(s, \theta)$.
    Then, taking the $V$-component in the above Cauchy problem, we have for a.e. $t \in [0, T]$:
    $$
    \frac{\d}{\d t} V_t(s, \theta) = - \frac{1}{N} \sum_{i=1}^N \D_V \Aa_{X_t(s, \theta)}^i (\Lambda^i_{\rho_t}(s))^* \mm^i_{\rho_t}(s) .
    $$
    In particular, leveraging the linearity of $\Aa_\theta$ w.r.t. the parameter $V$ and its boundedness w.r.t. the parameters $(Q,q)$ we have:
    $$
    \left\| \frac{\d}{\d t} V_t(s, \theta) \right\| \leq \frac{1}{N} \sum_{i=1}^N \| \Lambda^i_{\rho_t}(s) \| \| \mm^i_{\rho_t}(s) \|  .
    $$
    Using the forward ODE~\cref{eq:transformer_flow} and the backward ODE and considering that $\Aa_\theta$ has quadratic growth w.r.t. $\theta$  and linear growth w.r.t. $\Lambda$ we have:
    $$
    \forall i \in \{1, ..., N\}, \quad \forall (s,t) \in [0,1] \times [0,T], \quad
    \left\| \Lambda^i_{\rho_t}(s) \right\| \leq C_0 e^{C_0 \Ee(\rho_t)} \leq C_0 e^{C_0 \Ee_T} ,
    $$
    for some universal constant $C_0 \geq 0$.
    Similarly, using the backward ODE~\cref{eq:backward_ODE} and considering that $\D_\Lambda \Aa_\theta$ has quadratic growth w.r.t. $\theta$ and $\Lambda$ we obtain:
    $$
    \forall i \in \{1, ..., N\}, \quad \forall (s,t) \in [0,1] \times [0,T], \quad
    \left\| \mm^i_{\rho_t}(s) \right\| \leq C_1 e^{C_1 e^{C_1 \Ee_T} },
    $$
    for some universal constant $C_1 \geq 0$.
    Thus, if $\theta = (V, Q, q) \in \Theta$ is such that $\| V \| \leq R_0$, then using the two previous estimates as well as~\cref{eq:moment_estimate}, we have for any $t \in [0,T]$:
    \begin{align} \label{eq:support_estimate}
        \| V_t(s, \theta) \| \leq R_T \eqdef R_0 + T C_2(T) ,
    \end{align}
    where $C_2: \RR \to \RR$ is some continuous function.
\end{proof}

We can now proceed to the proof of~\cref{thm:wellposedness_compact}.

\begin{proof}[Proof of~\cref{thm:wellposedness_compact}]
    Let $T \geq 0$.
    We prove existence and uniqueness of solutions to the gradient flow equation up to time $T$.

    \proofpart{Uniqueness}

    From~\cref{lem:energy_support_estimate}, we can find a $R_T \geq 0$ such that, if $(\rho_t)_{t \in [0, +\infty)}$, $(\rho_t')_{t \in [0, +\infty)}$ are two gradient flow curves starting from $\rho_0$, then it holds for every $t \in [0,T]$.
    $$
    \forall t \in [0, T], \quad \Supp(\rho_t), \Supp(\rho_t') \subset [0,1] \times \left\{ (V, Q, q) \in \Theta, \| V \| \leq R_T \right\} .
    $$
    Denote by $\Tilde{\phi}$ the modified Attention map of~\cref{eq:attention_modified} with bounding function $\Pi : \RR^{d\times d} \to \RR^{d\times d}$ s.t. $\Pi(V) = V$ for $\| V \| \leq R_T+1$.
    Similarly, denote by $\Tilde{\Phi}$ the associated mean-field Attention in~\cref{eq:attention_meanfield}, by $\Tilde{\Ll}$ the associated risk in~\cref{eq:risk} and $\nabla \Tilde{\Ll}$ the associated gradient field of the risk in~\cref{eq:gradient_field}.
    Then by construction, we have for every $t \in [0,T]$ and a.e. $s \in [0,1]$ that $\Tilde{\Phi}_{\rho_t(.|s)} = \Phi_{\rho_t(.|s)}$ and hence $\Tilde{\Ll}(\rho_t) = \Ll(\rho_t)$. Also $\nabla \Tilde{\Ll}[\rho_t] = \nabla \Ll[\rho_t]$ on $[0,1] \times \left\{ (V, Q, q) \in \Theta, \| V \| \leq R_T \right\}$, hence $(\rho_t)_{t \in [0,T]}$ is a solution to the continuity equation:
    \begin{align*}
        \partial_t \rho_t - \nabla_\theta \cdot \left( \rho_t \nabla \Tilde{\Ll}(\rho_t) \right) = 0, \quad \text{on $[0,T] \times [0,1] \times \Theta$},
    \end{align*}
    i.e., the curve $(\rho_t)_{t \in [0,T]}$ is a gradient flow of $\Tilde{\Ll}$ in the sense of~\cref{def:gradient_flow}.
    The same arguments show that it is also the case for $(\rho_t')_{t \in [0,T]}$.
    Hence, by~\cref{thm:gradient_flow_uniqueness} we obtain $\rho_t = \rho_t'$ for every $t \in [0,T]$.

    \proofpart{Existence}
    Using the existence result in the case of the modified Attention (\cref{thm:gradient_flow_existence}), we have that there exists a solution $(\Tilde{\rho}_t)_{t \in [0, +\infty)}$ to the gradient flow equation for the modified risk $\Tilde{\Ll}$ starting from $\rho_0$.

    For $T' \geq 0$, let $R_{T'}$ be defined by~\cref{eq:support_estimate}.
    In particular, by~\cref{lem:energy_support_estimate}, we have that $\Supp(\Tilde{\rho}_t) \subset [0,1] \times \left\{ (V, Q, q) \in \Theta, \| V \| \leq R_t \right\}$ for every $t \geq 0$.
    Using the continuity of the map $T' \mapsto R_{T'}$, let us fix some $T' > T$ s.t. $R_{T'} < R_T+1$.
    Then by a similar argument as before, we have for $t \in [0, T']$ that $\nabla \Tilde{\Ll}[\Tilde{\rho}_t] = \nabla \Ll[\Tilde{\rho}_t]$ on $\Supp(\Tilde{\rho}_t)$.
    Hence $\Tilde{\rho}_t$ is a solution to the continuity equation:
    $$
    \partial_t \Tilde{\rho}_t - \nabla_\theta \cdot \left( \Tilde{\rho}_t \nabla \Tilde{\Ll}(\Tilde{\rho}_t) \right) = 0, \quad \text{on $[0,T'] \times [0,1] \times \Theta$} ,
    $$
    and is hence a gradient flow for the risk $\Ll$ up to time $T'$.
\end{proof}

\clearpage

\section{Transport equations with non-local velocities}
\label{sec:nonlocal_transport}

We investigate the well-posedness of transport equations with non-local velocity fields.
Precisely, given an initial distribution $\mu_0 \in \Pp(\RR^d)$ and a time-dependent velocity field $V_s : \Pp(\RR^d) \times \RR^d \to \RR^d$, we investigate the existence and uniqueness of solutions to the transport equation:
\begin{align}
    \partial_s \mu(s) + \div (\mu(s) V_s[\mu(s)]) = 0, \quad \text{with $\mu(0) = \mu_0$.}
\end{align}
We will consider the following set of assumptions on the time-dependent velocity $V$:

\begin{assumption} \label{ass:velocity}
    \begin{enumerate}
        \item For every $\mu \in \Pp_c(\RR^d)$ and $x \in \RR^d$ the map $s \mapsto V_s[\mu](x) \in \RR^d$ is measurable.
        \item There exists a function $C_0 \in L^1([0,1])$ s.t. for every $s \in [0,1]$ it holds for every radius $R > 0$ and every $\mu \in \Pp_c(\RR^d)$ supported in $B(0,R)$:
        \begin{align*}
            \left\| V_s[\mu] \right\|_{\Cc^0} \leq C_0(s) (1+R).
        \end{align*}

        \item For every radius $R > 0$ there exist functions $L_R, M_R \in L^1([0,1])$ s.t. for every $\mu \in \Pp_c(\RR^d)$ supported in $B(0,R)$ it holds:
        \begin{align*}
            \sup_{x,y \in \RR^d} \| V_s[\mu](x) - V_s[\mu](y)  \| \leq L_R(s) \| x-y \|
        \end{align*}
        and for every $\mu, \nu \in \Pp_c(\RR^d)$ supported on $B(0,R)$:
        \begin{align*}
            \| V_s[\mu] - V_s[\nu]  \|_{\Cc^0(B(0,R))} \leq M_R(s) \Ww_1(\mu, \nu).
        \end{align*}
    \end{enumerate}
\end{assumption}

Under those assumptions, one can show the existence and uniqueness of the transport equation with velocity-field $V$.
The following result is a time-dependent generalization of the result in~\cite[Thm.~2.3]{piccoli2015control}.
Recall that $\Cc^0_\co([0,1], \Pp_c(\RR^d))$ denotes the set of continuous paths of co-compactly supported probability measures.

\begin{theorem} \label{thm:wellposed}
    Assume $V$ satisfies~\cref{ass:velocity} and $\mu_0 \in \Pp_c(\RR^d)$. Then there exists a unique $\mu \in \Cc^0_\co([0,1], \Pp_c(\RR^d))$ with $\mu(0) = \mu_0$ that is a solution (in the sense of distribution) to the transport equation:
    \begin{align} \label{eq:continuity_V}
        \partial_s \mu(s) + \div(\mu(s) V_s[\mu(s)]) = 0, \quad \text{over $(0,1) \times \RR^d$}.
    \end{align}
    Moreover, if $\Lambda : [0,1] \times \RR^d \to \RR^d$ is the flow-map solution of the Cauchy problem:
    \begin{align} \label{eq:flow_map}
        \Lambda(s,x) = x + \int_0^s V_r[\mu(r)](\Lambda(r,x)) \d r, \quad \forall s \in [0,1], x \in \RR^d
    \end{align}
    then we have for every $s \in [0,1]$, $\mu(s) = \Lambda(s)_\# \mu_0$.
\end{theorem}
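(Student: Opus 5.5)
The plan is to decouple the nonlinearity by a fixed-point argument on curves of measures, in the spirit of \cite[Thm.~2.3]{piccoli2015control}. Fix $R_0$ with $\Supp(\mu_0) \subset B(0,R_0)$.

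\textbf{A priori support bound.} For any candidate path whose supports stay in some ball, the characteristics satisfy $\|\frac{\d}{\d s}\Lambda(s,x)\| \le C_0(s)(1+\|\Lambda(s,x)\|)$ by item 2 of \cref{ass:velocity}. Grönwall then confines them to $B(0,R)$ with $R \eqdef (1+R_0)e^{\|C_0\|_{L^1}} - 1$. This fixes the working radius once and for all.

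\textbf{Existence.} Let $X$ be the set of continuous paths $\nu:[0,1]\to\Pp(B(0,R))$ with $\nu(0)=\mu_0$, equipped with the metric $\sup_s e^{-\lambda \int_0^s (L_R+M_R)}\Ww_1(\nu(s),\nu'(s))$. This space is complete. Given $\nu \in X$, the field $(s,x)\mapsto V_s[\nu(s)](x)$ should be Carathéodory: measurability in $s$ comes from item 1 together with continuity of $\nu$ and the $\Ww_1$-Lipschitz bound, which make $s\mapsto V_s[\nu(s)](x)$ a limit of measurable maps. It is Lipschitz in $x$ with constant $L_R(s)$ and bounded by $C_0(s)(1+R)$. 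Hence it has a unique flow $\Lambda^\nu$, and we set $T(\nu)(s)\eqdef \Lambda^\nu(s)_\#\mu_0$, which lies in $X$ by the support bound.

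For two inputs $\nu,\nu'$, comparing flows gives
\[
\|\Lambda^\nu(s)-\Lambda^{\nu'}(s)\|_{\infty}\le \int_0^s \bigl(L_R\|\Lambda^\nu-\Lambda^{\nu'}\|_\infty+M_R\Ww_1(\nu,\nu')\bigr)\d r .
\]
Since $\Ww_1(\Lambda_\#\mu_0,\Lambda'_\#\mu_0)\le\|\Lambda-\Lambda'\|_\infty$, Grönwall combined with the weighted metric (choosing $\lambda$ large) makes $T$ a contraction. Its fixed point $\mu$ is a distributional solution: for $\varphi \in \Cc^\infty_c$, differentiate $\int\varphi(s,\Lambda(s,x))\d\mu_0(x)$ along characteristics. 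By construction $\mu(s)=\Lambda(s)_\#\mu_0$, and the flow solves \eqref{eq:flow_map}.

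\textbf{Uniqueness.} This is the main obstacle, because an arbitrary co-compact weak solution is not known a priori to be a pushforward. The fix is to take any solution $\tilde\mu \in \Cc^0_\co$, with supports in some ball $B(0,R')$, and freeze its velocity $w(s,x)=V_s[\tilde\mu(s)](x)$. This field is bounded by $C_0(s)(1+R')$ and Lipschitz in $x$ with an integrable constant. Uniqueness for the linear continuity equation with such Lipschitz fields (via duality or the superposition principle, \cite{ambrosio2008gradient}) forces $\tilde\mu(s)=\Lambda^{\tilde\mu}(s)_\#\mu_0$. The Grönwall support bound then gives $R'\le R$, so $\tilde\mu\in X$ is a fixed point of $T$, and by the contraction $\tilde\mu=\mu$.
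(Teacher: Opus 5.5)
\textbf{The gap: $T$ does not map your $X$ into itself.} Item~2 of \cref{ass:velocity} bounds $\|V_s[\nu(s)]\|$ by $C_0(s)(1+R_{\nu(s)})$, where $R_{\nu(s)}$ is the support radius of the \emph{measure argument} $\nu(s)$. It does not give $C_0(s)(1+\|\Lambda(s,x)\|)$. Your Gr\"onwall bound therefore holds only for self-consistent paths $\nu(s)=\Lambda^\nu(s)_\#\mu_0$, which is why it is fine in your uniqueness step.

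For an arbitrary $\nu\in X$ you only know $R_{\nu(s)}\le R$. With $c=\|C_0\|_{L^1}$, characteristics can then reach radius $R_0+(1+R)c$. With your $R=(1+R_0)e^{c}-1$ one gets $R_0+(1+R)c-R=(1+R_0)\bigl(1-e^{c}(1-c)\bigr)>0$ for $c>0$.

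The overshoot really happens. In $d=1$, the field $V_s[\mu](x)=c\,\bigl(1+\int|y|\,\d\mu(y)\bigr)$ satisfies \cref{ass:velocity}. Take $\mu_0=\delta_0$ (so $R_0=0$ and $R=e^{c}-1$) and $\nu(s)=\delta_{R\min(1,s/\eta)}\in X$. Then $T(\nu)(1)=\delta_{x_\eta}$ with $x_\eta\to c(1+R)=ce^{c}>R$ as $\eta\to 0$.

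So Banach's fixed-point theorem cannot be applied on $X$ as defined, and existence is not established. Your contraction estimate has the same problem: it uses $M_R$, which only controls $V_s[\nu]-V_s[\nu']$ on $B(0,R)$, along characteristics that may leave that ball.

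\textbf{The fix.} Use the time-dependent radius $r(s)\eqdef(1+R_0)e^{\int_0^s C_0}-1$ and require $\Supp\nu(s)\subset\bar B(0,r(s))$ in the definition of $X$. Then $\|\Lambda^\nu(s,x)\|\le R_0+\int_0^s C_0(\tau)(1+r(\tau))\,\d\tau=r(s)$, so $T(X)\subset X$. This $X$ is still closed under uniform $\Ww_1$ limits, and your weighted-metric contraction goes through with $L_{r(1)}$ and $M_{r(1)}$.

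\textbf{Comparison with the paper.} With that fix your argument is valid, but existence goes by a different route. The paper never runs a fixed point on curves of measures. It lifts the problem to the Banach space $\Cc^0(B(0,R),\RR^d)$ of flow maps, with velocity $\Aa_s(\Gamma)\eqdef V_s[\Gamma_\#\mu_0](\Gamma(\cdot))$. It gets local $L^1$-Lipschitz continuity from $\Ww_1(\Gamma_\#\mu_0,\Gamma'_\#\mu_0)\le\|\Gamma-\Gamma'\|_{\Cc^0}$ and then applies the Carath\'eodory theorem (\cref{thm:caratheodory}).

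In that formulation the growth bound $\|\Aa_s(\Gamma)\|_{\Cc^0}\le C_0(s)(1+\|\Gamma\|_{\Cc^0})$ is automatically self-consistent, because the support of the measure argument is controlled by the same unknown $\Gamma$. Your decoupled map $T$ breaks exactly this coupling, which is why you need an invariant set.

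Your uniqueness step is the same as the paper's: freeze the velocity, use uniqueness for the linear continuity equation to get a pushforward representation, then use uniqueness for the nonlinear problem. Your route is self-contained and gives an explicit contraction. The paper's route identifies the flow map directly as the solution of a Banach-space ODE, which is the formulation reused in \cref{prop:existence_uniqueness} and the later differentiability results.
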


\begin{proof}
    The proof can be done following the lines of~\cite[Thm.~2.3]{piccoli2015control} or~\cite[Thm.~6.6]{geshkovski2023emergence} and adapting the arguments to handle time-dependent velocity fields.
    We propose here a different argument based on the well-posedness of the flow-map equation.
    In the rest of the proof, we consider a radius $R > 0$ s.t. $\Supp(\mu_0) \subset B(0,R)$.

    \proofpart{Existence and uniqueness of the flow-map}
    
    For $s \in [0,1]$, consider the time-dependent mapping $\Aa_s : \Cc^0(B(0,R), \RR^d) \to \Cc^0(B(0,R), \RR^d)$ defined for $\Gamma \in \Cc^0(B(0,R), \RR^d)$ by:
    \begin{align*}
        \Aa_s(\Gamma)(x) \eqdef V_s[\Gamma_\# \mu_0] (\Gamma(x)).
    \end{align*}
    We first show existence and uniqueness of a solution $\Gamma \in \Cc^0([0,1] \times B(0,R), \RR^d)$ to the Cauchy problem:
    \begin{align} \label{eq:flow_map_Lambda}
        \forall s \in [0,1], \quad \Gamma(s) = \Id + \int_0^s \Aa_r(\Gamma(r)) \d r .
    \end{align}
    Indeed, for $s \in [0,1]$, the mapping $\Aa_s : \Cc^0(B(0,R), \RR^d) \to \Cc^0(B(0,R), \RR^d)$ is well-defined since, for $\Gamma \in \Cc^0(B(0,R), \RR^d)$, $\Gamma_\# \mu_0$ has compact support and hence $\Aa_s(\Gamma) \in \Cc^0(B(0,R), \RR^d)$ by~\cref{ass:velocity}.
    Also $\Aa_s$ is locally Lipschitz, as if $\Gamma, \Gamma' \in \Cc^0(B(0,R), \RR^d)$ are such that $\| \Gamma \|_{\Cc^0}, \| \Gamma' \|_{\Cc^0} \leq N$ then $\Supp(\Gamma_\# \mu_0) \subset B(0, N)$ and for $x \in B(0,R)$:
    \begin{align*}
        \left\| \Aa_s(\Gamma)(x) - \Aa_s(\Gamma')(x) \right\|
         = & \left\| V_s [ \Gamma_\# \mu_0] (\Gamma(x)) - V_s [ \Gamma'_\# \mu_0] (\Gamma'(x)) \right\| \\
        \leq & \left\| V_s [ \Gamma_\# \mu_0] (\Gamma(x)) - V_s [ \Gamma_\# \mu_0] (\Gamma'(x)) \right\| + \left\| V_s [ \Gamma_\# \mu_0] (\Gamma'(x)) - V_s [ \Gamma'_\# \mu_0] (\Gamma'(x)) \right\| \\
        \leq &  L_N(s) \| \Gamma-\Gamma' \|_{\Cc^0} + M_N(s) \Ww_1(\Gamma_\# \mu_0, \Gamma'_\# \mu_0) \\
        \leq & \left( L_N(s) + M_N(s) \right) \| \Gamma-\Gamma' \|_{\Cc^0},
    \end{align*}
    where we used~\cref{ass:velocity}.
    The local Lipschitz constant $L_N$ and $M_N$ being integrable w.r.t. $s \in [0,1]$, we have that $\Aa$ satisfies the Carathéodory and the $L^1$-Lipschitz conditions in~\cref{ass:caratheodory}.
    Moreover, if $\Gamma \in \Cc^0(B(0,R), \RR^d)$, then $\Supp(\Gamma(s)_\# \mu_0) \subset B(0, \| \Gamma(s) \|_{\Cc^0})$ and hence using~\cref{ass:velocity}:
    $$
    \left\| \Aa_s(\Gamma) \right\|_{\Cc^0} \leq C_0(s) \left( 1+ \| \Gamma(s) \|_{\Cc^0} \right) .
    $$
    Hence $\Aa$ also satisfies the $L^1$-linear growth condition of~\cref{ass:caratheodory}.
    Thus, by~\cref{thm:caratheodory}, there exists a unique solution $\Gamma \in \Cc^0([0,1] \times B(0,R), \RR^d)$ to the Cauchy problem~\cref{eq:flow_map_Lambda}.
    Moreover, an application of Grönwall's lemma gives for every $s \in [0,1]$:
    \begin{align*}
        \| \Gamma(s) \|_{\Cc^0} \leq R + (1+R) (e^{\int_0^s C_0(r) \d r}-1) .
    \end{align*}

    \proofpart{Existence and uniqueness of solutions}
    
    Observe that the above defined flow-map $\Gamma \in \Cc^0(B(0,R), \RR^d)$ gives a solution of the transport equation~\cref{eq:continuity_V}. Indeed, for $s \in [0,1]$, defining $\mu(s) \eqdef  \Gamma(s)_\# \mu_0$, we have $\mu \in \Cc^0_\co([0,1], \Pp_c(\RR^d))$ and for any test function $\varphi \in \Cc^\infty_c([0,1] \times \RR^d)$ and any $x \in \RR^d$:
    \begin{align*}
        \varphi(1,\Gamma(1,x)) - \varphi(0,x) & = \int_0^1 \left( \partial_s  \varphi(s, \Gamma(s,x)) + \nabla \varphi(s, \Gamma(s,x)) \cdot V_s[\mu(s)](\Gamma(s,x)) \right) \d s
    \end{align*}
    and integrating w.r.t. $\mu_0$:
    \begin{align*}
        \int_{\RR^d} \varphi(1) \d \mu(1) - \int_{\RR^d} \varphi(0) \d \mu(0) = \int_0^1 \int_{\RR^d} \left( \partial_s  \varphi(s) + \nabla \varphi(s) \cdot V_s[\mu(s)] \right) \d \mu(s) \d s
    \end{align*}
    which is the definition of (weak) solutions to~\cref{eq:continuity_V}.

    Conversely, if $\mu \in \Cc^0_\co([0,1], \Pp_c(\RR^d))$ is a solution to~\cref{eq:continuity_V}, then the time-dependent velocity field $v : (s, x) \in [0,1] \times \RR^d \to V_s[\mu(s)](x)$ is Lipschitz with an integrable Lipschitz constant. Hence by Cauchy-Lipschitz theorem, the flow-map $\Lambda : [0,1] \times \RR^d \to \RR^d$ is uniquely defined and continuous and since $\mu$ is a solution to the continuity equation:
    \begin{align*}
        \partial_s \mu(s) + \div (\mu(s) v(s)) = 0.
    \end{align*}
    It follows from classical results in the representation of solutions to the transport equation that $\mu(s) = \Lambda(s)_\# \mu_0$ (see~\cite[Prop.~8.18]{ambrosio2008gradient}).
    Hence, when restricted to $B(0,R)$, $\Lambda \in \Cc^0(B(0,R), \RR^d)$ and by definition for every $x \in B(0,R)$ and every $s \in [0,1]$:
    \begin{align*}
        \Lambda(s,x) = x + \int_0^s V_r[\Lambda(r)_\# \mu_0](\Lambda(r,x)) \d r
    \end{align*}
    i.e., $\Lambda$ is a solution to~\cref{eq:flow_map_Lambda}, which by uniqueness implies $\Lambda = \Gamma$ on $B(0,R)$.
\end{proof}

\clearpage

\section{Theory of ODEs in Banach spaces} \label{sec:Banach_ODE}

We work extensively in this paper with weak solutions to \emph{Ordinary Differential Equations (ODEs)} in separable Banach spaces.
The theory of such \emph{Carathéodory} differential equations can be found in~\cite{aulbach1996integral,younes_shapes_2010,deimling2006ordinary}.
The following conditions and well-posedness results are taken from~\cite{aulbach1996integral}.

\begin{assumption} \label{ass:caratheodory}
    Consider a separable Banach space $\Xx$, an interval $I \subset \RR$ and a map $f : I \times \Xx \to \Xx$. 
    \begin{itemize}
        \item We say $f$ satisfies the \emph{Carathéodory conditions} if
        \begin{enumerate}
            \item for every $x \in \Xx$ the map $t \in I \mapsto f(t,x) \in \Xx$ is measurable,
            \footnote{Since we assume $\Xx$ is separable, we do not need to distinguish between weak measurability and strong measurability of Banach-space-valued functions. This is a consequence of Pettis' measurability theorem~\cite[Chap.~II]{diestel1977measures}}
            \item for a.e. $t \in I$, the map $x \in \Xx \mapsto f(t,x) \in \Xx$ is continuous.
        \end{enumerate} 
        
        \item We say $f$ is \emph{locally $L^1$-Lipschitz} if for every bounded subset $\Vv \subset \Xx$ there exists a function $L_\Vv \in L^1_\loc(I)$ such that for a.e.\@ $t \in I$ it holds:
        \begin{align*}
            \forall x,y \in \Vv, \quad \left\| f(t,x) - f(t,y) \right\| \leq L_\Vv(t) \| x-y \| .
        \end{align*}
        
        \item We say $f$ has $L^1$-linear growth if there exists a function $C \in L^1_\loc(I)$ such that for a.e.\@ $t \in I$ it holds:
        \begin{align*}
            \forall x \in \Xx, \quad \| f(t,x) \| \leq C(t)(1+ \| x \|).
        \end{align*}
    \end{itemize}
\end{assumption} 

\begin{theorem}[Carathéodory theorem {\cite[Thm.~2.4]{aulbach1996integral}}] \label{thm:caratheodory}
    Let $\Xx$ be a separable Banach space and $I \subset \RR$ be an interval.
    Assume $f : I \times \Xx \to \Xx$ satisfies the Carathéodory conditions, is locally $L^1$-Lipschitz and has $L^1$-linear growth (\cref{ass:caratheodory}).
    Then for any $x_0 \in \Xx$, there exists a unique solution $x \in \Cc^0_\loc(I, \Xx)$ to the Cauchy problem:
    \begin{align} \label{eq:cauchy_problem}
        \forall t \in I, \quad x(t) = x_0 + \int_0^t f(s, x(s)) \d s.
    \end{align}
    In particular, the map $I \ni t \mapsto x(t) \in \Xx$ is locally absolutely continuous.
\end{theorem}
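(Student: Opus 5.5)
This is the standard Carathéodory existence and uniqueness theorem. I would prove it by a Picard iteration localized to bounded sets, using the $L^1$-linear growth to rule out blow-up. Since $\Xx$ is separable, the Carathéodory conditions make $s\mapsto f(s,x(s))$ strongly measurable for every continuous $x$: approximate $x$ uniformly by countably-valued step functions and use continuity of $f(s,\cdot)$ together with Pettis' theorem. The integral in \cref{eq:cauchy_problem} is then a Bochner integral whenever $\|f(s,x(s))\|$ is integrable, and linear growth gives $\|f(s,x(s))\|\le C(s)(1+\sup\|x\|)\in L^1_\loc$. It suffices to work on a compact subinterval $J=[0,T]\cap I$, or $[-T,0]$, and then patch solutions together by uniqueness.

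\textbf{A priori bound.} For any solution on $J$, the Grönwall lemma (\cref{lem:gronwall}) applied to
\[
\|x(t)\|\le \|x_0\|+\int_0^t C(s)(1+\|x(s)\|)\,\d s
\]
gives
\[
\|x(t)\| \le \bigl(1+\|x_0\|\bigr)\exp\Bigl(\int_J C\Bigr) - 1 =: R_J.
\]
Let $\Vv$ be the closed ball of radius $R_J+1$, and let $L=L_\Vv\in L^1(J)$ be the corresponding Lipschitz function.

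\textbf{Existence and uniqueness.} Fix the radius $R:=R_J+1$ and define the truncation $\pi(y)=y$ if $\|y\|\le R$, and $\pi(y)=Ry/\|y\|$ otherwise. This map is $2$-Lipschitz. Set $\tilde f(s,y)=f(s,\pi(y))$. Then $\tilde f$ is globally $2L(s)$-Lipschitz and satisfies the Carathéodory conditions. On the space $\Cc^0(J,\Xx)$, take the weighted norm
\[
\|x\|_\ast=\sup_t e^{-4\int_0^t L}\|x(t)\|.
\]
Under this norm the Picard map $\Phi(x)(t)=x_0+\int_0^t\tilde f(s,x(s))\,\d s$ is a contraction with constant $1/2$, so Banach's fixed point theorem gives a unique fixed point. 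By the a priori bound, and since the estimate only uses $\|\tilde f(s,y)\|\le C(s)(1+\|y\|)$ (which still holds for $\tilde f$), the fixed point stays in the ball of radius $R_J$. Hence $\tilde f=f$ along it, and it solves the original problem.

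Conversely, every solution of the original problem satisfies the same bound, so it is a fixed point of $\Phi$ and coincides with the one above. This gives uniqueness on $J$. Exhausting $I$ by compact intervals produces the unique solution in $\Cc^0_\loc(I,\Xx)$. Local absolute continuity is immediate, since
\[
\|x(t)-x(t')\|\le\Bigl|\int_{t'}^t C(s)(1+R_J)\,\d s\Bigr|.
\]

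The main subtle point is the measurability step: checking that $s\mapsto \tilde f(s,x(s))$ is strongly measurable, which makes $\Phi$ well defined. This needs separability and the Carathéodory conditions, exactly as the footnote indicates. The rest is routine Grönwall and contraction arguments.
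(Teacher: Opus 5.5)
Your argument is correct. The paper does not prove this statement itself: it is quoted from \cite[Thm.~2.4]{aulbach1996integral}. The only reasoning the paper adds is the footnote that, by separability and Pettis' theorem, weak and strong measurability coincide. That is exactly the measurability step you single out, and your treatment of it (uniform approximation by step functions, then a.e.\ limits via continuity of $f(s,\cdot)$) is sound. So your proposal is a self-contained replacement for a citation rather than a variant of an in-paper proof.

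On the route itself, a common textbook approach has three steps:
\begin{enumerate}
\item obtain a local solution by contraction on a short interval where the local Lipschitz function has small integral;
\item extend it to a maximal interval;
\item use linear growth plus Gr\"onwall to rule out blow-up.
\end{enumerate}
You instead proceed as follows:
\begin{enumerate}
\item do the Gr\"onwall estimate first;
\item truncate radially beyond the resulting radius to get a globally Lipschitz field (the factor $2$ for the radial retraction is correct in a general normed space);
\item use a Bielecki-type weighted norm to get a contraction on the whole compact interval in one step.
\end{enumerate}
What closes the argument is the observation $\|\pi(y)\|\le\|y\|$. It means $\tilde f$ inherits the linear-growth bound, so its fixed point satisfies the same a priori estimate and the truncation is inactive along it. The same estimate turns every genuine solution into a fixed point of $\Phi$, which gives uniqueness. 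Your approach therefore yields existence, uniqueness and explicit bounds with no continuation argument; the citation buys the paper brevity.

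Two cosmetic points:
\begin{itemize}
\item For negative times the weight should be $e^{-4\int_t^0 L}$.
\item When $I$ is open at an endpoint, $J$ should be a compact subinterval chosen inside $I$ rather than $[0,T]\cap I$, so that $L$ and $C$ are integrable on $J$.
\end{itemize}
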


We also extensively use Grönwall's lemma to provide a priori estimates on solutions.

\begin{lemma}[Grönwall's lemma] \label{lem:gronwall}
    Let $T \geq 0$ and let $u \in \Cc^0_\loc([0,T), \RR_+)$ be such that it holds:
    \begin{align*}
        \forall t \in [0,T), \quad u(t) \leq u(0) + \int_0^t (a(s) u(s) + b(s)) \d s 
    \end{align*}
    for some $a, b \in L^1_\loc([0,T), \RR_+)$ with $a$ non-negative.
    Then for all $t \in [0,T)$ it holds:
    \begin{align*}
        u(t) \leq u(0) e^{\int_0^t a(s) \d s} + \int_0^t b(s) e^{\int_s^t a(r) \d r} \d s.
    \end{align*}
\end{lemma}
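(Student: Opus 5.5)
The statement is Grönwall's lemma in integral form. My plan is to define the right-hand side as an absolutely continuous comparison function and differentiate it.

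Set
\[
w(t) \eqdef u(0) + \int_0^t (a(s) u(s) + b(s)) \d s .
\]
This is locally absolutely continuous on $[0,T)$, since $a,b \in L^1_\loc$ and $u$ is continuous, hence locally bounded. By hypothesis $u \leq w$ on $[0,T)$. For a.e.\ $t$,
\[
w'(t) = a(t) u(t) + b(t) \leq a(t) w(t) + b(t),
\]
where the inequality uses $a \geq 0$.

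Next introduce the integrating factor $E(t) \eqdef e^{-\int_0^t a(r) \d r}$. It is absolutely continuous with $E' = -aE$ a.e. The product of two locally absolutely continuous functions is locally absolutely continuous, so the product rule holds a.e.:
\[
\frac{\d}{\d t}\bigl(E(t) w(t)\bigr) = E(t)\bigl(w'(t) - a(t) w(t)\bigr) \leq E(t) b(t) \quad \text{a.e.}
\]
Integrating from $0$ to $t$, which is legitimate by absolute continuity, gives
\[
E(t) w(t) \leq w(0) + \int_0^t E(s) b(s) \d s .
\]
Now divide by $E(t) > 0$ and use $w(0) = u(0)$ together with $E(s)/E(t) = e^{\int_s^t a(r)\d r}$. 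This yields
\[
u(t) \leq w(t) \leq u(0) e^{\int_0^t a} + \int_0^t b(s) e^{\int_s^t a(r) \d r} \d s,
\]
which is the claim.

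The only delicate point is regularity. The hypotheses give $a,b$ only in $L^1_\loc$, so the chain and product rules must be justified in the absolutely continuous setting rather than the classical $C^1$ one. I would handle this by noting that both $w$ and $E$ are locally absolutely continuous, as indefinite integrals of $L^1_\loc$ functions or compositions of $\exp$ with such integrals. The fundamental theorem of calculus for absolutely continuous functions then applies on each compact subinterval $[0,t] \subset [0,T)$.
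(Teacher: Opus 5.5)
Your proof is correct. The paper gives no proof of this lemma: it is quoted in the appendix as a classical tool, so there is no argument of the authors' to compare yours against.

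Your integrating-factor argument is the standard one. It has three steps:
\begin{itemize}
\item the comparison function $w$, with $u \le w$;
\item the a.e.\ inequality $w' \le a w + b$, which uses $a \ge 0$;
\item the a.e.\ product rule for $e^{-\int_0^t a}\, w(t)$, followed by the fundamental theorem of calculus on compact subintervals $[0,t] \subset [0,T)$.
\end{itemize}
All three are justified. You also correctly identified the only real subtlety: since $a, b \in L^1_{\mathrm{loc}}$, all calculus must be done in the absolutely continuous setting.

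One detail makes the composition step fully rigorous. The map $t \mapsto e^{-\int_0^t a}$ is absolutely continuous because $\exp$ is Lipschitz on the bounded range of $-\int_0^t a$ over $[0,t]$. Composing an absolutely continuous function with a merely continuous one would not suffice.

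Finally, your argument uses neither $b \ge 0$ nor $u \ge 0$. The only sign condition it needs is $a \ge 0$.
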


\end{document}